\newtheorem{theorem}{Theorem}
\newtheorem{corollary}[theorem]{Corollary}
\newtheorem{lemma}[theorem]{Lemma}
\newtheorem{proposition}[theorem]{Proposition}
\newtheorem{proposition*}{Proposition}
\newtheorem{lemma*}{Lemma}
\theoremstyle{remark}
\newtheorem{definition}[theorem]{\bf Definition}
\numberwithin{theorem}{section}
\numberwithin{question}{section}
\numberwithin{figure}{section}
\numberwithin{equation}{section}
\begin{document}

\title{Tripod in uniform spanning tree and three-sided radial SLE$_2$}
\bigskip{}
\author[1]{Jiacheng Ding\thanks{dingjcthuprobab@gmail.com}}
\author[2]{Mingchang Liu\thanks{liumc\_prob@163.com}}
\author[1]{Hao Wu\thanks{hao.wu.proba@gmail.com}}
\affil[1]{Tsinghua University, China}
\affil[2]{Capital Normal University, China}
\date{}

%

%

\maketitle


\global\long\def\qnum#1{\left[#1\right]_q }
\global\long\def\qfact#1{\left[#1\right]_q! }
\global\long\def\qbin#1#2{\left[\begin{array}{c}
	#1\\
	#2 
	\end{array}\right]_q}

\global\long\def\defpatt{\shuffle}

\newcommand{\nradpartfn}[1]{\mathcal{Z}_{#1\mathrm{\textnormal{-}rad}}}
\newcommand{\nradE}[1]{\mathbb{E}_{#1\mathrm{\textnormal{-}rad}}}
\newcommand{\nradP}[1]{\mathbb{P}_{#1\mathrm{\textnormal{-}rad}}}

\global\long\def\covmap{h}

\global\long\def\mslitdriv{\omega}
\global\long\def\nnofloops{\mathscr{L}}


\global\long\def\U{\mathbb{U}}
\global\long\def\T{\mathbb{T}}
\global\long\def\HH{\mathbb{H}}
\global\long\def\R{\mathbb{R}}
\global\long\def\C{\mathbb{C}}
\global\long\def\N{\mathbb{N}}
\global\long\def\Z{\mathbb{Z}}
\global\long\def\E{\mathbb{E}}
\global\long\def\PP{\mathbb{P}}
\global\long\def\ratespiral{\mathcal{J}^{(\mu)}}
\global\long\def\QQ{\mathbb{Q}}
\global\long\def\A{\mathbb{A}}
\global\long\def\one{\mathbb{1}}

\newcommand{\PPspiral}[1]{\mathbb{P}^{(\mu)}_{#1}}
\newcommand{\PPnospiral}[1]{\mathbb{P}^{(0)}_{#1}}

\global\long\def\CR{\mathrm{CR}}
\global\long\def\ST{\mathrm{ST}}
\global\long\def\SF{\mathrm{SF}}
\global\long\def\cov{\mathrm{cov}}
\global\long\def\dist{\mathrm{dist}}
\global\long\def\SLE{\mathrm{SLE}}
\global\long\def\hSLE{\mathrm{hSLE}}
\global\long\def\CLE{\mathrm{CLE}}
\global\long\def\GFF{\mathrm{GFF}}
\global\long\def\inte{\mathrm{int}}
\global\long\def\ext{\mathrm{ext}}
\global\long\def\inrad{\mathrm{inrad}}
\global\long\def\outrad{\mathrm{outrad}}
\global\long\def\dimH{\mathrm{dim}}
\global\long\def\capa{\mathrm{cap}}
\global\long\def\diam{\mathrm{diam}}
\global\long\def\sign{\mathrm{sgn}}
\global\long\def\cat{\mathrm{Cat}}
\global\long\def\cst{\mathrm{C}}
\global\long\def\ck{\mathrm{C}_{\kappa}}
\global\long\def\free{\mathrm{free}}
\global\long\def\hF{{}_2\mathrm{F}_1}
\global\long\def\simple{\mathrm{simple}}
\global\long\def\even{\mathrm{even}}
\global\long\def\odd{\mathrm{odd}}
\global\long\def\st{\mathrm{ST}}
\global\long\def\ust{\mathrm{UST}}
\global\long\def\lerw{\mathrm{LERW}}
\global\long\def\usf{\mathrm{USF}}
\global\long\def\Leb{\mathrm{Leb}}
\global\long\def\LP{\mathrm{LP}}
\global\long\def\I{\mathrm{I}}
\global\long\def\II{\mathrm{II}}
\global\long\def\hcap{\mathrm{hcap}}
\global\long\def\trifurcation{\mathfrak{t}}
\global\long\def\out{\mathrm{out}}
\global\long\def\harmonic{\mathrm{H}}
\global\long\def\nharmonic{\mathrm{H}_{\mathrm{n}}}
\global\long\def\dharmonic{\mathrm{Q}}
\global\long\def\Poisson{\mathrm{P}}
\global\long\def\det{\mathrm{det}}
\global\long\def\LZtripod{\mathcal{Z}_{\mathrm{tri}}}
\global\long\def\Green{\mathrm{G}}
\global\long\def\constCW{\mathrm{C}_{\mathrm{CW}}}
\global\long\def\chordalSLE{\PP_{\mathrm{chord}}}
\global\long\def\chordalSLEpf{\mathcal{Z}_{\mathrm{chord}}}
\global\long\def\chordalSLEexp{\E_{\mathrm{chord}}}

\global\long\def\LA{\mathcal{A}}
\global\long\def\LB{\mathcal{B}}
\global\long\def\LC{\mathcal{C}}
\global\long\def\LD{\mathcal{D}}
\global\long\def\LF{\mathcal{F}}
\global\long\def\LK{\mathcal{K}}
\global\long\def\LE{\mathcal{E}}
\global\long\def\LG{\mathcal{G}}
\global\long\def\LGmu{\mathcal{G}_{\mu}}
\global\long\def\LI{\mathcal{I}}
\global\long\def\LJ{\mathcal{J}}
\global\long\def\LL{\mathcal{L}}
\global\long\def\LM{\mathcal{M}}
\global\long\def\LN{\mathcal{N}}
\global\long\def\OO{\mathcal{O}}
\global\long\def\LQ{\mathcal{Q}}
\global\long\def\LR{\mathcal{R}}
\global\long\def\LT{\mathcal{T}}
\global\long\def\LS{\mathcal{S}}
\global\long\def\LU{\mathcal{U}}
\global\long\def\LV{\mathcal{V}}
\global\long\def\LW{\mathcal{W}}
\global\long\def\LX{\mathcal{X}}
\global\long\def\LY{\mathcal{Y}}
\global\long\def\PartF{\mathcal{Z}}
\global\long\def\LH{\mathcal{H}}
\global\long\def\LJ{\mathcal{J}}

\global\long\def\blm{\mathfrak{m}}

\global\long\def\LZ{\mathcal{Z}}
\global\long\def\LZrp{\mathcal{Z}_{\alpha; \bs{s}}^{(p)}}
\global\long\def\LJrp{\mathcal{J}_{\alpha; \bs{s}}^{(p)}}
\global\long\def\chamberrp{\chamber_{\alpha; \bs{s}}^{(p)}}
\global\long\def\LErp{\mathcal{E}_{\alpha; \bs{s}}^{(p)}}
\global\long\def\Greenrp{G_{\alpha; \bs{s}}^{(p)}}
\global\long\def\Prp{P_{\alpha; \bs{s}}^{(p)}}
\global\long\def\norcst{\mathrm{C}_{\kappa}^{(\mathfrak{r})}}
\global\long\def\LZalphar{\mathcal{Z}_{\alpha}^{(\mathfrak{r})}}

\global\long\def\coulomb{\LH}
\global\long\def\auxcoulomb{\hat{\coulomb}}
\global\long\def\coulombGas{\LF}
\global\long\def\coulombnew{\LK}
\global\long\def\coulombLine{\LG}
\global\long\def\kfunc{p}

\global\long\def\eps{\epsilon}
\global\long\def\ov{\overline}
\global\long\def\QQrp{\QQ_{\alpha; \bs{s}}^{(p)}}

\global\long\def\bn{\mathbf{n}}
\global\long\def\MR{MR}
\global\long\def\cond{\,|\,}
\global\long\def\la{\langle}
\global\long\def\ra{\rangle}
\global\long\def\tree{\Upsilon}
\global\long\def\prob{\mathbb{P}}
\global\long\def\hm{\mathrm{Hm}}
%

\global\long\def\Im{\operatorname{Im}}
\global\long\def\Re{\operatorname{Re}}

\global\long\def\ud{\mathrm{d}}
\global\long\def\pder#1{\frac{\partial}{\partial#1}}
\global\long\def\pdder#1{\frac{\partial^{2}}{\partial#1^{2}}}
\global\long\def\pddder#1{\frac{\partial^{3}}{\partial#1^{3}}}
\global\long\def\der#1{\frac{\ud}{\ud#1}}

\global\long\def\bZnn{\mathbb{Z}_{\geq 0}}
\global\long\def\bZpos{\mathbb{Z}_{> 0}}
\global\long\def\bZneg{\mathbb{Z}_{< 0}}

\global\long\def\Vfunc{\LG}
\global\long\def\gfunc{g^{(\rr)}}
\global\long\def\hfunc{h^{(\rr)}}

\global\long\def\SimplexInt{\rho}
\global\long\def\CubeInt{\widetilde{\rho}}

\global\long\def\ii{\mathfrak{i}}
\global\long\def\ee{\mathrm{e}}
\global\long\def\rr{\mathfrak{r}}
\global\long\def\chamber{\mathfrak{X}}
\global\long\def\Wchamber{\mathfrak{W}}

\global\long\def\SimplexIntKappa8{\SimplexInt}

\global\long\def\nested{\boldsymbol{\underline{\Cap}}}
\global\long\def\unnested{\boldsymbol{\underline{\cap\cap}}}
\global\long\def\unnested{\boldsymbol{\underline{\cap\cap}}}

\global\long\def\acycle{\vartheta}
\global\long\def\bcycle{\tilde{\acycle}}

\global\long\def\metric{\mathrm{dist}}

\global\long\def\adj#1{\mathrm{adj}(#1)}

\global\long\def\bs{\boldsymbol}

\global\long\def\edge#1#2{\langle #1,#2 \rangle}
\global\long\def\graph{G}

\newcommand{\conn}{\varsigma}
\newcommand{\realacycle}{\smash{\mathring{\acycle}}}
\newcommand{\realpt}{\smash{\mathring{x}}}
\newcommand{\corrind}{\LC}
\newcommand{\bssymb}{\pi}
\newcommand{\coeff}{p}
\newcommand{\MainConst}{C}

\global\long\def\removeLink{/}

\global\long\def\domainofdef{\mathfrak{U}}
\global\long\def\Test_space{C_c^\infty}
\global\long\def\Distr_space{(\Test_space)^*}

\global\long\def\bs{\boldsymbol}
\global\long\def\cst{\mathrm{C}}

\newcommand{\red}{\textcolor{red}}
\newcommand{\blue}{\textcolor{blue}}
\newcommand{\green}{\textcolor{green}}
\newcommand{\magenta}{\textcolor{magenta}}

\newcommand{\coulombGasH}{\mathcal{H}}
\newcommand{\secondbeta}{\intloop}

\newcommand{\cev}[1]{\reflectbox{\ensuremath{\vec{\reflectbox{\ensuremath{#1}}}}}}

\global\long\def\anticonf{\zeta}
\global\long\def\intloop{\varrho}
\global\long\def\Gloop{\smash{\mathring{\intloop}}}

\global\long\def\SLEmeasure{\mathrm{P}}
\global\long\def\SLEmeasureEx{\mathrm{E}}

\global\long\def\fugacity{\nu}
\global\long\def\meanderMat{\mathcal{M}}
\global\long\def\LM{\mathcal{M}}
\global\long\def\meanderMatrix{\meanderMat_{\fugacity}}
\global\long\def\meanderMatrixPrime{\meanderMat_{\fugacity(\kappa')}}
\global\long\def\meanderRenorm{\widehat{\mathcal{M}}}

\global\long\def\PartFRenorm{\widehat{\PartF}}
\global\long\def\coulombGasRenorm{\widehat{\coulombGas}}

\global\long\def\hexa{\scalebox{1.3}{\hexagon}}

\global\long\def\np{p}

\global\long\def\FKdual{\mathcal{L}}

\global\long\def\fixedindex{\flat}

\begin{center}
\begin{minipage}{0.95\textwidth}
\abstract{
Fix a bounded $3$-polygon $(\Omega; x_1, x_2, x_3)$ with three marked boundary points $x_1, x_2, x_3\in\partial\Omega$ and suppose $(\Omega^{\delta}; x_1^{\delta}, x_2^{\delta}, x_3^{\delta})$ is an approximation of $(\Omega; x_1, x_2, x_3)$ on $\delta$-scaled hexagonal lattice. 
We consider uniform spanning tree (UST) in  $\Omega^{\delta}$ with wired boundary conditions. Conditional on the event that both branches from $x_1^{\delta}$ and $x_2^{\delta}$ hit the boundary through $x_3^{\delta}$, the two branches meet at a point $\trifurcation^{\delta}$ which we call trifurcation, and the union of the three branches from $x_j^{\delta}$ to $\trifurcation^{\delta}$ form a tripod in the UST. We compute the scaling limit of the tripod: the distribution of trifurcation is absolutely continuous with respect to Lebesgue measure with explicit density; given the trifurcation, the conditional law of the tripod is three-sided radial SLE$_2$. 
The proof relies on construction of a new observable for trifurcation in our key lemma--Lemma~\ref{lem::Fomin}--where we use 
Fomin's formula
and the geometry of the hexagonal lattice in an essential way.
Interestingly, the scaling limit of the observable for trifurcation coincides with the partition function for three-sided radial $\SLE_2$. 
Our result gives a probabilistic interpretation of the correlation function in CFT which has conformal weights $1$ at the three boundary points and has a spinless field of weights $(1,1)$ at the bulk point. 
 }

\bigskip{}

\noindent\textbf{Keywords:} uniform spanning tree, multi-sided radial SLE, Fomin's formula.\\ 

\noindent\textbf{MSC:} 60J67
\end{minipage}
\end{center}
\newpage
\tableofcontents
\newpage
\section{Introduction}
\label{sec::intro}
Schramm–-Loewner evolution (SLE), introduced by O. Schramm [Sch00], is a family of random curves defined in simply connected domains, proposed as scaling limits of interfaces in two-dimensional critical lattice models. There are two main variants: chordal SLE, where the curve connects two boundary points, and radial SLE, where the curve runs from a boundary point to an interior point. To date, conformal invariance and convergence to SLE have been established for several key models: loop-erased random walk (LERW) converges to radial $\SLE_2$~\cite{SchrammFirstSLE, LawlerSchrammWernerLERWUST}, interfaces in critical Ising model converges to chordal $\SLE_3$~\cite{ChelkakSmirnovIsing, CDCHKSConvergenceIsingSLE}, 
level lines in discrete Gaussian free field converges to chordal $\SLE_4$~\cite{SchrammSheffieldDiscreteGFF}, 
interfaces in critical FK-Ising model converges to chordal $\SLE_{16/3}$~\cite{ChelkakSmirnovIsing, CDCHKSConvergenceIsingSLE}, 
interfaces in critical Bernoulli percolation converges to chordal $\SLE_6$~\cite{SmirnovPercolationConformalInvariance, CamiaNewmanPercolation}, 
and the Peano curve for uniform spanning tree (UST) converges to chordal $\SLE_8$~\cite{SchrammFirstSLE, LawlerSchrammWernerLERWUST}. 
Recent developments~\cite{BauerBernardKytolaMultipleSLE, FloresKlebanPDE3, PeltolaWuGlobalMultipleSLEs, KarrilaKytolaPeltolaCorrelationsLERWUST, PeltolaWuCrossingProbaIsing, FengPeltolaWuConnectionProbaFKIsing, LiuPeltolaWuUST} have extended this framework to the scaling limits of multiple interfaces in polygonal domains. 
In such settings, collections of interfaces have been shown to converge to multiple chordal SLEs~\cite{DubedatEulerIntegralsCommutingSLEs, KozdronLawlerMultipleSLEs, LawlerPartitionFunctionsSLE, KytolaPeltolaPurePartitionFunctions, WuHyperSLE, BeffaraPeltolaWuUniqueness, AngHoldenSunYu2023, ZhanExistenceUniquenessMultipleSLE}. 

In contrast, the theory of multiple radial SLEs remains less developed. In~\cite{HealeyLawlerNSidedRadialSLE}, V. Healey and G. Lawler introduced multi-sided radial SLE and derived its invariant density under common-time parameter. Subsequent works~\cite{FengWuYangIsing, KrusellWangWuCommutationRelation, HuangPeltolaWuMultiradialSLEResamplingBP} studied multi-sided radial SLE  under multi-time parameter, establishing properties including transience, resampling and boundary perturbation. In this article, we investigate the connection between discrete lattice model and multi-sided radial SLE and prove that a collection of triple branches in UST converges to a three-sided radial $\SLE_2$. 
The proof relies on the construction a new observable in UST.

\begin{figure}[ht!]
\begin{subfigure}[b]{0.5\textwidth}
\begin{center}
\includegraphics[width=0.9\textwidth]{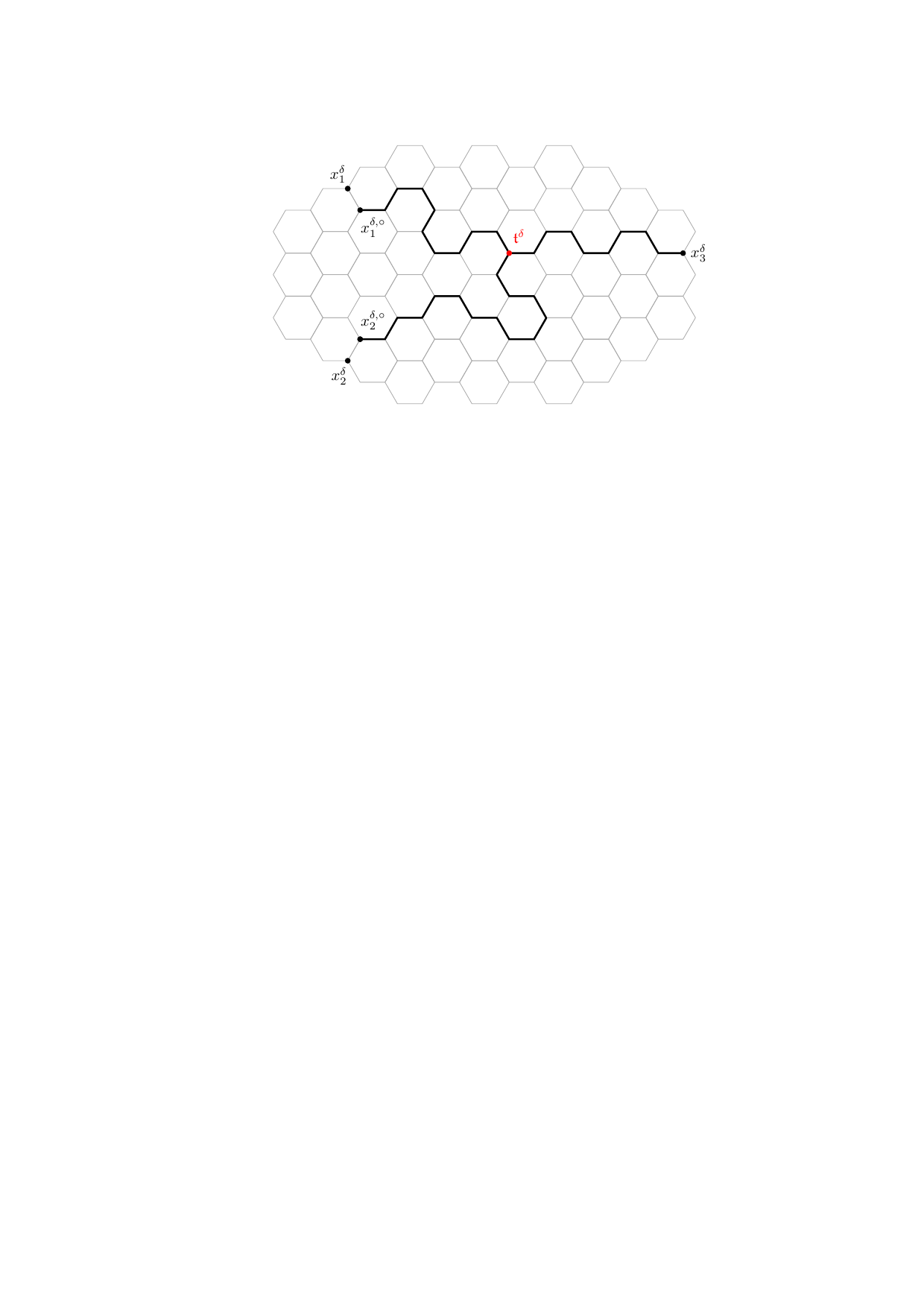}
\end{center}
\caption{}
\end{subfigure}
$\quad$
\begin{subfigure}[b]{0.4\textwidth}
\begin{center}
\includegraphics[width=\textwidth]{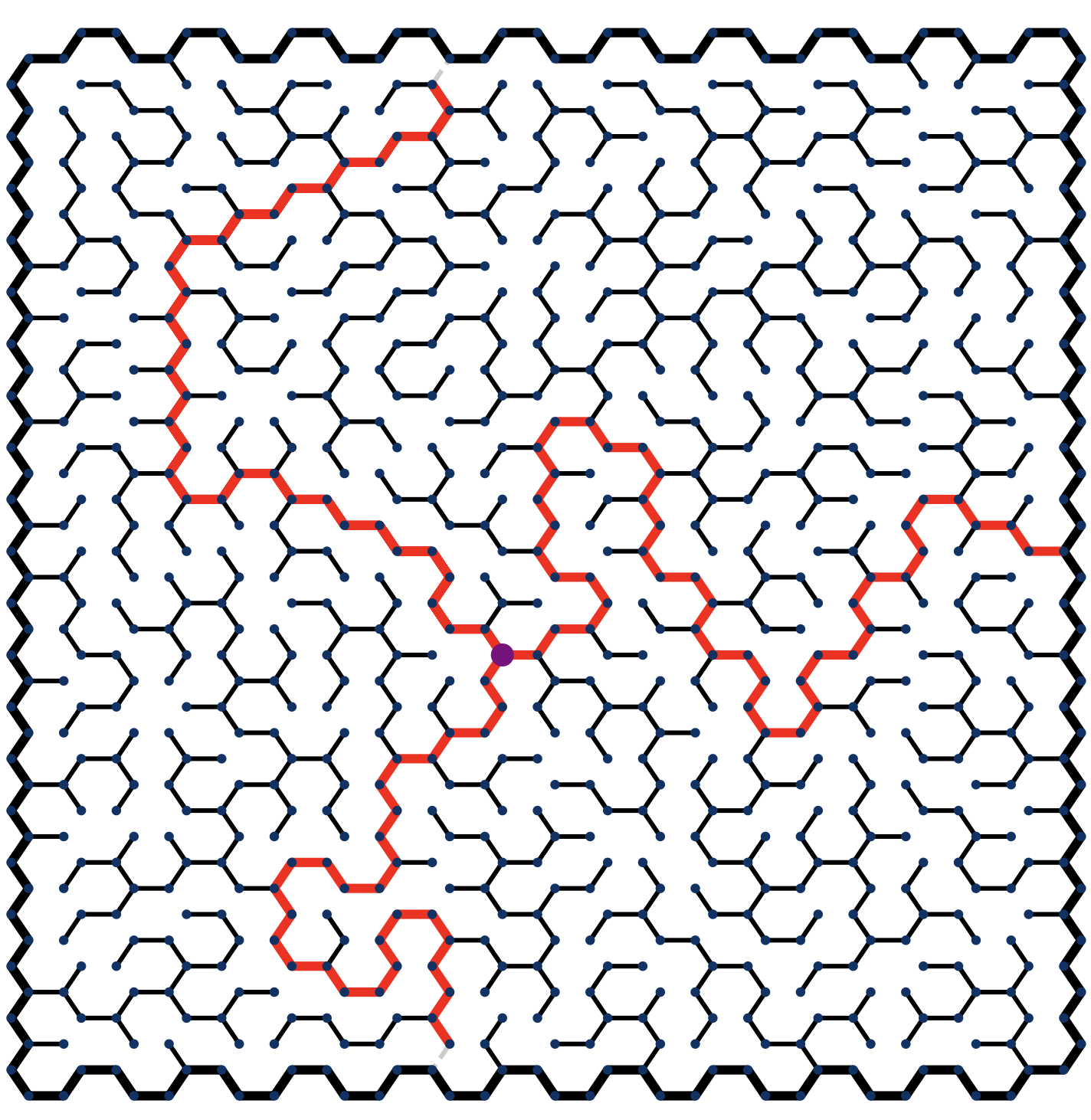}
\end{center}
\caption{}
\end{subfigure}
\caption{\label{fig::tripod} Illustration for tripod and trifurcation.}
\end{figure}

\subsection{Trifurcation and tripod in uniform spanning tree}
\label{subsec::intro_setup}
\paragraph*{Polygon.}
For $p\ge 1$, we say that 
$(\Omega; x_1, x_2, \ldots, x_p)$ 
is a (topological) $p$-\textit{polygon} if $\Omega\subsetneq\C$ is simply connected and $x_1, x_2, \ldots, x_p\in \partial\Omega$ are distinct points lying counterclockwise along the boundary. 
We will also assume throughout that $\partial\Omega$ is locally connected and the marked boundary points $x_1, x_2, \ldots, x_p$ lie on $C^{1+\eps}$-boundary segments, for some $\eps>0$, so that derivatives of conformal maps on $\Omega$ are defined there. For $x,y\in\partial\Omega$, we denote by $(xy)$ the counterclockwise boundary arc from $x$ to $y$. 

\paragraph*{Uniform spanning tree (UST) and its boundary branches.}
We consider a finite planar graph $\LG=(\LV, \LE)$, where a non-empty subset $ \LV^{\partial} \subset \LV$ is designated as the set of boundary vertices. All other vertices are referred to as interior vertices, and their set is denoted by $\LV^{\circ}=\LV \setminus  \LV^{\partial}$.
The set of boundary edges $ \LE^{\partial} \subset \LE$ consists of all edges $e=\langle e^{\partial}, e^{\circ}\rangle$ connecting a boundary vertex $e^{\partial} \in  \LV^{\partial}$ to an interior vertex $e^{\circ} \in \LV^{\circ}$.

A spanning tree of a connected graph is a subgraph which is connected and has no cycles and which contains every vertex. 
Let $\LG$ be a finite graph embedded in a planar domain. 
A uniform spanning tree with wired boundary condition on $\LG$ is defined as follows.
We collapse all boundary vertices in $\LV^{\partial}$ into a single vertex $v_{\partial}$, and denote the quotient graph by $\LG / \partial$.
Each spanning tree $\LT$ of $\LG / \partial$ can be viewed as a collection of edges of the original graph $\LG$, and the collection contains the edges identified from those of $\LG/\partial$ and the edges among boundary vertices.
We  give uniform distribution among all such collections.

Suppose $\LT$ is a spanning tree of $\LG/\partial$. 
If $v\in \LV^{\circ}$ is an interior vertex, there exists a unique path $\eta_v$  in $\LT$ from $v$ to the boundary and we call it \textit{boundary branch} from $v$.
We denote by $\eta_v=(u_0,u_1,\dots,u_{L})$ the boundary branch with distinct vertices $u_0,u_1,\dots,u_{L}$ such that $u_0=v$ and $\langle u_{j-1}, u_j\rangle\in \LT$ for all $j=1,\dots,L$.
The branch hits the boundary through the boundary edge $\langle u_{L-1}, u_{L}\rangle\in \LE^{\partial}$.
For a given $e_{\out} \in  \LE^{\partial}$, we denote by $\{v \rightsquigarrow e_{\out}\}$ the event $\{\langle u_{L-1}, u_{L}\rangle=e_{\out}\}$ that the boundary branch $\eta_v$ from $v$ hits the boundary through $e_{\out}$.

\paragraph*{Approximation by hexagonal lattices.}
Denote by $\hexagon$ the \textit{planar hexagonal lattice}. 
Fix a $p$-polygon $(\Omega; x_1, \ldots, x_p)$. Let us describe its approximation on hexagon lattice. 
Let $\Omega^\delta$ denote a domain such that $\partial \Omega^\delta$ consists of the edges of $\delta \hexagon$.
Denote by $\LV(\Omega^\delta)$ the set of the vertices in $\overline{\Omega^\delta}$,
and denote by $\LV^{\partial}(\Omega^\delta)$ the set of the vertices in $\partial \Omega^\delta$.
The set of the interior vertices is defined as $\LV^{\circ}(\Omega^\delta)=\LV(\Omega^\delta)\setminus \LV^{\partial}(\Omega^\delta)$.
The set $\LE(\Omega^{\delta})$ of edges consists of all pairs of adjacent vertices such that at least one of the vertices is an interior vertex,
and the set $\LE^{\partial}(\Omega^\delta)$ of boundary edges consists of all edges connecting a boundary vertex to an interior vertex. For $e\in\LE^{\partial}(\Omega^{\delta})$, we write $e=\langle e^{\circ}, e^{\partial}\rangle$ where $e^{\circ}, e^{\partial}$ are the two ends of $e$ and $e^{\circ}\in\LV^{\circ}(\Omega^{\delta})$ and $e^{\partial}\in\LV^{\partial}(\Omega^{\delta})$. 
For $x^{\delta}, y^{\delta}\in\partial\Omega^{\delta}$, we denote by $(x^{\delta}y^{\delta})$ the counterclockwise boundary arc from $x^{\delta}$ to $y^{\delta}$.  

Pick $p$ boundary edges $e_1^\delta, \ldots, e_p^\delta\in \LE^{\partial}(\Omega^\delta)$ and denote $x_j^{\delta}=e_j^{\delta,\partial}$ and $x_j^{\delta,\circ}=e_j^{\delta,\circ}$ for $1\le j\le p$.
We say that $(\Omega^{\delta}; x_1^{\delta}, \ldots, x_p^{\delta})$ is an approximation of $(\Omega; x_1, \ldots, x_p)$ on hexagonal lattice $\delta\hexagon$ in \textit{Carath\'eodory sense} if 
there exist conformal maps $\varphi^{\delta}: \U\to\Omega^{\delta}$ and conformal map $\varphi: \U\to \Omega$ such that $\varphi^{\delta}\to \varphi$ uniformly on compact subsets of $\U$ and $(\varphi^{\delta})^{-1}(x_j^{\delta})\to \varphi^{-1}(x_j)$ for $1\le j\le p$.  
The Carath\'eodory convergence guarantees local uniform convergence, but it still allows wild behavior of the boundaries. See more discussion in~\cite{KarrilaMultipleSLELocalGlobal}. 
In our setup, we need further assumptions on the boundaries of the domains: we assume further that $\partial\Omega^{\delta}$ converges to $\partial\Omega$ in Hausdorff distance
\begin{equation}\label{eqn::boundary_cvg_Hausdorff}
\lim_{\delta\to 0}\dist_H(\partial\Omega^{\delta}, \partial\Omega)=0. 
\end{equation}

\paragraph*{Trifurcation and tripod in UST.}
Fix a bounded $3$-polygon $(\Omega; x_1, x_2, x_3)$ and suppose $(\Omega^{\delta}; x_1^{\delta}, x_2^{\delta}, x_3^{\delta})$ is an approximation of $(\Omega; x_1, x_2, x_3)$ on $\delta\hexagon$ in Carath\'eodory sense. Suppose $\LT^{\delta}$ is a spanning tree on $\Omega^{\delta}$ with wired boundary condition. 
For $j=1,2$, denote by $\eta_j^{\delta}$ the boundary branch starting from $x_j^{\delta, \circ}$; 
and denote by $\LA_j^{\delta}$ the event that $\eta_j^{\delta}$ hits the boundary through $e_3^{\delta}=\langle x_3^{\delta, \circ}, x_3^{\delta}\rangle$, i.e. 
\begin{equation}\label{eqn::conditionalevent}
\LA_j^{\delta}=\{x_j^{\delta, \circ}\rightsquigarrow e_3^{\delta}\},\qquad \text{for }j=1,2.
\end{equation}
Conditioning on $\LA_1^{\delta}\cap\LA_2^{\delta}$, the branches $\eta_1^{\delta}$ and $\eta_2^{\delta}$ intersect for the first time at a vertex, denoted by $\trifurcation^{\delta}$, which we call \textit{trifurcation}. 
In the union $\eta_1^{\delta}\cup\eta_2^{\delta}$, there are three paths connecting $\trifurcation^{\delta}$ to $x_1^{\delta, \circ}, x_2^{\delta, \circ}, x_3^{\delta}$ respectively, denoted by $(\gamma_1^{\delta}, \gamma_2^{\delta}, \gamma_3^{\delta})$ and we view $\gamma_j^{\delta}$ as a path from $x_j^{\delta, \circ}$ to $\trifurcation^{\delta}$. We call $(\gamma_1^{\delta}, \gamma_2^{\delta}, \gamma_3^{\delta})$ \textit{tripod}.  See Figure~\ref{fig::tripod}.
The goal of this article is to derive the limiting distributions of the trifucartion and the tripod. 

\subsection{Limiting distribution for trifurcation}

\paragraph*{Partition function for tripod.}

The limiting distributions of the trifurcation and the tripod are encoded by the following partition function, which we call \textit{tripod partition function}: suppose $(\Omega; x_1, x_2, x_3)$ is a $3$-polygon with $z\in\Omega$, 
\begin{align}\label{eqn::LZtripod_def}
\LZtripod(\Omega; x_1, x_2, x_3; z)=\frac{\left(\Poisson(\Omega; z, x_1)\Poisson(\Omega; z, x_2)\Poisson(\Omega; z, x_3)\right)^2}{\CR(\Omega; z)^2\left(\Poisson(\Omega; x_1, x_2)\Poisson(\Omega; x_2, x_3)\Poisson(\Omega; x_3, x_1)\right)^{1/2}}, 
\end{align}
where $\CR(\Omega; z)$ is the conformal radius of $\Omega$ seen from $z$ (see~\eqref{eqn::CR_H}-\eqref{eqn::CR_cov}), $\Poisson(\Omega; z, x_j)$ is Poisson kernel (see~\eqref{eqn::Poisson_H}-\eqref{eqn::Poisson_cov}) and $\Poisson(\Omega; x_i, x_j)$ is boundary Poisson kernel (see~\eqref{eqn::bPoisson_H}-\eqref{eqn::bPoisson_cov}). 
Note that the tripod partition function is conformally covariant: for any confomral map $\varphi$ on $\Omega$, 
\begin{align}\label{eqn::LZtripod_cov}
\LZtripod(\Omega; x_1, x_2, x_3; z)=\prod_{j=1}^3 |\varphi'(x_j)|\times |\varphi'(z)|^2 \times\LZtripod(\varphi(\Omega); \varphi(x_1), \varphi(x_2), \varphi(x_3); \varphi(z)). 
\end{align}
In particular, when $\Omega=\HH$ is the upper-half plane and $x_1<x_2<x_3$ and $z\in\HH$, we have 
\begin{align}\label{eqn::LZtripod_H}
\LZtripod(\HH; x_1, x_2, x_3; z)=4\sqrt{2}\frac{(x_2-x_1)(x_3-x_2)(x_3-x_1)\mathrm{Im}(z)^4}{|(z-x_1)(z-x_2)(z-x_3)|^4}. 
\end{align}
Moreover, the tripod partition function is integrable (see Lemma~\ref{lem::LZtripod_integrable}):
\begin{align}\label{eqn::LZtripod_integrable}
\int_{\Omega}\LZtripod(\Omega; x_1, x_2, x_3; w)|\ud w|^2={\frac{3\pi}{4}} \left(\Poisson(\Omega; x_1, x_2)\Poisson(\Omega; x_2, x_3)\Poisson(\Omega; x_3, x_1)\right)^{1/2}.  
\end{align}

The limiting distribution of the trifurcation is given by the tripod partition function. 

\begin{theorem}\label{thm::trifurcation}
Fix a bounded $3$-polygon $(\Omega; x_1, x_2, x_3)$ and suppose $(\Omega^{\delta}; x_1^{\delta}, x_2^{\delta}, x_3^{\delta})$
 is an approximation of $(\Omega; x_1, x_2, x_3)$ on $\delta\hexagon$ in Carath\'eodory sense. 
We assume further that $\partial\Omega^{\delta}$ converges to $\partial\Omega$ in Hausdorff distance~\eqref{eqn::boundary_cvg_Hausdorff}. 
Consider the UST in $\Omega^{\delta}$ with wired boundary condition and 
define $\LA_1^{\delta}$ and $\LA_2^{\delta}$ as in~\eqref{eqn::conditionalevent} and consider the tripod $(\gamma_1^{\delta}, \gamma_2^{\delta}, \gamma_3^{\delta})$ and the trifurcation $\trifurcation^{\delta}$. 
Fix $z\in\Omega$ and suppose $z^{\delta}$ is a vertex in $\LV^{\circ}(\Omega^\delta)$ that is nearest to $z$. We have 
\begin{align}\label{eqn::deltasquare}
\lim_{\delta\to 0}\delta^{-2}\PP\left[\trifurcation^{\delta}=z^{\delta}\cond \LA_1^{\delta}\cap\LA_2^{\delta}\right]=\frac{3\sqrt{3}}{4} p(\Omega; x_1, x_2, x_3; z), 
\end{align}
where 
\begin{align}\label{eqn::density_def}
\begin{split}
p(\Omega; x_1, x_2, x_3; z)=&\frac{\LZtripod(\Omega; x_1, x_2, x_3; z)}{\int_{\Omega}\LZtripod(\Omega; x_1, x_2, x_3; w)|\ud w|^2}\\
=&{\frac{4}{3\pi}}\frac{\Poisson(\Omega; z, x_1)^2\Poisson(\Omega; z, x_2)^2\Poisson(\Omega; z, x_3)^2}{\CR(\Omega; z)^2\Poisson(\Omega; x_1, x_2)\Poisson(\Omega; x_2, x_3)\Poisson(\Omega; x_3, x_1)}. 
\end{split}
\end{align}
\end{theorem}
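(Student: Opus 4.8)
The plan is to compute $\PP[\trifurcation^\delta=z^\delta,\,\LA_1^\delta\cap\LA_2^\delta]$ exactly on the lattice by combining Wilson's algorithm with Fomin's formula, to pass to the scaling limit using discrete complex analysis, and then to recover the normalising probability $\PP[\LA_1^\delta\cap\LA_2^\delta]$ by summing the first quantity over $z^\delta$, so that the lattice-dependent constant cancels in the ratio.

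First, one records the combinatorial structure of the event. On $\LA_1^\delta\cap\LA_2^\delta\cap\{\trifurcation^\delta=z^\delta\}$ the three branches form a tripod centred at $z^\delta$: three self-avoiding paths $\gamma_1^\delta,\gamma_2^\delta,\gamma_3^\delta$, vertex-disjoint apart from $z^\delta$, with $\gamma_j^\delta$ from $x_j^{\delta,\circ}$ to $z^\delta$ for $j=1,2$ and $\gamma_3^\delta$ from $z^\delta$ to $x_3^\delta$ through $e_3^\delta$. Since $\delta\hexagon$ is trivalent, such a tripod occupies all three edges incident to $z^\delta$, so the event is the disjoint union, over the (topologically admissible) assignments of these three edges to the three legs, of events describing three disjoint loop-erased walks in $\Omega^\delta\setminus\{z^\delta\}$ joining the neighbours of $z^\delta$ to $x_1^{\delta,\circ},x_2^{\delta,\circ}$ and, for the third leg, to the boundary edge $e_3^\delta$. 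Running Wilson's algorithm with $z^\delta,x_1^{\delta,\circ},x_2^{\delta,\circ}$ explored first and applying Fomin's formula~\cite{fomin2001loop} turns the probability of each such configuration into a determinant of discrete harmonic observables of random walk on $\Omega^\delta$ --- discrete Poisson kernels and harmonic measures relating $z^\delta$, $x_1^{\delta,\circ}$, $x_2^{\delta,\circ}$ and the boundary edge $e_3^\delta$, together with the diagonal discrete Green's function at $z^\delta$ accounting for the loops erased at the trifurcation. Summing over the admissible assignments yields an exact closed formula for $\PP[\trifurcation^\delta=z^\delta,\,\LA_1^\delta\cap\LA_2^\delta]$; only its dependence on $(\Omega;x_1,x_2,x_3;z)$, not its overall numerical constant, will be needed.

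Next, one passes to the limit. By Chelkak--Smirnov discrete complex analysis on the isoradial lattice $\delta\hexagon$~\cite{ChelkakSmirnovDiscreteComplexAnalysis}, together with a priori estimates for discrete harmonic measure and loop-erased walks near the boundary in the spirit of~\cite{ChelkakWanMassiveLERW} (which exploit the Hausdorff convergence~\eqref{eqn::boundary_cvg_Hausdorff}), each ingredient of the formula converges once the correct power of $\delta$ is removed: the interior-to-boundary observables to the Poisson kernels $\Poisson(\Omega;z,x_j)$, the boundary-to-boundary ones to the boundary Poisson kernels $\Poisson(\Omega;x_i,x_j)$, and the diagonal Green's function factor at $z^\delta$, after renormalisation, to the appropriate power of $\CR(\Omega;z)$ --- it is here that the fine near-diagonal asymptotics of the hexagonal-lattice Green's function enter, making the geometry of $\hexagon$ genuinely essential. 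Substituting and matching the resulting conformally covariant combination against~\eqref{eqn::LZtripod_def} gives
\begin{equation*}
\lim_{\delta\to0}\delta^{-5}\,\PP[\trifurcation^\delta=z^\delta,\,\LA_1^\delta\cap\LA_2^\delta]=\constCW\,\LZtripod(\Omega;x_1,x_2,x_3;z),
\end{equation*}
for a positive constant $\constCW$ depending only on $\hexagon$, and the same estimates furnish a bound on the left-hand side that is uniform in $z^\delta$ and dominated by a constant multiple of $\LZtripod(\Omega;x_1,x_2,x_3;\cdot)$, which is integrable by~\eqref{eqn::LZtripod_integrable}, with the correct behaviour as $z^\delta$ approaches $\partial\Omega$ or a marked point.

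Finally, since on $\LA_1^\delta\cap\LA_2^\delta$ the trifurcation is always a well-defined interior vertex, $\LA_1^\delta\cap\LA_2^\delta=\bigsqcup_{z^\delta}\bigl(\{\trifurcation^\delta=z^\delta\}\cap\LA_1^\delta\cap\LA_2^\delta\bigr)$, whence $\PP[\LA_1^\delta\cap\LA_2^\delta]=\sum_{z^\delta}\PP[\trifurcation^\delta=z^\delta,\,\LA_1^\delta\cap\LA_2^\delta]$. By the previous step and dominated convergence this is a Riemann sum; using that every vertex of $\delta\hexagon$ carries area $\tfrac{3\sqrt3}{4}\delta^2$, it satisfies $\PP[\LA_1^\delta\cap\LA_2^\delta]\sim\constCW\,\tfrac{4}{3\sqrt3}\,\delta^3\int_\Omega\LZtripod(\Omega;x_1,x_2,x_3;w)\,|\ud w|^2$. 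Dividing this into the limit above, the constant $\constCW$ cancels and the reciprocal of the area factor produces $\tfrac{3\sqrt3}{4}$, which yields~\eqref{eqn::deltasquare}; the explicit form~\eqref{eqn::density_def} of the density then follows from the integrability identity~\eqref{eqn::LZtripod_integrable}. The main obstacle is the uniformity required in the last two steps: one must control the conditional law of $\trifurcation^\delta$ uniformly up to $\partial\Omega$ so that none of its mass escapes to the boundary in the limit and the Riemann sum for $\PP[\LA_1^\delta\cap\LA_2^\delta]$ is legitimate --- this is exactly what the hypothesis~\eqref{eqn::boundary_cvg_Hausdorff} and the hexagonal-lattice a priori bounds provide, and the reason the argument does not obviously transfer to other lattices.
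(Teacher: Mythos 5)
Your overall architecture (exact lattice identity via Wilson plus Fomin, scaling limit via Chelkak--Smirnov tools, then summing over $z^{\delta}$ to recover $\PP[\LA_1^{\delta}\cap\LA_2^{\delta}]$ so that the lattice constant cancels and the dual-face area $\tfrac{3\sqrt3}{4}\delta^2$ appears) matches the paper's, but the first step as you describe it does not work, and it is precisely where the real idea of the proof lies. Fomin's formula gives determinants of \emph{boundary hitting probabilities} for walks whose non-crossing pattern is forced by planarity; it does not directly compute the probability that two branches started at the interior vertices $x_1^{\delta,\circ},x_2^{\delta,\circ}$ first meet at a \emph{prescribed interior vertex} $z^{\delta}$. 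The paper's key move (Lemma~\ref{lem::Fomin}) is a combinatorial edge-swap exploiting trivalence of $\hexagon$: on $\LA_1^{\delta}\cap\LA_2^{\delta}\cap\{\trifurcation^{\delta}=z^{\delta}\}$ one deletes two of the three edges at $z^{\delta}$ and adds $e_1^{\delta},e_2^{\delta}$, giving a measure-preserving bijection onto the event that the three \emph{neighbours} of $z^{\delta}$ send their branches to $e_1^{\delta},e_2^{\delta},e_3^{\delta}$; only then does Fomin apply, and the result is a pure $3\times3$ determinant $\det(\harmonic(\Omega^{\delta};u_k,e_j^{\delta}))$ with \emph{no} Green's function factor. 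Your proposed determinant "together with the diagonal discrete Green's function at $z^{\delta}$" is not a correct identity, and the mechanism you assign to it cannot produce the answer: $\Green(\Omega^{\delta};z^{\delta},z^{\delta})$ renormalizes additively, as $\tfrac{\sqrt3}{2\pi}\log(1/\delta)+\tfrac{\sqrt3}{2\pi}\log\CR(\Omega;z)+\mathrm{const}$, so it yields a logarithm of the conformal radius and spoils pure power-law scaling in $\delta$, whereas in the paper the extra $\delta^{2}$ and the $\CR(\Omega;z)^{-2}$ in $\LZtripod$ come from the determinant's row-difference structure (discrete derivatives of Poisson kernels at $z$, Lemma~\ref{lem::deltafive_cvg}) and the algebraic identity $\det M=2\sqrt2\,\LZtripod$ (Lemma~\ref{lem::detM_LZtripod}). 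Without the edge-swap and this derivative structure, your limit computation has no valid starting point.

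A second, smaller gap: you assert $\lim_{\delta\to0}\delta^{-5}\PP[\trifurcation^{\delta}=z^{\delta},\,\LA_1^{\delta}\cap\LA_2^{\delta}]=\constCW\,\LZtripod$ and the analogous $\delta^{3}$ asymptotics for $\PP[\LA_1^{\delta}\cap\LA_2^{\delta}]$ with a constant depending only on $\hexagon$. This presumes that each discrete boundary-edge Poisson kernel $\harmonic(\Omega^{\delta};v^{\delta},e_j^{\delta})$ is asymptotic to $\delta$ times a continuum Poisson kernel with a definite lattice constant, which is not available (and need not hold) under Carath\'eodory plus Hausdorff convergence only: the discrete boundary near $x_j^{\delta}$ may wiggle at scale $\delta$ and change this constant. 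The paper sidesteps this by never dividing by powers of $\delta$ alone: both the trifurcation observable and $\PP[\LA_1^{\delta}\cap\LA_2^{\delta}]$ are normalized by the common factor $\delta^{2}\prod_{j=1}^3\harmonic(\Omega^{\delta};v^{\delta},e_j^{\delta})$ at a reference interior point $v$, and only \emph{ratios} of discrete harmonic quantities (which do converge, by Lemmas~\ref{lem::Poisson_cvg} and~\ref{lem::bPoisson_cvg}) ever appear, the non-universal factors cancelling in the conditional probability. Your final cancellation of $\constCW$ is the right instinct, but as written it requires two limits whose existence you have not justified; you would need to restructure the argument around the common normalization before passing to the limit, and then carry out the boundary estimates (Beurling-type bounds near $\partial\Omega$ and near the marked points, as in Lemmas~\ref{lem::deltacube_control_boundary1} and~\ref{lem::deltacube_control_boundary2}) to legitimize the interchange of sum and limit, which you only sketch.
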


In~\cite{KenyonLongRangeSpanningTree}, R. Kenyon considered the distribution of the trifurcation in UST in a bounded $3$-polygon of $\delta\Z^2$ with free boundary condition conditional on a similar event $\LA_1^{\delta}\cap\LA_2^{\delta}$. The scaling limit of the distribution of the trifurcation in this case is given by~\cite[Theorem~3.1]{KenyonLongRangeSpanningTree}: 
\begin{align}\label{eqn::Kenyon_trifurcation}
\tilde{p}(\Omega; x_1, x_2, x_3; z)=\frac{2}{\pi^2}\frac{|(\varphi(x_2)-\varphi(x_1))(\varphi(x_3)-\varphi(x_2))(\varphi(x_3)-\varphi(x_1))|\Im(\varphi(z))}{|(\varphi(z)-\varphi(x_1))(\varphi(z)-\varphi(x_2))(\varphi(z)-\varphi(x_3))|^2}|\varphi'(z)|^2, 
\end{align}
where $\varphi$ is any conformal map from $\Omega$ onto $\HH$. Our answer~\eqref{eqn::density_def} can be written as
\begin{align}\label{eqn::density_H}
p(\Omega; x_1, x_2, x_3; z)=\frac{8}{3\pi}\frac{|(\varphi(x_2)-\varphi(x_1))(\varphi(x_3)-\varphi(x_2))(\varphi(x_3)-\varphi(x_1))|^2\Im(\varphi(z))^4}{|(\varphi(z)-\varphi(x_1))(\varphi(z)-\varphi(x_2))(\varphi(z)-\varphi(x_3))|^4}|\varphi'(z)|^2. 
\end{align}
As the boundary condition for~\eqref{eqn::Kenyon_trifurcation} is free, while our boundary condition in Theorem~\ref{thm::trifurcation} is wired, our answer~\eqref{eqn::density_H} is different from~\eqref{eqn::Kenyon_trifurcation}. The observable we construct in Section~\ref{sec::trifurcation} is different from the one in~\cite{KenyonLongRangeSpanningTree}. Moreover, our proof of the key lemma--Lemma~\ref{lem::Fomin}--relies on the hexagonal lattice in an crucial way and it does not seem to work with $\delta\Z^2$ lattice in an easy way.
We prove Theorem~\ref{thm::trifurcation} in Section~\ref{sec::trifurcation}. 
Our proof has three steps. 
\begin{itemize}
\item First, we derive the probability $\PP[\LA_1^{\delta}\cap\LA_2^{\delta}\cap\{\trifurcation^{\delta}=z^{\delta}\}]$ in Lemma~\ref{lem::Fomin}. In this step, we write the probability in terms of Poisson kernel using Fomin's formula~\cite{fomin2001loop}. This step uses the geometry of hexagonal lattice in an essential way. The constant $\frac{3\sqrt{3}}{4}$ in RHS of~\eqref{eqn::deltasquare} is also lattice-dependent: the area of the dual face of hexagonal lattice is $\frac{3\sqrt{3}}{4}\delta^2$, see Figure~\ref{fig::threeneighbors}. 
The proof for this step works for more general graphs, but it is important that each vertex of the graph has degree three. We include the calculation for 4-8 lattice in Appendix~\ref{appendix::48lattice}. However, the proof does not work for $\Z^2$ lattice directly.
\item Second, we derive the scaling limit of the probability $\PP[\LA_1^{\delta}\cap\LA_2^{\delta}\cap\{\trifurcation^{\delta}=z^{\delta}\}]$ in Proposition~\ref{prop::observable_cvg}. It turns out that the scaling limit of $\PP[\LA_1^{\delta}\cap\LA_2^{\delta}\cap\{\trifurcation^{\delta}=z^{\delta}\}]$ is the same as tripod partition function $\LZtripod$~\eqref{eqn::LZtripod_def} up to multiplicative constant. In CFT language, the partition function $\LZtripod$ can be thought of as a correlation function of conformal weights $h_{1,2}=\frac{6-\kappa}{2\kappa}=1$ at the boundary points $x_1, x_2, x_3$ and a spinless field of weights $(1,1)$ at the bulk point $z$. In our setup, we give a probabilistic interpretation of such correlation function via Proposition~\ref{prop::observable_cvg}.

\item Third, we derive the scaling limit of $\PP[\LA_1^{\delta}\cap \LA_2^{\delta}]$ in Proposition~\ref{prop::deltacube}. Moreover, we also derive the probability $\PP[\LA_1^{\delta}\cap\{z^{\delta}\rightsquigarrow e_3^{\delta}\}]$ using Fomin's formula in Proposition~\ref{prop::A1interior}. This conclusion has an interesting consequence for chordal $\SLE_2$, see Proposition~\ref{prop::chordalSLE2_Fomin}. In the proof of Propositions~\ref{prop::deltacube} and~\ref{prop::A1interior}, it is quite delicate to interchange integral and limit. The proof of this step relies crucially on tools developed in~\cite{ChelkakSmirnovDiscreteComplexAnalysis} and~\cite{ChelkakWanMassiveLERW} and careful analysis in Sections~\ref{subsec::proba_tight}-\ref{subsec::A1interior}. 
\end{itemize}

\subsection{Limiting distribution for tripod}

\paragraph*{Space of curves.} 
We denote by $\chamber$ the set of planar oriented curves, i.e. continuous mappings from $[0,1]$ to $\C$ modulo reparameterization. We equip $\chamber$ with the metric:
\begin{equation}\label{eqn::curves_metric}
\dist_{\chamber}(\eta_1, \eta_2)=\inf_{\varphi_1, \varphi_2}\sup_{t\in [0,1]}|\eta_1(\varphi_1(t))-\eta_2(\varphi_2(t))|,
\end{equation}
where the infimum is taken over all increasing homeomorphisms $\varphi_1, \varphi_2: [0,1]\to [0,1]$. 
The metric space $(\chamber, \dist_{\chamber})$ is complete and seperable, see~\cite{ABRandomcurve, KemppainenSmirnovRandomCurves}. Let $\mathcal{P}$ be a family of probability measures on $\C$. We say $\mathcal{P}$ is tight if for any $\eps>0$, there exists a compact set $K_{\eps}$ such that $\PP[K_{\eps}]\ge 1-\eps$ for all $\PP\in\mathcal{P}$. We say $\mathcal{P}$ is relatively compact if every sequence of elements in $\mathcal{P}$ has a weakly convergent subsequence. As the metric space is complete and separable, relative compactness is equivalent to tightness.

For an $1$-polygon $(\Omega; x_1)$ with $z\in\Omega$, denote by $\chamber(\Omega; x_1; z)$ the collection of continuous simple curves in $\Omega$ from $x_1$ to $z$ such that they only touch the boundary $\partial\Omega$ at $x_1$. Fix $p\ge 2$. For a $p$-polygon $(\Omega; x_1, \ldots, x_p)$ with $z\in\Omega$, denote by $\chamber(\Omega; x_1, \ldots, x_p; z)$ the collection of continuous simple curves $(\gamma_1, \ldots, \gamma_p)$ such that $\gamma_j\in\chamber(\Omega; x_j; z)$ for $j\in\{1, \ldots, p\}$ and $\gamma_i\cap\gamma_j=\{z\}$ for $i\neq j$. For the tripod in UST, we will consider its convergence in the space $\chamber(\Omega; x_1, x_2, x_3; z)$. 

\paragraph*{Three-sided radial SLE.}
Multi-sided radial SLE was introduced by V. Healey and G. Lawler in~\cite{HealeyLawlerNSidedRadialSLE} as radial process under common-time parameter. It was generalized as radial process under multi-time parameter later in~\cite{FengWuYangIsing, KrusellWangWuCommutationRelation} and~\cite{HuangPeltolaWuMultiradialSLEResamplingBP} where the authors provided an equivalent description of the process~\cite[Theorem~1.3]{HuangPeltolaWuMultiradialSLEResamplingBP} (see more details in Section~\ref{subsec::pre_SLE}). 
\begin{definition}\label{def::3sidedradialSLE}
Fix $\kappa\in (0,4]$ and a $3$-polygon $(\Omega; x_1, x_2, x_3)$ with $z\in\Omega$. Three-sided radial $\SLE_{\kappa}$ is a probability measure on $\chamber(\Omega; x_1, x_2, x_3; z)$ whose law is uniquely characterized by the following:
\begin{itemize}
\item the marginal law of $\gamma_3$ is radial $\SLE_{\kappa}(2,2)$ in $\Omega$ from $x_3$ to $z$ with force points $(x_1, x_2)$;
\item given $\gamma_3$, the conditional law of $\gamma_1$ chordal $\SLE_{\kappa}(2)$ in $\Omega\setminus\gamma_3$ from $x_1$ to $z$ with force point $x_2$; 
\item given $(\gamma_3, \gamma_1)$, the conditional law of $\gamma_2$ is chordal $\SLE_{\kappa}$ in $(\Omega\setminus(\gamma_3\cup\gamma_1); x_2, z)$. 
\end{itemize}
The partition function for three-sided radial $\SLE_{\kappa}$ in $(\Omega; x_1, x_2, x_3; z)$ is given by 
\begin{align}\label{eqn::3sided_pf}
\nradpartfn{3}(\Omega; x_1, x_2, x_3; z)=\frac{\left(\Poisson(\Omega; z, x_1)\Poisson(\Omega; z, x_2)\Poisson(\Omega; z, x_3)\right)^{\frac{(10-\kappa)}{2\kappa}}}{\CR(\Omega; z)^{\frac{(10-\kappa)(\kappa+2)}{8\kappa}}\left(\Poisson(\Omega; x_1, x_2)\Poisson(\Omega; x_2, x_3)\Poisson(\Omega; x_3, x_1)\right)^{\frac{1}{\kappa}}}. 
\end{align}
\end{definition}

The tripod in UST converges to three-sided radial $\SLE_2$.

\begin{theorem}\label{thm::tripod}
Assume the same setup as in Theorem~\ref{thm::trifurcation}. 
The law of the tripod $(\gamma_1^{\delta}, \gamma_2^{\delta}, \gamma_3^{\delta})$ conditional on $\LA_1^{\delta}\cap \LA_2^{\delta}$ converges weakly to three continuous simple curves $(\gamma_1, \gamma_2, \gamma_3)$ whose law is characterized by the following properties.
\begin{itemize}
\item There exists $\trifurcation\in\Omega$ such that $(\gamma_1, \gamma_2, \gamma_3)\in\chamber(\Omega; x_1, x_2, x_3; \trifurcation)$. We call $\trifurcation$ trifurcation.  
The law of the trifurcation $\trifurcation$ is absolutely continuous with respect to Lebesgue measure on $\Omega$ and the density is given by $p(\Omega; x_1, x_2, x_3; z)$ defined in~\eqref{eqn::density_def}, i.e. 
\begin{equation}\label{eqn::trifurcation_distribution}
\PP[\trifurcation\in K]=\int_K p(\Omega; x_1, x_2, x_3; w)|\ud w|^2, \qquad \text{for any compact subset $K$ of $\Omega$}.
\end{equation}
\item Given $\trifurcation$, the conditional law of the tripod $(\gamma_1, \gamma_2, \gamma_3)$ is three-sided radial $\SLE_2$ in $(\Omega; x_1, x_2, x_3; \trifurcation)$ in Definition~\ref{def::3sidedradialSLE}. 
\end{itemize}
\end{theorem}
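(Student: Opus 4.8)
The plan is to combine tightness of the discrete tripods with identification of subsequential limits via the martingale/partition-function characterization of three-sided radial $\SLE_2$. Let me organize the argument.

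First I would establish tightness of the law of $(\gamma_1^\delta, \gamma_2^\delta, \gamma_3^\delta)$ conditional on $\LA_1^\delta \cap \LA_2^\delta$ in the space $\chamber$. Each $\gamma_j^\delta$ is a loop-erased random walk path (or a sub-path of one), and conditioning on the event $\LA_1^\delta \cap \LA_2^\delta$ is a conditioning on boundary hitting, which does not destroy the standard a-priori crossing-estimate bounds (Kemppainen--Smirnov / Aizenman--Burchard type conditions) known for LERW on the hexagonal lattice. So I would invoke the uniform crossing estimates for LERW to conclude that the family is tight, hence relatively compact; moreover, tightness of the trifurcation $\trifurcation^\delta$ in $\overline\Omega$ is automatic since it lies in the bounded set $\overline\Omega$. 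Along the way one checks that subsequential limits are supported on triples of simple curves meeting only at a common point, and that this common point is $\trifurcation:=\lim \trifurcation^\delta$ along the subsequence; the fact that the limit point lies in the open domain $\Omega$ (and not on $\partial\Omega$) follows from Theorem~\ref{thm::trifurcation}, which already shows that $\trifurcation^\delta/\delta$-rescaled hitting probabilities concentrate on compact subsets of $\Omega$, together with~\eqref{eqn::LZtripod_integrable} giving a proper probability measure with no atom on the boundary.

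Second, I would identify the law of any subsequential limit. Here I would use the spatial Markov / resampling structure: conditionally on $\LA_1^\delta \cap \LA_2^\delta$ and on the trifurcation being at $z^\delta$, the branch $\gamma_3^\delta$ is (by Wilson's algorithm run from $x_3^{\delta,\circ}$, or rather its reversal) a loop-erased walk conditioned to exit through $e_3^\delta$ and to pass through $z^\delta$, equivalently an LERW from $x_3^{\delta,\circ}$ to $z^\delta$ in $\Omega^\delta$ reweighted appropriately; by the known convergence of LERW to radial $\SLE_2$ (Lawler--Schramm--Werner) together with the Radon--Nikodym derivative computations of Theorem~\ref{thm::trifurcation} identifying the reweighting with $\SLE_2(2,2)$ force points at $x_1,x_2$, the marginal of $\gamma_3$ converges to radial $\SLE_2(2,2)$ from $x_3$ to $\trifurcation$ with force points $(x_1,x_2)$. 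Then, conditionally on $\gamma_3^\delta$, the remaining branch from $x_1^{\delta,\circ}$ is a LERW in the slit domain $\Omega^\delta\setminus\gamma_3^\delta$ conditioned to merge with $\gamma_3^\delta$ at $z^\delta$: this is a chordal-type LERW, and its scaling limit is chordal $\SLE_2(2)$ in $\Omega\setminus\gamma_3$ from $x_1$ to $\trifurcation$ with force point $x_2$ — the $\SLE_2(2)$ weighting again being produced by the ratio of boundary/interior Poisson kernels, exactly as in Definition~\ref{def::3sidedradialSLE}. Finally, conditionally on $(\gamma_3^\delta,\gamma_1^\delta)$, the last branch from $x_2^{\delta,\circ}$ is simply a LERW to the merged skeleton, converging to chordal $\SLE_2$ in $\Omega\setminus(\gamma_3\cup\gamma_1)$ from $x_2$ to $\trifurcation$. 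To pass these conditional statements to the limit cleanly — in particular to handle the slit domains, whose boundaries are themselves random fractal curves — I would appeal to the stability/continuity of the (multi-)radial $\SLE$ characterization established in~\cite{HuangPeltolaWuMultiradialSLEResamplingBP}, specifically the resampling description~\cite[Theorem~1.3]{HuangPeltolaWuMultiradialSLEResamplingBP}, which states that three-sided radial $\SLE_2$ is the unique law stationary under the cyclic resampling of the three branches with the prescribed conditional kernels; checking that any subsequential limit of the discrete tripod satisfies this resampling invariance (a discrete identity that survives the limit because each resampled branch converges by the single-curve LERW results) then pins down the limit uniquely.

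Combining the two steps, every subsequential limit has trifurcation distributed with density $p(\Omega;x_1,x_2,x_3;\cdot)$ (Theorem~\ref{thm::trifurcation}) and, given the trifurcation, equals three-sided radial $\SLE_2$; since this law is unique, the full family converges weakly to it, which is exactly the assertion of Theorem~\ref{thm::tripod}.

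I expect the main obstacle to be the second step — specifically, controlling the conditional convergence of $\gamma_1^\delta$ (and $\gamma_2^\delta$) \emph{given} $\gamma_3^\delta$ when $\gamma_3^\delta$ is close to, but not equal to, its limit $\gamma_3$, i.e. proving a joint rather than merely marginal convergence. The slit domain $\Omega^\delta \setminus \gamma_3^\delta$ has a rough, $\delta$-dependent boundary, and one must rule out the branch $\gamma_1^\delta$ getting trapped near $\gamma_3^\delta$ or forming large excursions that do not persist in the limit. The cleanest route is to phrase everything through the resampling characterization of~\cite{HuangPeltolaWuMultiradialSLEResamplingBP}: show that the subsequential limit is invariant under the resampling operations, each of which only involves a \emph{single} curve conditioned on a fixed (deterministic, in the conditional picture) configuration of the others, where the single-curve LERW-to-$\SLE$ machinery applies directly and is robust under Carathéodory convergence of the complement domains. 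Establishing that the discrete tripod's resampling invariance is \emph{exact} at the discrete level (it is, by Wilson's algorithm and the spatial Markov property of UST) and that this invariance is preserved under weak limits — which requires a uniform integrability / continuity argument for the conditional kernels near degenerate configurations — is the technical heart of the proof.
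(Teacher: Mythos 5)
Your skeleton (tightness, then identification of subsequential limits via the conditional structure of Definition~\ref{def::3sidedradialSLE}) matches the paper's, and your identification of $\gamma_3$ as radial $\SLE_2(2,2)$ via reweighted LERW is in the right spirit; but two of your key steps have genuine gaps. First, tightness: you invoke Kemppainen--Smirnov/Aizenman--Burchard crossing estimates and assert that conditioning on $\LA_1^{\delta}\cap\LA_2^{\delta}$ ``does not destroy'' them. This conditioning is on an event whose probability is comparable to $\prod_j\harmonic(\Omega^{\delta};v^{\delta},e_j^{\delta})$, i.e.\ of order $\delta^3$, so uniform crossing bounds for the unconditioned UST/LERW can a priori be distorted by a factor as large as $\delta^{-3}$; nothing in the standard theory transfers them automatically. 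The paper does not use crossing estimates at all: it first proves (Proposition~\ref{prop::boundary_branch_cvg}) that the full branches $\eta_1^{\delta},\eta_2^{\delta}$ conditioned on $\LA_1^{\delta}\cap\LA_2^{\delta}$ converge, by writing the conditioning as an explicit harmonic-measure weighting of Zhan's chordal $\SLE_2$ limit and proving an $L^1$-convergence statement (Lemma~\ref{lem::boundary_branch_cvg_L1}); tightness of the tripod and the fact that the limit lies in $\chamber(\Omega;x_1,x_2,x_3;\trifurcation)$ then require an additional Beurling-type argument (Proposition~\ref{prop::tripod_tightness}) showing that the splitting time into the three arcs converges --- a step you only assert (``one checks that subsequential limits are supported on triples\dots'').

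Second, your identification step leans on the claim that, given the other curves, each resampled branch is a single LERW to which ``the single-curve LERW-to-$\SLE$ machinery applies directly'' and that the discrete resampling invariance ``survives the limit.'' The conditioning you need --- an LERW in the slit domain conditioned to merge with $\gamma_3^{\delta}$ at (or near) a prescribed interior point --- is itself a singular conditioning (probability $O(\delta^2)$, degenerating further as the localization ball shrinks), so the cited single-curve results (Lawler--Schramm--Werner, Zhan), which concern fixed approximating domains and non-degenerate boundary-exit conditionings, do not apply off the shelf; this is exactly the uniform-integrability problem you flag at the end, and flagging it is not closing it. The paper avoids it by never conditioning a curve in a rough random domain on a singular event directly: it exploits that $\gamma_1^{\delta}\cup\gamma_2^{\delta}$ is a single boundary-to-boundary branch, so the conditioning becomes a weighting by the Fomin-determinant observable of Section~\ref{sec::trifurcation}, whose uniform convergence and boundary control (Propositions~\ref{prop::observable_cvg},~\ref{prop::deltacube}, Lemmas~\ref{lem::tripod_aux},~\ref{lem::S3_bounded},~\ref{lem::final_control}) justify the limit, and it then matches the resulting harmonic-measure Radon--Nikodym derivative against the explicit derivative of three-sided radial $\SLE_2$ with respect to chordal $\SLE_2$ (Lemmas~\ref{lem::1rad_RN},~\ref{lem::3radvs1rad_RN},~\ref{lem::3radvschordal}), rather than appealing to a resampling-uniqueness theorem. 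Your resampling route could in principle work, but as written it presupposes exactly the quantitative control (uniform bounds on the conditional kernels near degenerate configurations) that constitutes the actual content of the proof.
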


There are several works about convergence of LERW or branches in UST in different setup~\cite{LawlerSchrammWernerLERWUST, ZhanLERW, KarrilaUSTBranches, ChelkakWanMassiveLERW, LiuWuLERW, HanLiuWuUST}.
The strategy in these proofs is as follows: 1st. find a proper observable for the model; 2nd. derive the tightness of the process; 3rd. derive the corresponding partition function for SLE process from the observable. We recall the conclusion of~\cite{LawlerSchrammWernerLERWUST} in Lemma~\ref{lem::LERW_radialSLE2} and recall the conclusion of~\cite{ZhanLERW}  for the simply connected domain in Lemma~\ref{lem::boundary_branch_cvg1} (our proof of Theorem~\ref{thm::tripod} relies on these two cases). 
In these two cases, the scaling limit of the observable is the same as Poisson kernels.
In~\cite{KarrilaUSTBranches}, the author derived the scaling limit of multiple boundary-to-boundary branches in UST using the observable derived in~\cite{KenyonWilsonBoundaryPartitionsTreesDimers, KarrilaKytolaPeltolaCorrelationsLERWUST}. 
In~\cite{ChelkakWanMassiveLERW}, the authors derived the scaling limit of massive LERW where the corresponding observable is massive Poisson kernel. 
Although, we do not use the main conclusion of~\cite{ChelkakWanMassiveLERW} in this article, we do use an important tool developed in~\cite{ChelkakWanMassiveLERW}, see Section~\ref{subsec::harmonic_limit}. 
In the latter works~\cite{LiuWuLERW, HanLiuWuUST}, the authors derived the scaling limit of certain branches in UST using the observable derived in~\cite{LiuPeltolaWuUST}. The observable in this case is more involved: it is holomorphic function on the domain with proper boundary data. One is able to derive the partition function of the SLE process from the observable, but the observable is distinct from the partition function in general. 
It is quite surprising for us that the observable $\LZtripod$~\eqref{eqn::LZtripod_def} given by trifurcation distribution coincides exactly with three-sided radial SLE partition function $\nradpartfn{3}$~\eqref{eqn::3sided_pf} for $\kappa=2$. 

We prove Theorem~\ref{thm::tripod} in Section~\ref{sec::tripod}. 
Section~\ref{sec::tripod} contains two main results---Propositions~\ref{prop::cvg_branch_eta1} and~\ref{prop::gamma3_cvg}---as well as the proof of Theorem~\ref{thm::tripod}. 
We first derive the scaling limit of the boundary branch $\eta_1^{\delta}$ conditional on $\LA_1^{\delta}\cap\LA_2^{\delta}$ in Proposition~\ref{prop::cvg_branch_eta1}. The proof relies on the convergence of $\eta_1^{\delta}$ conditional on $\LA_1^{\delta}$ derived in~\cite{ZhanLERW} (see Lemma~\ref{lem::boundary_branch_cvg1}) and tools from~\cite{ChelkakSmirnovDiscreteComplexAnalysis} and~\cite{ChelkakWanMassiveLERW}. Proposition~\ref{prop::cvg_branch_eta1} provides us with the tightness of the tripod (see Proposition~\ref{prop::tripod_tightness}) and it is also crucial in the proof of Theorem~\ref{thm::tripod}.
We then derive the scaling limit of $\gamma_3^{\delta}$ conditional on $\LA_1^{\delta}\cap\LA_2^{\delta}$ in Proposition~\ref{prop::gamma3_cvg}. 
The proof relies on the convergence of LERW derived in~\cite{LawlerSchrammWernerLERWUST} (see Lemma~\ref{lem::LERW_radialSLE2}) and the scaling limit of the observable derived in Section~\ref{sec::trifurcation}. 
We complete the proof of Theorem~\ref{thm::tripod} in Section~\ref{subsec::tripod_proof}. The proof relies on Propositions~\ref{prop::cvg_branch_eta1} and~\ref{prop::gamma3_cvg} and techniques about multi-sided radial SLE developed recently in~\cite{HuangPeltolaWuMultiradialSLEResamplingBP}. Although our proof of the key lemma---Lemma~\ref{lem::Fomin}---is sensitive to the geometry of the lattice, the convergence of the tripod in Theorem~\ref{thm::tripod} can be extended to more general graphs including $\Z^2$, see discussion in Section~\ref{subsec::tripod_otherlattice}.

\subsection{A consequence of Fomin's formula}
\begin{proposition}\label{prop::chordalSLE2_Fomin}
Fix a $3$-polygon $(\Omega; x_1, x_2, x_3)$ with $z\in\Omega$. 
Suppose $\ell$ is chordal $\SLE_2$ in $(\Omega; x_1, x_3)$ and denote its law by $\chordalSLE^{(\kappa=2)}(\Omega; x_1, x_3)$. Define 
\begin{equation}
g(\Omega; x_1, x_3; z)=\chordalSLEexp^{(\kappa=2)}(\Omega; x_1, x_3)\left[\harmonic(\Omega\setminus\ell; z, \ell)\right],
\end{equation}
where $\harmonic(\Omega\setminus\ell; z, \ell)$ is harmonic measure of $\ell$ in $\Omega\setminus\ell$ seen from $z$. Then $g$ is conformally invariant: for any conformal map $\varphi$ on $\Omega$, we have
\begin{equation}\label{eqn::chordalSLE2_harmonic_inv}
g(\Omega; x_1, x_3; z)=g(\varphi(\Omega); \varphi(x_1), \varphi(x_3); \varphi(z)); 
\end{equation}
and for $(\Omega; x_1, x_3)=(\HH; 0, \infty)$ and $z=r\ee^{\ii\theta}\in \HH$ with $\theta\in(0,\pi)$, we have
\begin{equation}\label{eqn::chordalSLE2_harmonic_H}
g(\HH; 0, \infty; r\ee^{\ii\theta})=\frac{4}{3\pi}\left(\theta(\pi-\theta)+(\frac{\pi}{2}-\theta)\sin\theta\cos\theta+2\sin^2\theta\right).
\end{equation}
Moreover, when $z\to x_2$, we have 
\begin{equation}\label{eqn::chordalSLE2_nharmonic}
\chordalSLEexp^{(\kappa=2)}(\Omega; x_1, x_3)\left[\nharmonic(\Omega\setminus\ell; x_2, \ell)\right]=\left(\frac{2\Poisson(\Omega; x_1, x_2)\Poisson(\Omega; x_2, x_3)}{\Poisson(\Omega; x_1, x_3)}\right)^{1/2}, 
\end{equation}
where $\nharmonic(\Omega\setminus\ell; x_2, \ell)$ is the renormalized harmonic measure of $\ell$ in $\Omega\setminus\ell$ seen from $x_2$ (see~\eqref{eqn::nharmonic_H}-\eqref{eqn::nharmonic_cov}) and $\Poisson(\Omega; x_i, x_j)$ is boundary Poisson kernel (see~\eqref{eqn::bPoisson_H}-\eqref{eqn::bPoisson_cov}). 
\end{proposition}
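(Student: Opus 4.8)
The plan is to obtain both the conformal invariance~\eqref{eqn::chordalSLE2_harmonic_inv}, the explicit formula~\eqref{eqn::chordalSLE2_harmonic_H}, and the boundary limit~\eqref{eqn::chordalSLE2_nharmonic} by reading off the scaling limit established in Section~\ref{sec::trifurcation} for the UST observable. Recall that Fomin's formula~\cite{fomin2001loop} expresses $\PP[\LA_1^{\delta}\cap\{z^{\delta}\rightsquigarrow e_3^{\delta}\}]$ in terms of a discrete observable, and Proposition~\ref{prop::A1interior} identifies its $\delta\to0$ asymptotics. On the other hand, one has the probabilistic identity
\begin{equation*}
\PP\bigl[z^{\delta}\rightsquigarrow e_3^{\delta}\cond \LA_1^{\delta}\bigr]
= \E\Bigl[\harmonic\bigl(\Omega^{\delta}\setminus\eta_1^{\delta}; z^{\delta}, \text{(arc near }e_3^{\delta})\bigr)\cond \LA_1^{\delta}\Bigr]
\end{equation*}
(up to the appropriate discrete harmonic-measure normalization), since conditionally on the branch $\eta_1^{\delta}$ the branch from $z^{\delta}$ is a LERW in the slit domain, whose probability of exiting through $e_3^{\delta}$ is the discrete harmonic measure of the relevant boundary piece. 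Here $\eta_1^{\delta}$ conditioned on $\LA_1^{\delta}$ is (by~\cite{ZhanLERW}, Lemma~\ref{lem::boundary_branch_cvg1}) the discrete analogue of chordal $\SLE_2$ from $x_1$ to $x_3$. Thus $g(\Omega; x_1, x_3; z)$ arises as the limit of the ratio $\PP[z^{\delta}\rightsquigarrow e_3^{\delta}\cap \LA_1^{\delta}]/\PP[\LA_1^{\delta}]$, and conformal invariance~\eqref{eqn::chordalSLE2_harmonic_inv} follows from conformal invariance of both the scaling-limit curve $\SLE_2$ and harmonic measure, combined with the conformal covariance of the Poisson-kernel expressions appearing in Proposition~\ref{prop::A1interior} (the covariance factors at $x_1, x_3$ cancel in the ratio, and the factor $|\varphi'(z)|^2$ cancels because harmonic measure is conformally invariant as a probability, not a density). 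Alternatively, and perhaps more cleanly, one proves~\eqref{eqn::chordalSLE2_harmonic_inv} directly: $\harmonic(\Omega\setminus\ell; z, \ell)$ is manifestly conformally invariant for a fixed curve $\ell$, and chordal $\SLE_2$ is conformally invariant, so the expectation is invariant --- this requires no UST input at all, only a measurability/continuity check that $\ell\mapsto \harmonic(\Omega\setminus\ell; z,\ell)$ is a bounded measurable functional on the curve space $(\chamber, \dist_{\chamber})$.

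For the explicit evaluation~\eqref{eqn::chordalSLE2_harmonic_H} in $(\HH; 0, \infty)$, I would compute directly with $\SLE_2$ in the half-plane. Let $(g_t)$ be the Loewner maps driven by $W_t=\sqrt{2}\,B_t$, and for a fixed interior point $z$ set $Z_t = g_t(z)-W_t$, writing $Z_t = R_t\ee^{\ii\Theta_t}$. A standard computation gives the Itô system for $(\log R_t, \Theta_t)$; the quantity of interest is $M = \E[\harmonic(\HH\setminus\ell; z, \ell)]$, and since harmonic measure of the slit from an interior point, pulled back by $g_t$, is a familiar explicit function, one sets up a martingale observable $u(R_t, \Theta_t)$ that is a local martingale under the $\SLE_2$ dynamics with the right boundary/terminal behavior (value $1$ when the curve swallows $z$, and matching the harmonic measure of the tip otherwise). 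Because $\SLE_2$ does not hit interior points, the relevant function is bounded and the generator equation $\mathcal{L}u = 0$ reduces --- after using scale invariance to drop the $R$-dependence --- to a second-order ODE in $\theta\in(0,\pi)$ with boundary conditions $u(0^+)=0$ (harmonic measure of the tip tends to $0$ when $z$ is near the starting point side) and $u(\pi^-)=0$ as well, together with an inhomogeneous term coming from the harmonic-measure density of the tip of the curve. Solving this ODE yields the stated combination $\tfrac{4}{3\pi}\bigl(\theta(\pi-\theta)+(\tfrac{\pi}{2}-\theta)\sin\theta\cos\theta+2\sin^2\theta\bigr)$; I would double-check the normalizing constant $\tfrac{4}{3\pi}$ against the integral identity~\eqref{eqn::LZtripod_integrable} or against the $z\to x_2$ limit below, which provides an independent calibration.

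The boundary statement~\eqref{eqn::chordalSLE2_nharmonic} follows by sending $z\to x_2\in(x_1x_3)$ with the renormalized harmonic measure $\nharmonic$, which strips off the vanishing factor and leaves a finite limit. Concretely, taking $\Omega=\HH$, $x_1<x_2<x_3$, and $z=x_2+\ii\varepsilon$, one has $\harmonic(\HH\setminus\ell; z, \ell)\asymp \varepsilon\times(\text{const})$ as $\varepsilon\to0$, the constant being $\nharmonic(\HH\setminus\ell; x_2, \ell)$ by definition~\eqref{eqn::nharmonic_H}. Multiplying~\eqref{eqn::chordalSLE2_harmonic_H} (transported to the three-marked-point configuration) by the correct $\varepsilon^{-1}$ and comparing with the boundary Poisson kernels $\Poisson(\HH; x_i, x_j)=(x_j-x_i)^{-2}$ gives $\bigl(2\Poisson(\Omega;x_1,x_2)\Poisson(\Omega;x_2,x_3)/\Poisson(\Omega;x_1,x_3)\bigr)^{1/2}$; then one checks this combination is conformally covariant of the correct weight so the identity holds in general $\Omega$. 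The main obstacle I anticipate is not conceptual but bookkeeping: getting every normalization constant right --- the discrete harmonic-measure normalization in Fomin's formula, the $\tfrac{3\sqrt3}{4}\delta^2$ lattice factor, the renormalization constant in $\nharmonic$, and the factor $2$ in~\eqref{eqn::chordalSLE2_nharmonic} --- and, if one wants to avoid the UST route entirely, solving the inhomogeneous ODE for $u(\theta)$ cleanly and matching both boundary conditions plus the constant. The cleanest logical path is probably: prove~\eqref{eqn::chordalSLE2_harmonic_inv} directly from $\SLE_2$ conformal invariance; prove~\eqref{eqn::chordalSLE2_harmonic_H} by the ODE; deduce~\eqref{eqn::chordalSLE2_nharmonic} by the $z\to x_2$ asymptotics; and use the UST/Fomin picture only as a consistency check on the constants.
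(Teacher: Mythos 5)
Your identification of $g$ as the limit of $\PP[z^{\delta}\rightsquigarrow e_3^{\delta}\mid\LA_1^{\delta}]$ and your direct argument for the conformal invariance~\eqref{eqn::chordalSLE2_harmonic_inv} are fine (the latter indeed needs no UST input). The genuine gap is in your proposed derivation of the explicit formula~\eqref{eqn::chordalSLE2_harmonic_H} by an ``SLE martingale observable $u(R_t,\Theta_t)$ reducing, by scale invariance, to a second-order ODE in $\theta$''. The conditional expectation $M_t=\E[\harmonic(\HH\setminus\ell;z,\ell)\mid\mathcal{F}_t]$ is \emph{not} a function of $\Theta_t=\arg(g_t(z)-W_t)$ alone: after mapping out $\ell[0,t]$, the harmonic measure of the full curve becomes the harmonic measure of the future curve \emph{together with} the interval $[a_t,b_t]$ onto which the two sides of $\ell[0,t]$ have been mapped, so the natural observable depends on $(g_t(z)-W_t,\,a_t-W_t,\,b_t-W_t)$ and scale invariance only removes one real variable --- you are left with a PDE, not an ODE, and the ``inhomogeneous term coming from the harmonic-measure density of the tip'' is never identified. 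Note also that if $g(\Theta_t)$ itself were a bounded local martingale with boundary values $0$ at $\theta\in\{0,\pi\}$, it would vanish identically; so the correct equation must carry a nontrivial source, and producing that source is exactly the hard step. In the paper this source is obtained on the lattice: the discrete observable $g^{\delta}(u)=\PP[u\rightsquigarrow e_3^{\delta}\mid\LA_1^{\delta}]$ has its discrete Laplacian computed \emph{exactly} via an edge-switching bijection and Fomin's formula (the determinant in~\eqref{eqn::qdiscrete_def}), whose scaling limit is the explicit $q$ of Lemma~\ref{lem::q_cvg}; then $g$ is pinned down as the unique bounded solution of $-\Delta g=\tfrac43 q$ (Lemma~\ref{lem::g_solution_Laplace}), where $\Delta$ is the Euclidean Laplacian, not the generator of the angular diffusion. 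The paper explicitly remarks that it seems hard to obtain~\eqref{eqn::chordalSLE2_harmonic_H} without Fomin's formula, and your sketch does not supply the missing mechanism.

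Your route to~\eqref{eqn::chordalSLE2_nharmonic} also has an unaddressed step: passing from~\eqref{eqn::chordalSLE2_harmonic_H} to the renormalized statement requires interchanging the limit $z\to x_2$ (equivalently $\varepsilon^{-1}\harmonic(\Omega\setminus\ell;x_2+\ii\varepsilon\,\cdot,\ell)\to\nharmonic(\Omega\setminus\ell;x_2,\ell)$) with the expectation over $\ell$; the integrand is unbounded precisely when $\ell$ approaches $x_2$, so you need a uniform-integrability or domination estimate for chordal $\SLE_2$ near a boundary point. This is the continuum analogue of the issue the paper handles in Lemma~\ref{lem::boundary_branch_cvg_L1}; the paper instead obtains~\eqref{eqn::chordalSLE2_nharmonic} from the normalization of the Radon--Nikodym derivative~\eqref{eqn::RN_eta1_ell1} in Proposition~\ref{prop::boundary_branch_cvg}, whose constant is fixed by Proposition~\ref{prop::deltacube} and the integral identity~\eqref{eqn::LZtripod_integrable}, so no such interchange in the continuum is needed. (Your numerical calibration is consistent: $g'(0)=2$ does match $\bigl(2\Poisson\Poisson/\Poisson\bigr)^{1/2}$ in $(\HH;0,x_2,\infty)$, but consistency is not a proof of either identity.) Minor further slips: ``value $1$ when the curve swallows $z$'' is vacuous since $\SLE_2$ is simple and a.s.\ avoids interior points, and the boundary data you impose mix conditions for $g$ with conditions for the (nonexistent) one-variable martingale observable.
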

We will complete the proof of Proposition~\ref{prop::chordalSLE2_Fomin} in Section~\ref{subsec::prop14}. 
The proof is a direct consequence of Proposition~\ref{prop::A1interior} and Proposition~\ref{prop::cvg_branch_eta1}. 
It seems hard for us to derive~\eqref{eqn::chordalSLE2_harmonic_H} without Fomin's formula. 


\paragraph*{Acknowledgments.}
We thank Chongzhi Huang for helpful discussion about Section~\ref{subsec::pre_SLE}. 
We thank Yunhui Wu and Hong Zhang for helpful discussion about the proof  of Lemma~\ref{lem::g_solution_Laplace}. We thank Yueheng Li and Jinwoo Sung for helpful discussion on how to extend our conclusion from the hexagonal lattice to $\Z^2$ lattice.
H.W. is supported by New Cornerstone Investigator Program 100001127. H.W. is partly affiliated at Yanqi Lake Beijing Institute of Mathematical Sciences and Applications, Beijing, China.

\section{Preliminaries}
\label{sec::pre}
\subsection{Conformal radius, Green's function, Poisson kernel and harmonic measure}
\paragraph*{Conformal radius.}
For a simply connected domain $\Omega\subsetneq\C$ and $z\in \Omega$, the \emph{conformal radius} of $\Omega$ seen from $z$ is defined by $\CR(\Omega; z) := 1/\phi'(z)$  where $\phi : \Omega \to \U$ is the conformal map such that $\phi(z)=0$ and $\phi'(z)>0$. 
In particular, for $\Omega=\HH$ and $z\in\HH$, we have
\begin{align}\label{eqn::CR_H}
\CR(\HH; z)=2\Im(z). 
\end{align}
Conformal radius is conformally covariant: for any conformal map $\varphi$ on $\Omega$, we have
\begin{align} \label{eqn::CR_cov}
\CR(\Omega; z)=|\varphi'(z)|^{-1}\CR(\varphi(\Omega); \varphi(z)).
\end{align}

\paragraph*{Green's function.}
For $\HH$, \textit{Green's function} is defined as 
\begin{equation}\label{eqn::cgreen}
    \Green(\HH;z,w)=\log{\frac{|z-\overline{w}|}{|z-{w}|}}, \qquad z,w\in \HH.
\end{equation}
For a simply connected domain $\Omega\subsetneq \C$, we define the Green's function via conformal invariance:
\begin{equation}\label{eqn::cgreen_inv}
    \Green(\Omega;z,w)=\Green(\HH;\varphi(z),\varphi(w)), \qquad z,w\in \Omega,
\end{equation}
where $\varphi$ is any conformal map from $\Omega$ onto $\HH$.

\paragraph*{Poisson kernel.} 
For a $1$-polygon $(\HH; x)$ with $z\in\HH$, the \textit{Poisson kernel} is defined as 
\begin{equation}\label{eqn::Poisson_H}
\Poisson(\HH;z,x) = \frac{2\Im z}{|z - x|^2}.
\end{equation}
For a general $1$-polygon $(\Omega; x)$ with $z\in\Omega$, we define the Poisson kernel via conformal covariance: 
\begin{equation}\label{eqn::Poisson_cov}
\Poisson(\Omega;z,x) = |\varphi'(x)|  \Poisson(\HH;\varphi(z),\varphi(x)),
\end{equation}
where $\varphi$ is any conformal map from $\Omega$ onto $\HH$. The relation between Poisson kernel and Green's function is as follows:
\begin{equation}\label{eqn::cgreen_poisson}
    \Poisson(\HH;z,x)=\lim_{r\to 0+}\frac{1}{r}\Green(\HH; z,x+\ii r).
\end{equation}

\paragraph*{Boundary Poisson kernel.}
For a $2$-polygon $(\HH; x,y)$, the \textit{boundary Poisson kernel} is defined as
\begin{align}\label{eqn::bPoisson_H}
\Poisson(\HH; x,y) :=\lim_{r\to 0}\frac{1}{r}\Poisson(\HH; x+\ii r, y)=\frac{2}{(y-x)^2}. 
\end{align}
For a general $2$-polygon $(\Omega; x,y)$, we define the boundary Poisson kernel via conformal covariance: 
\begin{align}\label{eqn::bPoisson_cov}
\Poisson(\Omega; x,y) := |\varphi'(x)\varphi'(y)|\Poisson(\HH; \varphi(x),\varphi(y)), 
\end{align}
where $\varphi$ is any conformal map from $\Omega$ onto $\HH$.

\paragraph*{Harmonic measure.}
For the upper half-plane $\HH$ with $z\in\HH$, suppose $\ell\subset\partial\HH$ is an interval, the harmonic measure of $\ell$ seen from $z$ can be defined as 
\begin{align}\label{eqn::charnomic_H}
\harmonic(\HH; z, \ell)=\int_{\ell}\Poisson(\HH; z, y)|\ud y|. 
\end{align}
Suppose $x\in\R\setminus\ell$, we define (renormalized) harmonic measure by 
\begin{align}\label{eqn::nharmonic_H}
\nharmonic(\HH; x, \ell):=\lim_{r\to 0+}\frac{1}{r}\harmonic(\HH;x+\ii r, \ell)=\int_{\ell}\Poisson(\HH; x, y)|\ud y|. 
\end{align}
For a general $1$-polygon $(\Omega; x)$ with $z\in\Omega$, suppose $\ell\subset\partial\Omega$ is a boundary arc and $x\not\in\ell$, we define $\harmonic$ and $\nharmonic$ via conformal covariance: 
\begin{align}\label{eqn::nharmonic_cov}
\harmonic(\Omega; z, \ell)=\harmonic(\HH; \varphi(z), \varphi(\ell)), \qquad \nharmonic(\Omega; x, \ell)=|\varphi'(x)|\nharmonic(\HH; \varphi(x), \varphi(\ell)), 
\end{align}
where $\varphi$ is any conformal map from $\Omega$ onto $\HH$. Note that $\harmonic(\Omega; z, \ell)$ is the harmonic measure of $\ell$ seen from $z$, and we also say that $\nharmonic(\Omega; x, \ell)$ is the harmonic measure of $\ell$ seen from $x$.

\begin{lemma}\label{lem::LZtripod_integrable}
Recall that tripod partition function $\LZtripod$ is defined in~\eqref{eqn::LZtripod_def}: 
\begin{align*}
\LZtripod(\Omega; x_1, x_2, x_3; z)=\frac{\left(\Poisson(\Omega; z, x_1)\Poisson(\Omega; z, x_2)\Poisson(\Omega; z, x_3)\right)^2}{\CR(\Omega; z)^2\left(\Poisson(\Omega; x_1, x_2)\Poisson(\Omega; x_2, x_3)\Poisson(\Omega; x_3, x_1)\right)^{1/2}}, 
\end{align*}
It satisfies conformal covariance~\eqref{eqn::LZtripod_cov}; it has expression~\eqref{eqn::LZtripod_H} when $\Omega=\HH$; it is integrable and~\eqref{eqn::LZtripod_integrable} holds: 
\begin{align*}
\int_{\Omega}\LZtripod(\Omega; x_1, x_2, x_3; w)|\ud w|^2={\frac{3\pi}{4}} \left(\Poisson(\Omega; x_1, x_2)\Poisson(\Omega; x_2, x_3)\Poisson(\Omega; x_3, x_1)\right)^{1/2}.  
\end{align*}
\end{lemma}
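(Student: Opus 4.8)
The plan is to reduce everything to the upper half-plane via the conformal covariance relations for $\CR$, $\Poisson(\Omega; z, x_j)$ and $\Poisson(\Omega; x_i, x_j)$, prove the three assertions there, and then transport back. Conformal covariance~\eqref{eqn::LZtripod_cov} follows directly by substituting~\eqref{eqn::CR_cov},~\eqref{eqn::Poisson_cov},~\eqref{eqn::bPoisson_cov} into the definition~\eqref{eqn::LZtripod_def} and counting exponents: each $\Poisson(\Omega; z, x_j)$ contributes a factor $|\varphi'(x_j)|$ and the Jacobian on the $z$-side; squaring and dividing by $\CR(\Omega; z)^2$ and the square root of the product of boundary Poisson kernels yields exactly $\prod_j |\varphi'(x_j)| \cdot |\varphi'(z)|^2$. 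The formula~\eqref{eqn::LZtripod_H} for $\HH$ is then a plug-in computation: with $x_1 < x_2 < x_3$ real and $z \in \HH$, use $\CR(\HH; z) = 2\Im(z)$, $\Poisson(\HH; z, x_j) = 2\Im(z)/|z-x_j|^2$, and $\Poisson(\HH; x_i, x_j) = 2/(x_j - x_i)^2$; the powers of $2$ and of $\Im(z)$ combine to give the stated $4\sqrt{2}$ and the $\Im(z)^4$ in the numerator, with the $|(z-x_1)(z-x_2)(z-x_3)|^4$ in the denominator and $(x_2-x_1)(x_3-x_2)(x_3-x_1)$ in the numerator.

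The substantive part is the integral identity~\eqref{eqn::LZtripod_integrable}. By conformal covariance it suffices to prove it when $(\Omega; x_1, x_2, x_3) = (\HH; x_1, x_2, x_3)$ with $x_1 < x_2 < x_3$, and then by translation and scaling one may further normalize, say, $x_1 = 0$ and $x_3 = 1$ (or send one point to $\infty$ by a M\"obius map, checking the covariance factors still balance in the limit). Using~\eqref{eqn::LZtripod_H}, the claim becomes
\begin{align*}
\int_{\HH} \frac{(x_2-x_1)(x_3-x_2)(x_3-x_1)\Im(w)^4}{|(w-x_1)(w-x_2)(w-x_3)|^4}\,|\ud w|^2 = \frac{3\pi}{4}\cdot\frac{1}{4\sqrt 2}\cdot\frac{\sqrt{2}\cdot\sqrt{2}\cdot\sqrt{2}}{(x_2-x_1)(x_3-x_2)(x_3-x_1)},
\end{align*}
i.e.\ after clearing constants, $\int_{\HH} \Im(w)^4 / \prod_{j}|w-x_j|^4 \,|\ud w|^2$ equals an explicit rational function of the $x_j$'s times a universal constant. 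I would evaluate this by a partial-fraction decomposition of $\prod_j (w - x_j)^{-2}$ in the variable $w$ (a sum of terms $A_j/(w-x_j) + B_j/(w-x_j)^2$ with coefficients that are rational in the $x_j$), so that $\Im(w)^4/\prod_j|w-x_j|^4$ becomes a bilinear combination of products $\Im(w)^4 (w-x_i)^{-a}(\bar w - x_j)^{-b}$; each such integral over $\HH$ is a standard two-dimensional integral computable by residues in $w$ after integrating in one real direction, or by recognizing it as a derivative of the basic integral $\int_\HH \Im(w)^2 |w-a|^{-2}|w-b|^{-2}\,|\ud w|^2$. Collecting the pieces and simplifying should produce $3\pi/4$ times the square root of the product of boundary Poisson kernels.

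The main obstacle is the bookkeeping in this last integral: convergence at the three singular points $x_1, x_2, x_3$ and at $\infty$ must be checked (the integrand behaves like $\dist(w, x_j)^{-4}\cdot\dist(w,x_j)^4 = O(1)$ near each $x_j$ on the bulk side but one must be careful near the real axis, where $\Im(w)^4$ provides the needed vanishing, and like $|w|^{-8}\cdot|w|^4 = |w|^{-4}$ at infinity, which is integrable in $2$D), and the partial-fraction coefficients, while elementary, proliferate. An alternative that may shorten the computation: differentiate the known identity $\int_\HH \Poisson(\Omega; w, x)^2\,|\ud w|^2 / \CR(\Omega;w)^2$-type integrals, or use the interpretation of the left-hand side as (a constant times) the total mass of the trifurcation observable, which by Theorem~\ref{thm::trifurcation} and normalization of a probability measure must be finite and conformally covariant with the claimed weights — but since Theorem~\ref{thm::trifurcation} is proved later using this lemma, the clean route is the direct M\"obius-reduced residue computation. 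I would carry out the reduction to $\HH$, then to the normalized configuration, then the partial fractions, then the elementary planar integrals, then reassemble, checking the constant $3\pi/4$ at the end on a convenient symmetric configuration such as $(x_1, x_2, x_3) = (-1, 0, 1)$.
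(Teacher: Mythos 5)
Your treatment of the conformal covariance~\eqref{eqn::LZtripod_cov} and of the half-plane formula~\eqref{eqn::LZtripod_H} is correct and identical to the paper's (direct substitution of \eqref{eqn::CR_H}--\eqref{eqn::bPoisson_cov}), and your reduction of~\eqref{eqn::LZtripod_integrable} to an explicit integral over $\HH$ with the right constants is also fine. The problem is the substantive step: the value of that integral, i.e.\ the constant $3\pi/4$, is never actually derived --- you only assert that the partial-fraction/residue bookkeeping ``should produce'' it. Worse, the route you sketch would fail as stated: after decomposing $\prod_j(w-x_j)^{-2}$ and $\prod_j(\bar w-x_j)^{-2}$ into partial fractions, the individual terms are of the form $\Im(w)^4\,(w-x_i)^{-a}(\bar w-x_j)^{-b}$ with $a+b\le 4$, which behave like $|w|^{4-a-b}$ at infinity and are therefore \emph{not} integrable over $\HH$ term by term (even your proposed ``basic integral'' $\int_\HH \Im(w)^2|w-a|^{-2}|w-b|^{-2}\,|\ud w|^2$ diverges logarithmically at infinity). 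So one cannot simply interchange the finite sum with the integral and evaluate each piece by residues; you would need to expand $\Im(w)^4$ as well and regroup terms, or work on a large disk and track cancellations, and none of this is done. The near-boundary and near-$x_j$ convergence remarks you make apply to the full integrand, not to the pieces you propose to integrate.

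For comparison, the paper avoids all of this: after reducing to $\HH$ it performs the M\"obius change of variables $u=\frac{(w-x_1)(x_2-x_3)}{(w-x_3)(x_2-x_1)}$, which strips out the $x_j$-dependence exactly and leaves the single universal integral $I_0=\int_\HH \Im(u)^4|u(u-1)|^{-4}|\ud u|^2$; this is then evaluated in closed form ($I_0=3\pi/8$) via a Feynman-parameter identity, a shift and polar coordinates, the Wallis integral, and a Beta integral. Your plan could in principle be repaired (e.g.\ by the regrouping just described, or by adopting a similar normalization and then computing the single remaining integral honestly), but as written it has a genuine gap precisely where the lemma's content lies.
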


\begin{proof}
The conformal covariance~\eqref{eqn::LZtripod_cov} follows from~\eqref{eqn::CR_cov}, \eqref{eqn::Poisson_cov} and~\eqref{eqn::bPoisson_cov}.
The expression~\eqref{eqn::LZtripod_H} when $\Omega=\HH$ follows from ~\eqref{eqn::CR_H}, \eqref{eqn::Poisson_H} and~\eqref{eqn::bPoisson_H}. 
It remains to show~\eqref{eqn::LZtripod_integrable}. As the two sides of~\eqref{eqn::LZtripod_integrable} satisfies the same conformal covariance, it suffices to show that, for $\Omega=\HH$ and $x_1<x_2<x_3$, 
\begin{align*}
\int_{\HH}\LZtripod(\HH; x_1, x_2, x_3; w)|\ud w|^2=&{4\sqrt{2}}\int_{\HH}\frac{(x_2-x_1)(x_3-x_2)(x_3-x_1)\Im(w)^4}{|(w-x_1)(w-x_2)(w-x_3)|^4}|\ud w|^2\notag\\
=&\frac{3\pi/\sqrt{2}}{(x_2-x_1)(x_3-x_2)(x_3-x_1)}. 
\end{align*} We write $x_{ji}=x_j-x_i$ to shorten the notation. Then, it suffices to show 
\begin{align}\label{eqn::LZtripod_integrable_H_goal}
\int_{\HH}\frac{\Im(w)^4|\ud w|^2}{|(w-x_1)(w-x_2)(w-x_3)|^4}=\frac{3\pi/8}{(x_{21}x_{32}x_{31})^2}.
\end{align}

We perform change of variables:  
\[
u=\frac{(w - x_1) (x_2 - x_3)}{(w - x_3) (x_2 - x_1)}. 
\]
Then
\begin{align}\label{eqn::LZtripod_integrable_H_aux1}
\text{LHS of}~\eqref{eqn::LZtripod_integrable_H_goal}=\frac{1}{(x_{21}x_{32}x_{31})^2}\underbrace{\int_{\HH}\frac{\Im(u)^4|\ud u|^2}{|u(u-1)|^4}}_{I_0:=}. 
\end{align}

Let us evaluate $I_0$. We use Euclidean coordinates $u=x+\ii y$ with $x\in\R$ and $y>0$, then 
\begin{align*}
I_0
=\int_{y>0}\int_{x\in\mathbb{R}}
\frac{y^4 \ud x   \ud y}{(x^2+y^2)^2 ((x-1)^2+y^2)^2}.
\end{align*}
Feynman-parameter identity for $A, B>0$ reads
\begin{align*}
\frac{1}{A^2B^2}
=6\int_0^1 \frac{t(1-t)\ud t}{\left(tA+(1-t)B\right)^4}.
\end{align*}
Setting $A=x^2+y^2$ and $B=(x-1)^2+y^2$, we obtain
\begin{align*}
\frac{1}{(x^2+y^2)^2((x-1)^2+y^2)^2}=6\int_0^1\frac{t(1-t)\ud t}{(t(x^2+y^2)+(1-t)((x-1)^2+y^2))^4}.
\end{align*}
Using this identity, we have 
\begin{align*}
I_0=& 6\int_0^1t(1-t) \ud t
\int_{y>0}\int_{x\in\R}\frac{y^4\ud x\ud y}{(t(x^2+y^2)+(1-t)((x-1)^2+y^2))^4}\\
=&6\int_0^1t(1-t) \ud t
\int_{y>0}\int_{x\in\R}\frac{y^4\ud x\ud y}{(x^2+y^2+t(1-t))^4}\tag{replace $x$ by $x+(1-t)$}\\
=& 6\int_0^1t(1-t) \ud t\int_0^{\pi}(\sin\theta)^4\ud\theta\int_0^{\infty}\frac{r^5\ud r}{(r^2+t(1-t))^4}.\tag{set $x=r\cos\theta, y=r\sin\theta$}
\end{align*}
Let us evaluate the integrals with $\ud \theta$ and $\ud r$. 
\begin{itemize}
\item Wallis formula gives
    \[\int_0^\pi(\sin\theta)^4 \ud \theta=\frac{3\pi}{8}.\]
\item For $p,q>0$, denote Beta-integral by 
\[\mathrm{Beta}(p,q)=\int_{0}^{\infty}\frac{s^{p-1} \ud s}{(1+s)^{p+q}}.\]
Then 
\begin{align*}
\int_0^\infty\frac{r^5\ud r}{(r^2+t(1-t))^4} 
=&\frac{1}{2t(1-t)}\int_0^\infty\frac{s^2\ud s}{(1+s)^4} \tag{set $s=\frac{r^2}{t(1-t)}$}\\
=&\frac{\mathrm{Beta}(3,1)}{2t(1-t)}=\frac{1}{6t(1-t)}. 
\end{align*}
\end{itemize}
Plugging these two integrals into $I_0$, we obtain $I_0=3\pi/8$. Plugging $I_0=3\pi/8$ into~\eqref{eqn::LZtripod_integrable_H_aux1}, we obtain~\eqref{eqn::LZtripod_integrable_H_goal} as desired. 
\end{proof}

\subsection{Random walk}

\paragraph*{Poisson kernel and harmonic measure.}
Let $\LG=(\mathcal{V}, \LE)$ be a graph and $\LV^{\partial} \subset \LV$ be the set of boundary vertices and $ \LE^{\partial} \subset \LE$ be the set of boundary edges. 
Consider the simple random walk $\beta$ starting from an interior vertex $\beta(0)=v \in \LV^{\circ}$, and stopped at the first time $T$ it hits the boundary $\LV^{\partial}$. 
The last step of the stopped random walk $\beta=(\beta(t))_{t=0}^T$ is a boundary edge $\langle\beta(T-1), \beta(T)\rangle \in \LE^{\partial}$. 
For any given boundary edge $e_{\out} \in \LE^{\partial}$, the Poisson kernel of $e_{\out}$ seen from $v \in \LV^{\circ}$ is the probability that
the random walk $\beta$ starting from $v$ hits $\LV^{\partial}$ through the edge $e_{\out}$:
\begin{align}\label{eqn::dPoisson_def}
\harmonic(\LG; v, e_{\out})=\PP\left[\langle\beta(T-1), \beta(T)\rangle=e_{\out}\cond \beta(0)=v\right]. 
\end{align}
For $W\subset \LV^{\partial}$, the harmonic measure of $W$ seen from $v\in\LV^{\circ}$ is the probability that the random walk starting from $v$ hits $\LV^{\partial}$ through vertices in $W$: 
\begin{align}\label{eqn::dharmonic_def}
\harmonic(\LG; v, W)=\sum_{e=\langle e^{\circ},e^{\partial}\rangle\in \LE^{\partial}}\harmonic(\LG; v, e)\one\{e^{\partial}\in W\}. 
\end{align}

\paragraph*{Green's function.}
Green's function for simple random walk $\beta$ is defined by 
\begin{equation}\label{eqn::dGreen_def}
\Green(\LG; v, u)=\sum_{t=0}^T \PP[\beta(t)=u\cond \beta(0)=v].
\end{equation}
For each directed edge $\langle v,v^\prime\rangle$, we assign a weight $w_{\langle v,v\prime \rangle}=1/\deg(v)$. 
For a finite walk $\chi=(v_0,\dots,v_{\ell})$ on the graph $\LG$, we denote  by $\LW(v,u)$ the set of paths starting from $v_0=v$ and ending at $v_{\ell}=u$, and assign the weight $w(\chi)= \prod_{s=1}^{\ell}w_{\langle v_{s-1},v_s\rangle}$.
Then Green's function can also be expressed as
\begin{equation*}
    \Green(\LG; v, u)=\sum_{\chi\in\LW(v,u)}w(\chi).
\end{equation*}
The Poisson kernel of a boundary edge $e=\langle e^{\circ}, e^{\partial}\rangle\in\LE^{\partial}$ can be written as a Green's function:
\begin{equation*}
\harmonic(\LG; v, e)=\Green(\LG; v, e^{\partial}). 
\end{equation*}
Furthermore, 
\begin{equation}\label{eqn::harmonic_green}
\harmonic(\LG; v, e)=\Green(\LG; v, e^{\partial})=\frac{1}{\deg(e^{\circ})}\Green(\LG; v, e^{\circ}),  
\end{equation}
because each path $\chi \in \LW(v,e^{\partial})$ can be decomposed into a path $\chi^\prime\in \LW(v,e^{\circ})$ and the edge $e=\langle e^{\circ}, e^{\partial}\rangle$, and the weight of $e$ is $1/\deg(e^{\circ})$. 

\paragraph*{Discrete Laplacian.}
For a function $g$ on $\LV$, define (discrete) Laplacian by 
\begin{align}\label{eqn::dLaplacian_def}
\Delta g(v)=\sum_{w\sim v}\frac{1}{\deg(v)}(g(w)-g(v)), \qquad v\in\LV^{\circ}. 
\end{align}
Green's function $\Green(\LG; \cdot, u)$~\eqref{eqn::dGreen_def} has the following property
\[-\Delta \Green(\LG; v, u)=0, \quad\text{for }v\neq u; \qquad \text{and}\qquad -\Delta\Green(\LG; v, u)=1\quad\text{for }v= u.\]
As a consequence, suppose $g$ is a function on $\LV$ such that $-\Delta g=q$ on $\LV^{\circ}$ and $g=0$ on $\LV^{\partial}$, then 
\begin{align}\label{eqn::dLaplacian_green}
g(v)=\sum_{u\in \LV^{\circ}} \Green(\LG; v, u)q(u). 
\end{align}

\subsection{Scaling limit of  Green's function, Poisson kernel and harmonic measure}
\label{subsec::harmonic_limit}

In this section, we collect properties and the scaling limit of Green's function in Lemmas~\ref{lem::Green_cvg} and~\ref{lem::Green_upperbound}. 
We collect  properties and scaling limit of Poisson kernel in Lemmas~\ref{lem::discretePoisson_max_control} and~\ref{lem::Poisson_cvg}. 
We collect the scaling limit of the harmonic measure and boundary Poisson kernel in Lemmas~\ref{lem::bPoisson_cvg} and~\ref{lem::nharmonic_cvg}.
The proof for these conclusions relies crucially on tools from~\cite{ChelkakSmirnovDiscreteComplexAnalysis, ChelkakWanMassiveLERW}. 
These conclusions play an essential role in Sections~\ref{sec::trifurcation} and~\ref{sec::tripod}. 

\begin{lemma}[\cite{ChelkakSmirnovDiscreteComplexAnalysis}]
\label{lem::Green_cvg}
   Suppose $\Omega\subset \C$ is a bounded simply connected domain and suppose $\Omega^\delta$ is an approximation of $\Omega$ on $\delta\hexagon$ in Carath\'eodory sense.  
   Fix two interior points $z,w\in \Omega$ and suppose $z^\delta$ (resp. $w^\delta$) is the vertex in $\LV^{\circ}(\Omega^\delta)$ that is nearest to $z$ (resp. to $w$).
The scaling limit of the Green's function~\eqref{eqn::dGreen_def} exists:
    \begin{equation}\label{eqn::Green_cvg}
        \lim_{\delta\to 0}\Green(\Omega^\delta; z^\delta,w^\delta)=\frac{\sqrt{3}}{2\pi}\Green(\Omega;z,w),
    \end{equation}
    where $\Green(\Omega;z,w)$ is Green's function~\eqref{eqn::cgreen}-\eqref{eqn::cgreen_inv}. Moreover, the convergence~\eqref{eqn::Green_cvg} is uniform as long as $z,w$ remain in a compact subset of $\Omega$ and $\dist(z,w)\ge \eps$ for some $\eps>0$.
    \end{lemma}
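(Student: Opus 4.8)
\textbf{Proof proposal for Lemma~\ref{lem::Green_cvg}.}

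The plan is to follow the classical strategy for convergence of discrete Green's functions to their continuum counterpart, adapted to the hexagonal lattice, with the scaling constant $\tfrac{\sqrt{3}}{2\pi}$ accounting for the geometry of $\hexagon$. First I would fix $w\in\Omega$ and regard $G^\delta(\cdot):=\Green(\Omega^\delta;\cdot,w^\delta)$ as a discrete function that is discrete harmonic on $\LV^\circ(\Omega^\delta)\setminus\{w^\delta\}$, vanishes on $\LV^\partial(\Omega^\delta)$, and has a prescribed logarithmic singularity at $w^\delta$ coming from the local behaviour of the simple random walk Green's function on the full lattice $\delta\hexagon$. The key input from~\cite{ChelkakSmirnovDiscreteComplexAnalysis} is the a priori estimate: near $w$, the full-plane discrete Green's function on $\delta\hexagon$ behaves like $-\tfrac{\sqrt{3}}{2\pi}\log\tfrac{|z-w|}{\delta}+O(1)$ (the constant is the inverse of the lattice-dependent ``conductance'' normalisation, equivalently $\tfrac{1}{2\pi}$ times the square root of the determinant-type factor for $\hexagon$, which is where $\sqrt{3}$ enters), together with uniform boundedness and equicontinuity of discrete harmonic functions away from their singularities on Carath\'eodory-approximating domains.

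The main steps, in order: (1) Decompose $G^\delta = H^\delta + R^\delta$, where $H^\delta$ is (a discrete version of) the free-space logarithmic kernel $-\tfrac{\sqrt{3}}{2\pi}\log|z-w|$ restricted to $\Omega^\delta$, and $R^\delta$ is the discrete-harmonic correction chosen so that $G^\delta=0$ on $\LV^\partial(\Omega^\delta)$; then $R^\delta$ is discrete harmonic on all of $\LV^\circ(\Omega^\delta)$ with boundary data converging (by the full-plane asymptotics and~\eqref{eqn::boundary_cvg_Hausdorff} is \emph{not} needed here, only Carath\'eodory convergence) to $\tfrac{\sqrt{3}}{2\pi}\log|z-w|$ on $\partial\Omega$. (2) Invoke the uniform-convergence theorem for solutions of the discrete Dirichlet problem on $\delta\hexagon$ from~\cite{ChelkakSmirnovDiscreteComplexAnalysis} (equicontinuity + identification of subsequential limits as the harmonic function with the right boundary values, using the maximum principle) to conclude $R^\delta \to \tfrac{\sqrt{3}}{2\pi}\,\widehat R$ locally uniformly on $\Omega$, where $\widehat R$ is harmonic in $\Omega$ with boundary value $\log|z-w|$. (3) Combine with the local asymptotics of $H^\delta$ to get $G^\delta(z^\delta)\to \tfrac{\sqrt{3}}{2\pi}\big(-\log|z-w| + \widehat R(z)\big) = \tfrac{\sqrt{3}}{2\pi}\,\Green(\Omega;z,w)$, since $-\log|z-w|+\widehat R(z)$ is precisely the continuum Green's function with the normalisation of~\eqref{eqn::cgreen}–\eqref{eqn::cgreen_inv} (one checks the logarithmic singularity and zero boundary values match; conformal invariance~\eqref{eqn::cgreen_inv} pins down the constant via the $\HH$ formula $\log\tfrac{|z-\bar w|}{|z-w|}$). (4) Upgrade to uniformity in $z,w$ on compacts with $\dist(z,w)\ge\eps$ by noting all estimates in steps (1)–(3) are uniform under that constraint, and symmetry $\Green(\Omega^\delta;z^\delta,w^\delta)=\Green(\Omega^\delta;w^\delta,z^\delta)$ lets one treat both variables.

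The main obstacle I expect is step (2)/(3): making the passage from discrete to continuum \emph{quantitative and uniform} near — but not at — the singularity, i.e.\ controlling $R^\delta$ up to the diagonal $z=w$ while the singular part $H^\delta$ blows up, and getting the constant $\tfrac{\sqrt{3}}{2\pi}$ exactly right rather than up to an unidentified lattice factor. This is precisely the point where one must quote the hexagonal-lattice version of the discrete-complex-analysis toolbox of~\cite{ChelkakSmirnovDiscreteComplexAnalysis}: the convergence of discrete harmonic measure/Poisson kernel, the Harnack-type estimates on $\delta\hexagon$, and the sharp first-order expansion of the full-plane Green's function on $\hexagon$ (whose leading coefficient is $\tfrac{\sqrt{3}}{2\pi}$ because a hexagon has $3$ edges per vertex and the fundamental domain has area $\tfrac{3\sqrt{3}}{2}$ at unit scale, matching the constant flagged after Theorem~\ref{thm::trifurcation}). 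Since the lemma is attributed directly to~\cite{ChelkakSmirnovDiscreteComplexAnalysis}, the ``proof'' may in practice reduce to citing that reference and verifying the normalising constant; I would nonetheless record the decomposition above so that the identification of the constant — and the uniformity claim, which is what later sections actually use — is transparent.
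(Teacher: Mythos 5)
Your proposal is correct and matches the paper's route: the paper's proof of this lemma is literally the citation of \cite[Corollary~3.11]{ChelkakSmirnovDiscreteComplexAnalysis} together with the remark that the constant $\sqrt{3}$ in $\frac{\sqrt{3}}{2\pi}$ is lattice-dependent, and your singular-part-plus-harmonic-correction sketch is exactly the standard argument behind that citation (your observation that only Carath\'eodory convergence, not~\eqref{eqn::boundary_cvg_Hausdorff}, is needed is also consistent with the statement). One cosmetic point: the per-vertex dual face of the unit hexagonal lattice has area $\frac{3\sqrt{3}}{4}$, not $\frac{3\sqrt{3}}{2}$, but since you only use this as a heuristic for the constant and ultimately quote the sharp full-plane expansion from \cite{ChelkakSmirnovDiscreteComplexAnalysis}, nothing in your argument depends on it.
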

    
    \begin{proof}
    The uniform convergence~\eqref{eqn::Green_cvg} is guaranteed by~\cite[Corollary~3.11]{ChelkakSmirnovDiscreteComplexAnalysis}. Note that the Green's function in LHS of~\eqref{eqn::Green_cvg} is the Green's function for the simple random walk on hexagonal lattice $\delta\hexagon$ and  the constant $\sqrt{3}$ in RHS of~\eqref{eqn::Green_cvg} is lattice-dependent.
    \end{proof}
    \begin{lemma}\label{lem::Green_upperbound}
    Suppose $\Omega\subset \C$ is a bounded simply connected domain and suppose $\Omega^\delta$ is an approximation of $\Omega$ on $\delta\hexagon$ in Carath\'eodory sense. 
    We assume further that $\partial\Omega^{\delta}$ converges to $\partial\Omega$ in Hausdorff distance~\eqref{eqn::boundary_cvg_Hausdorff}. 
    Fix an interior point $z\in\Omega$ and suppose $z^{\delta}$ is the vertex in $\LV^{\circ}(\Omega^{\delta})$ that is nearest to $z$. 
    There exists a constant $\delta_0>0$ depending on $(\{\Omega^\delta\}_{\delta>0};\Omega;z)$ and a constant $C_{\eqref{eqn::Green_upperbound}}\in (0,\infty)$ depending on $(\Omega;z)$ such that, for all $ \delta\le \delta_0$, 
\begin{equation}\label{eqn::Green_upperbound}
    \Green(\Omega^\delta; z^{\delta}, y)
    \le \frac{\sqrt{3}}{2\pi}\log\frac{1}{|z^{\delta}-y|\vee\delta}
    + C_{\eqref{eqn::Green_upperbound}}, \qquad \text{for all }y \in \LV(\Omega^\delta).
\end{equation}
    \end{lemma}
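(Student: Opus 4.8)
\textbf{Proof proposal for Lemma~\ref{lem::Green_upperbound}.}

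The plan is to bound the discrete Green's function $\Green(\Omega^\delta; z^\delta, y)$ by comparing it to the Green's function of a slightly larger domain whose scaling limit we control, and then to split into two regimes according to whether $y$ is macroscopically far from $z^\delta$ or microscopically close. For the far regime, I would invoke Lemma~\ref{lem::Green_cvg}: since $\Green(\Omega; z, w)=\log\frac{|z-\bar w|}{|z-w|}$ is bounded on $\{w\in\Omega_R\}$ once $w$ stays a fixed distance from $z$, where $\Omega_R$ is a fixed compact exhaustion piece, the uniform convergence in Lemma~\ref{lem::Green_cvg} gives $\Green(\Omega^\delta; z^\delta, y^\delta)\le \frac{\sqrt3}{2\pi}\Green(\Omega; z, w)+o(1)$, and $\frac{\sqrt3}{2\pi}\Green(\Omega;z,w)$ is itself $\le \frac{\sqrt3}{2\pi}\log\frac1{|z-w|}+C$ because $|z-\bar w|$ is bounded by $\diam(\Omega)$ (here I use $\partial\Omega$ locally connected and $\Omega$ bounded). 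The Hausdorff convergence~\eqref{eqn::boundary_cvg_Hausdorff} is what lets me say that any $y\in\LV(\Omega^\delta)$ that is far from $z$ lies within $O(\delta)$ of a genuine point $w\in\overline\Omega$, so replacing $y$ by the nearest interior vertex $w^\delta$ to such a $w$ only costs $O(\delta)$ in the Green's function by a standard one-step/Harnack comparison.

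For the near regime, $|z^\delta-y|\le \eps_0$ for a fixed small $\eps_0$, I would use the monotonicity of Green's functions in the domain: $\Green(\Omega^\delta; z^\delta, y)\le \Green(D^\delta; z^\delta, y)$ where $D^\delta$ is the hexagonal-lattice disk of fixed radius $\eps_0$ centered at $z^\delta$, provided $\Omega^\delta\subset D^\delta$ is replaced by the correct inclusion (Green's function of the random walk killed on exiting a larger set dominates that killed on exiting a smaller set — so I actually want $\Omega^\delta\subseteq$ something, which holds on the bounded-domain side automatically after translating the disk to be large enough, or more simply I bound $\Green(\Omega^\delta;\cdot,\cdot)\le\Green(\delta\hexagon\cap B(z^\delta,\eps_0);\cdot,\cdot)$ whenever the ball is contained in $\Omega^\delta$, and otherwise fold the boundary case into the far regime). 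The Green's function of the simple random walk on $\delta\hexagon$ in a disk of fixed radius has the well-known logarithmic asymptotics $\frac{\sqrt3}{2\pi}\log\frac{\eps_0}{|z^\delta-y|\vee\delta}+O(1)$, uniformly in $y$ and in $\delta\le\delta_0$; this is exactly the kind of a-priori bound recorded in~\cite{ChelkakSmirnovDiscreteComplexAnalysis} (the full-plane potential kernel on $\delta\hexagon$ behaves like $\frac{\sqrt3}{2\pi}\log|x|+\mathrm{const}+O(|x|^{-2})$, and the disk Green's function is its difference with a bounded harmonic correction), and the floor at $\delta$ handles $y=z^\delta$ and its lattice neighbors where the continuous log blows up but the discrete Green's function stays $O(1)$. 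Combining the two regimes, absorbing all bounded errors into a single constant $C_{\eqref{eqn::Green_upperbound}}$ depending only on $(\Omega;z)$ (the $\eps_0$, and hence the implied constants, can be fixed as functions of $\dist(z,\partial\Omega)$), and taking $\delta_0$ small enough (depending on the sequence $\{\Omega^\delta\}$, $\Omega$, and $z$, which is where the first parameter in the statement enters) gives~\eqref{eqn::Green_upperbound} for all $y\in\LV(\Omega^\delta)$.

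The main obstacle I anticipate is the transition zone and the uniformity of the error: Lemma~\ref{lem::Green_cvg} only gives convergence uniform \emph{away} from the diagonal and on compact subsets of $\Omega$, so near $\partial\Omega$ and near $z$ I cannot use it directly, and I must instead patch together the interior-disk a-priori estimate (valid near $z$) with a boundary Harnack / Beurling-type estimate (valid near $\partial\Omega$, giving that $\Green(\Omega^\delta;z^\delta,y)\to0$ as $y\to\partial\Omega$, hence is bounded there). Making the constant genuinely independent of $\delta$ across all these regions — rather than merely $o(1)$-bounded for each fixed compact piece — is the delicate point, and it is precisely here that the quantitative discrete-complex-analysis estimates of~\cite{ChelkakSmirnovDiscreteComplexAnalysis} and the Hausdorff hypothesis~\eqref{eqn::boundary_cvg_Hausdorff} (which prevents the boundary of $\Omega^\delta$ from pinching arbitrarily close to $z$ as $\delta\to0$) are needed. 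I would organize the write-up so that the disk estimate is quoted as a black box, the boundary decay is obtained from the maximum principle comparing with the half-plane Poisson kernel, and the matching is done on the fixed annulus $\eps_0/2\le|w-z|\le\eps_0$ where Lemma~\ref{lem::Green_cvg} applies cleanly.
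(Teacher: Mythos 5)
Your overall strategy (compare $\Green(\Omega^\delta;z^\delta,\cdot)$ with a reference Green's function whose logarithmic behaviour on $\delta\hexagon$ is known, then absorb bounded errors) is sound, but two steps as written do not hold up. First, in the near regime your ``more simply'' comparison has the monotonicity backwards: if $\delta\hexagon\cap B(z^{\delta},\eps_0)\subset\Omega^{\delta}$, then the Green's function of the small ball is a \emph{lower} bound for $\Green(\Omega^\delta;z^\delta,\cdot)$, not an upper bound, since Green's functions increase with the domain. The inequality you need must go through a domain \emph{containing} $\Omega^{\delta}$ (your parenthetical ``translate the disk to be large enough'' option), or through the decomposition of $\Green(\Omega^\delta;z^\delta,\cdot)$ on the small ball into $\Green(B^\delta;z^\delta,\cdot)$ plus a harmonic part controlled by the far-regime bound; as stated, the step fails. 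Second, the far regime is not actually covered by Lemma~\ref{lem::Green_cvg}: that lemma is uniform only on compact subsets of $\Omega$, while~\eqref{eqn::Green_upperbound} must hold for every $y\in\LV(\Omega^\delta)$, including vertices adjacent to $\partial\Omega^{\delta}$. You acknowledge this and gesture at a Beurling/half-plane patch, but that patch is never executed, and it is precisely where the uniformity in $\delta$ would have to be proved; a cleaner fix is the maximum principle applied to $y\mapsto\Green(\Omega^\delta;z^\delta,y)$ on $\Omega^\delta\setminus B(z^\delta,\eps_0)$, whose boundary values vanish on $\partial\Omega^\delta$, reducing the far regime to the sphere $|y-z^\delta|=\eps_0$ where Lemma~\ref{lem::Green_cvg} does apply.

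For comparison, the paper needs no regime splitting at all: it takes the full-plane Green's function $\Green(\delta\hexagon;z^\delta,\cdot)$ of~\cite{ChelkakSmirnovDiscreteComplexAnalysis}, with diagonal value $\tfrac{\sqrt3}{2\pi}\left(\log(1/\delta)+\gamma_{\mathrm{Euler}}+\log 2\right)$ and asymptotics $\tfrac{\sqrt3}{2\pi}\log\tfrac1{|u-v|}+O(\delta^2|u-v|^{-2})$, observes that $\Green(\Omega^\delta;z^\delta,\cdot)-\Green(\delta\hexagon;z^\delta,\cdot)$ is harmonic in $\LV^{\circ}(\Omega^\delta)$ with boundary values $-\Green(\delta\hexagon;z^\delta,v)$, bounds those boundary values by a constant using exactly the geometric control $B(z^\delta,r/2)\subset\Omega^\delta\subset B(z^\delta,2R)$ furnished by the Hausdorff hypothesis~\eqref{eqn::boundary_cvg_Hausdorff}, and concludes by the maximum principle; the floor at $\delta$ comes from the explicit diagonal value. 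Your ``large disk containing $\Omega^\delta$'' remark, pursued with the same lattice asymptotics, amounts to this one-step argument; if you commit to it, the two-regime structure, the appeal to Lemma~\ref{lem::Green_cvg}, and the Beurling patch all become unnecessary. As written, however, the proposal contains a wrong-direction comparison and an unproved boundary estimate, so it does not yet constitute a proof.
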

\begin{proof}
     We first recall the definition of Green's function on the whole lattice $\delta\hexagon$ as in~\cite[Definition~2.3]{ChelkakSmirnovDiscreteComplexAnalysis}.
Fix $u\in\LV(\delta\hexagon)$, we define $\Green(\delta\hexagon;u,\cdot)$ to be the function on $\LV(\delta\hexagon)$ satisfying
\begin{itemize}
    \item $-\Delta\Green(\delta\hexagon;u,v)=0$ for all $v\neq u$ and $-\Delta\Green(\delta\hexagon;u,v)=1$ for $v=u$;
    \item $\Green(\delta\hexagon;u,v)=o(|u-v|)$ as $|u-v|\to\infty$;
    \item $\Green(\delta\hexagon;u,u)=\frac{\sqrt{3}}{2\pi}\left(\log(1/ \delta)+\gamma_{\mathrm{Euler}}+\log 2\right)$, where $\gamma_{\mathrm{Euler}}$ is the Euler constant.
\end{itemize}
The existence and uniqueness of such $\Green(\delta\hexagon; u, \cdot)$ is proved in~\cite[Theorem~2.5]{ChelkakSmirnovDiscreteComplexAnalysis}.
Moreover, as $\delta\to 0$, 
    \begin{equation}\label{eqn::Green_plane_asymp}
        \Green(\delta\hexagon;u,v)= \frac{\sqrt{3}}{2\pi}\log \frac{1}{|u-v|}+O\left(\frac{\delta^2}{|u-v|^2}\right), \qquad \text{for } u,v\in \LV(\delta\hexagon), u\neq v. 
    \end{equation}
The asymptotic~\eqref{eqn::Green_plane_asymp} for isoradial graphs was first obtained in~\cite[Theorem~7.3]{KenyonLaplacianDiracCriticalPlanarGraphs}. The version we cite here is from~\cite[Theorem~2.5]{ChelkakSmirnovDiscreteComplexAnalysis} whose proof is contained in~\cite[Appendix~A.1]{ChelkakSmirnovDiscreteComplexAnalysis}. 

Let us consider the function $\Green(\Omega^{\delta}; z^{\delta}, \cdot)-\Green(\delta\hexagon; z^{\delta}, \cdot)$. This function is harmonic in $\LV^{\circ}(\Omega^{\delta})$ 
and its boundary value along $\partial\Omega^{\delta}$ is bounded from above by 
\begin{align*}
-\min_{v\in\partial\Omega^{\delta}}\Green(\delta\hexagon; z^{\delta}, v)\le &-\frac{\sqrt{3}}{2\pi}\min_{v\in\partial \Omega^\delta}\log\frac{1}{|z^\delta-v|} +\frac{C_{\eqref{eqn::Green_plane_asymp}}\delta^2}{\dist(z^\delta,\partial\Omega^\delta)^2}\\
=&\underbrace{\frac{\sqrt{3}}{2\pi}\max_{v\in\partial \Omega^\delta}\log|z^\delta-v| +\frac{C_{\eqref{eqn::Green_plane_asymp}}\delta^2}{\dist(z^\delta,\partial\Omega^\delta)^2}}_{J(\delta):=},
\end{align*}
where $C_{\eqref{eqn::Green_plane_asymp}}\in (0,\infty)$ is  a universal constant due to~\eqref{eqn::Green_plane_asymp}.  
By the maximum principle for discrete harmonic functions, we have 
$\Green(\Omega^{\delta}; z^{\delta}, y)\leq \Green(\delta\hexagon; z^{\delta}, y)+J(\delta)$ for all $y\in \LV(\Omega^{\delta})$. 
When $y\neq z^{\delta}$, combining with~\eqref{eqn::Green_plane_asymp}, we obtain
\begin{align}\label{eqn::green_upperbound_aux2}
\Green(\Omega^{\delta}; z^{\delta}, y)\leq \frac{\sqrt{3}}{2\pi}\log\frac{1}{|z^{\delta}-y|}+C_{\eqref{eqn::Green_plane_asymp}}+J(\delta),\qquad \text{for all }y\in \LV(\Omega^{\delta}) \text{ and }y\neq z^{\delta}. 
\end{align}
When $y=z^{\delta}$, combining with the value of $\Green(\delta\hexagon;z^{\delta},z^{\delta})$, we obtain
\begin{align}\label{eqn::green_upperbound_aux3}
\Green(\Omega^{\delta}; z^{\delta}, z^{\delta})\leq \frac{\sqrt{3}}{2\pi}\left(\log(1/ \delta)+\gamma_{\mathrm{Euler}}+\log 2\right)+J(\delta). 
\end{align}

Let us evaluate $J(\delta)$. Fix $R>r>0$ such that $B(z,r)\subset \Omega\subset B(z,R)$. As $\dist_H(\partial\Omega^{\delta}, \partial\Omega)\to 0$, there exists $\delta_0\in (0,1)$ such that 
\[B(z^{\delta}, r/2)\subset \Omega^{\delta}\subset B(z^{\delta}, 2R), \qquad \text{for all }\delta\le \delta_0.\]
Thus, for $\delta\le \delta_0$, we have 
$\max_{v\in\partial\Omega^{\delta}}|z^{\delta}-v|\le 2R$ and $\dist(z^{\delta}, \partial\Omega^{\delta})\ge r/2$. Therefore,
\begin{align}\label{eqn::green_upperbound_aux1}
J(\delta)\le \frac{\sqrt{3}}{2\pi}\log(2R)+\frac{4C_{\eqref{eqn::Green_plane_asymp}}}{r^2},\qquad \text{for all }\delta\le\delta_0. 
\end{align}
Combining the two cases~\eqref{eqn::green_upperbound_aux2} and~\eqref{eqn::green_upperbound_aux3} and~\eqref{eqn::green_upperbound_aux1}, we obtain~\eqref{eqn::Green_upperbound} with 
\[C_{\eqref{eqn::Green_upperbound}}=\frac{\sqrt{3}}{2\pi}\left(\gamma_{\mathrm{Euler}}+\log 2\right)+C_{\eqref{eqn::Green_plane_asymp}}+\frac{\sqrt{3}}{2\pi}\log(2R)+\frac{4C_{\eqref{eqn::Green_plane_asymp}}}{r^2}.\]
\end{proof}

\begin{lemma}\label{lem::discretePoisson_max_control}
Suppose $(\Omega; x)$ is a bounded $1$-polygon and suppose $(\Omega^{\delta}; x^{\delta})$ is an approximation of $(\Omega; x)$ on $\delta\hexagon$ in Carath\'eodory sense.
Suppose $e^{\delta}=\langle x^{\delta, \circ}, x^{\delta}\rangle$ is the boundary edge whose boundary end point $x^{\delta}$ is nearest to $x$. 
Fix an interior point $v\in\Omega$ and suppose $v^{\delta}$ is the vertex in $\LV^{\circ}(\Omega^{\delta})$ that is nearest to $v$. 
Fix a compact subset $K\subset\overline{\Omega}$ such that $x\not\in K$. Then there exists a constant $C_{\eqref{eqn::discretePoisson_max_control}}\in (0,\infty)$ depending on $(\Omega; K; x; v)$ such that 
\begin{align}\label{eqn::discretePoisson_max_control}
\max_{w\in K\cap\LV^{\circ}(\Omega^{\delta})}\frac{\harmonic(\Omega^{\delta}; w, e^{\delta})}{\harmonic(\Omega^{\delta}; v^{\delta}, e^{\delta})}\le C_{\eqref{eqn::discretePoisson_max_control}}. 
\end{align}
\end{lemma}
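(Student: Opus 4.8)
The plan is to compare the discrete Poisson kernel $\harmonic(\Omega^\delta; \cdot, e^\delta)$ to a positive discrete harmonic function (Green's function from $v^\delta$) using the discrete Harnack principle away from the boundary, and handle the behavior near the boundary separately. Recall from~\eqref{eqn::harmonic_green} that $\harmonic(\Omega^\delta; w, e^\delta)=\frac{1}{\deg(x^{\delta,\circ})}\Green(\Omega^\delta; w, x^{\delta,\circ})$, so the ratio in~\eqref{eqn::discretePoisson_max_control} equals $\Green(\Omega^\delta; w, x^{\delta,\circ})/\Green(\Omega^\delta; v^\delta, x^{\delta,\circ})$. The key symmetry (reversibility of the simple random walk on $\delta\hexagon$, where all interior vertices have degree $3$) gives $\Green(\Omega^\delta; w, x^{\delta,\circ})=\Green(\Omega^\delta; x^{\delta,\circ}, w)$ up to the bounded degree factor, so up to a universal multiplicative constant the quantity to control is $\Green(\Omega^\delta; x^{\delta,\circ}, w)/\Green(\Omega^\delta; x^{\delta,\circ}, v^\delta)$, i.e. we are comparing the values at $w\in K$ and at $v^\delta$ of the \emph{single} nonnegative function $u^\delta(\cdot):=\Green(\Omega^\delta;\cdot,x^{\delta,\circ})$, which is discrete harmonic on $\LV^\circ(\Omega^\delta)\setminus\{x^{\delta,\circ}\}$ and vanishes on $\LV^\partial(\Omega^\delta)$.

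First I would fix $r>0$ small enough that $B(x,2r)\cap K=\emptyset$ and that $v\in\Omega\setminus \overline{B(x,2r)}$; then for $\delta$ small, $K\cup\{v^\delta\}$ lies in the open set $\Omega^\delta\setminus\overline{B(x^{\delta,\circ},r)}$, on which $u^\delta$ is genuinely discrete harmonic and nonnegative. Second, I would invoke the discrete Harnack principle (boundary Harnack near $\partial\Omega^\delta$, interior Harnack in the bulk) for nonnegative discrete harmonic functions on $\delta\hexagon$ vanishing on the relevant boundary portion — these are exactly the estimates established in~\cite{ChelkakSmirnovDiscreteComplexAnalysis} and used in~\cite{ChelkakWanMassiveLERW}; combined with a chaining argument along a fixed compact connected set containing $K\cup\{v\}$ inside $\Omega\setminus\overline{B(x,r)}$, this yields a constant $C$, depending only on $(\Omega;K;x;v)$, such that $u^\delta(w)\le C\,u^\delta(v^\delta)$ for all $w\in K\cap\LV^\circ(\Omega^\delta)$ and all small $\delta$. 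The Hausdorff convergence of $\partial\Omega^\delta$ (which we already assume, via~\eqref{eqn::boundary_cvg_Hausdorff}) ensures the chain of balls realizing Harnack stays inside $\Omega^\delta$ uniformly in $\delta$; alternatively one may extract the bound directly from the uniform convergence $\Green(\Omega^\delta;\cdot,\cdot)\to\frac{\sqrt3}{2\pi}\Green(\Omega;\cdot,\cdot)$ of Lemma~\ref{lem::Green_cvg} on compact subsets of $\Omega\times\Omega$ away from the diagonal, noting that the continuum Green's function $\Green(\Omega;x,w)$ extends continuously and positively to $w$ ranging over the compact set $K$ while $x$ is a fixed boundary point, so the supremum of $\Green(\Omega;x,w)/\Green(\Omega;x,v)$ over $w\in K$ is finite; the finitely many remaining $\delta$'s are handled since each ratio is a finite positive number.

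The main obstacle is the uniformity in $\delta$ near the marked boundary point $x$ and near $\partial\Omega$: Carathéodory convergence alone does not prevent the discrete domains from pinching or developing long fjords near $\partial\Omega^\delta$, which could in principle let $\harmonic(\Omega^\delta;w,e^\delta)$ for some $w\in K$ close to $\partial\Omega$ blow up relative to $\harmonic(\Omega^\delta;v^\delta,e^\delta)$. This is precisely why the extra hypothesis~\eqref{eqn::boundary_cvg_Hausdorff} is imposed, and it is what makes the boundary Harnack inequality of~\cite{ChelkakSmirnovDiscreteComplexAnalysis} applicable with a constant not degenerating as $\delta\to0$; once that input is in place, the rest is the routine Harnack chaining described above.
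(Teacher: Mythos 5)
Your reduction via~\eqref{eqn::harmonic_green} to a ratio of Green's functions is the right starting point, but after that your two suggested routes both have genuine gaps. The ``alternative'' route is simply wrong: Lemma~\ref{lem::Green_cvg} only applies when \emph{both} arguments stay in a compact subset of the open domain $\Omega$, whereas here the pole $x^{\delta,\circ}$ converges to the boundary point $x$, where the continuum Green's function vanishes rather than ``extends continuously and positively''. Both $\Green(\Omega^{\delta};w,x^{\delta,\circ})$ and $\Green(\Omega^{\delta};v^{\delta},x^{\delta,\circ})$ tend to $0$, so the ratio is a $0/0$ degeneration that cannot be read off from Lemma~\ref{lem::Green_cvg}; controlling exactly this degeneration uniformly in $\delta$ is the whole content of the lemma. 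Your main route (view $u^{\delta}(\cdot)=\Green(\Omega^{\delta};\cdot,x^{\delta,\circ})$ as a single nonnegative harmonic function and chain Harnack from $w$ to $v^{\delta}$) also leaves the hard case untreated: $K$ is a compact subset of $\overline{\Omega}$ and may touch $\partial\Omega$ away from $x$, so points $w\in K$ close to $\partial\Omega^{\delta}$ cannot be reached by interior Harnack chains, and the ``boundary Harnack'' estimates of~\cite{ChelkakSmirnovDiscreteComplexAnalysis,ChelkakWanMassiveLERW} are statements about \emph{ratios of two} positive harmonic functions vanishing on a common boundary arc (e.g.~\cite[Corollary~3.8]{ChelkakWanMassiveLERW}); they do not by themselves give the Carleson-type bound $u^{\delta}(w)\le C\,u^{\delta}(v^{\delta})$ for a single function at a near-boundary point. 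One could repair this by a maximum-principle reduction to a sphere around $x$, or by comparing $u^{\delta}$ against a second reference function such as $\Green(\Omega^{\delta};\cdot,v^{\delta})$ via the two-function boundary Harnack principle with a covering argument, but as written the step is asserted, not proved. A smaller point: you invoke the Hausdorff convergence~\eqref{eqn::boundary_cvg_Hausdorff}, which is not a hypothesis of this lemma (only Carath\'eodory approximation is assumed); for keeping a Harnack chain inside a compact subset of $\Omega$, Carath\'eodory convergence already suffices, so this assumption should not be needed.

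For comparison, the paper's proof avoids the near-boundary points of $K$ entirely by applying the boundary Harnack principle in the \emph{second} variable, near $x$: with $r_0=\dist(x,K)\wedge|x-v|$, the two functions $\Green(\Omega^{\delta};w,\cdot)$ and $\Green(\Omega^{\delta};v^{\delta},\cdot)$ are positive harmonic in $\LV^{\circ}(\Omega^{\delta})\cap B(x,r_0)$ and vanish on $\LV^{\partial}(\Omega^{\delta})\cap B(x,r_0)$, so~\cite[Corollary~3.8]{ChelkakWanMassiveLERW} transfers the ratio at the boundary-adjacent vertex $x^{\delta,\circ}$ to the ratio at a bulk point $u\in B(x,\tfrac12 r_0)$, up to a universal constant $\constCW$; at such a point $u$ the numerator and denominator are no longer degenerate, and Lemma~\ref{lem::Green_cvg} (together with elementary upper bounds on $\Green(\Omega^{\delta};w,u)$, uniform over $w\in K$ since $\dist(u,K)\ge r_0/2$) finishes the proof. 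If you want to salvage your single-function formulation, you essentially have to reintroduce this two-function comparison, at which point you have reproduced the paper's argument.
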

\begin{proof}
From~\eqref{eqn::harmonic_green}, for $w\in K\cap\LV^{\circ}(\Omega^{\delta})$, we have
\begin{align}\label{eqn::dPoisson_max_control_aux1}
\frac{\harmonic(\Omega^{\delta}; w, e^{\delta})}{\harmonic(\Omega^{\delta}; v^{\delta}, e^{\delta})}=\frac{\Green(\Omega^{\delta}; w, x^{\delta, \circ})}{\Green(\Omega^{\delta}; v^{\delta}, x^{\delta, \circ})}. 
\end{align}
Denote $r_0=\dist(x,K)\wedge |x-v|$.
Both $\Green(\Omega^{\delta}; w, \cdot)$ and $\Green(\Omega^{\delta}; v^{\delta}, \cdot)$ are harmonic in $\LV^{\circ}(\Omega^{\delta})\cap B(x,r_0)$ and vanish on $\LV^{\partial}(\Omega^{\delta})\cap B(x,r_0)$. Then \cite[Corollary~3.8]{ChelkakWanMassiveLERW} asserts that there exists a universal constant $\constCW\in (0,\infty)$ such that 
\begin{align}\label{eqn::dPoisson_max_control_aux2}
\begin{split}
\frac{\Green(\Omega^{\delta}; w, x^{\delta, \circ})}{\Green(\Omega^{\delta}; v^{\delta}, x^{\delta, \circ})}\le \constCW\frac{\Green(\Omega^{\delta}; w, u)}{\Green(\Omega^{\delta}; v^{\delta}, u)},\quad \text{for all }w\in \LV^{\circ}(\Omega^{\delta})\cap K\text{ and }u\in\LV^{\circ}(\Omega^{\delta})\cap B(x, \tfrac{1}{2}r_0).
\end{split}
\end{align}
Combining~\eqref{eqn::dPoisson_max_control_aux1} and~\eqref{eqn::dPoisson_max_control_aux2}, we have
\begin{align}\label{eqn::dPoisson_max_control_aux3}
\frac{\harmonic(\Omega^{\delta}; w, e^{\delta})}{\harmonic(\Omega^{\delta}; v^{\delta}, e^{\delta})}\le &\constCW\frac{\Green(\Omega^{\delta}; w, u)}{\Green(\Omega^{\delta}; v^{\delta}, u)},\qquad
\begin{cases}
\text{for all }w\in \LV^{\circ}(\Omega^{\delta})\cap K\text{ and }u\in\LV^{\circ}(\Omega^{\delta})\cap B(x, \tfrac{1}{2}r_0),\\
\text{where }r_0=\dist(x, K)\wedge|x-v|.
\end{cases}
\end{align}
Combining with~\eqref{eqn::Green_cvg}, we obtain~\eqref{eqn::discretePoisson_max_control} as desired. 
\end{proof}


Next, we address the scaling limit of the Poisson kernel and its discrete derivative at an interior point. 
For $u\in\LV^{\circ}(\Omega^{\delta})$, it has three adjacent vertices $u_1, u_2, u_3$. There are two possible cases for the directions of the edges $\langle u,u_1\rangle, \langle u,u_2\rangle, \langle u,u_3\rangle$ (see Figure~\ref{fig::threeneighbors}):  
\begin{align}\label{eqn::threeneighbors_case1}
\text{either}\qquad &u_k=u+\delta\bs{\alpha}_k, \qquad \text{where }\bs{\alpha}_k=\ee^{2k\pi\ii/3}, \qquad \text{for }k=1,2,3; \tag{$\vartriangleleft$}\\
\text{or}\qquad 
&u_k=u+\delta\bs{\alpha}_k, \qquad \text{where }\bs{\alpha}_k=\ee^{(2k+1)\pi\ii/3}, \qquad \text{for }k=1,2,3. \tag{$\vartriangleright$}
\label{eqn::threeneighbors_case2}
\end{align}
Define the directional-derivative of the Poisson kernel $\harmonic(\Omega; \cdot, e^{\delta})$ as 
\begin{equation}\label{eqn::Poisson_derivative_def}
\dharmonic_{\bs{\alpha}_k}(\Omega^{\delta}; u, e^{\delta})=\delta^{-1}\left(\harmonic(\Omega^{\delta}; u +\delta\bs{\alpha}_k, e^{\delta})-\harmonic(\Omega^{\delta}; u, e^{\delta})\right), \qquad \text{for }k=1,2,3. 
\end{equation}

\begin{figure}[ht!]
\begin{subfigure}[b]{0.4\textwidth}
\begin{center}
\includegraphics[width=0.4\textwidth]{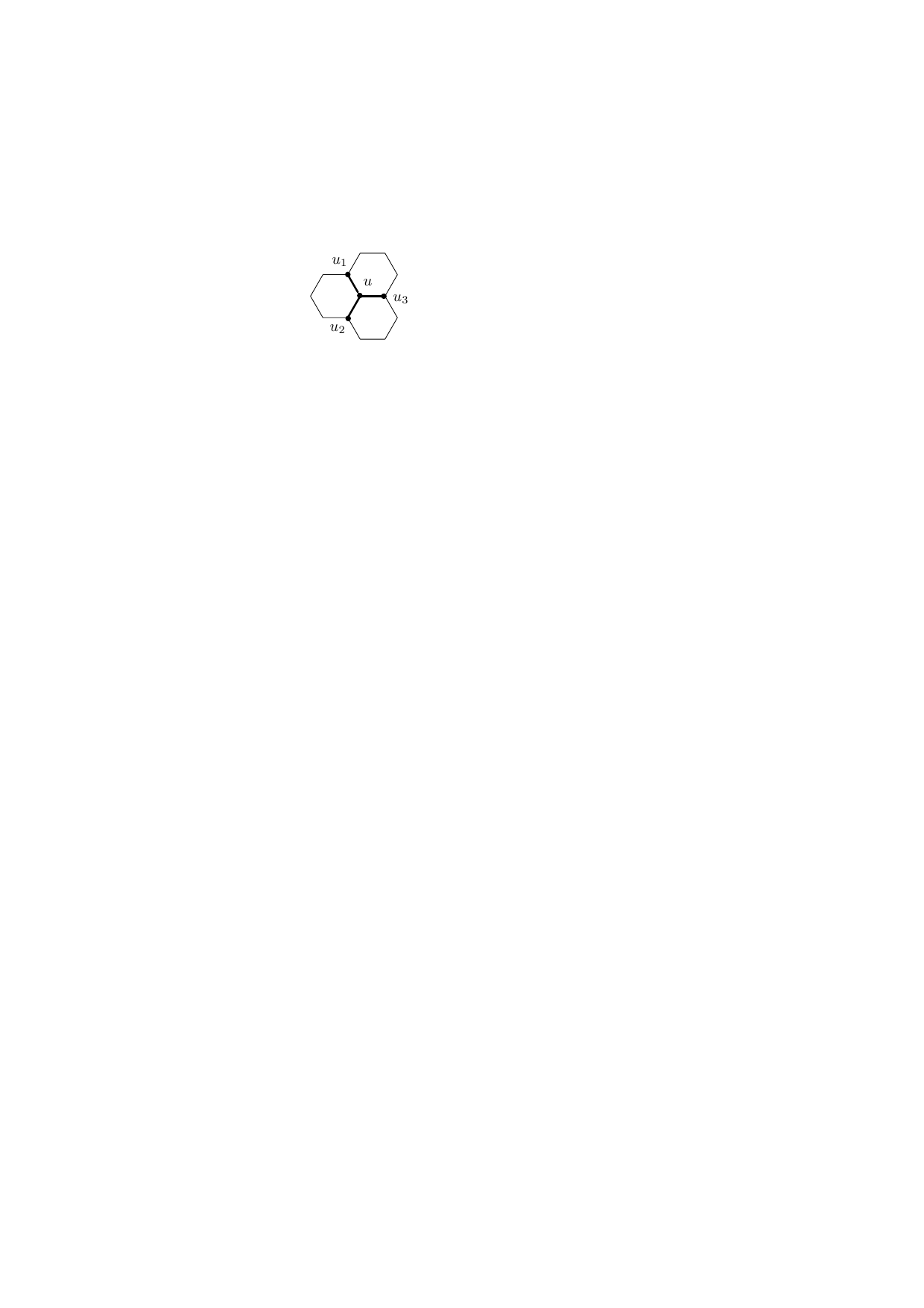}$\quad$
\includegraphics[width=0.4\textwidth]{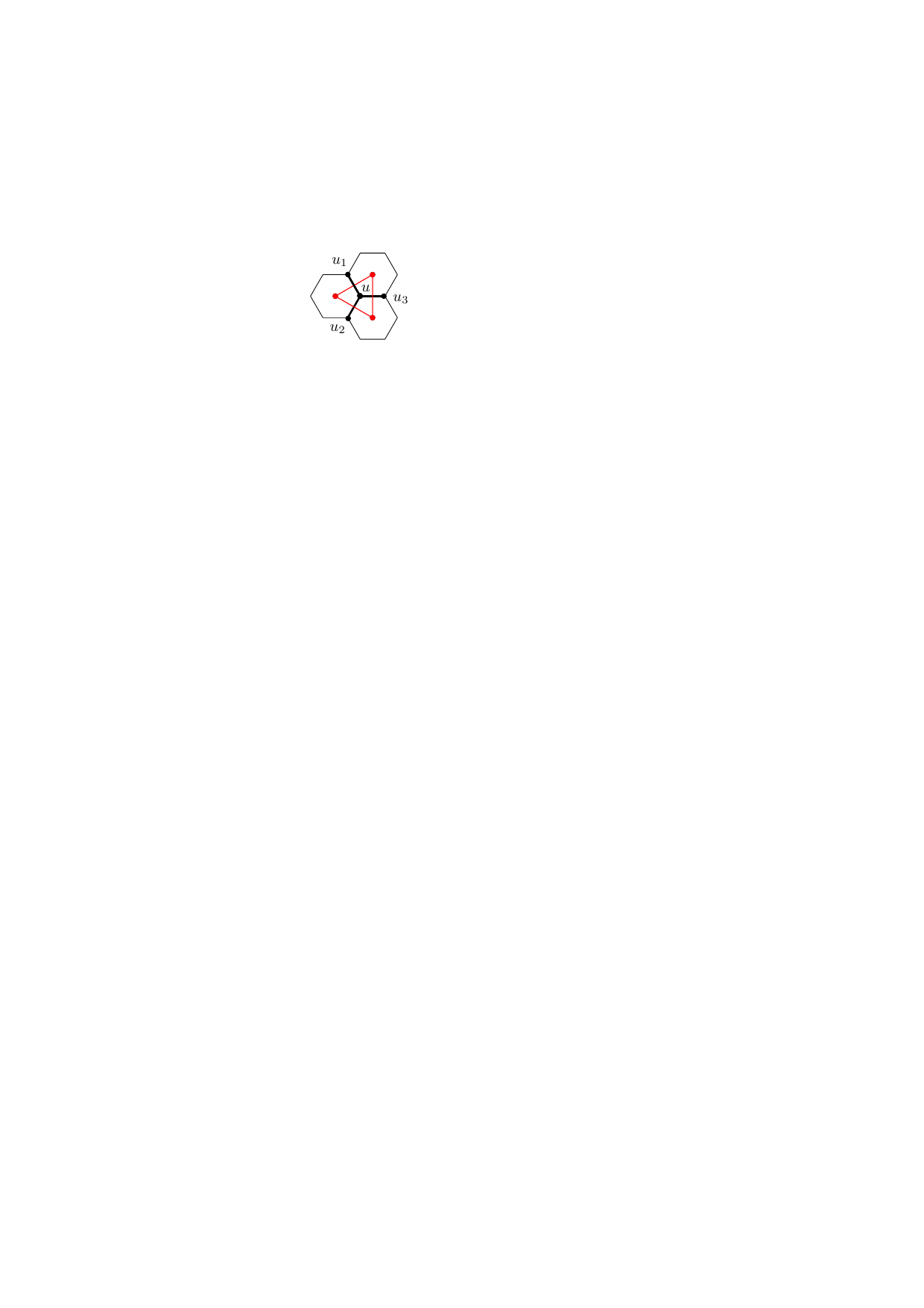}
\end{center}
\caption{Case~\eqref{eqn::threeneighbors_case1}.}
\end{subfigure}
\begin{subfigure}[b]{0.4\textwidth}
\begin{center}
\includegraphics[width=0.4\textwidth]{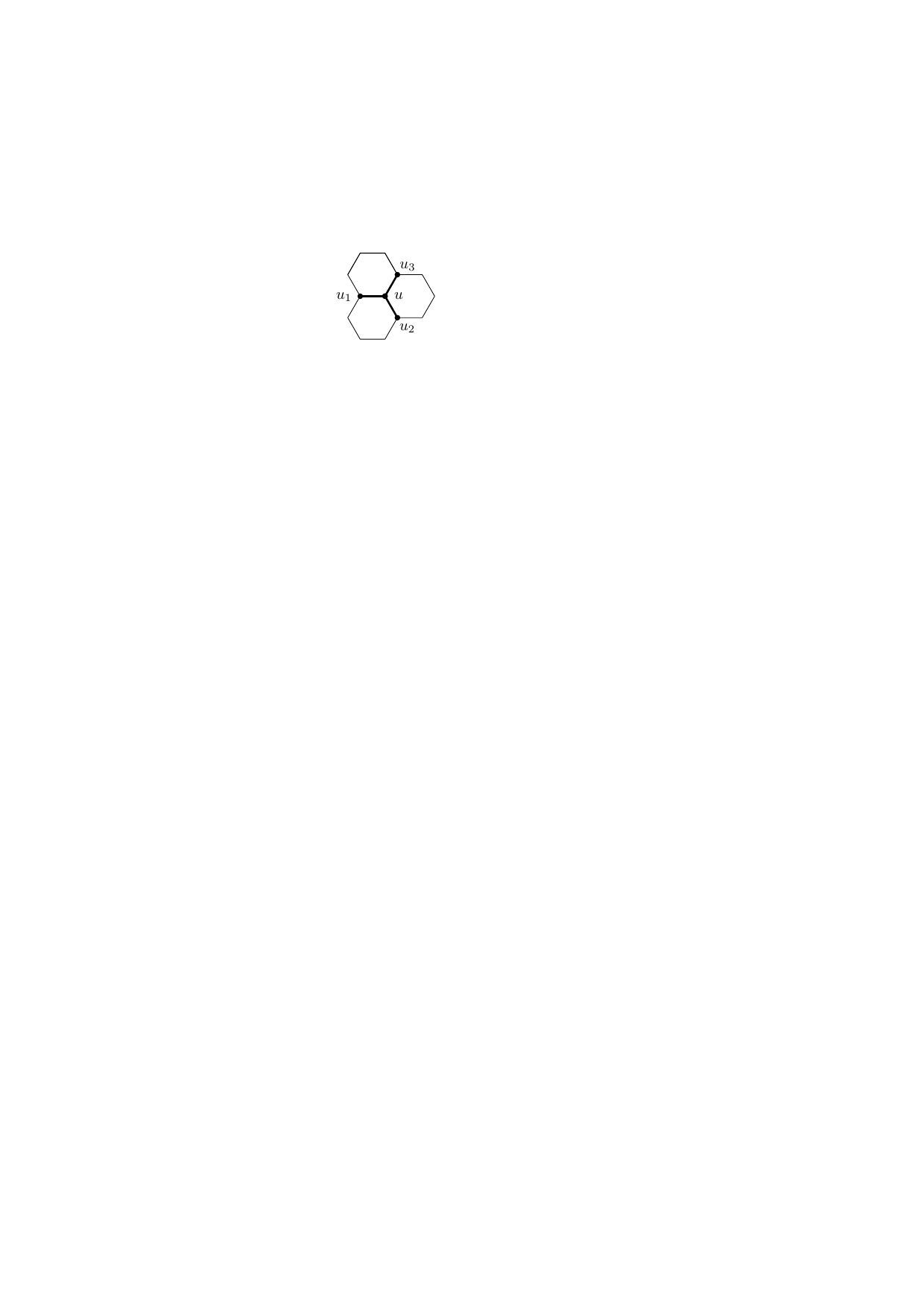}$\quad$
\includegraphics[width=0.4\textwidth]{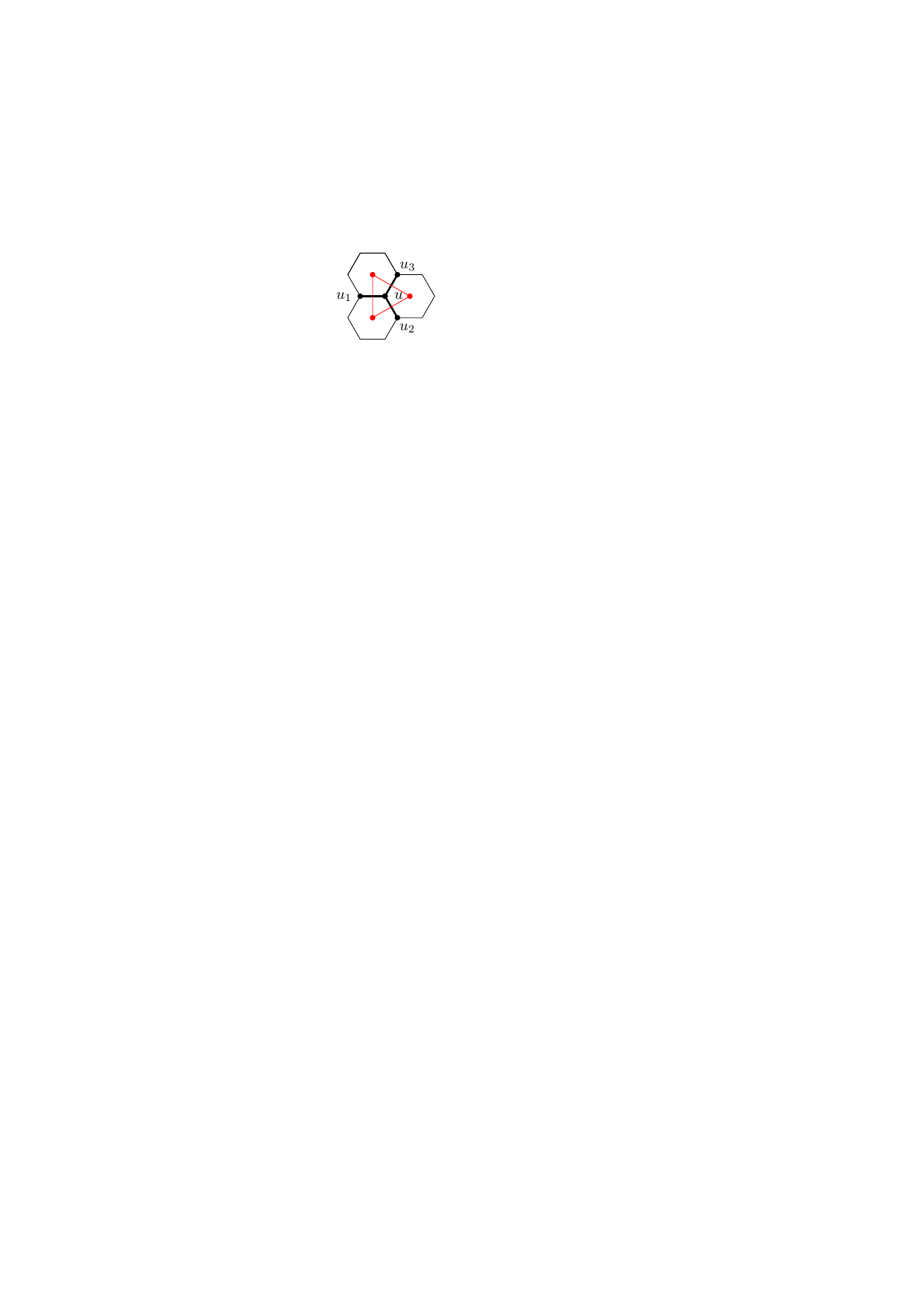}
\end{center}
\caption{Case~\eqref{eqn::threeneighbors_case2}.}
\end{subfigure}
\caption{\label{fig::threeneighbors} The directions of three adjacent edges of $u$ on hexagonal lattice have two possible cases: (a) Case~\eqref{eqn::threeneighbors_case1} and (b) Case~\eqref{eqn::threeneighbors_case2}. 
In Case~\eqref{eqn::threeneighbors_case1}, we denote by $\bigtriangleup^{\delta}(u)$ the triangle with three vertices $u+\delta\ee^{\ii\pi/3}, u-\delta, u+\delta\ee^{-\ii\pi/3}$ (red dots). In Case~\eqref{eqn::threeneighbors_case2}, we denote by $\bigtriangleup^{\delta}(u)$ the triangle with three vertices $u+\delta\ee^{2\ii\pi/3}, u+\delta\ee^{4\ii\pi/3}, u+\delta$ (red dots). Note that $\bigtriangleup^{\delta}(u)$ is the dual face of $u$ and its area is $\frac{3\sqrt{3}}{4}\delta^2$.}
\end{figure}

\begin{lemma}\label{lem::Poisson_cvg}
Suppose $(U; x)$ and $(\Omega; x)$ are bounded $1$-polygons and $U$ agrees with $\Omega$ in neighborhood of $x$. 
Suppose $(U^{\delta}; x^{\delta})$ (resp. $(\Omega^{\delta}; x^{\delta})$) is an approximation of $(U; x)$ (resp. of $(\Omega; x)$) on $\delta\hexagon$ in Carath\'eodory sense.
Suppose $e^{\delta}=\langle x^{\delta, \circ}, x^{\delta}\rangle$ is the boundary edge whose boundary end point $x^{\delta}$ is nearest to $x$. 
Fix two interior points $v\in\Omega$ and $z\in U$ and suppose $v^{\delta}$ is the vertex in $\LV^{\circ}(\Omega^{\delta})$ that is nearest to $v$.
\begin{itemize}
\item Suppose $z^{\delta}$ is the vertex in $\LV^{\circ}(\Omega^{\delta})$ that is nearest to $z$. The scaling limit of the Poisson kernel~\eqref{eqn::dPoisson_def} exists: 
\begin{align}\label{eqn::Poisson_cvg}
\lim_{\delta\to 0}\frac{\harmonic(U^{\delta}; z^{\delta}; e^{\delta})}{\harmonic(\Omega^{\delta}; v^{\delta}; e^{\delta})}=\frac{\Poisson(U; z, x)}{\Poisson(\Omega; v, x)}, 
\end{align}
where $\Poisson(U; z, x)$ is Poisson kernel~\eqref{eqn::Poisson_H}-\eqref{eqn::Poisson_cov}. 
\item 
Suppose $z^{\delta}$ is the vertex in $\LV^{\circ}(\Omega^{\delta})$ of Case~\eqref{eqn::threeneighbors_case1} that is nearest to $z$. 
The scaling limit of the directional-derivative of Poisson kernel~\eqref{eqn::Poisson_derivative_def} exists: 
\begin{align}\label{eqn::Poisson_derivative_cvg}
\lim_{\delta\to 0}\frac{\dharmonic_{\bs{\alpha}_k}(U^{\delta}; z^{\delta}, e^{\delta})}{\harmonic(\Omega^{\delta}; v^{\delta}, e^{\delta})}=\frac{\partial_{\bs{\alpha}_k}\Poisson(U; z, x)}{\Poisson(\Omega; v, x)}, \qquad\text{for }k=1,2,3,
\end{align}
    where $\partial_{\bs{\alpha}_k}\Poisson(U; z, x)$ is the directional-derivative of Poisson kernel given by
\begin{align*}
\partial_{\bs{\alpha}_k}\Poisson(U; z, x)= \lim_{\eps\to 0}\eps^{-1}\left(\Poisson(U; z+\eps \bs{\alpha}_k,x)-\Poisson(U; z,x)\right).
\end{align*}
\end{itemize}
\end{lemma}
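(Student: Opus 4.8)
\textbf{Proof proposal for Lemma~\ref{lem::Poisson_cvg}.}

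The plan is to reduce everything to the Green's function estimates of Lemmas~\ref{lem::Green_cvg}, \ref{lem::Green_upperbound} and~\ref{lem::discretePoisson_max_control}. First, using the identity~\eqref{eqn::harmonic_green}, rewrite the discrete Poisson kernel of the boundary edge $e^\delta=\langle x^{\delta,\circ},x^\delta\rangle$ as a Green's function: $\harmonic(U^\delta; z^\delta, e^\delta)=\tfrac{1}{\deg(x^{\delta,\circ})}\Green(U^\delta; z^\delta, x^{\delta,\circ})=\tfrac13\Green(U^\delta; z^\delta, x^{\delta,\circ})$ on the hexagonal lattice, and similarly for $\harmonic(\Omega^\delta; v^\delta, e^\delta)$. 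Since $U$ and $\Omega$ agree in a neighborhood $N$ of $x$, for small $\delta$ the lattice domains $U^\delta$ and $\Omega^\delta$ coincide inside $N$; in particular they share the same boundary edge $e^\delta$ and the same vertex $x^{\delta,\circ}$, which is the common object through which both ratios are normalized. The point of dividing by $\harmonic(\Omega^\delta; v^\delta, e^\delta)$ is precisely to cancel the unknown $\delta$-dependent normalization of the discrete Green's function near the boundary point $x$ (which blows up like $\log(1/\delta)$), so that the ratio converges to a finite limit.

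For the first claim~\eqref{eqn::Poisson_cvg}, I would fix a small ball $B=B(x,r_0)\subset N$ with $v,z\notin B$, and compare $\Green(U^\delta; z^\delta, x^{\delta,\circ})$ with $\Green(\Omega^\delta; v^\delta, x^{\delta,\circ})$ through an intermediate interior point $u$ near $x$ (as in~\eqref{eqn::dPoisson_max_control_aux2}). Concretely, both functions $\Green(U^\delta; z^\delta,\cdot)$ and $\Green(\Omega^\delta; v^\delta,\cdot)$ are discrete harmonic in $\LV^\circ\cap B$ and vanish on $\LV^\partial\cap B$; the boundary Harnack-type estimate~\cite[Corollary~3.8]{ChelkakWanMassiveLERW} (already invoked in the proof of Lemma~\ref{lem::discretePoisson_max_control}) gives that the ratio $\Green(U^\delta; z^\delta, y)/\Green(\Omega^\delta; v^\delta, y)$ for $y\in\LV^\circ\cap B(x,\tfrac12 r_0)$ is, up to a multiplicative error tending to $1$ as the sampling point $y\to x$, independent of $y$ and equals the limit ratio. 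Combined with the bulk convergence~\eqref{eqn::Green_cvg} at a fixed interior point $u\in B(x,\tfrac12 r_0)$, namely $\Green(U^\delta; z^\delta, u)\to \tfrac{\sqrt3}{2\pi}\Green(U; z, u)$ and $\Green(\Omega^\delta; v^\delta, u)\to \tfrac{\sqrt3}{2\pi}\Green(\Omega; v, u)$, a diagonal argument sending first $\delta\to0$ and then $u\to x$ gives
\[
\lim_{\delta\to0}\frac{\Green(U^\delta; z^\delta, x^{\delta,\circ})}{\Green(\Omega^\delta; v^\delta, x^{\delta,\circ})}
=\lim_{u\to x}\frac{\Green(U; z, u)/\dist(u,\partial U)}{\Green(\Omega; v, u)/\dist(u,\partial\Omega)}
=\frac{\Poisson(U; z, x)}{\Poisson(\Omega; v, x)},
\]
where the last equality uses~\eqref{eqn::cgreen_poisson} together with the fact that $U$ and $\Omega$ have the same boundary near $x$ (so the two $\dist(u,\partial\cdot)$ factors cancel). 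The constants $\tfrac{\sqrt3}{2\pi}$ cancel in the ratio, and the factor $\tfrac13=1/\deg$ also cancels, which is why no lattice constant survives in~\eqref{eqn::Poisson_cvg}.

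For the second claim~\eqref{eqn::Poisson_derivative_cvg}, I would exploit uniformity. Fix $k$; then
\[
\frac{\dharmonic_k(U^\delta; z^\delta, e^\delta)}{\harmonic(\Omega^\delta; v^\delta, e^\delta)}
=\delta^{-1}\left(\frac{\harmonic(U^\delta; z_k^\delta, e^\delta)}{\harmonic(\Omega^\delta; v^\delta, e^\delta)}
-\frac{\harmonic(U^\delta; z^\delta, e^\delta)}{\harmonic(\Omega^\delta; v^\delta, e^\delta)}\right),
\]
where $z_k^\delta=z^\delta+\delta\tau^k$. Naively applying~\eqref{eqn::Poisson_cvg} to each term and subtracting is not enough because of the $\delta^{-1}$ blow-up; what is needed is that the convergence $\harmonic(U^\delta; w^\delta, e^\delta)/\harmonic(\Omega^\delta; v^\delta, e^\delta)\to \Poisson(U; w, x)/\Poisson(\Omega; v, x)$ holds in a $C^1$ (in fact smooth) sense locally uniformly in $w$ on compact subsets of $U\setminus\{x\}$. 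This again follows from the Green's function machinery of~\cite{ChelkakSmirnovDiscreteComplexAnalysis}: $\Green(U^\delta; \cdot, x^{\delta,\circ})$ is discrete harmonic away from $x$, so by discrete interior regularity (uniform a priori bounds on discrete gradients of discrete harmonic functions, using the uniform upper bound of Lemma~\ref{lem::Green_upperbound} to control it on a slightly larger compact set), any subsequential limit of $\delta^{-1}(\harmonic(U^\delta; z_k^\delta,e^\delta)-\harmonic(U^\delta; z^\delta,e^\delta))$ normalized by $\harmonic(\Omega^\delta; v^\delta,e^\delta)$ is a directional derivative of a harmonic function, and by~\eqref{eqn::Poisson_cvg} that harmonic function is $\Poisson(U;\cdot,x)/\Poisson(\Omega;v,x)$; hence the limit is $\partial_{\tau^k}\Poisson(U; z, x)/\Poisson(\Omega; v, x)$, and since the limit is unique the whole family converges.

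\textbf{Main obstacle.} The delicate point is upgrading the pointwise convergence~\eqref{eqn::Poisson_cvg} to convergence of discrete derivatives in~\eqref{eqn::Poisson_derivative_cvg}: one must establish equicontinuity of the rescaled discrete Poisson kernels and their lattice gradients on compact subsets of $U\setminus\{x\}$, uniformly in $\delta$. This requires the precise discrete harmonicity estimates of~\cite{ChelkakSmirnovDiscreteComplexAnalysis} (discrete Harnack inequality and gradient estimates for discrete harmonic functions on $\delta\hexagon$) together with the uniform logarithmic upper bound of Lemma~\ref{lem::Green_upperbound} to keep the family bounded on an enlarged neighborhood. Everything else—the rewriting via~\eqref{eqn::harmonic_green}, the cancellation of lattice constants, the boundary Harnack comparison near $x$, and the identification of the limit via~\eqref{eqn::cgreen_poisson}—is routine once this equicontinuity is in hand.
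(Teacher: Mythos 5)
Your proof of the first claim is essentially the paper's: rewrite both Poisson kernels via~\eqref{eqn::harmonic_green} as Green's functions evaluated at $x^{\delta,\circ}$, use the discrete boundary-Harnack estimate~\cite[Corollary~3.8]{ChelkakWanMassiveLERW} to replace $x^{\delta,\circ}$ by a bulk point $u$ near $x$ up to a $(1\pm\eps)$ factor, invoke the Green's function convergence~\eqref{eqn::Green_cvg} at $u$, and identify the boundary limit of the continuum ratio $\Green(U;z,\cdot)/\Green(\Omega;v,\cdot)$ with $\Poisson(U;z,x)/\Poisson(\Omega;v,x)$; this is exactly the chain~\eqref{eqn::Poisson_cvg_aux1}--\eqref{eqn::Poisson_cvg_aux4}. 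For the second claim you diverge from the paper only in that you re-derive, via equicontinuity and interior gradient estimates for discrete harmonic functions, what the paper simply quotes: \cite[Theorem~3.13]{ChelkakSmirnovDiscreteComplexAnalysis} already gives $\dharmonic_k(U^\delta;z^\delta,e^\delta)/\harmonic(U^\delta;z^\delta,e^\delta)\to\partial_{\tau^k}\Poisson(U;z,x)/\Poisson(U;z,x)$, which combined with~\eqref{eqn::Poisson_cvg} finishes the proof in one line. Your compactness route is viable (it is in essence the proof of that cited theorem), but one ingredient is misattributed: the uniform bound of the normalized family on compacts away from $x$ should come from Lemma~\ref{lem::discretePoisson_max_control} (or directly from the boundary-Harnack comparison), not from Lemma~\ref{lem::Green_upperbound}, whose logarithmic bound on $\Green(U^\delta;\cdot,x^{\delta,\circ})$ is far from the correct order-$\delta$ magnitude of the numerator near the boundary vertex $x^{\delta,\circ}$ and hence gives no control of the ratio after dividing by $\harmonic(\Omega^\delta;v^\delta,e^\delta)$. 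With that substitution your argument is complete, at the cost of reproving a cited tool rather than using it.
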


\begin{proof}
We first prove~\eqref{eqn::Poisson_cvg}.
From~\eqref{eqn::harmonic_green}, we may write
\begin{equation}\label{eqn::Poisson_cvg_aux1}
    \frac{\harmonic(U^{\delta}; z^{\delta}; e^{\delta})}{\harmonic(\Omega^{\delta}; v^{\delta}; e^{\delta})}=\frac{\Green(U^{\delta}; z^{\delta},x^{\delta,\circ})}{\Green(\Omega^{\delta};  v^\delta,x^{\delta,\circ})}.
\end{equation}
We pick $r_0=r_0(U,\Omega)>0$ such that $B(x, 2r_0)\cap\Omega\subset U$ and $z,v\notin B(x,r_0)$. 
Both $\Green(U^\delta; z^\delta,\cdot)$ and $\Green(\Omega^{\delta}; v^{\delta},\cdot)$ are harmonic in $\LV^{\circ}(\Omega^{\delta}) \cap B(x, r_0)$ and vanish on $\LV^{\partial}(\Omega^{\delta}) \cap B(x, r_0)$. 
For any $\eps>0$, \cite[Corollary~3.8]{ChelkakWanMassiveLERW} asserts that there exists $r_\eps \in\left(0, r_0\right)$ such that
\begin{equation}\label{eqn::Poisson_cvg_aux2}
    1-\eps<\frac{\Green(U^\delta;  z^\delta,x^{\delta, \circ})}{\Green(\Omega^{\delta} ;  v^{\delta},x^{\delta, \circ})} \cdot \frac{\Green(\Omega^{\delta} ;  v^{\delta},u)}{\Green(U^\delta ;z^\delta,u)}<1+\eps, \quad \text { for all } u \in \LV^{\circ}(\Omega^{\delta}) \cap B(x, r_\epsilon) .
\end{equation}
Pick $u \in \LV^{\circ}(\Omega^{\delta}) \cap B(x, r_\epsilon)$ and $s_\eps<\frac{r_\eps}{4}$ such that $B(u, s_\eps) \subset B(x, r_\eps)$. 
Then by~\eqref{eqn::Green_cvg} and~\eqref{eqn::dPoisson_max_control_aux3}, we can pick $s_\eps>0$ small enough and there exists $\delta_\epsilon>0$  such that the following holds: for every $\delta <\delta_\epsilon$, there exists $\hat{u} \in B(u, s_\epsilon)$ such that
\begin{equation}\label{eqn::Poisson_cvg_aux3}
    \left|\frac{\Green(U^\delta ; z^\delta,u)}{\Green(\Omega^{\delta} ;v^{\delta},u)}-\frac{\Green(U;  z,\hat{u})}{\Green(\Omega ;  v,\hat{u})}\right|<\eps.
\end{equation}
Since $\Green(U; z,\cdot) / \Green(\Omega ;  v,\cdot)$ is continuous in $\overline{\Omega} \cap B(x_2, r_0)$, we can choose $r_0$ small enough such that
\begin{equation}\label{eqn::Poisson_cvg_aux4}
    \left|\frac{\Green(U ;  z,\hat{u})}{\Green(\Omega ;  v,\hat{u})}-\frac{\Poisson(U ; z,x)}{\Poisson(\Omega ; v, x)}\right|<\eps, \quad \text { for all } \hat{u} \in B(x, r_0) .
\end{equation}
Combining~\eqref{eqn::Poisson_cvg_aux1}-\eqref{eqn::Poisson_cvg_aux4}, we obtain~\eqref{eqn::Poisson_cvg} as desired.
\medbreak

Next, we prove~\eqref{eqn::Poisson_derivative_cvg}.
From~\cite[Theorem~3.13]{ChelkakSmirnovDiscreteComplexAnalysis},
we have the following convergence of discrete derivative of harmonic functions: as $\delta\to 0$,
\begin{equation}
    \frac{\dharmonic_{\bs{\alpha}_k}(U^\delta;z^\delta,e^\delta)}{\harmonic(U^\delta;z^\delta,e^\delta)}\to \frac{\partial_{\bs{\alpha}_k}\Poisson(U;z,x)}{\Poisson(U;z,x)}, \qquad\text{for }k=1,2,3.
\end{equation}
Combining this with~\eqref{eqn::Poisson_cvg}, we obtain~\eqref{eqn::Poisson_derivative_cvg} as desired.
\end{proof}

\begin{lemma}\label{lem::bPoisson_cvg}
Fix a bounded $2$-polygon $(\Omega; x_1, x_2)$ and suppose $(\Omega^{\delta}; x_1^{\delta}, x_2^{\delta})$ is an approximation of $(\Omega; x_1, x_2)$ on $\delta\hexagon$ in Carath\'eodory sense. 
We fix an interior point $v\in\Omega$ and suppose $v^{\delta}$ is the vertex in $\LV^{\circ}(\Omega^{\delta})$ that is nearest to $v$. 
\begin{itemize}
\item The scaling limit of the harmonic measure~\eqref{eqn::dharmonic_def} exists:
    \begin{equation}\label{eqn::charmonic_cvg}
        \lim_{\delta\to 0}\harmonic(\Omega^\delta; v^\delta, (x_1^\delta x_2^\delta))=\frac{1}{2\pi}\harmonic(\Omega;v,(x_1x_2)),
    \end{equation}
    where $\harmonic(\Omega;v,(x_1x_2))$ is harmonic measure~\eqref{eqn::charnomic_H}-\eqref{eqn::nharmonic_cov}.
\item The scaling limit of the Poisson kernel~\eqref{eqn::dPoisson_def} at boundary points exists:
\begin{align}\label{eqn::bPoisson_cvg}
\lim_{\delta\to 0}\frac{\harmonic(\Omega^{\delta}; x_2^{\delta,\circ}, e_1^{\delta})}{\harmonic(\Omega^{\delta}; v^{\delta}, e_1^{\delta})\harmonic(\Omega^{\delta}; v^{\delta}, e_2^{\delta})}=\frac{2\sqrt{3}\pi\Poisson(\Omega; x_1, x_2)}{\Poisson(\Omega; v, x_1)\Poisson(\Omega; v, x_2)}, 
\end{align}
where $\Poisson(\Omega; v, x)$ is Poisson kernel~\eqref{eqn::Poisson_H}-\eqref{eqn::Poisson_cov} and $\Poisson(\Omega; x_1, x_2)$ is boundary Poisson kernel~\eqref{eqn::bPoisson_H}-\eqref{eqn::bPoisson_cov}. 
\end{itemize}
\end{lemma}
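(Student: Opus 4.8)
\emph{Harmonic measure.} By definition~\eqref{eqn::dharmonic_def}, $\harmonic(\Omega^{\delta};v^{\delta},(x_1^{\delta}x_2^{\delta}))$ is the probability that the simple random walk started at $v^{\delta}$ first exits $\Omega^{\delta}$ through a boundary edge whose boundary endpoint lies on the arc $(x_1^{\delta}x_2^{\delta})$. The plan is to show this converges to the Brownian exit probability $\PP^{\mathrm{BM}}_{v}[\mathrm{exit}\in(x_1x_2)]$, which by the convention in~\eqref{eqn::charnomic_H} (note $\int_{\R}\Poisson(\HH;\ii,y)\,|\ud y|=2\pi$) equals $\tfrac{1}{2\pi}\harmonic(\Omega;v,(x_1x_2))$; this is exactly~\eqref{eqn::charmonic_cvg}. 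For the convergence I would regard $w\mapsto\harmonic(\Omega^{\delta};w,(x_1^{\delta}x_2^{\delta}))$ as a bounded discrete harmonic function, approximate its discontinuous boundary data $\one_{(x_1x_2)}$ from above and below by continuous data differing from it only on small neighbourhoods of $x_1,x_2$, apply the convergence of the discrete Dirichlet problem from~\cite{ChelkakSmirnovDiscreteComplexAnalysis}, and absorb the resulting error by a Beurling-type estimate (from a fixed interior $v$ the walk rarely visits a tiny disc around $x_j$ before exiting). Since harmonic measure from a fixed interior point is continuous under Carath\'eodory convergence, the Hausdorff hypothesis~\eqref{eqn::boundary_cvg_Hausdorff} is not needed for this part.

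\emph{Boundary Poisson kernel --- reduction.} The first step is algebraic. Every interior vertex of $\delta\hexagon$ has degree $3$, so the walk killed on $\partial\Omega^{\delta}$ is reversible for the counting measure on interior vertices, giving $\Green(\Omega^{\delta};u,w)=\Green(\Omega^{\delta};w,u)$ for interior $u,w$. Combining this with $\harmonic(\Omega^{\delta};\cdot,e^{\delta})=\tfrac13\Green(\Omega^{\delta};\cdot,e^{\delta,\circ})$ from~\eqref{eqn::harmonic_green} yields
\[
\frac{\harmonic(\Omega^{\delta};x_2^{\delta,\circ},e_1^{\delta})}{\harmonic(\Omega^{\delta};v^{\delta},e_1^{\delta})\,\harmonic(\Omega^{\delta};v^{\delta},e_2^{\delta})}
=\frac{3\,\Green(\Omega^{\delta};x_1^{\delta,\circ},x_2^{\delta,\circ})}{\Green(\Omega^{\delta};v^{\delta},x_1^{\delta,\circ})\,\Green(\Omega^{\delta};v^{\delta},x_2^{\delta,\circ})}.
\]
Everything is thus reduced to the asymptotics of the lattice Green's function when one or both arguments is one of the near-boundary vertices $x_1^{\delta,\circ},x_2^{\delta,\circ}$.

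\emph{Boundary Poisson kernel --- passage to the limit.} I would first extend Lemma~\ref{lem::Green_cvg} so that it holds uniformly up to the boundary near $x_1,x_2$, namely $\Green(\Omega^{\delta};v^{\delta},x_j^{\delta,\circ})=\tfrac{\sqrt3}{2\pi}\Green(\Omega;v,x_j^{\delta,\circ})(1+o(1))$ and $\Green(\Omega^{\delta};x_1^{\delta,\circ},x_2^{\delta,\circ})=\tfrac{\sqrt3}{2\pi}\Green(\Omega;x_1^{\delta,\circ},x_2^{\delta,\circ})(1+o(1))$. As the discrete and continuum Green's functions are positive, harmonic and vanish along $\partial\Omega$ near each $x_j$, this follows by comparing near-boundary ratios to interior ones through the discrete boundary Harnack estimate \cite[Corollary~3.8]{ChelkakWanMassiveLERW} (already used in the proof of Lemma~\ref{lem::Poisson_cvg}) together with continuity up to the boundary of the continuum ratios. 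It then remains to evaluate the continuum limit: with $\psi:\Omega\to\HH$ conformal, \eqref{eqn::cgreen_poisson} and \eqref{eqn::bPoisson_H} give (first in $\HH$, then by conformal invariance of $\Green$) that $\Green(\Omega;v,w)=h_j(w)\Poisson(\Omega;v,x_j)(1+o(1))$ as $w\to x_j$ and $\Green(\Omega;a,b)=h_1(a)h_2(b)\Poisson(\Omega;x_1,x_2)(1+o(1))$ as $a\to x_1,\ b\to x_2$, with the same boundary profile $h_j(w)=|\psi'(x_j)|^{-1}\Im\psi(w)$. The profiles $h_1(x_1^{\delta,\circ}),h_2(x_2^{\delta,\circ})$ cancel in the above ratio, so its limit is $3\cdot\tfrac{2\pi}{\sqrt3}=2\sqrt3\,\pi$ times $\Poisson(\Omega;x_1,x_2)/\bigl(\Poisson(\Omega;v,x_1)\Poisson(\Omega;v,x_2)\bigr)$, which is~\eqref{eqn::bPoisson_cvg}.

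\emph{Main obstacle.} The only genuinely technical point is the uniformity up to the boundary just invoked: Lemma~\ref{lem::Green_cvg} is stated only on compact subsets of $\Omega$, while $x_1^{\delta,\circ},x_2^{\delta,\circ}$ approach $\partial\Omega$. I expect it to go through exactly as in the proof of Lemma~\ref{lem::Poisson_cvg}, by squeezing the near-boundary Green's function between its value at a fixed nearby interior vertex and a boundary Harnack constant and passing to the limit there. This is also the only place the hexagonal geometry enters --- $h_j(x_j^{\delta,\circ})$ is asymptotically $\delta$ times the cosine of the angle between the boundary edge $e_j^{\delta}$ and the inner normal at $x_j$ --- but these lattice-dependent factors cancel between numerator and denominator, which is why the limit~\eqref{eqn::bPoisson_cvg} takes the clean, lattice-independent form stated.
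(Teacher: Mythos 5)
Your reduction via \eqref{eqn::harmonic_green} and the symmetry of $\Green(\Omega^{\delta};\cdot,\cdot)$, your bookkeeping of the constant $3\cdot\tfrac{2\pi}{\sqrt3}=2\sqrt3\,\pi$, and your treatment of \eqref{eqn::charmonic_cvg} are all fine, and the overall strategy (Green's-function reduction, boundary Harnack from \cite[Corollary~3.8]{ChelkakWanMassiveLERW} near the marked points, interior convergence, continuum boundary asymptotics) is essentially the paper's. The genuine problem is the pivotal step you call the ``main obstacle'': the claimed pointwise extensions
$\Green(\Omega^{\delta};v^{\delta},x_j^{\delta,\circ})=\tfrac{\sqrt3}{2\pi}\Green(\Omega;v,x_j^{\delta,\circ})(1+o(1))$ and
$\Green(\Omega^{\delta};x_1^{\delta,\circ},x_2^{\delta,\circ})=\tfrac{\sqrt3}{2\pi}\Green(\Omega;x_1^{\delta,\circ},x_2^{\delta,\circ})(1+o(1))$
are not what the boundary Harnack principle delivers, and they are not true at this level of generality. \cite[Corollary~3.8]{ChelkakWanMassiveLERW} only compares \emph{two discrete} positive harmonic functions vanishing on the \emph{same} discrete boundary arc; it says nothing about matching a discrete harmonic function, at a vertex one lattice step from $\partial\Omega^{\delta}$, with the \emph{continuum} Green's function of $\Omega$ evaluated at that microscopic point. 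Two independent obstructions: (i) the value at $x_j^{\delta,\circ}$ carries a lattice micro-profile (depending on the local geometry of the boundary edges around $x_j^{\delta}$) which has no reason to coincide, to multiplicative accuracy $1+o(1)$, with the Euclidean profile $h_j$; (ii) under Carath\'eodory convergence alone (and the lemma does not assume \eqref{eqn::boundary_cvg_Hausdorff}, let alone a rate), $\partial\Omega^{\delta}$ may deviate from $\partial\Omega$ near $x_j$ at scales much larger than $\delta$, so $\Green(\Omega;v,x_j^{\delta,\circ})$ — continuum domain, lattice point adjacent to $\partial\Omega^{\delta}$ — need not even be of the same order as $\Green(\Omega^{\delta};v^{\delta},x_j^{\delta,\circ})$. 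So the step as formulated would fail, and your final remark about the profiles being ``$\delta$ times a cosine'' is in tension with it: if the $(1+o(1))$ claim held there would be no lattice factor left to cancel.

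The fix is the one your own cancellation computation already points to, and it is exactly how the paper argues: never leave the level of ratios. Near $x_2$, compare the two discrete harmonic functions $\harmonic(\Omega^{\delta};\cdot,e_1^{\delta})$ and $\Green(\Omega^{\delta};\cdot,v^{\delta})$ (both vanish on $\LV^{\partial}(\Omega^{\delta})\cap B(x_2,r_0)$) via \cite[Corollary~3.8]{ChelkakWanMassiveLERW}, so that the value of their ratio at $x_2^{\delta,\circ}$ is squeezed between its values at interior points $u$ near $x_2$; there Lemma~\ref{lem::Poisson_cvg} (which already absorbed the boundary behaviour at $x_1$, including whatever micro-profile sits in $\harmonic(\Omega^{\delta};v^{\delta},e_1^{\delta})$) and Lemma~\ref{lem::Green_cvg} apply, and finally one lets $u\to x_2$ in the continuum ratio $\Poisson(\Omega;\cdot,x_1)/\Green(\Omega;\cdot,v)\to\Poisson(\Omega;x_1,x_2)/\Poisson(\Omega;v,x_2)$, where your profile cancellation happens legitimately (a uniform bound of the type \eqref{eqn::bPoisson_cvg_boundness} is also needed to justify exchanging the limits). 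With the argument rewritten this way your constants and your continuum asymptotics are exactly right and reproduce \eqref{eqn::bPoisson_cvg}; as written, however, the intermediate pointwise claims constitute a gap.
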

\begin{proof}
The convergence~\eqref{eqn::charmonic_cvg} is guaranteed by~\cite[Theorem~3.12]{ChelkakSmirnovDiscreteComplexAnalysis}. 
The convergence~\eqref{eqn::bPoisson_cvg} can be proved similarly as~\eqref{eqn::Poisson_cvg} in Lemma~\ref{lem::Poisson_cvg}. 
As the constant $2\sqrt{3}\pi$ will be important later. 
We still include the details below. 

Fix $r_0>0$ such that $v\not\in B(x_2, 2r_0)$.
By~\eqref{eqn::harmonic_green}, we may write
\begin{equation}\label{eqn::bPoisson_cvg_aux1}
\frac{\harmonic(\Omega^{\delta}; x_2^{\delta,\circ}, e_1^{\delta})}{\harmonic(\Omega^{\delta}; v^{\delta}, e_1^{\delta})\harmonic(\Omega^{\delta}; v^{\delta}, e_2^{\delta})}=
\frac{3\harmonic(\Omega^{\delta}; x_2^{\delta,\circ}, e_1^{\delta})}{\harmonic(\Omega^{\delta}; v^{\delta}, e_1^{\delta})\Green(\Omega^{\delta}; x_2^{\delta,\circ}, v^{\delta})}.
\end{equation}
Both $\harmonic(\Omega^{\delta}; \cdot, e_1^{\delta})$ and $\Green(\Omega^{\delta}; \cdot, v^{\delta})$ are harmonic in $\LV^{\circ}(\Omega^\delta)\cap B(x_2,r_0)$ and vanish on $\LV^{\partial} (\Omega^\delta)\cap B(x_2,r_0)$. For any $\eps>0$, \cite[Corollary~3.8]{ChelkakWanMassiveLERW} asserts that there exists $r_{\eps}\in (0,r_0)$ such that
\begin{equation}\label{eqn::bPoisson_cvg_aux2}
    1-\eps<\frac{\harmonic(\Omega^{\delta}; x_2^{\delta,\circ}, e_1^{\delta})}{\Green(\Omega^{\delta}; x_2^{\delta,\circ}, v^{\delta})}\cdot\frac{\Green(\Omega^{\delta}; u, v^{\delta})}{\harmonic(\Omega^{\delta}; u, e_1^{\delta})}<1+\eps, \qquad\text{for all }u \in \LV^{\circ}(\Omega^\delta) \cap B(x_2, r_\eps). 
\end{equation}
Combination of~\cite[Corollary~3.8]{ChelkakWanMassiveLERW} and~\cite[Corollary~3.11, Theorem~3.13]{ChelkakSmirnovDiscreteComplexAnalysis}  also asserts that there exists a constant $C_{\eqref{eqn::bPoisson_cvg_boundness}}\in (0,\infty)$ depending on $(\Omega;x_1,x_2;v;r_0)$ such that:
\begin{equation}\label{eqn::bPoisson_cvg_boundness}
\frac{\harmonic(\Omega^{\delta}; u, e_1^{\delta})}{\harmonic(\Omega^\delta;v^\delta;e_1^\delta)\Green(\Omega^{\delta}; u, v^{\delta})}\le C_{\eqref{eqn::bPoisson_cvg_boundness}}, \quad \text{ for all } u \in \LV^{\circ}(\Omega^{\delta}) \cap B(x_2, r_0).
\end{equation}
Pick $u \in \LV^{\circ}(\Omega^\delta) \cap B(x_2, r_\eps)$ and $s_\eps<\frac{r_\eps}{4}$ such that $B(u,s_\eps)\subset B(x_2,r_\eps)$.
Then by~\eqref{eqn::bPoisson_cvg_boundness}, {~\eqref{eqn::Poisson_cvg} and~\eqref{eqn::Green_cvg}}, there exists $\delta_{\eps}>0$ such that the following holds: 
for every $\delta<\delta_\eps$, there exists $\hat{u} \in B(u, s_\eps)$ such that
\begin{equation}\label{eqn::bPoisson_cvg_aux3}
\left|\frac{\harmonic(\Omega^{\delta}; u, e_1^{\delta})}{\harmonic(\Omega^{\delta}; v^{\delta}, e_1^{\delta})\Green(\Omega^{\delta}; u, v^{\delta})}-
\frac{\Poisson(\Omega;\hat{u},x_1)}{\Poisson(\Omega; v, x_1)\frac{\sqrt{3}}{2\pi}\Green(\Omega; \hat{u}, v)}\right|<\eps.
\end{equation}
Since $\Poisson(\Omega; \cdot, x_1)/\Green(\Omega; \cdot, v)$ is continuous in $\overline{\Omega}\cap B(x_2, r_{\eps})$, we can choose $r_\eps$ small enough such that
\begin{equation}\label{eqn::bPoisson_cvg_aux4}
    \left| \frac{\Poisson(\Omega;\hat{u},x_1)}{\Green(\Omega;\hat{u},v)}-\frac{\Poisson(\Omega;x_2,x_1)}{\Poisson(\Omega;v,x_2)} \right|< \epsilon, \quad \text { for all } \hat{u} \in B(x_2, r_\eps).
\end{equation}
Combining~\eqref{eqn::bPoisson_cvg_aux1}-\eqref{eqn::bPoisson_cvg_aux4}, we obtain~\eqref{eqn::bPoisson_cvg} as desired.
\end{proof}

\begin{lemma}\label{lem::nharmonic_cvg}
Suppose $(U; x_1, x_2, x_3)$ is a bounded $3$-polygon and $(\Omega; x_2)$ is a bounded $1$-polygon and $U$ agrees with $\Omega$ in neighborhood of $x_2$. 
Suppose $(U^{\delta}; x_1^{\delta}, x_2^{\delta}, x_3^{\delta})$ (resp. $(\Omega^{\delta}; x_2^{\delta})$) is an approximation of $(U; x_1, x_2, x_3)$ (resp. of $(\Omega; x_2)$) on $\delta\hexagon$ in Carath\'eodory sense. 
We fix an interior point $v\in\Omega$ and suppose $v^{\delta}$ is the vertex in $\LV^{\circ}(\Omega^{\delta})$ that is nearest to $v$. 
The scaling limit of harmonic measure~\eqref{eqn::dharmonic_def} at boundary points exists:
\begin{align}
\label{eqn::nharmonic_cvg}
\lim_{\delta\to 0}\frac{\harmonic(U^{\delta}; x_2^{\delta, \circ}, (x_3^{\delta}x_1^{\delta}))}{\harmonic(\Omega^{\delta}; v^{\delta}, e_2^{\delta})}=&\sqrt{3}\frac{\nharmonic(U; x_2, (x_3x_1))}{\Poisson(\Omega; v, x_2)},
\end{align}
where $\nharmonic(U; x_2, (x_3x_1))$ is harmonic measure~\eqref{eqn::nharmonic_H}-\eqref{eqn::nharmonic_cov}. 
\end{lemma}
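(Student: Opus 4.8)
The plan is to treat Lemma~\ref{lem::nharmonic_cvg} in the same way as Lemma~\ref{lem::bPoisson_cvg}. The first move is to use~\eqref{eqn::harmonic_green} to replace the Poisson kernel in the denominator by a Green's function: since $x_2^{\delta,\circ}$ is an interior vertex and hence has degree $3$, one has $\harmonic(\Omega^{\delta}; v^{\delta}, e_2^{\delta})=\tfrac13\Green(\Omega^{\delta}; v^{\delta}, x_2^{\delta,\circ})$, so the quantity to be analysed is
\[
\frac{\harmonic(U^{\delta}; x_2^{\delta, \circ}, (x_3^{\delta}x_1^{\delta}))}{\harmonic(\Omega^{\delta}; v^{\delta}, e_2^{\delta})}
=3\,\frac{\harmonic(U^{\delta}; x_2^{\delta, \circ}, (x_3^{\delta}x_1^{\delta}))}{\Green(\Omega^{\delta}; v^{\delta}, x_2^{\delta,\circ})}.
\]
The reason for this reformulation is that in a neighbourhood $B(x_2,r_0)$ of $x_2$ on which $U$ and $\Omega$ agree and which is kept away from $v$ and from the arc $(x_3x_1)$, both $w\mapsto\harmonic(U^{\delta}; w, (x_3^{\delta}x_1^{\delta}))$ and $w\mapsto\Green(\Omega^{\delta}; v^{\delta}, w)$ are discrete harmonic and vanish along the boundary: the marked points being counterclockwise, $x_2$ sits on the complementary arc $(x_1x_3)$, on which the harmonic measure of $(x_3x_1)$ is $0$.

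Next I would apply the boundary Harnack principle \cite[Corollary~3.8]{ChelkakWanMassiveLERW} to these two discrete harmonic functions near $x_2$: for every $\eps>0$ there is $r_\eps\in(0,r_0)$ so that the ratio $\harmonic(U^{\delta}; \cdot, (x_3^{\delta}x_1^{\delta}))/\Green(\Omega^{\delta}; v^{\delta}, \cdot)$ only fluctuates by a factor $1\pm\eps$ over $\LV^{\circ}(\Omega^{\delta})\cap B(x_2,r_\eps)$, and in particular its value at the near-boundary vertex $x_2^{\delta,\circ}$ agrees up to $O(\eps)$ with its value at a fixed interior point $u\in B(x_2,r_\eps)$ held at macroscopic distance from $\partial\Omega$. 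For such a fixed $u$ the scaling limits are already available: equation~\eqref{eqn::charmonic_cvg} of Lemma~\ref{lem::bPoisson_cvg} gives $\harmonic(U^{\delta}; u, (x_3^{\delta}x_1^{\delta}))\to\tfrac1{2\pi}\harmonic(U; u, (x_3x_1))$, and Lemma~\ref{lem::Green_cvg} gives $\Green(\Omega^{\delta}; v^{\delta}, u)\to\tfrac{\sqrt3}{2\pi}\Green(\Omega; v, u)$; the ratio of the lattice-dependent constants $\tfrac1{2\pi}$ and $\tfrac{\sqrt3}{2\pi}$ is $1/\sqrt3$, which together with the prefactor $3$ produces the factor $3/\sqrt3=\sqrt3$ appearing in~\eqref{eqn::nharmonic_cvg}. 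To make this step rigorous one also needs the uniform boundedness of the discrete ratio on $\LV^{\circ}(\Omega^{\delta})\cap B(x_2,r_0)$ for small $\delta$ (obtained exactly as in~\eqref{eqn::bPoisson_cvg_boundness}, from boundary Harnack together with the two convergences) and the $\hat u$-approximation device used in the proof of Lemma~\ref{lem::bPoisson_cvg} to absorb the precise lattice location of $u$.

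Finally I would let $\eps\to0$, which forces the auxiliary interior point $u$ to $x_2$, and identify $\lim_{u\to x_2}\harmonic(U; u, (x_3x_1))/\Green(\Omega; v, u)$ with $\nharmonic(U; x_2, (x_3x_1))/\Poisson(\Omega; v, x_2)$. Because $x_2$ lies on a $C^{1+\eps}$ boundary segment and $(x_3x_1)$ stays at positive distance, $\harmonic(U;\cdot,(x_3x_1))$ and $\Green(\Omega;v,\cdot)=\Green(\Omega;\cdot,v)$ are both harmonic near $x_2$, vanish on that segment, and have well-defined inward normal derivatives there; by the very definitions~\eqref{eqn::nharmonic_H}--\eqref{eqn::nharmonic_cov} and by~\eqref{eqn::cgreen_poisson} (using the symmetry of $\Green$), these derivatives are $\nharmonic(U; x_2, (x_3x_1))$ and $\Poisson(\Omega; v, x_2)$, so the ratio of the functions tends to the ratio of the derivatives (cleanest to see after a conformal chart reducing a neighbourhood of $x_2$ to a real point of $\HH$, where both are explicit and the conformal factors $|\varphi'(x_2)|$ cancel). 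Combining this with the previous step then yields~\eqref{eqn::nharmonic_cvg}. I expect the only genuinely delicate point to be the interchange of the limits $\delta\to0$ and $u\to x_2$; this is handled exactly as in Lemma~\ref{lem::bPoisson_cvg} by fixing $\eps$ (hence $r_\eps$ and $u$) first, sending $\delta\to0$, and only then sending $\eps\to0$, with the required uniformity supplied by \cite[Corollary~3.8]{ChelkakWanMassiveLERW}.
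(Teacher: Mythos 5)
Your proposal is correct and follows essentially the same route as the paper, which simply says to repeat the proof of~\eqref{eqn::Poisson_cvg} in Lemma~\ref{lem::Poisson_cvg} with the Green's function convergence for the numerator replaced by~\eqref{eqn::charmonic_cvg}: you rewrite the denominator via~\eqref{eqn::harmonic_green}, apply the boundary Harnack estimate of \cite[Corollary~3.8]{ChelkakWanMassiveLERW} near $x_2$, invoke~\eqref{eqn::charmonic_cvg} and~\eqref{eqn::Green_cvg} at an interior reference point (correctly producing the constant $3\cdot\tfrac{1}{\sqrt{3}}=\sqrt{3}$), and identify the boundary limit of the continuum ratio with $\nharmonic(U;x_2,(x_3x_1))/\Poisson(\Omega;v,x_2)$. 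Your handling of the order of limits ($\delta\to 0$ first, then the Harnack radius) matches the paper's treatment in Lemmas~\ref{lem::Poisson_cvg} and~\ref{lem::bPoisson_cvg}, so no changes are needed.
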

\begin{proof}
This can be done by similar proof of~\eqref{eqn::Poisson_cvg} in Lemma~\ref{lem::Poisson_cvg} where we replace the convergence~\eqref{eqn::Green_cvg} by~\eqref{eqn::charmonic_cvg}.
\end{proof}

\subsection{Schramm--Loewner Evolution}
\label{subsec::pre_SLE}

In this section, we collect properties of radial SLE and three-sided radial SLEs. They will be crucial in the proof of Theorem~\ref{thm::tripod} in Section~\ref{sec::tripod}. 

\paragraph*{Chordal SLE.}
For a $2$-polygon $(\Omega; x_1, x_2)$, we denote by $\chamber(\Omega; x_1, x_2)$ the set of continuous simple unparameterized curves in $\Omega$ connecting $x_1$ and $x_2$ such that they only touch the boundary $\partial\Omega$ in $\{x_1, x_2\}$.
Fix $\kappa\le 4$. Chordal $\SLE_{\kappa}$ in $(\Omega; x_1, x_2)$ is a probability measure on $\chamber(\Omega; x_1, x_2)$ that satisfies conformal invariance and domain Markov property. Its definition is usually given in the upper-half plane via chordal Loewner chain. As we mainly focus on the radial setting in this article, we do not plan to introduce notations for chordal Loewner chain. Readers may look at~\cite{WernerRandomPlanarcurves}. We denote by $\chordalSLE(\Omega; x_1, x_2)$ the law of chordal $\SLE_{\kappa}$ in $(\Omega; x_1, x_2)$. Its partition function is given by 
\begin{align*}
\chordalSLEpf(\Omega; x_1, x_2)=\Poisson(\Omega; x_1, x_2)^{\frac{(6-\kappa)}{2\kappa}}, 
\end{align*}
where $\Poisson(\Omega; x_1, x_2)$ is boundary Poisson kernel~\eqref{eqn::bPoisson_H}-\eqref{eqn::bPoisson_cov}.

\paragraph*{Radial Loewner chain.}
To introduce radial SLE, it is more convenient to work with the polygon $(\Omega; x_1, \ldots, x_p)=(\U; \ee^{\ii\theta_1}, \ldots, \ee^{\ii\theta_p})$. We denote 
\[\LX_p=\{(\theta_1, \ldots, \theta_p)\in\R^p: \theta_1<\theta_2<\cdots<\theta_p<\theta_1+2\pi\}.\]

Fix $\theta\in\R$ and $T \in (0,\infty)$.
Let $\gamma \colon [0,T]\to \overline{\U}$ be a continuous simple curve such that $\gamma_0=\ee^{\ii \theta}$ and $\gamma_{(0,T)} \subset \U \setminus \{0\}$. 
For each $t \in [0,T]$, let $g_t : \U\setminus \gamma_{[0,t]} \to \U$ be the unique conformal map 
normalized at the origin, i.e., 
$g_t(0)=0$ and $g'_t(0)>0$, which we call the \emph{mapping-out function of} $\gamma$.
We say that the curve $\gamma$ is \emph{parameterized by capacity} if $g'_t(0) = \ee^t$ for all $t \in [0,T]$. 
These maps $\{g_t \colon t \in [0,T] \}$ solve the \emph{radial Loewner equation}
\begin{align*}
\partial_t g_t(z)=g_t(z) \frac{\ee^{\ii \xi_t}+g_t(z)}{\ee^{\ii \xi_t}-g_t(z)}, \qquad g_0(z)=z,
\end{align*}
where $\xi \colon [0,T]\to \R$ is a continuous function called the Loewner \emph{driving function} of $\gamma$. 
It is convenient to use the \emph{covering map} 
$h_t \colon \R \to \R$ 
of $g_t$ defined via $g_t(\ee^{\ii \theta}) = \exp( \ii h_t(\theta) )$. The process of covering maps satisfies the Loewner equation
\begin{align*}
\partial_t h_t(\theta) = \cot \left(\frac{h_t(\theta) - \xi_t}{2}\right) , \qquad h_0(\theta)=\theta .
\end{align*}

\textit{Radial $\SLE_{\kappa}$}  in $(\U; \ee^{\ii\theta}; 0)$ is the radial Loewner process with driving function $\xi_t=\theta+\sqrt{\kappa} B_t$ where $B_t$ is standard one-dimensional Brownian motion.  
When $\kappa\in (0,4]$, radial $\SLE_{\kappa}$ is almost surely generated by a continuous simple curve $\gamma\in\chamber(\U; \ee^{\ii\theta}; 0)$. 
We define radial $\SLE_{\kappa}$ in a more general $1$-polygon $(\Omega; x)$ with $z\in\Omega$ as the pushforward measure of radial $\SLE_{\kappa}$ by $\varphi^{-1}$, where $\varphi: \Omega\to \U$ is the conformal map with $\varphi(z)=0$ and $\varphi(x)=\ee^{\ii\theta}$. 
We denote by $\nradP{1}(\Omega; x; z)$ the law of radial $\SLE_{\kappa}$ in $(\Omega; x; z)$. Its partition function is given by (see~\cite[Section~2.1]{HuangPeltolaWuMultiradialSLEResamplingBP})
\begin{align}\label{eqn::1rad_pf}
\nradpartfn{1}(\Omega; x; z)=|\varphi'(z)|^{\frac{(6-\kappa)(\kappa-2)}{8\kappa}}|\varphi'(x)|^{\frac{(6-\kappa)}{2\kappa}}. 
\end{align}

The following coordinate change of radial SLE will be useful in Section~\ref{sec::tripod}. 

\begin{lemma}
\label{lem::1rad_RN}
Fix $\kappa\in (0,4]$ and an $1$-polygon $(\Omega; x)$. For two interior points $z_1, z_2\in\Omega$, the law of $\gamma$ under $\nradP{1}(\Omega; x; z_1)$ is the same as $\nradP{1}(\Omega; x; z_2)$ weighted by the following Radon-Nikodym derivative:
\begin{align}\label{eqn::1rad_RN}
\frac{\ud \nradP{1}(\Omega; x; z_1)}{\ud \nradP{1}(\Omega; x; z_2)}(\gamma[0,t])=\frac{\nradpartfn{1}(\Omega\setminus\gamma[0,t]; \gamma(t); z_1)}{\nradpartfn{1}(\Omega\setminus\gamma[0,t]; \gamma(t); z_2)}\frac{\nradpartfn{1}(\Omega; x, z_2)}{\nradpartfn{1}(\Omega; x, z_1)}. 
\end{align}
\end{lemma}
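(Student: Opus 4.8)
The plan is to pass to a reference configuration, recognise the right-hand side of~\eqref{eqn::1rad_RN} as a local martingale of the radial $\SLE_\kappa$ flow, and conclude by Girsanov's theorem. First I would reduce to $\Omega=\U$, $z_2=0$, $x=\ee^{\ii\theta}$ and $z_1=w\in\U\setminus\{0\}$. This is legitimate because both sides of~\eqref{eqn::1rad_RN} are conformally invariant: the left-hand side is a Radon--Nikodym derivative between two laws on curves in $\Omega$, hence unchanged under pushforward by a conformal map $\varphi$; and on the right-hand side, by the covariance~\eqref{eqn::1rad_pf}, the tip factor $|\varphi'(\gamma(t))|^{(6-\kappa)/(2\kappa)}$ cancels within the first ratio while the bulk factors $|\varphi'(z_1)|^{a}$ and $|\varphi'(z_2)|^{a}$, with $a:=\tfrac{(6-\kappa)(\kappa-2)}{8\kappa}$, cancel between the two ratios. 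In this reference setting $\nradP{1}(\U;\ee^{\ii\theta};0)$ is the standard radial $\SLE_\kappa$, while $\nradP{1}(\U;\ee^{\ii\theta};w)$ is, by definition, the image of a standard radial $\SLE_\kappa$ under the inverse of a M\"obius automorphism of $\U$ sending $w$ to $0$.

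Next I would make the candidate density explicit. Run the radial Loewner chain $(g_t)$ under $\nradP{1}(\U;\ee^{\ii\theta};0)$, with $g_t'(0)=\ee^{t}$, driving function $\xi_t=\theta+\sqrt{\kappa}\,B_t$, $O_t:=\ee^{\ii\xi_t}$ and $w_t:=g_t(w)$; since $\kappa\le4$ the curve is simple, so $w$ is never disconnected from $0$ and $w_t\in\U$ for all $t\ge0$. Uniformizing $\U\setminus\gamma[0,t]$ by post-composing $g_t$ with the M\"obius automorphism $\mu_t$ of $\U$ sending $w_t$ to $0$, formula~\eqref{eqn::1rad_pf} together with $|g_t'(0)|=\ee^{t}$ and the Schwarz--Pick identities $|\mu_t'(w_t)|=(1-|w_t|^2)^{-1}$, $|\mu_t'(O_t)|=(1-|w_t|^2)/|O_t-w_t|^2$ shows that the derivative-at-the-tip factor cancels, so that with $b:=\tfrac{6-\kappa}{2\kappa}$,
\begin{align*}
M_t & :=\frac{\nradpartfn{1}(\U\setminus\gamma[0,t];\gamma(t);w)}{\nradpartfn{1}(\U\setminus\gamma[0,t];\gamma(t);0)}\cdot\frac{\nradpartfn{1}(\U;\ee^{\ii\theta};0)}{\nradpartfn{1}(\U;\ee^{\ii\theta};w)}\\
& =\left(\frac{1-|w_t|^2}{1-|w|^2}\right)^{b-a}\left(\frac{|\ee^{\ii\theta}-w|^2}{|O_t-w_t|^2}\right)^{b}\,|g_t'(w)|^{a}\,\ee^{-at},
\end{align*}
which is positive and satisfies $M_0=1$.

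Then I would check that $M_t$ is a local martingale in the filtration of $B$. Using $\ud O_t=O_t(\ii\sqrt{\kappa}\,\ud B_t-\tfrac{\kappa}{2}\,\ud t)$, $\partial_t w_t=w_t(O_t+w_t)/(O_t-w_t)$, and the evolution of $g_t'(w)$ obtained by differentiating the radial Loewner equation in space, It\^o's formula shows that $\ud M_t/M_t$ has a $\ud B_t$-term and a $\ud t$-term, and the drift term vanishes identically; this cancellation is precisely the second-order (null-vector) identity satisfied by $\nradpartfn{1}$, and is where the prescribed values of $a$ and $b$ enter. I expect this It\^o computation, together with the bookkeeping of the time reparametrization in the last step, to be the main technical point of the proof: it is routine in nature but must be carried out with care.

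Finally I would invoke Girsanov. Tilting $\nradP{1}(\U;\ee^{\ii\theta};0)$ by $M_t$, the Brownian motion $B$ acquires a drift; using $M_t=\nradpartfn{1}(\U\setminus\gamma[0,t];\gamma(t);w)/\nradpartfn{1}(\U;\ee^{\ii\theta};w)$ up to a deterministic constant, together with the chain rule for Loewner flows, one checks that in the coordinates uniformizing $w$ to $0$, and after the corresponding (mutually absolutely continuous) time change, the driving function of $\gamma$ becomes $\theta'+\sqrt{\kappa}\,\widetilde B_s$ for a Brownian motion $\widetilde B$, i.e.\ the tilted law is exactly $\nradP{1}(\U;\ee^{\ii\theta};w)$. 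Since the latter is a probability measure, the local martingale $M_t$ is in fact a true martingale, so $M_t=\ud\nradP{1}(\U;\ee^{\ii\theta};w)/\ud\nradP{1}(\U;\ee^{\ii\theta};0)$ on $\gamma[0,t]$ (parametrized, say, by capacity seen from $0$). Undoing the conformal reduction then yields~\eqref{eqn::1rad_RN}.
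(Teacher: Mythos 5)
Your proposal is correct in outline, but it takes a genuinely different route from the paper: the paper does not prove the lemma from scratch at all --- it simply invokes the SLE coordinate-change results of~\cite{SchrammWilsonSLECoordinatechanges} and the partition-function framework of~\cite{HuangPeltolaWuMultiradialSLEResamplingBP} (Eq.~(2.19) there), and the only substantive point it records is the one you also make, namely that although $\Omega\setminus\gamma[0,t]$ is rough at the tip, the ratio on the right-hand side of~\eqref{eqn::1rad_RN} is well defined because the tip factors $|\varphi'(\gamma(t))|^{(6-\kappa)/(2\kappa)}$ cancel by conformal covariance of $\nradpartfn{1}$. What you propose is essentially a self-contained reconstruction of the cited coordinate-change result: reduction by conformal invariance of both sides to $(\U;\ee^{\ii\theta};0)$ versus an interior target $w$ (valid, since the bulk factors cancel between the two ratios), an explicit formula for the candidate density $M_t$ via the M\"obius/Schwarz--Pick factors (your formula is correct and $M_0=1$), the local-martingale property via It\^o, and identification of the tilted law by Girsanov after a time change. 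This buys self-containedness, at the price that the two computations carrying all the content --- the vanishing of the drift of $M_t$ and the identification, after reparametrization, of the tilted driving process with that of radial $\SLE_\kappa$ targeted at $w$ --- are asserted rather than carried out; these are precisely the computations done in~\cite{SchrammWilsonSLECoordinatechanges}, so the plan is sound, but as written those steps are claims, not proofs.

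One caveat on your final step: ``the tilted law is a probability measure, hence $M$ is a true martingale'' cannot be invoked for an arbitrary fixed capacity time $t$ (capacity seen from $z_2$). Under $\nradP{1}(\Omega;x;z_1)$ the curve terminates at $z_1$ at an almost surely finite capacity-from-$z_2$ time, so for $t$ large the laws of $\gamma[0,t]$ under the two measures are not mutually absolutely continuous, and $M$ is then only a supermartingale, with $\E[M_t]$ equal to the probability that the $z_1$-targeted curve is still alive at capacity time $t$. The identity~\eqref{eqn::1rad_RN} should be read, as it is actually used later in the paper (e.g.\ at the hitting time $T_r$ in Section~4), at stopping times occurring strictly before the curve reaches either target point; with that reading, your Girsanov argument goes through.
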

\begin{proof}
This follows from coordinate change~\cite{SchrammWilsonSLECoordinatechanges}. 
Note that the boundary of $\Omega\setminus\gamma[0,t]$ is rough at the tip $\gamma(t)$ and the partition function is not well-defined at such rough point. Whereas, the ratio in RHS of~\eqref{eqn::1rad_RN} is well-defined: suppose $\Omega=\U$, then we have (see~\cite[Eq.~(2.19)]{HuangPeltolaWuMultiradialSLEResamplingBP} for general conclusion)
\begin{align*}
\frac{\nradpartfn{1}(\U\setminus\gamma[0,t]; \gamma(t); z_1)}{\nradpartfn{1}(\U\setminus\gamma[0,t]; \gamma(t); z_2)}=\left|\frac{g_t'(z_1)}{g_t'(z_2)}\right|^{\frac{(6-\kappa)(\kappa-2)}{8\kappa}}\frac{\nradpartfn{1}(\U; \ee^{\ii\xi_t}; g_t(z_1))}{\nradpartfn{1}(\U; \ee^{\ii\xi_t}; g_t(z_2))}. 
\end{align*}
\end{proof}

\paragraph*{Radial SLE with force points.}
Fix $(\theta_1, \theta_2, \theta_3)\in\LX_3$ and $(\rho_1, \rho_2)\in\R^2$. 
Radial $\SLE_{\kappa}(\rho_1, \rho_2)$ in $\U$ from $\ee^{\ii\theta_3}$ to $0$ with force points $(\ee^{\ii\theta_1}, \ee^{\ii\theta_2})$  
is defined as the radial Loewner chain whose driving function $\xi_t$ solves the SDE 
\begin{align}\label{eqn::SLEkapparhomu_SDE}
\begin{cases}
\ud \xi_t = \sqrt{\kappa} \, \ud B_t 
- \displaystyle \frac{\rho_1}{2} \cot \left(\frac{V_t^{1} - \xi_t}{2}\right) \ud t -\frac{\rho_2}{2} \cot \left(\frac{V_t^{2} - \xi_t}{2}\right) \ud t, 
\qquad \xi_0 = \theta_3; \\
\ud V_t^{j} =  \displaystyle  \cot \left(\frac{V_t^{j} - \xi_t}{2}\right) \ud t, \quad V_0^{j}=\theta_j, 
\qquad j\in \{1,2\},
\end{cases}
\end{align}
where $B_t$ is standard one-dimensional Brownian motion. 
When $\rho_1\ge 0$ and $\rho_2\ge 0$, radial $\SLE_{\kappa}(\rho_1, \rho_2)$ process is almost surely generated by a continuous curve $\gamma\in\chamber(\U; \ee^{\ii\theta_3}; 0)$, see~\cite[Proposition~A.1]{HuangPeltolaWuMultiradialSLEResamplingBP}.
As usual, we can define radial $\SLE_{\kappa}(\rho_1, \rho_2)$ processes in a more general $3$-polygon $(\Omega; x_1, x_2, x_3)$ with $z\in\Omega$
as the pushforward measure of radial $\SLE_{\kappa}(\rho_1, \rho_2)$ by $\varphi^{-1}$, 
 where $\varphi : \Omega \to \U$ is the conformal map with $\varphi(z)=0$, 
and $(\theta_1, \theta_2, \theta_3)\in\LX_3$ is defined via $\varphi(x_j) =: \ee^{\ii\theta_j}$ for each~$j$.

\paragraph*{Three-sided radial SLE.}
Fix a $3$-polygon $(\Omega; x_1, x_2, x_3)$ with $z\in\Omega$. 
Three-sided radial SLE is a probability measure on $(\gamma_1, \gamma_2, \gamma_3)\in \chamber(\Omega; x_1, x_2, x_3; z)$ in Definition~\ref{def::3sidedradialSLE}. 
We denote by $\nradP{3}(\Omega; x_1, x_2, x_3; z)$ the law of three-sided radial $\SLE_{\kappa}$ in $(\Omega; x_1, x_2, x_3; z)$. 
Its partition function is given by~\eqref{eqn::3sided_pf}. 
Note that the partition function $\nradpartfn{3}$ satisfies the following conformal covariance: for any conformal map $\varphi$ on $\Omega$, 
\begin{align*}
\nradpartfn{3}(\Omega; x_1, x_2, x_3; z)
:= |\varphi'(z)|^{\frac{(10-\kappa)(\kappa+2)}{8\kappa}}  
\prod_{j=1}^3 |\varphi'(x_j)|^{\frac{(6-\kappa)}{2\kappa}} \nradpartfn{3}(\varphi(\Omega); \varphi(x_1), \varphi(x_2), \varphi(x_2); \varphi(z)).  
\end{align*}

The following two properties of three-sided radial SLE will be useful in Section~\ref{sec::tripod}. 
\begin{lemma}\label{lem::3radvs1rad_RN}
Fix $\kappa\in (0,4]$ and a $3$-polygon $(\Omega; x_1, x_2, x_3)$ with $z\in\Omega$. 
The law of $\gamma_3$ under $\nradP{3}(\Omega; x_1, x_2, x_3; z)$ is the same as $\nradP{1}(\Omega; x_3; z)$ weighted by the following Radon-Nikodym derivative
\begin{align}\label{eqn::3radvs1rad_RN}
\frac{\ud \nradP{3}(\Omega; x_1, x_2, x_3; z)}{\ud \nradP{1}(\Omega; x_3; z)}(\gamma_3[0,t])=\frac{\nradpartfn{3}(\Omega\setminus\gamma_3[0,t]; x_1, x_2, \gamma_3(t); z)}{\nradpartfn{1}(\Omega\setminus\gamma_3[0,t]; \gamma_3(t); z)}\frac{\nradpartfn{1}(\Omega; x_3; z)}{\nradpartfn{3}(\Omega; x_1, x_2, x_3; z)}. 
\end{align}
\end{lemma}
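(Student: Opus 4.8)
The plan is to read off the Radon–Nikodym derivative directly from the defining structure of three-sided radial $\SLE_{\kappa}$ in Definition~\ref{def::3sidedradialSLE} together with the coordinate-change rule for radial $\SLE_{\kappa}(2,2)$. By definition, the marginal law of $\gamma_3$ under $\nradP{3}(\Omega; x_1, x_2, x_3; z)$ is radial $\SLE_{\kappa}(2,2)$ in $\Omega$ from $x_3$ to $z$ with force points $(x_1, x_2)$. So the content of the lemma is really the standard fact that a radial $\SLE_{\kappa}(\rho_1,\rho_2)$ process, stopped at time $t$, is absolutely continuous with respect to plain radial $\SLE_{\kappa}$ (here $\nradP{1}(\Omega; x_3; z)$), with density equal to a ratio of partition functions; and that the relevant partition function for radial $\SLE_{\kappa}(2,2)$ with the two spectator points $x_1,x_2$ is exactly $\nradpartfn{3}$ of~\eqref{eqn::3sided_pf}. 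Concretely, I would work in the reference domain $(\U; \ee^{\ii\theta_1}, \ee^{\ii\theta_2}, \ee^{\ii\theta_3}; 0)$, let $\xi_t$ be the radial driving function and $V_t^1, V_t^2$ the force-point images solving~\eqref{eqn::SLEkapparhomu_SDE}, and compute via Itô's formula the Radon–Nikodym derivative of the $\SLE_{\kappa}(2,2)$ driving measure with respect to the $\SLE_{\kappa}$ driving measure (driving function $\theta_3 + \sqrt{\kappa}B_t$) up to time $t$.

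The key computational step is a Girsanov/Itô verification. Write $M_t = \nradpartfn{3}(\U\setminus\gamma_3[0,t]; \ee^{\ii V_t^1}, \ee^{\ii V_t^2}, \gamma_3(t); 0) \big/ \nradpartfn{1}(\U\setminus\gamma_3[0,t]; \gamma_3(t); 0)$, expressed through the mapping-out functions $g_t$ and the covering maps $h_t$ so that $\nradpartfn{3}$ becomes an explicit function of $\xi_t, V_t^1, V_t^2$ and the derivatives $g_t'(0)=\ee^t$. I would then check that under $\nradP{1}(\Omega; x_3; z)$ — i.e. with $\ud\xi_t = \sqrt{\kappa}\,\ud B_t$ and $\ud V_t^j = \cot((V_t^j-\xi_t)/2)\,\ud t$ — the process $M_t / M_0$ is a local martingale whose drift vanishes precisely because $\nradpartfn{3}$ satisfies the null-vector PDE system associated to the BPZ weights $h_{1,2}$ at $x_1,x_2,x_3$ and the interior weight at $z$ (these are the defining PDEs of multi-sided radial partition functions recorded in~\cite{HuangPeltolaWuMultiradialSLEResamplingBP}, which I am allowed to invoke). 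Computing $\ud \log M_t$ and matching the finite-variation part with the drift terms $-\tfrac{\rho_j}{2}\cot((V_t^j-\xi_t)/2)$, $\rho_j=2$, in~\eqref{eqn::SLEkapparhomu_SDE} identifies the Radon–Nikodym derivative of $\gamma_3[0,t]$ as exactly $M_t/M_0$, which is the right-hand side of~\eqref{eqn::3radvs1rad_RN}. Finally, transferring back from $\U$ to a general $(\Omega; x_1, x_2, x_3; z)$ via the conformal map $\varphi$ is immediate because both $\nradpartfn{1}$ and $\nradpartfn{3}$ carry conformal covariance with matching weights, so the prefactors $|\varphi'(\cdot)|$ cancel in the ratio.

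One subtlety to treat carefully, rather than a genuine obstacle, is that the domain $\Omega\setminus\gamma_3[0,t]$ has a rough tip at $\gamma_3(t)$ where $\nradpartfn{3}$ and $\nradpartfn{1}$ are individually ill-defined; as in the proof of Lemma~\ref{lem::1rad_RN} one observes that only the ratio enters, and that ratio is a well-defined function of the Loewner data $(\xi_t, V_t^1, V_t^2)$ and $g_t'(0)$ after expressing $\nradpartfn{3}/\nradpartfn{1}$ in the uniformized coordinate $\U$. I would spell this out by the analogue of~\cite[Eq.~(2.19)]{HuangPeltolaWuMultiradialSLEResamplingBP}, writing the conformal-covariance factors explicitly so the singular $|g_t'(\gamma_3(t))|$-type terms are seen to cancel. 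The main thing to be careful about is bookkeeping of the several exponents in~\eqref{eqn::3sided_pf} — the powers $\tfrac{10-\kappa}{2\kappa}$, $\tfrac{(10-\kappa)(\kappa+2)}{8\kappa}$, $\tfrac1\kappa$ — and checking that they are precisely the ones produced by the Itô computation; but this is routine given the PDE/BPZ framework, and the genuine "hard part" (existence and the martingale/PDE characterization of $\nradpartfn{3}$) is already available from~\cite{HuangPeltolaWuMultiradialSLEResamplingBP}, so the proof here is essentially an application of that machinery plus the Markov/tower structure of Definition~\ref{def::3sidedradialSLE}.
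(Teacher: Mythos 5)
Your argument is correct in substance, but it takes a different (and much longer) route than the paper: the paper's entire proof is the observation that the statement is literally a special case of \cite[Lemma~2.11]{HuangPeltolaWuMultiradialSLEResamplingBP}, supplemented by the explicit disk formula showing that the ratio $\nradpartfn{3}/\nradpartfn{1}$ remains well defined despite the rough tip (the analogue of \cite[Eq.~(2.19)]{HuangPeltolaWuMultiradialSLEResamplingBP}, which you also identify). What you propose is essentially to re-derive that cited lemma: use Definition~\ref{def::3sidedradialSLE} to identify the marginal of $\gamma_3$ as radial $\SLE_{\kappa}(2,2)$, then run the standard Girsanov/It\^o tilting of radial $\SLE_{\kappa}$ by the partition-function ratio, checking drift cancellation via the null-vector PDEs and matching the resulting drift $\kappa\,\partial_{\xi}\log\nradpartfn{3}=\sum_{j=1,2}\cot\bigl(\tfrac{\xi_t-V_t^j}{2}\bigr)$ with the $\rho_j=2$ terms in~\eqref{eqn::SLEkapparhomu_SDE}; in the disk the boundary factors of~\eqref{eqn::3sided_pf} reduce to $\prod_{i<j}|\sin((\theta_i-\theta_j)/2)|^{2/\kappa}$, so this matching does come out right. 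Two points you should tighten if you insist on the self-contained computation: (i) to conclude that $M_t/M_0$ is the Radon--Nikodym derivative on the $\sigma$-field of $\gamma_3[0,t]$ you need more than a local martingale --- either show it is a true martingale on $[0,t]$ or argue via mutual absolute continuity of the two probability laws (no force point is swallowed since $\rho_j=2>\tfrac{\kappa}{2}-2$ for $\kappa\le 4$), which is exactly the packaging the cited lemma provides; and (ii) the exponent bookkeeping (the powers $\tfrac{10-\kappa}{2\kappa}$, $\tfrac{(10-\kappa)(\kappa+2)}{8\kappa}$, $\tfrac1\kappa$ together with the $|g_t'(x_1)|,|g_t'(x_2)|,|g_t'(z)|$ covariance factors) must be verified explicitly rather than asserted as routine, since that is where an error would hide. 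Given that the paper is allowed to, and does, quote \cite{HuangPeltolaWuMultiradialSLEResamplingBP}, the citation route is the economical one; your route buys a self-contained verification at the cost of redoing the computation the reference already contains.
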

\begin{proof}
This is a special case of~\cite[Lemma~2.11]{HuangPeltolaWuMultiradialSLEResamplingBP}. 
The boundary of $\Omega\setminus\gamma_3[0,t]$ is rough at the tip $\gamma_3(t)$, but the ratio in RHS of~\eqref{eqn::3radvs1rad_RN} is well-defined: suppose $\Omega=\U$, then we have 
\begin{align*}
\frac{\nradpartfn{3}(\U\setminus\gamma_3[0,t]; x_1, x_2, \gamma_3(t); z)}{\nradpartfn{1}(\U\setminus\gamma_3[0,t]; \gamma_3(t); z)}
=&\frac{\nradpartfn{3}(\U; g_t(x_1), g_t(x_2), \ee^{\ii\xi_t}; g_t(z))}{\nradpartfn{1}(\U; \ee^{\ii\xi_t}; g_t(z))}|g_t'(z)|^{\frac{4}{\kappa}}|g_t'(x_1)|^{\frac{(6-\kappa)}{2\kappa}}|g_t'(x_2)|^{\frac{(6-\kappa)}{2\kappa}}. 
\end{align*}
\end{proof}


\begin{lemma}\label{lem::3radvschordal}
Fix $\kappa\in (0,4]$ and a $3$-polygon $(\Omega; x_1, x_2, x_3)$ with $z\in\Omega$. 
We consider the relation between the following two measures.
\begin{itemize}
\item The marginal law of $(\gamma_1, \gamma_2)$ under three-sided radial $\SLE_{\kappa}\sim \nradP{3}(\Omega; x_1, x_2, x_3; z)$. 
\item Suppose $\ell$ is chordal $\SLE_{\kappa}\sim\chordalSLE(\Omega; x_1, x_2)$ from $x_1$ to $x_2$. Let $\gamma_1$ be $\ell$ and let $\gamma_2$ be the time-reversal of $\ell$. Consider the law of $(\gamma_1, \gamma_2)$ under $\chordalSLE(\Omega; x_1, x_2)$. 
 \end{itemize}
Suppose $U\subset\Omega$ such that $(U; x_1, x_2)$ is a $2$-polygon and it has a positive distance from $\{x_3, z\}$. 
For $j=1,2$, let $\tau_j^U$ be the first time $\gamma_j$ exits $U$. 
Let $g_U$ be the conformal map from $\Omega\setminus(\gamma_1[0,\tau_1^U]\cup\gamma_2[0,\tau_2^U])$ onto $\Omega$ such that
\[g_U(\gamma_1(\tau_1^U))=x_1, \qquad g_U(\gamma_2(\tau_2^U))=x_2,\qquad g_U(x_3)=x_3.\]
Then the law of $(\gamma_1[0,\tau_1^U], \gamma_2[0,\tau_2^U])$ under $\nradP{3}(\Omega; x_1, x_2, x_3; z)$ is the same as $\chordalSLE(\Omega; x_1, x_2)$ weighted by the following Radon-Nikodym derivative
\begin{align}\label{eqn::3radvschordal_RN}
&\frac{\ud \nradP{3}(\Omega; x_1, x_2, x_3; z)}{\ud \chordalSLE(\Omega; x_1, x_2)}(\gamma_1[0,\tau_1^U], \gamma_2[0,\tau_2^U])\notag\\
=&\one\{\gamma_1[0,\tau_1^U]\cap\gamma_2[0,\tau_2^U]=\emptyset\}|g_U'(x_3)|^{\frac{(6-\kappa)}{2\kappa}}|g_U'(z)|^{\frac{(10-\kappa)(\kappa+2)}{8\kappa}}\frac{\nradpartfn{3}(\Omega; x_1, x_2, x_3; g_U(z))}{\nradpartfn{3}(\Omega; x_1, x_2, x_3; z)}.
\end{align}
\end{lemma}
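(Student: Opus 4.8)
The plan is to build the Radon--Nikodym derivative in two stages, peeling off the interior target point $z$ first and then comparing the resulting two-boundary-point radial-type measure with chordal $\SLE_\kappa$. Recall from Definition~\ref{def::3sidedradialSLE} that under $\nradP{3}(\Omega;x_1,x_2,x_3;z)$ we have the iterated description: $\gamma_3\sim$ radial $\SLE_\kappa(2,2)$ from $x_3$ to $z$, then $\gamma_1$ is chordal $\SLE_\kappa(2)$ in $\Omega\setminus\gamma_3$ from $x_1$ to $z$ with force point $x_2$, then $\gamma_2$ is chordal $\SLE_\kappa$ from $x_2$ to $z$ in $\Omega\setminus(\gamma_3\cup\gamma_1)$. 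The key structural input I would use is the resampling/reversibility symmetry of three-sided radial $\SLE_\kappa$ from~\cite{HuangPeltolaWuMultiradialSLEResamplingBP}: the same measure admits the description where one runs $\gamma_1$ first, then $\gamma_2$, then $\gamma_3$, and moreover the marginal on $(\gamma_1,\gamma_2)$ can be read off symmetrically. Concretely, I would first establish that the marginal law of $(\gamma_1,\gamma_2)$ under $\nradP{3}(\Omega;x_1,x_2,x_3;z)$ is absolutely continuous with respect to the ``doubled chordal $\SLE_\kappa$'' measure (i.e. $\gamma_1=\ell$, $\gamma_2$ = time-reversal of $\ell$, $\ell\sim\chordalSLE(\Omega;x_1,x_2)$), because reversibility of chordal $\SLE_\kappa$ for $\kappa\le 4$~\cite{MSreversibility} makes this doubled object the natural reference measure. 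The Radon--Nikodym derivative between the marginal and the reference is, by the Loewner/Girsanov bookkeeping standard for $\SLE_\kappa(\underline\rho)$ partition functions, a ratio of partition functions: namely
\begin{align}\label{eqn::3radvschordal_aux1}
\frac{\ud \nradP{3}(\Omega; x_1, x_2, x_3; z)}{\ud \chordalSLE(\Omega; x_1, x_2)}\Big((\gamma_1,\gamma_2)\text{-marginal, up to }(\tau_1^U,\tau_2^U)\Big)=\one\{\gamma_1\cap\gamma_2=\emptyset\}\,\frac{\nradpartfn{3}(\Omega\setminus(\gamma_1\cup\gamma_2); x_1', x_2', x_3; z)}{\nradpartfn{3}(\Omega; x_1, x_2, x_3; z)\,\chordalSLEpf(\Omega;x_1,x_2)}\times(\text{local terms}),
\end{align}
where $x_1'=\gamma_1(\tau_1^U)$, $x_2'=\gamma_2(\tau_2^U)$ are the tips. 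The indicator $\one\{\gamma_1[0,\tau_1^U]\cap\gamma_2[0,\tau_2^U]=\emptyset\}$ appears because on the complementary event the three-sided configuration is degenerate (two of the three legs have merged before reaching $z$), so the reference measure must be restricted to the non-intersection event.

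The second and main stage is to rewrite the ratio $\nradpartfn{3}(\Omega\setminus(\gamma_1\cup\gamma_2);x_1',x_2',x_3;z)/\nradpartfn{3}(\Omega;x_1,x_2,x_3;z)$ in terms of the conformal map $g_U:\Omega\setminus(\gamma_1[0,\tau_1^U]\cup\gamma_2[0,\tau_2^U])\to\Omega$ normalized by $g_U(x_1')=x_1$, $g_U(x_2')=x_2$, $g_U(x_3)=x_3$. Applying the conformal covariance of $\nradpartfn{3}$ stated just after Definition~\ref{def::3sidedradialSLE}, with $\varphi=g_U$, gives
\begin{align}\label{eqn::3radvschordal_aux2}
\nradpartfn{3}(\Omega\setminus(\gamma_1\cup\gamma_2); x_1', x_2', x_3; z)
= |g_U'(z)|^{\frac{(10-\kappa)(\kappa+2)}{8\kappa}}\,|g_U'(x_1')|^{\frac{(6-\kappa)}{2\kappa}}|g_U'(x_2')|^{\frac{(6-\kappa)}{2\kappa}}|g_U'(x_3)|^{\frac{(6-\kappa)}{2\kappa}}\,\nradpartfn{3}(\Omega; x_1, x_2, x_3; g_U(z)).
\end{align}
Now the factors $|g_U'(x_1')|^{(6-\kappa)/2\kappa}$ and $|g_U'(x_2')|^{(6-\kappa)/2\kappa}$ together with the ``local terms'' in~\eqref{eqn::3radvschordal_aux1} and the $\chordalSLEpf$ normalization should cancel exactly: this is the usual statement that chordal $\SLE_\kappa$ weighted by $\SLE_\kappa(2)$-type force-point interactions near $x_1,x_2$ is absorbed by the boundary conformal-covariance exponents at the tips. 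After this cancellation only $|g_U'(x_3)|^{(6-\kappa)/2\kappa}$, $|g_U'(z)|^{(10-\kappa)(\kappa+2)/8\kappa}$, and $\nradpartfn{3}(\Omega;x_1,x_2,x_3;g_U(z))/\nradpartfn{3}(\Omega;x_1,x_2,x_3;z)$ survive, which is precisely~\eqref{eqn::3radvschordal_RN}.

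I expect the main obstacle to be making the cancellation in the second stage rigorous rather than formal: one must carefully match the Girsanov/local martingale terms coming from the $\SLE_\kappa(2)$ and $\SLE_\kappa(2,2)$ drifts in Definition~\ref{def::3sidedradialSLE} against the boundary derivative exponents $|g_U'(x_1')|^{(6-\kappa)/2\kappa}$ and $|g_U'(x_2')|^{(6-\kappa)/2\kappa}$ and the $\chordalSLEpf$ factor, and verify there is no residual drift surviving at the stopping times $\tau_1^U,\tau_2^U$. A clean way to sidestep much of this computation is to invoke~\cite[Lemma~2.11]{HuangPeltolaWuMultiradialSLEResamplingBP} (already used to prove Lemma~\ref{lem::3radvs1rad_RN}) together with the boundary perturbation / commutation results of~\cite{HuangPeltolaWuMultiradialSLEResamplingBP}: these give the comparison of $\nradP{3}$ with an iterated radial/chordal object as a single partition-function ratio, and then one only has to unwind that ratio using the conformal covariance~\eqref{eqn::3radvschordal_aux2} and the explicit form~\eqref{eqn::3sided_pf} of $\nradpartfn{3}$. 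The non-intersection indicator and the domain-dependence of $g_U$ (well-definedness of $g_U$, and that the tips $\gamma_1(\tau_1^U),\gamma_2(\tau_2^U)$ land on a smooth portion of $\partial\Omega$ so the derivatives $|g_U'(x_3)|$, $|g_U'(z)|$ are genuine finite quantities) are routine but must be checked, using that $U$ has positive distance from $\{x_3,z\}$.
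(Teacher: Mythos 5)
Your overall skeleton matches the paper's: write the Radon--Nikodym derivative of the localized $(\gamma_1,\gamma_2)$ law as a ratio of partition functions, then apply the conformal covariance of $\nradpartfn{3}$ under $g_U$ so that only the factors at $x_3$ and $z$ survive. But the first stage, which is the core of the argument, is left as a formal claim, and as written your display is incomplete in a way that matters. The intermediate identity actually needed is
\begin{align*}
&\frac{\ud \nradP{3}(\Omega; x_1, x_2, x_3; z)}{\ud \chordalSLE(\Omega; x_1, x_2)}(\gamma_1[0,\tau_1^U], \gamma_2[0,\tau_2^U])\\
&\quad=\one\{\gamma_1[0,\tau_1^U]\cap\gamma_2[0,\tau_2^U]=\emptyset\}\,
\frac{\nradpartfn{3}(\Omega\setminus(\gamma_1[0,\tau_1^U]\cup\gamma_2[0,\tau_2^U]); \gamma_1(\tau_1^U), \gamma_2(\tau_2^U), x_3; z)}{\chordalSLEpf(\Omega\setminus(\gamma_1[0,\tau_1^U]\cup\gamma_2[0,\tau_2^U]); \gamma_1(\tau_1^U), \gamma_2(\tau_2^U))}\,
\frac{\chordalSLEpf(\Omega; x_1, x_2)}{\nradpartfn{3}(\Omega; x_1, x_2, x_3; z)},
\end{align*}
i.e.\ the chordal partition function of the \emph{slit} domain must appear in the denominator: it is precisely its conformal covariance, with exponent $\frac{6-\kappa}{2\kappa}$ at the two tips, that cancels the factors $|g_U'(\gamma_1(\tau_1^U))|^{\frac{6-\kappa}{2\kappa}}|g_U'(\gamma_2(\tau_2^U))|^{\frac{6-\kappa}{2\kappa}}$ produced by the covariance of $\nradpartfn{3}$, leaving exactly $|g_U'(x_3)|^{\frac{6-\kappa}{2\kappa}}|g_U'(z)|^{\frac{(10-\kappa)(\kappa+2)}{8\kappa}}$ and the ratio of $\nradpartfn{3}$ at $g_U(z)$ and $z$. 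In your version these tip factors are hidden inside unspecified ``local terms,'' and the cancellation is asserted rather than derived, so \eqref{eqn::3radvschordal_RN} is not actually established.

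The mechanism the paper uses to obtain this identity is also absent from your sketch: both the law of $(\gamma_1[0,\tau_1^U],\gamma_2[0,\tau_2^U])$ under $\nradP{3}(\Omega;x_1,x_2,x_3;z)$ and the doubled chordal measure are compared with one common reference measure, the product $\nradP{1}(\Omega;x_1;z)\otimes\nradP{1}(\Omega;x_2;z)$ of two independent radial $\SLE_\kappa$'s aimed at $z$. The two Radon--Nikodym derivatives (one from the coordinate change/boundary perturbation of \cite[Section~2.5]{HuangPeltolaWuMultiradialSLEResamplingBP}, the other from \cite{SchrammWilsonSLECoordinatechanges}) each contain the same non-local factor $\exp\bigl(\tfrac{\mathfrak{c}}{2}\blm(\Omega;\gamma_1[0,\tau_1^U],\gamma_2[0,\tau_2^U])\bigr)$ involving the Brownian loop measure, and these cancel in the quotient; this is what removes all ``Girsanov drift'' bookkeeping and yields the clean partition-function ratio above. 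Your proposed substitutes do not supply this: reversibility of chordal $\SLE_\kappa$ is not needed (the doubled object is defined directly from $\ell$ and its time-reversal), and \cite[Lemma~2.11]{HuangPeltolaWuMultiradialSLEResamplingBP} (the ingredient of Lemma~\ref{lem::3radvs1rad_RN}) compares $\gamma_3$ with one-sided radial $\SLE$ and does not address the $(\gamma_1,\gamma_2)$-marginal. So the gap is genuine, even though your second stage --- the covariance computation under $g_U$ --- is correct and is exactly how the paper concludes.
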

\begin{proof}
For $j=1,2$, suppose $\gamma_j$ is radial $\SLE_{\kappa}\sim\nradP{1}(\Omega; x_j; z)$ and denote by $\nradP{1}(\Omega; x_1; z)\otimes \nradP{1}(\Omega; x_2; z)$ the measure under which $\gamma_1$ and $\gamma_2$ are independent. 
\begin{itemize}
\item From coordinate change~\cite[Section~2.5]{HuangPeltolaWuMultiradialSLEResamplingBP}, we have
\begin{align*}
&\frac{\ud \nradP{3}(\Omega; x_1, x_2, x_3; z)}{\ud \nradP{1}(\Omega; x_1; z)\otimes \nradP{1}(\Omega; x_2; z)}(\gamma_1[0,\tau_1^U], \gamma_2[0,\tau_2^U])\\
=&\one\{\gamma_1[0,\tau_1^U]\cap\gamma_2[0,\tau_2^U]=\emptyset\}\exp\left(\frac{\mathfrak{c}}{2}\blm(\Omega; \gamma_1[0,\tau_1^U], \gamma_2[0,\tau_2^U])\right)\\
&\times \frac{\nradpartfn{3}(\Omega\setminus(\gamma_1[0,\tau_1^U]\cup\gamma_2[0,\tau_2^U]); \gamma_1(\tau_1^U), \gamma_2(\tau_2^U), x_3; z)}{\nradpartfn{1}(\Omega\setminus\gamma_1[0,\tau_1^U]; \gamma_1(\tau_1^U); z)\nradpartfn{1}(\Omega\setminus\gamma_2[0,\tau_2^U]; \gamma_2(\tau_2^U); z)}\frac{\nradpartfn{1}(\Omega; x_1; z)\nradpartfn{1}(\Omega; x_2; z)}{\nradpartfn{3}(\Omega; x_1, x_2, x_3; z)}, 
\end{align*}
where $\mathfrak{c}$ is central charge
\[\mathfrak{c}=\frac{(6-\kappa)(3\kappa-8)}{2\kappa},\]
and $\blm(\Omega; A, B)$ is Brownian loop measure on loops in $\Omega$ intersecting both $A$ and $B$ (see~\cite{LawlerWernerBrownianLoopsoup} for its definition and properties and~\cite{LawlerPartitionFunctionsSLE} for a survey). 
\item From coordinate change~\cite{SchrammWilsonSLECoordinatechanges}, we have 
\begin{align*}
&\frac{\ud \chordalSLE(\Omega; x_1, x_2)}{\ud \nradP{1}(\Omega; x_1; z)\otimes \nradP{1}(\Omega; x_2; z)}(\gamma_1[0,\tau_1^U], \gamma_2[0,\tau_2^U])\\
=&\one\{\gamma_1[0,\tau_1^U]\cap\gamma_2[0,\tau_2^U]=\emptyset\}\exp\left(\frac{\mathfrak{c}}{2}\blm(\Omega; \gamma_1[0,\tau_1^U], \gamma_2[0,\tau_2^U])\right)\\&\times \frac{\chordalSLEpf(\Omega\setminus(\gamma_1[0,\tau_1^U]\cup\gamma_2[0,\tau_2^U]); \gamma_1(\tau_1^U), \gamma_2(\tau_2^U))}{\nradpartfn{1}(\Omega\setminus\gamma_1[0,\tau_1^U]; \gamma_1(\tau_1^U); z)\nradpartfn{1}(\Omega\setminus\gamma_2[0,\tau_2^U]; \gamma_2(\tau_2^U); z)}\frac{\nradpartfn{1}(\Omega; x_1; z)\nradpartfn{1}(\Omega; x_2; z)}{\chordalSLEpf(\Omega; x_1, x_2)}. 
\end{align*}
\end{itemize}
Combining the two observations, we obtain
\begin{align*}
&\frac{\ud \nradP{3}(\Omega; x_1, x_2, x_3; z)}{\ud \chordalSLE(\Omega; x_1, x_2)}(\gamma_1[0,\tau_1^U], \gamma_2[0,\tau_2^U])\\
=&\one\{\gamma_1[0,\tau_1^U]\cap\gamma_2[0,\tau_2^U]=\emptyset\}\frac{\nradpartfn{3}(\Omega\setminus(\gamma_1[0,\tau_1^U]\cup\gamma_2[0,\tau_2^U]); \gamma_1(\tau_1^U), \gamma_2(\tau_2^U), x_3; z)}{\chordalSLEpf(\Omega\setminus(\gamma_1[0,\tau_1^U]\cup\gamma_2[0,\tau_2^U]); \gamma_1(\tau_1^U), \gamma_2(\tau_2^U))}\frac{\chordalSLEpf(\Omega; x_1, x_2)}{\nradpartfn{3}(\Omega; x_1, x_2, x_3; z)}. 
\end{align*}
Combining with conformal covariance of partition functions, we obtain~\eqref{eqn::3radvschordal_RN} as desired. 
\end{proof}

In this article, we focus on $\kappa=2$. In this case, we have
\begin{align}\label{eqn::13rad_kappa2}
\nradpartfn{1}^{(\kappa=2)}(\Omega; x; z)=\Poisson(\Omega; z, x), \qquad \nradpartfn{3}^{(\kappa=2)}(\Omega; x_1, x_2, x_3; z)=\LZtripod(\Omega; x_1, x_2, x_3; z), 
\end{align}
where $\Poisson(\Omega; z, x)$ is Poisson kernel~\eqref{eqn::Poisson_H}-\eqref{eqn::Poisson_cov} and $\LZtripod$ is tripod partition function~\eqref{eqn::LZtripod_def}.

\section{Scaling limit of trifurcation}
\label{sec::trifurcation}
The goal of this section is to prove Theorem~\ref{thm::trifurcation}. 
In Section~\ref{subsec::trifurcation_key}, we derive the probability $\PP[\LA_1^{\delta}\cap\LA_2^{\delta}\cap\{\trifurcation^{\delta}=z^{\delta}\}]$ in terms of discrete Poisson kernels using Fomin's formula.
In Section~\ref{subsec::observable_limit}, we show that the scaling limit of the probability $\PP[\LA_1^{\delta}\cap\LA_2^{\delta}\cap\{\trifurcation^{\delta}=z^{\delta}\}]$ is the same as tripod partition function $\LZtripod$~\eqref{eqn::LZtripod_def} up to multiplicative constant. In Section~\ref{subsec::trifurcation_proof}, we derive the scaling limit of $\PP[\LA_1^{\delta}\cap\LA_2^{\delta}]$ and complete the proof of Theorem~\ref{thm::trifurcation}. 
In Section~\ref{subsec::A1interior}, we derive the scaling limit of the probability $\PP\left[ \{z^{\delta}\rightsquigarrow e_3^{\delta}\}\cond \LA_1^{\delta}\right]$ in Proposition~\ref{prop::A1interior}. It will be the key step in the proof of Proposition~\ref{prop::chordalSLE2_Fomin}. 

\subsection{Key lemma for trifurcation}
\label{subsec::trifurcation_key}

\paragraph*{Loop-erased random walk and Wilson's algorithm.}
Let $z = (z(t))_{t=0}^n$ be a finite sequence of symbols.  
Its \emph{loop-erasure} $\operatorname{LE}(z) = (\lambda(s))_{s=0}^m$ is defined recursively by  
\begin{equation}
\lambda(0) = z(0), \qquad 
\lambda(s+1) = z(t_s + 1), \qquad
t_s = \max\{  t > t_{s-1} \mid z(t) = \lambda(s)  \}, \qquad t_{-1} = -1,
\end{equation}
where $m$ is the smallest index with $t_m = n$.  
The sequence $\lambda$ satisfies $\lambda(0) = z(0)$, $\lambda(m) = z(n)$, and is self-avoiding.
A \textit{loop-erased random walk} (LERW) is the loop-erasure of a finite random walk.  
Let $\eta$ be a simple random walk on $\LV$ started at $v$, stopped upon hitting a non-empty set $S \subset \LV$ at time $T$.  
The LERW from $v$ to $S$ is $\lambda = \mathrm{LE}(\eta_{[0,T]})$.  
We identify $\lambda$ with its vertex list, edge list, or induced subgraph. 
The following procedure of constructing a uniform spanning tree is known as Wilson's algorithm~\cite{WilsonUSTLERW}. 
Let $v_0, \dots, v_n$ enumerate the vertices of a finite connected graph.  
Set $\LT_0 = \{v_0\}$.  
For $k=1,\dots,n$, let $\LT_k$ be the union of $\LT_{k-1}$ and an independent LERW from $v_k$ to $\LT_{k-1}$.  
Then $\LT = \LT_n$ is a uniform spanning tree. 

Applying Wilson's algorithm for the UST with wired boundary condition, the boundary branch $\eta_v$ from $v \in \LV$ has the law of an LERW from $v$ to $\LV^{\partial}$.  The probability that the boundary branch $\eta_v$ from $v \in \LV$ hits the boundary through a boundary edge $e_{\out} \in \LE^{\partial}$ is the Poisson kernel:
\begin{align}\label{eqn::boundarybranch_harmonic}
    \PP\left[v \rightsquigarrow e_{\out}\right]=\harmonic(\LG; v, e_{\out}) .
\end{align}
Moreover, conditional on exiting through $e_{\out}$, the branch $\eta_v$ is the loop-erasure of a simple random walk from $v$ to $\LV^{\partial}$ conditioned to exit through $e_{\out}$.

\begin{lemma}\label{lem::Fomin}
Fix a bounded $3$-polygon $(\Omega; x_1, x_2, x_3)$ and suppose $(\Omega^{\delta}; x_1^{\delta}, x_2^{\delta}, x_3^{\delta})$ is an approximation of $(\Omega; x_1, x_2, x_3)$ on $\delta\hexagon$ in Carath\'eodory sense. 
Recall that $e_j^\delta$ is the boundary edge $e_j^{\delta}=\langle x_j^{\delta,\circ}, x_j^\delta\rangle$.
Consider UST on $\Omega^{\delta}$ with wired boundary condition. 
Define $\LA_1^{\delta}=\{x_1^{\delta, \circ}\rightsquigarrow e_3^{\delta}\}$ and $\LA_2^{\delta}=\{x_2^{\delta, \circ}\rightsquigarrow e_3^{\delta}\}$ as in~\eqref{eqn::conditionalevent} and consider the trifurcation $\trifurcation^{\delta}$.
For $u \in \LV^{\circ}(\Omega^{\delta})$, denote by $u_1, u_2, u_3$ the three adjacent vertices of $u$ arranged in counterclockwise order. 
Then we have
\begin{align}\label{eqn::fomin_main}
\PP\left[\LA_1^{\delta}\cap \LA_2^{\delta}\cap \{\trifurcation^{\delta}=u\}\right]=\det\left(\harmonic(\Omega^{\delta}; u_k, e_{j}^\delta)\right)_{k,j=1}^3, 
\end{align}
where $\harmonic(\Omega^{\delta}; u, e)$ is the Poisson kernel~\eqref{eqn::dPoisson_def}. 
\end{lemma}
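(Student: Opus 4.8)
Write $E_u:=\LA_1^{\delta}\cap\LA_2^{\delta}\cap\{\trifurcation^{\delta}=u\}$. The plan is to first describe $E_u$ combinatorially via Wilson's algorithm and then evaluate $\PP[E_u]$ with Fomin's formula from~\cite{fomin2001loop}; the geometry of the hexagonal lattice --- that interior vertices have degree exactly three, matching the three marked points --- enters the first step crucially.

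\textbf{Step 1: a Wilson-algorithm description of $E_u$.} Sample the UST with wired boundary condition by Wilson's algorithm rooted at $v_{\partial}$, drawing branches in the order: the LERW $\omega$ from $u$; then the LERW $\eta_1$ from $x_1^{\delta,\circ}$ to $\{v_{\partial}\}\cup\omega$; then the LERW $\eta_2$ from $x_2^{\delta,\circ}$ to $\{v_{\partial}\}\cup\omega\cup\eta_1$. I would show that $E_u$ is exactly the event that $\omega$ exits the domain through $e_3^{\delta}$, that $\eta_1$ first meets $\{v_{\partial}\}\cup\omega$ at the vertex $u$, and that $\eta_2$ first meets $\{v_{\partial}\}\cup\omega\cup\eta_1$ at $u$. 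One direction is immediate from the definitions of $\LA_1^{\delta},\LA_2^{\delta}$ and of the trifurcation (on $E_u$, $\omega$ is the branch of $u$, which must then exit through $e_3^{\delta}$, and the branches of $x_1^{\delta,\circ}$ and $x_2^{\delta,\circ}$ first merge with it at $u$). For the converse: the first edge of $\omega$ is one of the three edges $\langle u,u_1\rangle,\langle u,u_2\rangle,\langle u,u_3\rangle$, say $\langle u,u_a\rangle$; since $u_a$ and all the remaining vertices of $\omega$ already lie in the tree when $\eta_1$ is grown, and $u_1,u_2,u_3$ are the \emph{only} neighbours of $u$, the last edge of $\eta_1$ into $u$ is one of the two remaining edges, say $\langle u,u_b\rangle$ with $b\neq a$; then the last edge of $\eta_2$ into $u$ is forced to be $\langle u,u_c\rangle$ with $\{a,b,c\}=\{1,2,3\}$. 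Hence all three edges incident to $u$ lie in $\LT^{\delta}$, the vertex $u$ has degree three, deleting $u$ produces three subtrees attached to $x_1^{\delta,\circ}$, $x_2^{\delta,\circ}$ and (through $e_3^{\delta}$) to the wired boundary, and indeed $u=\trifurcation^{\delta}$. It is precisely here that the hexagonal lattice is used: the three branches $\omega,\eta_1,\eta_2$ exhaust the degree-three star of $u$, so nothing is left over.

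\textbf{Step 2: Fomin's formula.} After reversing $\omega$, Step~1 exhibits $E_u$ as the presence in the UST of three loop-erased simple paths joining $u$ to $e_3^{\delta}$, to $x_1^{\delta,\circ}$, and to $x_2^{\delta,\circ}$, pairwise disjoint apart from their common endpoint $u$. Expanding the LERW probabilities as weighted sums over the underlying simple-random-walk trajectories, $\PP[E_u]$ becomes the alternating sum of trajectory weights to which Fomin's formula applies: the standard sign-reversing involution (which interchanges the portions of two walks past a first common site) cancels all configurations whose traces intersect, and the surviving non-crossing configurations are, after loop-erasure, exactly the tripod configurations of Step~1. Fomin's formula~\cite{fomin2001loop} then identifies this alternating sum with $\det\big(\harmonic(\Omega^{\delta};u_k,e_j^{\delta})\big)_{k,j=1}^3$ --- equivalently, recalling from~\eqref{eqn::harmonic_green} that $\harmonic(\Omega^{\delta};u_k,e_j^{\delta})=\Green(\Omega^{\delta};u_k,x_j^{\delta})=\tfrac{1}{\deg(x_j^{\delta,\circ})}\Green(\Omega^{\delta};u_k,x_j^{\delta,\circ})$, with a Green's-function determinant for the three neighbours of $u$ against the three prescribed targets. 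Because $u_1,u_2,u_3$ are the three neighbours of the single interior vertex $u$, cyclically ordered consistently with $x_1,x_2,x_3$ along $\partial\Omega$, the only pairings compatible with planarity are cyclic, and all cyclic permutations of $\{1,2,3\}$ are even; so no negative contributions appear in the determinant expansion, which therefore equals $\PP[E_u]$.

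\textbf{Main obstacle.} The delicate part is the interface between the combinatorics of $E_u$ and Fomin's determinant. In Step~1 one must argue carefully for the converse inclusion --- in particular that on the stated event neither $\eta_1$ nor $\eta_2$ can visit two neighbours of $u$ (that would be inconsistent with $\eta_2$ first meeting the tree at $u$, since all neighbours of $u$ would then already lie in the tree), and that no further branch of the UST can carry an edge incident to $u$. In Step~2 the issue is the sign and planar-topology bookkeeping in Fomin's identity: checking that the walk configurations surviving the involution correspond bijectively, after loop-erasure, to the tripod configurations of Step~1, and that each enters with sign $+1$. One should also dispose of the degenerate cases in which $u$ is adjacent to $\partial\Omega^{\delta}$, or some $u_k$ coincides with an $x_j^{\delta,\circ}$, which are best treated separately. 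I expect the sign/topology matching in Step~2 to be the main difficulty.
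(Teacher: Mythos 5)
Your Step 1 is essentially correct: running Wilson's algorithm in the order $u$, $x_1^{\delta,\circ}$, $x_2^{\delta,\circ}$ does describe the event $\LA_1^{\delta}\cap\LA_2^{\delta}\cap\{\trifurcation^{\delta}=u\}$ exactly as you say, and on this event the three arms of the tripod use the three distinct edges of the degree-three star at $u$. The genuine gap is in Step 2. The determinant you must produce, $\det\left(\harmonic(\Omega^{\delta};u_k,e_j^{\delta})\right)_{k,j=1}^3$, is expressed by Fomin's formula as a signed sum over events for three independent random walks \emph{started at the three neighbours $u_1,u_2,u_3$ of $u$ and absorbed through the boundary edges $e_1^{\delta},e_2^{\delta},e_3^{\delta}$}, with the ordered constraints $\beta_j\cap\mathrm{LE}(\beta_i)=\emptyset$ for $i<j$. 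The walks produced by your Step 1 are of a different type: they start at $u$, $x_1^{\delta,\circ}$, $x_2^{\delta,\circ}$, and two of them are stopped upon first hitting previously drawn branches \emph{at the interior vertex $u$}. ``Reversing $\omega$'' does not convert ``first meet the tree at $u$'' into ``exit through $e_j^{\delta}$'', so your sentence ``expanding the LERW probabilities \ldots becomes the alternating sum of trajectory weights to which Fomin's formula applies'' is exactly the missing step, not a routine expansion. If you try to build the bridge by reversing walks, you must trade hitting the interior vertices $u$ and $x_j^{\delta,\circ}$ for boundary-edge absorption, which brings in last-step weights $1/\deg=1/3$ and Green's-function identities that you never track; both the validity and the multiplicative constant in~\eqref{eqn::fomin_main} hinge on this bookkeeping.

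The paper closes precisely this hole by a surgery on spanning trees rather than on walks: on the event $\LB_{\ell}^{\delta}$ that the arm towards $x_j$ leaves $u$ through $\langle u,u_{j+\ell}\rangle$, deleting the two edges $\langle u,u_{1+\ell}\rangle,\langle u,u_{2+\ell}\rangle$ and adding the boundary edges $e_1^{\delta},e_2^{\delta}$ is a bijection (hence probability-preserving, by uniformity of the UST) onto the event $\LC_{\ell}^{\delta}=\{u_{1+\ell}\rightsquigarrow e_1^{\delta}\}\cap\{u_{2+\ell}\rightsquigarrow e_2^{\delta}\}\cap\{u_{3+\ell}\rightsquigarrow e_3^{\delta}\}\cap\{\langle u,u_{3+\ell}\rangle\in\LT^{\delta}\}$; only after this surgery does Wilson's algorithm, now started from $u_1,u_2,u_3$, produce walks with the starting points and absorbing edges that Fomin's determinant actually involves (at that point your planarity and sign remarks apply verbatim). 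Note also the event $\{\langle u,u_{3+\ell}\rangle\in\LT^{\delta}\}$, which your proposal never isolates: in Wilson's algorithm it is realised by the branch of $u$ itself, a single edge chosen uniformly among the three edges at $u$, independently of the three non-intersecting branches, i.e.\ an extra factor $\tfrac13$ relative to the pure three-walk non-intersection probability appearing in Fomin's identity. Any complete write-up has to account for this factor explicitly --- it is worth testing the bookkeeping on a small degree-three example, also against the paper's own identification in~\eqref{eqn::fomin_aux2} --- and this normalisation at $u$, rather than the signs of the cyclic permutations, is where the constant in the lemma is decided.
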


The proof of Lemma~\ref{lem::Fomin} relies on Fomin's formula~\cite[Theorem~6.1]{fomin2001loop} and uses the geometry of the hexagonal lattice crucially. 
In an earlier work~\cite{BLpiecewise}, the authors derived the scaling limit of the boundary-to-boudary branches in Temperleyan tree arising from the dimer model under piecewise Temperleyan boundary conditions. In that work, Fomin's formula also plays an important role in constructing the corresponding observable.

\begin{proof}[Proof of Lemma~\ref{lem::Fomin}]
In this lemma, we view $\gamma_j^{\delta}$ as the path from $\trifurcation^{\delta}$ to $x_j^{\delta}$ exiting the domain through the boundary edge $e_j^{\delta}=\langle x_j^{\delta, \circ}, x_j^{\delta}\rangle$. For $\ell\in\{0,1,2\}$, define 
\begin{align*}
\LB_{\ell}^{\delta}=\{\gamma_j^{\delta}\text{ traverses the edge }\langle u, u_{j+\ell}\rangle\text{ for }j\in\{1,2,3\}\},
\end{align*}
where we use the convention that $u_4=u_1, u_5=u_2$. See Figure~\ref{fig::fomin}~(a). For configurations belonging to the event $\LA_1^{\delta}\cap\LA_2^{\delta}\cap\{\trifurcation^{\delta}=u\}\cap\LB_{\ell}^{\delta}$, the operation of removing two edges $\langle u, u_{1+\ell}\rangle$ and $\langle u, u_{2+\ell}\rangle$ and adding two edges $e_1^{\delta}$ and $e_2^{\delta}$ induces a bijection to configurations in the event 
\[\LC_{\ell}^{\delta}=\{u_{1+\ell}\rightsquigarrow e_1^\delta\}\cap\{u_{2+\ell}\rightsquigarrow e_2^\delta\}\cap \{u_{3+\ell}\rightsquigarrow e_3^\delta\}\cap\{\langle u, u_{3+\ell}\rangle \in\LT^{\delta}\}.\]
See Figure~\ref{fig::fomin}. Thus, 
\begin{equation}\label{eqn::Fomin_aux1}
            \PP\left[\LA_1^\delta\cap\LA_2^\delta\cap\{\trifurcation^\delta=u\}\right]=\sum_{\ell=0}^2 \PP\left[\LA_1^\delta\cap\LA_2^\delta\cap\{\trifurcation^\delta=u\} \cap \LB_{\ell}^\delta\right]
            =\sum_{\ell=0}^{2}\PP[\LC_{\ell}^\delta].
        \end{equation}
 
\begin{figure}[ht!]
\begin{subfigure}[b]{0.45\textwidth}
\begin{center}
\includegraphics[width=\textwidth]{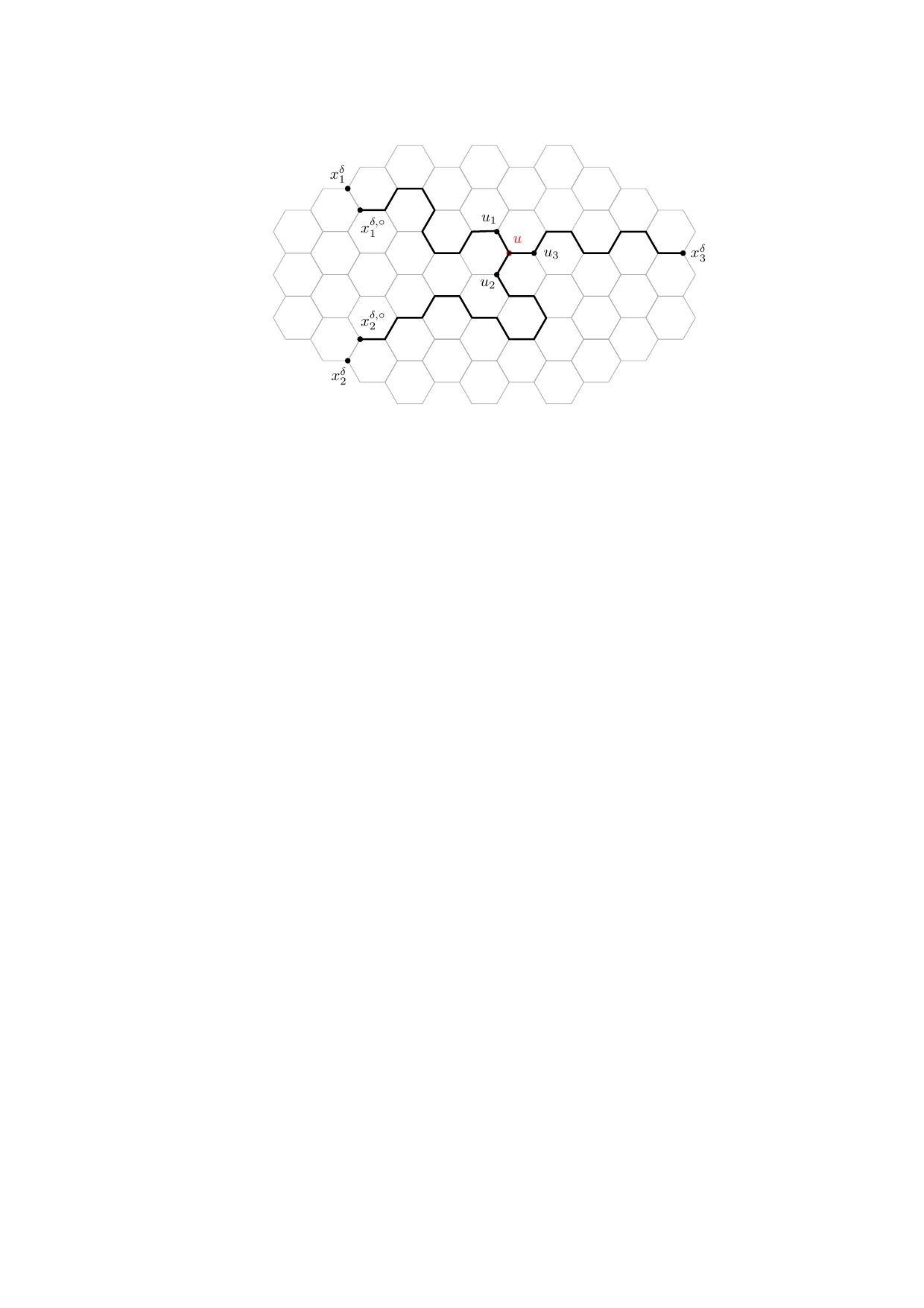}
\end{center}
\caption{}
\end{subfigure}
$\quad$
\begin{subfigure}[b]{0.45\textwidth}
\begin{center}
\includegraphics[width=\textwidth]{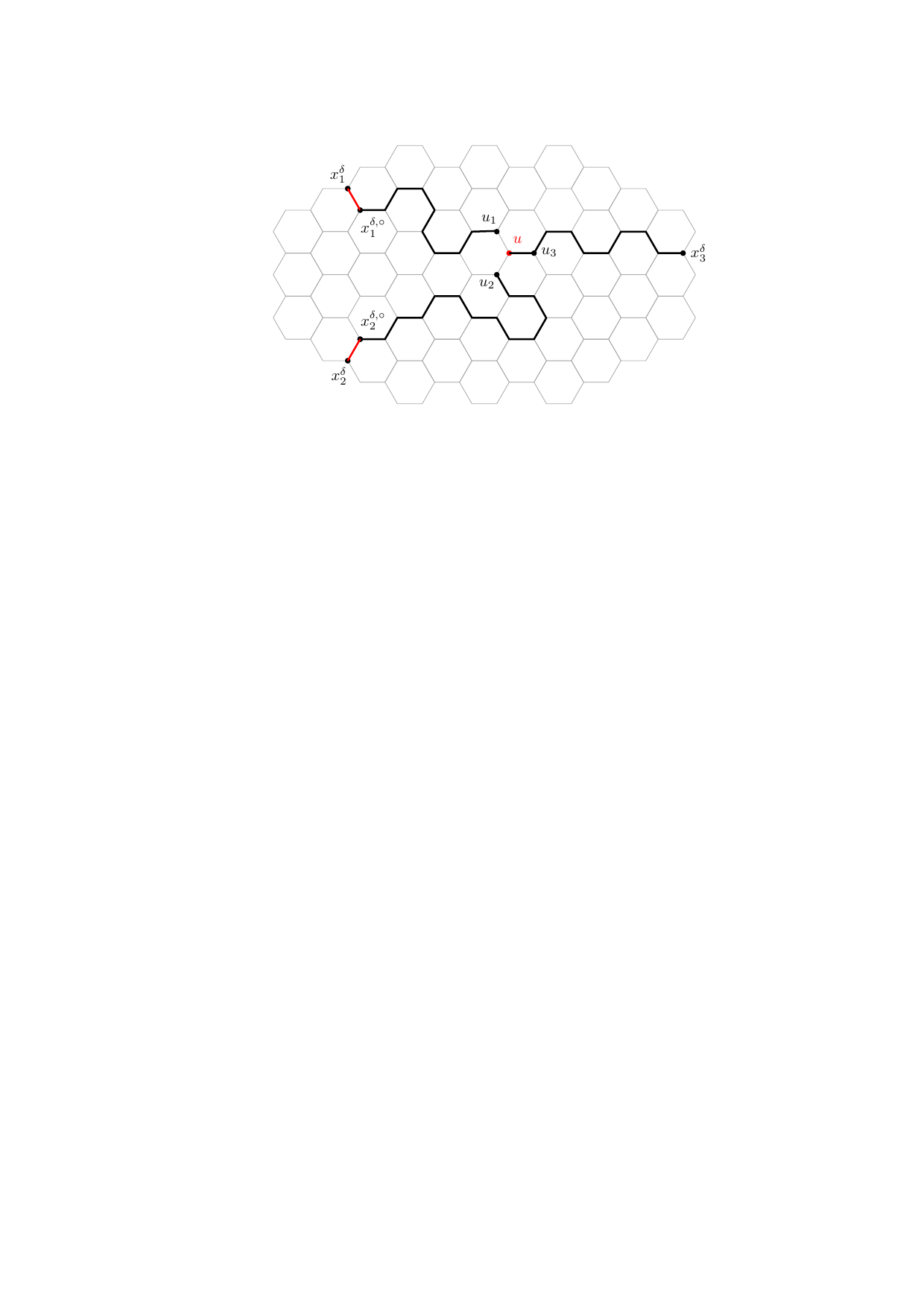}
\end{center}
\caption{}
\end{subfigure}
\caption{\label{fig::fomin} In tripod, suppose $\{u_1\rightsquigarrow e_1^{\delta}, u_2\rightsquigarrow e_2^{\delta}, u_3\rightsquigarrow e_3^{\delta}\}$ as in~(a). We delete two edges $\langle u, u_1\rangle$ and $\langle u, u_2\rangle$ and add two edges $\langle x_1^{\delta}, x_1^{\delta, \circ}\rangle$ and $\langle x_2^{\delta}, x_2^{\delta, \circ}\rangle$ (the two edges in red) as in~(b). 
Such operation induces a bijection from configurations in $\LA_1^{\delta}\cap\LA_2^{\delta}\cap\{\trifurcation^{\delta}=u\}\cap\LB_1^{\delta}$ to configurations in $\LC_1^{\delta}$.}
\end{figure}

We will evaluate $\PP[\LC_{\ell}^{\delta}]$ using Fomin's formula. Denote by $\LW(u, e)$ the set of all finite walks on the graph $\Omega^{\delta}$ from an interior vertex $u \in \LV^{\partial}(\Omega^{\delta})$ to the boundary through the boundary edge $e \in \LE^{\partial}(\Omega^{\delta})$. 
        Let $\beta_{1}^\delta, \beta_{2}^\delta, \beta_{3}^\delta$ be three independent simple random walks starting from $u_1, u_2, u_3$ respectively, and stopped upon hitting the boundary. 
\begin{itemize}
\item On the one hand, applying Wilson's algorithm, we have 
        \begin{equation}\label{eqn::fomin_aux2}
           \PP[\LC_{\ell}^\delta]=\PP\left[\beta_{j}^\delta\in \LW(u_j,e_{j-\ell}^\delta) \text{ for }j\in\{1,2,3\}, \text{ and } \beta_{j}^\delta\cap \mathrm{LE}(\beta_{i}^\delta)=\emptyset \text{ for all }i<j\right], 
        \end{equation}
        where we use the convention that $e_{0}=e_3, e_{-1}=e_2$.
        \item On the other hand, Fomin's formula~\cite[Theorem~6.1]{fomin2001loop} tells
        \begin{align}\label{eqn::fomin_aux3}
                    &\det\left(\harmonic(\Omega^{\delta}; u_k, e_{j}^\delta)\right)_{k,j=1}^3\notag\\
            =&
           \sum_{\sigma}\mathrm{sgn}(\sigma)\PP\left[\beta_{j}^\delta\in \LW(u_j,e_{\sigma(j)}^\delta) \text{ for }j\in\{1,2,3\}, \text{ and } \beta_{j}^\delta\cap \mathrm{LE}(\beta_{i}^\delta)=\emptyset \text{ for all }i<j\right], \end{align}
        where the sum ranges over all permutations $\sigma$ of order three. Since $u_1, u_2, u_3$ are three adjacent vertices of $u$ arranged in counterclockwise order, the requirement $\{\beta_{j}^\delta\cap \mathrm{LE}(\beta_{i}^\delta)=\emptyset \text{ for all }i<j\}$ tells that the only possible permutations  are $(123)$, $(231)$, or $(312)$ in RHS of~\eqref{eqn::fomin_aux3}. For these three permutations $\sigma$, we have $\mathrm{sgn}(\sigma)=1$. Thus, 
        \begin{align*}
                    &\det\left(\harmonic(\Omega^{\delta}; u_k, e_{j}^\delta)\right)_{k,j=1}^3\notag\\
            =&
           \sum_{\sigma}\mathrm{sgn}(\sigma)\PP\left[\beta_{j}^\delta\in \LW(u_j,e_{\sigma(j)}^\delta) \text{ for }j\in\{1,2,3\}, \text{ and } \beta_{j}^\delta\cap \mathrm{LE}(\beta_{i}^\delta)=\emptyset \text{ for all }i<j\right]\\
           =& \sum_{\ell=0}^2\PP\left[\beta_{j}^\delta\in \LW(u_j,e_{j-\ell}^\delta) \text{ for }j\in\{1,2,3\}, \text{ and } \beta_{j}^\delta\cap \mathrm{LE}(\beta_{i}^\delta)=\emptyset \text{ for all }i<j\right].\end{align*}
\end{itemize} 
Combining the two observations, we have 
\begin{equation}\label{eqn::fomin_aux4}
\sum_{\ell=0}^2\PP[\LC_{\ell}^{\delta}]=\det\left(\harmonic(\Omega^{\delta}; u_k, e_{j}^\delta)\right)_{k,j=1}^3.
\end{equation}
Plugging into~\eqref{eqn::Fomin_aux1}, we obtain~\eqref{eqn::fomin_main} as desired. 
\end{proof}

\subsection{Scaling limit of the observable}
\label{subsec::observable_limit}
The goal of this section is to take scaling limit of the RHS of~\eqref{eqn::fomin_main}. 
\begin{proposition}\label{prop::observable_cvg}
Fix a bounded $3$-polygon $(\Omega; x_1, x_2, x_3)$ and suppose $(\Omega^{\delta}; x_1^{\delta}, x_2^{\delta}, x_3^{\delta})$ is an approximation of $(\Omega; x_1, x_2, x_3)$ on $\delta\hexagon$ in Carath\'eodory sense. 
Consider UST on $\Omega^{\delta}$ with wired boundary condition. 
Define $\LA_1^{\delta}=\{x_1^{\delta, \circ}\rightsquigarrow e_3^{\delta}\}$ and $\LA_2^{\delta}=\{x_2^{\delta, \circ}\rightsquigarrow e_3^{\delta}\}$ as in~\eqref{eqn::conditionalevent} and consider the trifurcation $\trifurcation^{\delta}$.
We fix an interior point $v\in\Omega$ and suppose $v^{\delta}$ is the vertex in $\LV^{\circ}(\Omega^{\delta})$ that is nearest to $v$. 
Define 
\begin{equation}\label{eqn::deltafive_integrand}
f^{\delta}(u)=\frac{\PP\left[\LA_1^{\delta}\cap \LA_2^{\delta}\cap \{\trifurcation^{\delta}=u\}\right]}{\delta^2\prod_{j=1}^3 \harmonic(\Omega^\delta; v^\delta, e_j^\delta)},\qquad u\in\LV^{\circ}(\Omega^{\delta}). 
\end{equation}
We define the triangle $\bigtriangleup^{\delta}(u)$ covering vertex $u$ as in Figure~\ref{fig::threeneighbors}.  
For $u\in\LV^{\circ}(\Omega^{\delta})$, we set $f^{\delta}(z)=f^{\delta}(u)$ for $z\in\bigtriangleup^{\delta}(u)$. 
This is a step function defined on $\Omega^{\delta}$ and we set $f^{\delta}=0$ outside of $\Omega^{\delta}$. Define
\begin{equation}\label{eqn::deltafive_integrand_continuum}
f(z):=\frac{{6\sqrt{6}}\LZtripod(\Omega; x_1, x_2, x_3; z)}{\prod_{j=1}^{3}\Poisson(\Omega; v, x_j)}, \qquad z\in\Omega, 
\end{equation}
and set $f=0$ outside of $\Omega$. 
Then $f^{\delta}$ converges to $f$ uniformly on compact subsets of $\Omega$. 
\end{proposition}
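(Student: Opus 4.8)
The plan is to start from Lemma~\ref{lem::Fomin}, extract the order-$\delta^{2}$ behaviour of the determinant it produces, and recognise it as a constant multiple of the tripod partition function. Fix $u\in\LV^{\circ}(\Omega^{\delta})$ with neighbours $u_1,u_2,u_3$ (counterclockwise) and set $M^{\delta}_{k,j}:=\harmonic(\Omega^{\delta};u_k,e_j^{\delta})$, the $3\times3$ matrix of Lemma~\ref{lem::Fomin}, so that $\PP[\LA_1^{\delta}\cap\LA_2^{\delta}\cap\{\trifurcation^{\delta}=u\}]=\det(M^{\delta})$ and $f^{\delta}(u)=\det(M^{\delta})/\bigl(\delta^{2}\prod_{j}\harmonic(\Omega^{\delta};v^{\delta},e_j^{\delta})\bigr)$. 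Since the hexagonal lattice is $3$-regular and, for $u$ in a fixed compact subset of $\Omega$ and $\delta$ small, $u\ne x_j^{\delta,\circ}$, each $\harmonic(\Omega^{\delta};\cdot,e_j^{\delta})$ is discrete harmonic at $u$; hence $\sum_{k=1}^{3}M^{\delta}_{k,j}=3\,\harmonic(\Omega^{\delta};u,e_j^{\delta})$ and, by~\eqref{eqn::Poisson_derivative_def}, $M^{\delta}_{k,j}-\harmonic(\Omega^{\delta};u,e_j^{\delta})=\delta\,\dharmonic_k(\Omega^{\delta};u,e_j^{\delta})$ \emph{exactly}. Writing $P^{\delta}:=(\harmonic(\Omega^{\delta};u,e_1^{\delta}),\harmonic(\Omega^{\delta};u,e_2^{\delta}),\harmonic(\Omega^{\delta};u,e_3^{\delta}))$ as a row vector, $\one:=(1,1,1)^{\top}$ and $N^{\delta}:=M^{\delta}-\one P^{\delta}$ (so $N^{\delta}_{k,j}=\delta\,\dharmonic_k(\Omega^{\delta};u,e_j^{\delta})=O(\delta)$), the rank-one determinant identity $\det(A+\one P)=\det A+P\,\adj{A}\,\one$, valid for every $3\times3$ matrix $A$, gives $\det(M^{\delta})=\det(N^{\delta})+P^{\delta}\,\adj{N^{\delta}}\,\one$, with $\det(N^{\delta})=O(\delta^{3})$ and $P^{\delta}\,\adj{N^{\delta}}\,\one=O(\delta^{2})$.

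Next I would compute the $O(\delta^{2})$ term. Write $u_k=u+\delta\zeta_k$ and $\bs{\zeta}:=(\zeta_1,\zeta_2,\zeta_3)^{\top}$, which equals $(\tau,\tau^{2},1)^{\top}$ in Case~\eqref{eqn::threeneighbors_case1} and $\ee^{\ii\pi/3}(\tau,\tau^{2},1)^{\top}$ in Case~\eqref{eqn::threeneighbors_case2}, $\tau=\ee^{2\pi\ii/3}$. Because $\sum_k\dharmonic_k(\Omega^{\delta};u,e_j^{\delta})=0$ and $\mathfrak{a}\mapsto(\Re(\overline{\zeta_k}\,\mathfrak{a}))_{k=1}^{3}$ is a linear isomorphism of $\C$ onto $\{x\in\R^{3}:x_1+x_2+x_3=0\}$, there is a unique $\mathfrak{a}_j^{\delta}\in\C$ with $\dharmonic_k(\Omega^{\delta};u,e_j^{\delta})=\Re(\overline{\zeta_k}\,\mathfrak{a}_j^{\delta})$, so the $j$-th column of $N^{\delta}$ is $\tfrac{\delta}{2}\bigl(\mathfrak{a}_j^{\delta}\overline{\bs{\zeta}}+\overline{\mathfrak{a}_j^{\delta}}\,\bs{\zeta}\bigr)$. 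Expanding $P^{\delta}\,\adj{N^{\delta}}\,\one=\sum_{i=1}^{3}\harmonic(\Omega^{\delta};u,e_i^{\delta})\det\bigl(N^{\delta}\text{ with column }i\text{ replaced by }\one\bigr)$ and then each column over $\{\overline{\bs{\zeta}},\bs{\zeta}\}$ by multilinearity, every surviving term is a multiple of $\det(\overline{\bs{\zeta}},\bs{\zeta},\one)=3\sqrt{3}\,\ii$ (the same value in both cases, since the common scalar $\ee^{\ii\pi/3}$ cancels, and unchanged by a cyclic relabelling of $u_1,u_2,u_3$). Collecting terms gives
\begin{equation*}
\det(M^{\delta})=\frac{3\sqrt{3}}{2}\,\delta^{2}\det\begin{pmatrix}\harmonic(\Omega^{\delta};u,e_1^{\delta})&\harmonic(\Omega^{\delta};u,e_2^{\delta})&\harmonic(\Omega^{\delta};u,e_3^{\delta})\\ \Re\mathfrak{a}_1^{\delta}&\Re\mathfrak{a}_2^{\delta}&\Re\mathfrak{a}_3^{\delta}\\ \Im\mathfrak{a}_1^{\delta}&\Im\mathfrak{a}_2^{\delta}&\Im\mathfrak{a}_3^{\delta}\end{pmatrix}+O(\delta^{3}),
\end{equation*}
a discrete Wronskian of the three Poisson kernels and their (complex) gradients at $u$.

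Then I would pass to the limit. Dividing by $\delta^{2}\prod_{j=1}^{3}\harmonic(\Omega^{\delta};v^{\delta},e_j^{\delta})$ and absorbing $\harmonic(\Omega^{\delta};v^{\delta},e_j^{\delta})^{-1}$ into column $j$, the main term becomes $\tfrac{3\sqrt{3}}{2}$ times the determinant with $j$-th column $(\harmonic(\Omega^{\delta};u,e_j^{\delta}),\Re\mathfrak{a}_j^{\delta},\Im\mathfrak{a}_j^{\delta})^{\top}/\harmonic(\Omega^{\delta};v^{\delta},e_j^{\delta})$; by Lemma~\ref{lem::Poisson_cvg} (part~\eqref{eqn::Poisson_cvg}, together with the directional-derivative part~\eqref{eqn::Poisson_derivative_cvg}, which forces $\mathfrak{a}_j^{\delta}/\harmonic(\Omega^{\delta};v^{\delta},e_j^{\delta})\to(\partial_x+\ii\partial_y)\Poisson(\Omega;z,x_j)/\Poisson(\Omega;v,x_j)$) each entry converges uniformly for $z$ in a compact subset of $\Omega$, while the remainder $\det(N^{\delta})/\bigl(\delta^{2}\prod_j\harmonic(\Omega^{\delta};v^{\delta},e_j^{\delta})\bigr)=\delta\cdot O(1)\to0$. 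Hence $f^{\delta}(u)\to\tfrac{3\sqrt{3}}{2}\bigl(\prod_j\Poisson(\Omega;v,x_j)\bigr)^{-1}W(\Omega;x_1,x_2,x_3;z)$, where $W:=\det\bigl(\Poisson(\Omega;\cdot,x_j),\partial_x\Poisson(\Omega;\cdot,x_j),\partial_y\Poisson(\Omega;\cdot,x_j)\bigr)_{j=1}^{3}$ at $z$. It remains to check $W=4\sqrt{2}\,\LZtripod$. First, $W$ has the covariance~\eqref{eqn::LZtripod_cov}: writing $\partial_x\Poisson=\partial_z\Poisson+\overline{\partial_z\Poisson}$, $\partial_y\Poisson=\ii(\partial_z\Poisson-\overline{\partial_z\Poisson})$ and using that $\partial_z\Poisson$ is holomorphic in $z$ (so it picks up the factor $|\varphi'(x_j)|\varphi'(z)$ under a conformal $\varphi$), each column of $W(\Omega)$ at $z$ is $|\varphi'(x_j)|$ times one fixed real matrix of determinant $|\varphi'(z)|^{2}$ applied to the corresponding column of $W(\varphi(\Omega))$ at $\varphi(z)$, which is exactly the transformation law~\eqref{eqn::LZtripod_cov}. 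So it suffices to treat $\Omega=\HH$, $x_1<x_2<x_3$: with $z=t+\ii y$ and $s_j=t-x_j$, the $j$-th column of $W$ is $\tfrac{2}{|z-x_j|^{4}}$ times $s_j^{2}(y,0,1)^{\top}+s_j(0,-2y,0)^{\top}+(y^{3},0,-y^{2})^{\top}$, and a Vandermonde-type factorization---the determinant of the coefficient matrix $\bigl[(y^{3},0,-y^{2})^{\top},(0,-2y,0)^{\top},(y,0,1)^{\top}\bigr]$ is $-4y^{4}$, times $\prod_{i<j}(s_j-s_i)$---gives, after simplification, $W(\HH;x_1,x_2,x_3;z)=4\sqrt{2}\,\LZtripod(\HH;x_1,x_2,x_3;z)$ via~\eqref{eqn::LZtripod_H}. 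Since $\tfrac{3\sqrt{3}}{2}\cdot4\sqrt{2}=6\sqrt{6}$, the pointwise limit of $f^{\delta}(u)$ equals $f(z)$ from~\eqref{eqn::deltafive_integrand_continuum}; because $|u-z|\le C\delta$ whenever $z\in\bigtriangleup^{\delta}(u)$ and $f$ is continuous, this pointwise convergence together with its local uniformity yields uniform convergence of the step function $f^{\delta}$ on compact subsets of $\Omega$.

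The step I expect to be the main obstacle is the uniformity: one needs the convergences in Lemma~\ref{lem::Poisson_cvg} to hold uniformly for $z$ in a compact subset of $\Omega$, together with a uniform a priori bound on the discrete gradients $\mathfrak{a}_j^{\delta}$, so that the $O(\delta^{3})$ remainder is uniformly negligible against the $O(\delta^{2})$ main term (which is bounded below once $z$ stays in a compact set and the $x_j$ are distinct, since $W=4\sqrt{2}\,\LZtripod>0$ there). This is exactly where the a priori boundedness and equicontinuity estimates for discrete harmonic functions and their discrete derivatives from~\cite{ChelkakSmirnovDiscreteComplexAnalysis} and the Harnack-type comparison of~\cite{ChelkakWanMassiveLERW}---already invoked in Lemmas~\ref{lem::discretePoisson_max_control}--\ref{lem::Poisson_cvg}---would be used. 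A minor bookkeeping point is that $z^{\delta}$, and any vertex $u$ in a compact set, may be of either type~\eqref{eqn::threeneighbors_case1} or~\eqref{eqn::threeneighbors_case2} and may switch type with $\delta$; this is harmless, since $\det(\overline{\bs{\zeta}},\bs{\zeta},\one)=3\sqrt{3}\,\ii$ is the same in both.
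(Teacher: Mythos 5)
Your proposal is correct and follows the same overall strategy as the paper: start from Lemma~\ref{lem::Fomin}, use discrete harmonicity at $u$ to exhibit the determinant at order $\delta^{2}$ in terms of the discrete Poisson kernels and their discrete derivatives normalized by $\harmonic(\Omega^{\delta};v^{\delta},e_j^{\delta})$, pass to the limit via Lemma~\ref{lem::Poisson_cvg}, identify the limiting determinant with $\LZtripod$ through conformal covariance plus an explicit half-plane computation, and get local uniformity from~\cite{ChelkakSmirnovDiscreteComplexAnalysis}. The execution differs only in bookkeeping: the paper subtracts the third row from the first two, which turns $\det(M^{\delta})$ \emph{exactly} into $\delta^{2}$ times a determinant of discrete derivative differences and values (no $O(\delta^{3})$ remainder to dispose of), and then works with the complex matrix $M$ of $\partial_z\Poisson$, $-\ii\partial_{\bar z}\Poisson$, $\Poisson$ with $\det M=2\sqrt{2}\,\LZtripod$ and prefactor $3\sqrt{3}$ (via $\det\begin{pmatrix}\tau-1&\tau^2-1\\ \tau^2-1&\tau-1\end{pmatrix}=-3\sqrt{3}\,\ii$), whereas you use the matrix determinant lemma centered at $u$, encode the three directional derivatives in a single complex gradient $\mathfrak{a}_j^{\delta}$, reduce everything to $\det(\overline{\bs{\zeta}},\bs{\zeta},\one)=3\sqrt{3}\,\ii$, and land on the real Wronskian $W=\det(\Poisson,\partial_x\Poisson,\partial_y\Poisson)=4\sqrt{2}\,\LZtripod$ with prefactor $\tfrac{3\sqrt{3}}{2}$; the constants agree since $\tfrac{3\sqrt{3}}{2}\cdot 4\sqrt{2}=3\sqrt{3}\cdot 2\sqrt{2}=6\sqrt{6}$, and I have checked your intermediate Wronskian identity and the half-plane factorization. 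Two small advantages of your version are the uniform treatment of Cases~\eqref{eqn::threeneighbors_case1} and~\eqref{eqn::threeneighbors_case2} (the phase $\ee^{\ii\pi/3}$ cancels in $\det(\overline{\bs{\zeta}},\bs{\zeta},\one)$, where the paper just says ``similarly''); one small extra obligation is the $\det(N^{\delta})=O(\delta^{3})$ remainder, which you correctly kill using boundedness of the normalized discrete gradients. Your final uniformity step is left at the same level as the paper's, namely a citation of~\cite[Theorem~3.13]{ChelkakSmirnovDiscreteComplexAnalysis} (note that Lemma~\ref{lem::Poisson_cvg} as stated is pointwise in $z$, so this citation is genuinely needed), so there is no gap beyond what the paper itself defers to that reference.
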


Before we prove the local uniform convergence in Proposition~\ref{prop::observable_cvg}, we first address the pointwise convergence. 

\begin{lemma}\label{lem::deltafive_cvg}
Assume the same setup as in Proposition~\ref{prop::observable_cvg}. 
We fix two interior points $z,v\in\Omega$ and suppose $z^{\delta}$ (resp. $v^{\delta}$) is the vertex in $\LV^{\circ}(\Omega^{\delta})$ that is nearest to $z$ (resp. to $v$). 
For $z^{\delta}\in\LV^{\circ}(\Omega^{\delta})$, denote by $z_1^{\delta}, z_2^{\delta}, z_3^{\delta}$ the three adjacent vertices of $z^{\delta}$ arranged in counterclockwise order. 
We have
\begin{align}\label{eqn::det_convergence}
\lim_{\delta\to 0}\frac{\det\left(\harmonic(\Omega^{\delta}; z^\delta_k, e_{j}^\delta)\right)_{k,j=1}^3}{\delta^2\prod_{j=1}^3 \harmonic(\Omega^\delta; v^\delta, e_j^\delta)}=\frac{3\sqrt{3}}{\prod_{j=1}^{3}\Poisson(\Omega; v, x_j)}\det\left(M(\Omega; x_1, x_2, x_3; z)\right), 
\end{align}
where $M(\Omega;x_1,x_2,x_3,z)$ is the matrix given by
\begin{equation}\label{eqn::Mdef}
\begin{pmatrix}
  \partial_z\Poisson(\Omega; z, x_1) &    \partial_z \Poisson(\Omega; z, x_2)&     \partial_z \Poisson(\Omega; z, x_3) \\
-\ii\partial_{\overline{z}}\Poisson(\Omega; z, x_1) &  -\ii\partial_{\overline{z}}\Poisson(\Omega; z, x_2)&    -\ii\partial_{\overline{z}}\Poisson(\Omega; z, x_3) \\
\Poisson(\Omega; z, x_1) & \Poisson(\Omega; z, x_2) & \Poisson(\Omega; z, x_3) \\
\end{pmatrix}, 
\end{equation}
and $\Poisson(\Omega; z, x)$ is Poisson kernel~\eqref{eqn::Poisson_H}-\eqref{eqn::Poisson_cov}. 
\end{lemma}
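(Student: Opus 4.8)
The strategy is to Taylor-expand the discrete Poisson kernels $\harmonic(\Omega^\delta;z^\delta_k,e_j^\delta)$ around the common point $z^\delta$ and identify the determinant as a discrete approximation of a Jacobian-type quantity. First I would fix $j$ and write, using the directional-derivative notation from~\eqref{eqn::Poisson_derivative_def}, the three values $\harmonic(\Omega^\delta;z^\delta_k,e_j^\delta)$ in terms of $\harmonic(\Omega^\delta;z^\delta,e_j^\delta)$ and the first-order discrete derivatives $\dharmonic_k(\Omega^\delta;z^\delta,e_j^\delta)=\delta^{-1}(\harmonic(\Omega^\delta;z^\delta_k,e_j^\delta)-\harmonic(\Omega^\delta;z^\delta,e_j^\delta))$. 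Treating the $k$th row of the matrix $(\harmonic(\Omega^\delta;z^\delta_k,e_j^\delta))_{k,j}$ as a function of the neighbour index $k$, I would perform row operations: subtract a suitable linear combination of rows so that the determinant becomes, up to the correct power of $\delta$, the determinant of a $3\times 3$ matrix whose rows encode (discrete $x$-derivative, discrete $y$-derivative, value) of $\harmonic(\Omega^\delta;\cdot,e_j^\delta)$. Concretely, because the three unit vectors $\tau,\tau^2,\tau^3=1$ (with $\tau=\ee^{2\pi\ii/3}$) in Case~\eqref{eqn::threeneighbors_case1} — or their rotated analogues in Case~\eqref{eqn::threeneighbors_case2} — sum to zero and span $\R^2$, the change of basis from $(\dharmonic_1,\dharmonic_2,\dharmonic_3)$ to $(\partial_x,\partial_y)$-type combinations is an explicit invertible linear map whose Jacobian constant produces exactly the lattice factor. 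I expect the geometric constant $3\sqrt3$ in~\eqref{eqn::det_convergence} to emerge here, and I would cross-check it against the fact (stated in the introduction and Figure~\ref{fig::threeneighbors}) that the dual face $\bigtriangleup^\delta(u)$ has area $\tfrac{3\sqrt3}{4}\delta^2$.

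Next I would pass to the limit. After the row reduction, the numerator equals $\delta^2$ times (a fixed constant times) $\det$ of the matrix with entries built from $\harmonic(\Omega^\delta;z^\delta,e_j^\delta)$ and its discrete derivatives $\dharmonic_k(\Omega^\delta;z^\delta,e_j^\delta)$, $k=1,2,3$. Dividing by $\prod_j \harmonic(\Omega^\delta;v^\delta,e_j^\delta)$ and applying the convergence of the Poisson kernel~\eqref{eqn::Poisson_cvg} and of its discrete derivative~\eqref{eqn::Poisson_derivative_cvg} from Lemma~\ref{lem::Poisson_cvg} (here with $U=\Omega$, which trivially agrees with itself near each $x_j$), each column $j$ converges: $\harmonic(\Omega^\delta;z^\delta,e_j^\delta)/\harmonic(\Omega^\delta;v^\delta,e_j^\delta)\to \Poisson(\Omega;z,x_j)/\Poisson(\Omega;v,x_j)$ and $\dharmonic_k(\Omega^\delta;z^\delta,e_j^\delta)/\harmonic(\Omega^\delta;v^\delta,e_j^\delta)\to \partial_{\tau^k}\Poisson(\Omega;z,x_j)/\Poisson(\Omega;v,x_j)$. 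Then I would convert the three directional derivatives $\partial_{\tau^k}\Poisson$ into the complex Wirtinger derivatives: since $\partial_{\tau^k}=\Re(\tau^k)\partial_x+\Im(\tau^k)\partial_y = \tau^k\partial_z+\overline{\tau^k}\partial_{\bar z}$ (with the convention $\partial_z=\tfrac12(\partial_x-\ii\partial_y)$), the row-reduced matrix becomes exactly $M(\Omega;x_1,x_2,x_3;z)$ up to an invertible constant matrix acting on the rows; its determinant contributes the remaining numerical factor, combining with the earlier lattice constant to give $3\sqrt3$. Factoring out the $\prod_j \Poisson(\Omega;v,x_j)$ from the columns then yields~\eqref{eqn::det_convergence}.

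The main obstacle, and the step requiring the most care, is the bookkeeping in the row reduction together with keeping track of all numerical constants: one must verify that the two lattice cases~\eqref{eqn::threeneighbors_case1} and~\eqref{eqn::threeneighbors_case2} give the same limiting matrix $M$ (up to sign/permutation, which is absorbed by $\det$ since the neighbours are listed counterclockwise in both), and that the product of the change-of-basis constant, the Jacobian from $(\partial_x,\partial_y)$ to $(\partial_z,\partial_{\bar z})$, and the factors of $\ii$ in the second row of~\eqref{eqn::Mdef} all combine to exactly $3\sqrt3$. A secondary subtlety is justifying that the second-order error terms in the Taylor expansion of the discrete harmonic function are genuinely $o(\delta^2)$ after the division; this is where one invokes the higher-order a priori estimates for discrete harmonic functions on the hexagonal lattice from~\cite{ChelkakSmirnovDiscreteComplexAnalysis} (the same input behind Lemma~\ref{lem::Poisson_cvg}), noting that $z$ stays at positive distance from $x_1,x_2,x_3$ so the relevant functions are uniformly discrete-harmonic in a fixed neighbourhood of $z$. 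Once the pointwise statement is in place, the local uniform convergence asserted in Proposition~\ref{prop::observable_cvg} will follow from the uniformity built into Lemmas~\ref{lem::Green_cvg} and~\ref{lem::Poisson_cvg}, but that is the content of the subsequent argument rather than of this lemma.
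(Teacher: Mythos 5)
Your plan is correct and essentially identical to the paper's own proof: one subtracts the third row, factors an exact $\delta$ out of each of the two difference rows so that they become the discrete directional derivatives $\dharmonic_k-\dharmonic_3$, passes to the limit column-by-column using~\eqref{eqn::Poisson_cvg}--\eqref{eqn::Poisson_derivative_cvg} of Lemma~\ref{lem::Poisson_cvg}, and then converts $\partial_{\tau^k}\Poisson=\tau^k\partial_z\Poisson+\tau^{-k}\partial_{\bar z}\Poisson$, the change-of-basis determinant $\det\begin{pmatrix}\tau-1 & \tau^2-1\\ \tau^2-1 & \tau-1\end{pmatrix}=-3\sqrt{3}\,\ii$ producing the constant $3\sqrt{3}$, with the two lattice cases~\eqref{eqn::threeneighbors_case1}--\eqref{eqn::threeneighbors_case2} giving the same limit. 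The only small correction is that no second-order Taylor-remainder control is needed: the row reduction is exact (the differences are exactly $\delta\dharmonic_k$), so the convergence of the normalized discrete derivatives in~\eqref{eqn::Poisson_derivative_cvg} is already the full analytic input.
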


\begin{proof}
We first prove~\eqref{eqn::det_convergence} via the subsequence of $\{z^\delta\}$ of Case~\eqref{eqn::threeneighbors_case1}.
For any subsequence of $\{z^\delta\}$ of Case~\eqref{eqn::threeneighbors_case1}, 
using~\eqref{eqn::Poisson_derivative_def}, we have 
\begin{align*}
&\delta^{-2}\det\left(\harmonic(\Omega^{\delta}; z^\delta_k, e_{j}^\delta)\right)_{k,j=1}^3\notag\\
=&\delta^{-2}\det
\begin{pmatrix}
    \harmonic(\Omega^\delta; z_1^\delta, e_1^\delta)-\harmonic(\Omega^\delta; z_3^\delta, e_1^\delta) & \harmonic(\Omega^\delta; z_1^\delta, e_2^\delta)-\harmonic(\Omega^\delta; z_3^\delta, e_2^\delta) & \harmonic(\Omega^\delta; z_1^\delta, e_3^\delta)-\harmonic(\Omega^\delta; z_3^\delta, e_3^\delta)\\
    \harmonic(\Omega^\delta; z_2^\delta, e_1^\delta)-\harmonic(\Omega^\delta; z_3^\delta, e_1^\delta) & \harmonic(\Omega^\delta; z_2^\delta, e_2^\delta)-\harmonic(\Omega^\delta; z_3^\delta, e_2^\delta) & \harmonic(\Omega^\delta; z_2^\delta, e_3^\delta)-\harmonic(\Omega^\delta; z_3^\delta, e_3^\delta)\\
    \harmonic(\Omega^\delta; z_3^\delta, e_1^\delta) & \harmonic(\Omega^\delta; z_3^\delta, e_2^\delta) & \harmonic(\Omega^\delta; z_3^\delta, e_3^\delta)\\
\end{pmatrix}\\
=&\det
\begin{pmatrix}
    \dharmonic_{\bs{\alpha}_1}(\Omega^\delta; z^\delta, e_1^\delta)-\dharmonic_{\bs{\alpha}_3}(\Omega^\delta; z^\delta, e_1^\delta) & \dharmonic_{\bs{\alpha}_1}(\Omega^\delta; z^\delta, e_2^\delta)-\dharmonic_{\bs{\alpha}_3}(\Omega^\delta; z^\delta, e_2^\delta) & \dharmonic_{\bs{\alpha}_1}(\Omega^\delta; z^\delta, e_3^\delta)-\dharmonic_{\bs{\alpha}_3}(\Omega^\delta; z^\delta, e_3^\delta)\\
    \dharmonic_{\bs{\alpha}_2}(\Omega^\delta; z^\delta, e_1^\delta)-\dharmonic_{\bs{\alpha}_3}(\Omega^\delta; z^\delta, e_1^\delta) & \dharmonic_{\bs{\alpha}_2}(\Omega^\delta; z^\delta, e_2^\delta)-\dharmonic_{\bs{\alpha}_3}(\Omega^\delta; z^\delta, e_2^\delta) & \dharmonic_{\bs{\alpha}_2}(\Omega^\delta; z^\delta, e_3^\delta)-\dharmonic_{\bs{\alpha}_3}(\Omega^\delta; z^\delta, e_3^\delta)\\
    \harmonic(\Omega^\delta; z_3^\delta, e_1^\delta) & \harmonic(\Omega^\delta; z_3^\delta, e_2^\delta) & \harmonic(\Omega^\delta; z_3^\delta, e_3^\delta)\\
\end{pmatrix}. 
    \end{align*}
Convergence~\eqref{eqn::Poisson_cvg}-\eqref{eqn::Poisson_derivative_cvg} gives
    \begin{equation*}
\lim_{\delta\to 0}\frac{\harmonic(\Omega^{\delta}; z_3^{\delta}; e_j^{\delta})}{\harmonic(\Omega^{\delta}; v^{\delta}; e_j^{\delta})}=\frac{\Poisson(\Omega; z, x_j)}{\Poisson(\Omega; v, x_j)}, \qquad \lim_{\delta\to 0}\frac{\dharmonic_{\bs{\alpha}_k}(\Omega^{\delta}; z^{\delta}, e_j^{\delta})}{\harmonic(\Omega^{\delta}; v^{\delta}, e_j^{\delta})}=\frac{\partial_{\bs{\alpha}_k}\Poisson(\Omega; z, x_j)}{\Poisson(\Omega; v, x_j)}, \quad \text{for }k,j=1,2,3.
\end{equation*}
Therefore, 
\begin{align*}
&\lim_{\delta\to 0}\frac{\det\left(\harmonic(\Omega^{\delta}; z^\delta_k, e_{j}^\delta)\right)_{k,j=1}^3}{\delta^2\prod_{j=1}^3 \harmonic(\Omega^\delta; v^\delta, e_j^\delta)}\\
=&\frac{1}{\prod_{j=1}^3\Poisson(\Omega; v, x_j)}\\
&\times \det
\begin{pmatrix}
    \partial_{\bs{\alpha}_1}\Poisson(\Omega; z, x_1)-\partial_{\bs{\alpha}_3}\Poisson(\Omega; z, x_1) & \partial_{\bs{\alpha}_1}\Poisson(\Omega; z, x_2)-\partial_{\bs{\alpha}_3}\Poisson(\Omega; z, x_2) & \partial_{\bs{\alpha}_1}\Poisson(\Omega; z, x_2)-\partial_{\bs{\alpha}_3}\Poisson(\Omega; z, x_3)\\
    \partial_{\bs{\alpha}_2}\Poisson(\Omega; z, x_1)-\partial_{\bs{\alpha}_3}\Poisson(\Omega; z, x_1) & \partial_{\bs{\alpha}_2}\Poisson(\Omega; z, x_2)-\partial_{\bs{\alpha}_3}\Poisson(\Omega; z, x_2) & \partial_{\bs{\alpha}_2}\Poisson(\Omega; z, x_3)-\partial_{\bs{\alpha}_3}\Poisson(\Omega; z, x_3)\\
    \Poisson(\Omega; z, x_1) & \Poisson(\Omega; z, x_2) & \Poisson(\Omega; z, x_3)\\
\end{pmatrix}\\
=&\ii \det\begin{pmatrix}
    \bs{\alpha}_1-\bs{\alpha}_3 & \overline{\bs{\alpha}_1}-\overline{\bs{\alpha}_3}\\
    \bs{\alpha}_2-\bs{\alpha}_3 & \overline{\bs{\alpha}_2}-\overline{\bs{\alpha}_3}
\end{pmatrix} \times
\frac{\det\left(M(\Omega; x_1, x_2, x_3; z)\right)}{\prod_{j=1}^{3}\Poisson(\Omega; v, x_j)}, 
\end{align*}
where, in the last equality, we use 
    \[\partial_{\bs{\alpha}_k}\Poisson(\Omega; z, x)=\bs{\alpha}_k\partial_z \Poisson(\Omega; z, x)+\overline{\bs{\alpha}_k}\partial_{\bar{z}}\Poisson(\Omega; z, x).\]   
    We denote by $\operatorname{Area}(\bs{\alpha}_1, \bs{\alpha}_2, \bs{\alpha}_3)$ the area of the triangle whose three vertices are $\bs{\alpha}_1, \bs{\alpha}_2, \bs{\alpha}_3$, then
\begin{equation}\label{eqn::lattice_dependence_area}
\ii \det\begin{pmatrix}
    \bs{\alpha}_1-\bs{\alpha}_3 & \overline{\bs{\alpha}_1}-\overline{\bs{\alpha}_3}\\
    \bs{\alpha}_2-\bs{\alpha}_3 & \overline{\bs{\alpha}_2}-\overline{\bs{\alpha}_3}
\end{pmatrix}=4\operatorname{Area}(\bs{\alpha}_1, \bs{\alpha}_2, \bs{\alpha}_3)=3\sqrt{3},
\end{equation} 
which gives the convergence in~\eqref{eqn::det_convergence} along any subsequence of Case~\eqref{eqn::threeneighbors_case1}. 

For the case \eqref{eqn::threeneighbors_case2}, we can use the same argument as above, and we still have $\operatorname{Area}(\bs{\alpha}_1, \bs{\alpha}_2, \bs{\alpha}_3)=\frac{3\sqrt{3}}{4}$. Thus we obtain the convergence of any subsequence to the same limit  in~\eqref{eqn::det_convergence}.
\end{proof}

\begin{lemma}\label{lem::detM_LZtripod}
The determinant of the matrix~\eqref{eqn::Mdef} equals $\LZtripod$~\eqref{eqn::LZtripod_def} up to multiplicative constant: 
\begin{equation}\label{eqn::detM_LZtripod}
\det(M(\Omega; x_1, x_2, x_3; z))={2\sqrt{2}}\LZtripod(\Omega; x_1, x_2, x_3; z). 
\end{equation}
\end{lemma}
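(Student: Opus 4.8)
The plan is to reduce to the half-plane by conformal covariance, and then evaluate $\det M$ explicitly by recognizing a Vandermonde-type factorization.

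First I would check that $\det M(\Omega; x_1, x_2, x_3; z)$ obeys exactly the same conformal covariance as $\LZtripod$ in~\eqref{eqn::LZtripod_cov}: weight $1$ at each $x_j$ and weight $2$ at $z$. For a conformal map $\varphi$ on $\Omega$ we have $\Poisson(\Omega; z, x) = |\varphi'(x)|\,\Poisson(\varphi(\Omega); \varphi(z), \varphi(x))$, and since $\varphi$ is holomorphic, $\partial_z\overline{\varphi(z)}=0$, so differentiating in $z$ and in $\bar z$ gives
\begin{align*}
\partial_z\Poisson(\Omega; z, x) &= |\varphi'(x)|\,\varphi'(z)\,(\partial_w\Poisson)(\varphi(\Omega); \varphi(z), \varphi(x)),\\
\partial_{\bar z}\Poisson(\Omega; z, x) &= |\varphi'(x)|\,\overline{\varphi'(z)}\,(\partial_{\bar w}\Poisson)(\varphi(\Omega); \varphi(z), \varphi(x)).
\end{align*}
Pulling $|\varphi'(x_j)|$ out of column $j$, $\varphi'(z)$ out of the first row, and $\overline{\varphi'(z)}$ out of the second row of $M$, one gets $\det M(\Omega; x_1,x_2,x_3; z) = \prod_{j=1}^3|\varphi'(x_j)|\cdot|\varphi'(z)|^2\,\det M(\varphi(\Omega); \varphi(x_1),\varphi(x_2),\varphi(x_3); \varphi(z))$, which matches~\eqref{eqn::LZtripod_cov}. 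Hence the ratio $\det M / \LZtripod$ is conformally invariant, and it suffices to prove~\eqref{eqn::detM_LZtripod} for $\Omega=\HH$, $x_1<x_2<x_3$ real, $z\in\HH$; by~\eqref{eqn::LZtripod_H} this is the claim $\det M(\HH; x_1, x_2, x_3; z) = 16\,(x_2-x_1)(x_3-x_2)(x_3-x_1)\,\Im(z)^4/|(z-x_1)(z-x_2)(z-x_3)|^4$.

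For the half-plane computation I would use $\Poisson(\HH; z, x) = \ii/(z-x) - \ii/(\bar z - x)$, so the three rows of $M(\HH;x_1,x_2,x_3;z)$ have $j$-th entries $-\ii/(z-x_j)^2$, $1/(\bar z - x_j)^2$, and $2\Im(z)/|z-x_j|^2$. Writing $a_j = z-x_j$, $b_j = \bar z - x_j$ (so $b_j = \bar a_j$ and $a_jb_j = |z-x_j|^2$), and pulling the common factor $1/(a_j^2 b_j^2)$ out of column $j$ and $-\ii$, $2\Im(z)$ out of rows $1$ and $3$, the problem reduces to evaluating the determinant of the matrix whose $j$-th column is $(b_j^2,\ a_j^2,\ a_j b_j)$. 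Since $b^2=(\bar z-x)^2$, $a^2=(z-x)^2$, $ab=(z-x)(\bar z-x)$ are quadratic polynomials in $x$ (each with leading coefficient $1$), this determinant equals $\det C\cdot\prod_{1\le i<j\le 3}(x_j-x_i)$, where $C$ is the $3\times3$ matrix of their coefficients; subtracting the coefficient row of $ab$ from the other two and expanding along the last column gives $\det C = -(z-\bar z)^3 = 8\ii\,\Im(z)^3$. Collecting all factors, $\det M(\HH;x_1,x_2,x_3;z) = (-\ii)(2\Im z)\bigl(\prod_j|z-x_j|^{-4}\bigr)\cdot 8\ii\,\Im(z)^3\,(x_2-x_1)(x_3-x_1)(x_3-x_2) = 16\,\Im(z)^4\,(x_2-x_1)(x_3-x_2)(x_3-x_1)/|(z-x_1)(z-x_2)(z-x_3)|^4$, and comparison with~\eqref{eqn::LZtripod_H} yields the constant $16/(4\sqrt2)=2\sqrt2$.

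I do not expect a real obstacle. The only points requiring care are the conformal-covariance step, where holomorphicity of $\varphi$ is essential so that $\partial_z$ and $\partial_{\bar z}$ of the Poisson kernel pick up the single factors $\varphi'(z)$ and $\overline{\varphi'(z)}$ rather than a mixture, and the half-plane evaluation, where one must track powers of $\ii$ and signs. An alternative to the reduction would be a direct computation in general $\Omega$ via $\Poisson(\Omega; z, x) = |\varphi'(x)|\,\Poisson(\HH; \varphi(z), \varphi(x))$, but passing through $\HH$ is cleaner.
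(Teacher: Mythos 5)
Your proposal is correct and follows essentially the same route as the paper: conformal covariance of $\det M$ (inherited from~\eqref{eqn::Poisson_cov}) reduces the identity to $\Omega=\HH$, where the determinant is evaluated explicitly and compared with~\eqref{eqn::LZtripod_H}. The only difference is that you evaluate the half-plane determinant by hand via the Vandermonde-type factorization $\det[p_i(x_j)]=\det C\cdot\prod_{i<j}(x_j-x_i)$ with $\det C=-(z-\bar z)^3$, whereas the paper delegates this $3\times 3$ computation to Mathematica; your factor bookkeeping reproduces the paper's value $16\,(x_2-x_1)(x_3-x_2)(x_3-x_1)\Im(z)^4/|(z-x_1)(z-x_2)(z-x_3)|^4$ and hence the constant $2\sqrt{2}$.
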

\begin{proof}
The determinant $\det(M(\Omega; x_1, x_2, x_3; z))$ satisfies the conformal covariance~\eqref{eqn::LZtripod_cov} due to~\eqref{eqn::Poisson_cov}. It remains to show that the two sides of~\eqref{eqn::detM_LZtripod} are equal when $\Omega=\HH$. Note that, for $\Omega=\HH$, 
    \begin{align*}
    \Poisson(\HH; z, x)=\frac{2\Im(z)}{|z-x|^2}=\ii\left(\frac{1}{z-x}-\frac{1}{\overline{z}-x}\right), \qquad\text{for }x\in\R, z\in \HH.
    \end{align*}
    Thus,
    \begin{align}\label{eqn::Poisson_derivative}
        \partial_z\Poisson(\HH; z, x)=\frac{-\ii}{(z-x)^2},\qquad \partial_{\overline{z}}\Poisson(\HH; z,x)=\frac{\ii}{(\overline{z}-x)^2}.
    \end{align}
    With help of Mathematica, we have
    \begin{align}\label{eqn::M_H}
        \det(M(\HH;x_1,x_2,x_3,z))
        =&(-\ii)\det       
         \begin{pmatrix}
            (z-x_1)^{-2}   &(z-x_2)^{-2}   & (z-x_3)^{-2}   \\
            (\overline{z}-x_1)^{-2} &(\overline{z}-x_2)^{-2} & (\overline{z}-x_3)^{-2}\\
            \frac{2\Im(z)}{|z-x_1|^2}&\frac{2\Im(z)}{|z-x_2|^2}&\frac{2\Im(z)}{|z-x_3|^2}\\
        \end{pmatrix}\notag\\
        =&16\frac{(x_2-x_1)(x_3-x_2)(x_3-x_1)\Im(z)^4}{|(z-x_1)(z-x_2)(z-x_3)|^4}.
\end{align}
This gives~\eqref{eqn::detM_LZtripod} for $\Omega=\HH$ and completes the proof. 
\end{proof}

\begin{proof}[Proof of Proposition~\ref{prop::observable_cvg}]
The function $f^{\delta}$ converges to $f$ pointwise due to Lemmas~\ref{lem::Fomin} and~\ref{lem::deltafive_cvg} and~\ref{lem::detM_LZtripod}. 
In fact, \cite[Theorem~3.13]{ChelkakSmirnovDiscreteComplexAnalysis} tells uniform convergence. 
Fix a compact subset $K\subset\Omega$.  
Let $\tilde{K}^{\delta}$ be the union of all triangles $\bigtriangleup^{\delta}(u)$ with $u\in\LV^{\circ}(\Omega^{\delta})$ and $\bigtriangleup^{\delta}(u)\cap K\neq\emptyset$. 
\cite[Theorem~3.13]{ChelkakSmirnovDiscreteComplexAnalysis} tells that there exists $\eps(\delta, K, \Omega)\to 0$ as $\delta\to 0$ such that 
\[|f^{\delta}(u)-f(u)|\le \eps(\delta, K, \Omega), \qquad\text{for }u\in \tilde{K}^{\delta}\cap\LV^{\circ}(\Omega^{\delta}).\]
For any $z\in K$, suppose $u\in \tilde{K}^{\delta}\cap\LV^{\circ}(\Omega^{\delta})$ such that $z\in\bigtriangleup^{\delta}(u)$, then 
\[|f^{\delta}(z)-f(z)|= |f^{\delta}(u)-f(z)|\le \eps(\delta, K, \Omega)+|f(u)-f(z)|.\]
Consequently, 
$f^{\delta}$ converges to $f$ uniformly on $K$. 
\end{proof}

\subsection{Tightness of the renormalized probability}\label{subsec::proba_tight}
In this section, we derive three technical lemmas related to the tightness of the renormalization of $\PP[\LA_1^{\delta}\cap\LA_2^{\delta}]$. 
We emphasize that the conclusions and proofs for these three lemmas also hold for $\Z^2$ lattice.

   \begin{lemma}\label{lem::proba_tight2}
   Assume the same setup as in Theorem~\ref{thm::trifurcation}. We fix an interior point $v\in\Omega$ and suppose $v^{\delta}$ is the vertex in $\LV^{\circ}(\Omega^{\delta})$ that is nearest to $v$. For $r>0$, denote $V(r)=B(x_1, r)\cup B(x_2, r)\cup B(x_3, r)$.  There exists a universal constant $\alpha\in (0,\infty)$ (from weak Beurling-type estimate~\cite[Proposition 2.11]{ChelkakSmirnovDiscreteComplexAnalysis})
such that 
\begin{align}\label{eqn::deltacube_control_boundary1}
&\frac{\PP[\LA_1^{\delta}\cap \LA_2^{\delta}\cap\{\trifurcation^{\delta}\in V(r)\}]}{\prod_{j=1}^3\harmonic(\Omega^{\delta}; v^{\delta}, e_j^{\delta})}\le C_{\eqref{eqn::deltacube_control_boundary1}} r^{\alpha}, \qquad \text{for all }\delta>0. 
\end{align}
\end{lemma}

\begin{proof}
    Fix $r_0>0$ such that $B(x_j,2r_0)\cap B(x_k,2r_0)=\emptyset$ for $j\neq k$ and let $v\in \Omega\setminus(\cup_{j=1}^3 B(x_j,r_0))$ and suppose $0<r\le r_0$. 
    We first show that there exist a universal constant $\alpha\in(0,\infty)$ (from weak Beurling-type estimate~\cite[Proposition 2.11]{ChelkakSmirnovDiscreteComplexAnalysis}) and a constant $C_{\eqref{eqn::deltacube_control_boundary1_aux1}}\in (0,\infty)$ depending on $(\Omega; x_1, x_2, x_3; v; r_0)$ such that 
   \begin{equation}\label{eqn::deltacube_control_boundary1_aux1}
        \frac{\PP\left[\LA_1^\delta\cap \LA_2^\delta\cap\{\trifurcation^\delta \in B(x_1,r)\}\right]}{\prod_{j=1}^3\harmonic(\Omega^\delta; v^\delta,e_j^\delta)}\le C_{\eqref{eqn::deltacube_control_boundary1_aux1}}  r^\alpha,\qquad \text{for all }\delta>0, r\in (0,r_0].
    \end{equation}
For $j=1,2,3$, let $V_j^\delta(r)$ be the maximal domain contained in $B(x_j,r)\cap \Omega^\delta$ such that $\partial V_j^\delta(r)$ consists of the edges of $\delta \hexagon$. We set $S_j^{\delta}(r)=\LV^{\partial}(V_j^{\delta}(r))\setminus \LV^{\partial}(\Omega^{\delta})$.
    Recall that $\eta_1^\delta$ is the boundary branch from $x_1^{\delta,\circ}$. 
Denote by $\Omega_1^{\delta}$ the domain $\Omega^{\delta}\setminus\eta_1^{\delta}$. Note that its set of boundary vertices $\LV^{\partial}(\Omega_1^{\delta})$ consists of vertices in $\LV^{\partial}(\Omega^{\delta})$ and vertices in $\eta_1^{\delta}$.
    Given $(\LA_1^{\delta}, \eta_1^{\delta})$, the event $\{\LA_1^\delta\cap \LA_2^\delta\cap\{\trifurcation^\delta \in B(x_1,r)\}\}$ implies that the simple random walk from $x_2^{\delta, \circ}$ hits $\LV^{\partial}(\Omega_1^{\delta})$ through $\eta_1^{\delta}\cap V_1^{\delta}(r)$. 
    Using the notation~\eqref{eqn::dharmonic_def}, the probability of this event is bounded by    
     $\harmonic(\Omega_1^{\delta}; x_2^{\delta, \circ}, \eta_1^{\delta}\cap V_1^{\delta}(r))$. Thus, 
\begin{equation}\label{eqn::deltacube_control_boundary1_aux2}
        \PP\left[\LA_1^\delta\cap\LA_2^\delta\cap\{\trifurcation^\delta \in B(x_1,r)\}\right]\leq
        \E\left[ \harmonic(\Omega_1^\delta; x_2^{\delta,\circ}, \eta_1^\delta\cap V_1^{\delta}(r))\one\{\LA_1^\delta\}\right].
    \end{equation}

Let us evaluate $\harmonic(\Omega_1^\delta; x_2^{\delta,\circ}, \eta_1^{\delta}\cap V_1^{\delta}(r))$. 
Consider a simple random walk starting from $x_2^{\delta,\circ}$. In order to hit $\LV^{\partial}(\Omega_1^{\delta})$ through $\eta_1^{\delta}\cap V_1^{\delta}(r)$, it has to first exit $V_2^{\delta}(r_0)\cap\Omega_1^{\delta}$ through $S_2^{\delta}(r_0)\cap\Omega_1^{\delta}$ and then hit $\LV^{\partial}(\Omega_1^{\delta})$ through $\eta_1^{\delta}\cap V_1^{\delta}(r)$, see Figure~\ref{fig::deltacube_boundary}~(a). The probability of this event is bounded by 
\[\harmonic(V^\delta_2(r_0)\cap \Omega_1^{\delta};x_2^{\delta,\circ}, S^\delta_2(r_0)\cap \Omega_1^{\delta}) \le \harmonic(V^\delta_2(r_0);x_2^{\delta,\circ}, S^\delta_2(r_0)).\]
By the Markov property of random walk, given $(\LA_1^{\delta},\eta_1^{\delta})$,  we have 
\begin{align}\label{eqn::deltacube_control_boundary1_aux3}
        \harmonic(\Omega_1^\delta; x_2^{\delta,\circ}, \eta_1^{\delta}\cap V_1^{\delta}(r))        \leq & \harmonic(V^\delta_2(r_0);x_2^{\delta,\circ}, S^\delta_2(r_0)) 
        \times\max_{w\in S_2^\delta(r_0)}\harmonic(\Omega_1^\delta;w, \eta_1^{\delta}\cap V_1^{\delta}(r)).
    \end{align}
From weak Beurling-type estimate~\cite[Proposition 2.11]{ChelkakSmirnovDiscreteComplexAnalysis}, there exists a universal constant $\alpha\in (0,\infty)$ such that\footnote{We write $F\lesssim G$ if $F/G$ is bounded by a universal constant.}
    \begin{equation*}
\harmonic(\Omega_1^\delta;w, \eta_1^{\delta}\cap V_1^{\delta}(r))\lesssim  \left(\frac{\mathrm{diam}(\eta_1^{\delta}\cap V_1^{\delta}(r))}{\mathrm{dist}(w,\eta_1^{\delta}\cap V_1^{\delta}(r))}\right)^\alpha \lesssim \left(\frac{r}{r_0}\right)^\alpha, \qquad \text{for all }w\in S_2^\delta(r_0).
    \end{equation*} 
 Plugging it into~\eqref{eqn::deltacube_control_boundary1_aux3} and~\eqref{eqn::deltacube_control_boundary1_aux2}, we have 
\begin{align}\label{eqn::deltacube_control_boundary1_aux4}
\PP\left[\LA_1^\delta\cap\LA_2^\delta\cap\{\trifurcation^\delta \in B(x_1,r)\}\right]\lesssim \left(\frac{r}{r_0}\right)^\alpha\harmonic(V^\delta_2(r_0);x_2^{\delta,\circ}, S^\delta_2(r_0)) \PP[\LA_1^{\delta}]. 
\end{align} 
From~\eqref{eqn::nharmonic_cvg} and~\eqref{eqn::bPoisson_cvg}, the following ratios are bounded by a finite constant depending on $(\Omega; x_1, x_2, x_3; v; r_0)$: 
    \begin{align*}
    \frac{\harmonic(V^\delta_2(r_0);x_2^{\delta,\circ}, S^\delta_2(r_0))}{\harmonic(\Omega^{\delta}; v^{\delta}, e_2^{\delta})}, \qquad
            \frac{\PP[\LA_1^\delta]}{\harmonic(\Omega^\delta; v^\delta, e_1^\delta)\harmonic(\Omega^\delta; v^\delta, e_3^\delta)}. 
    \end{align*}
    Plugging into~\eqref{eqn::deltacube_control_boundary1_aux4}, we obtain~\eqref{eqn::deltacube_control_boundary1_aux1} as desired. 
\medbreak
   
   By symmetry, there exists a constant $C_{\eqref{eqn::deltacube_control_boundary1_aux9}}\in (0,\infty)$ depending on $(\Omega; x_1, x_2, x_3; v; r_0)$ such that
   \begin{equation}\label{eqn::deltacube_control_boundary1_aux9}
        \frac{\PP\left[\LA_1^\delta\cap \LA_2^\delta\cap\{\trifurcation^\delta \in B(x_2,r)\}\right]}{\prod_{j=1}^3\harmonic(\Omega^\delta; v^\delta,e_j^\delta)}\leq C_{\eqref{eqn::deltacube_control_boundary1_aux9}} r^\alpha.
    \end{equation}
    It remains to control $\PP\left[\LA_1^\delta\cap \LA_2^\delta\cap\{\trifurcation^\delta \in B(x_3,r)\}\right]$. By erasing the edge $\langle x_3^{\delta, \circ}, x_3^{\delta}\rangle$ and adding the edge $\langle x_1^{\delta,\circ}, x_1^{\delta}\rangle$, we switch the roles of $x_1$ and $x_3$. Thus, the above analysis also gives
    \begin{equation}\label{eqn::deltacube_control_boundary1_aux10}
        \frac{\PP\left[\LA_1^\delta\cap \LA_2^\delta\cap\{\trifurcation^\delta \in B(x_3,r)\}\right]}{\prod_{j=1}^3\harmonic(\Omega^\delta; v^\delta,e_j^\delta)}\leq C_{\eqref{eqn::deltacube_control_boundary1_aux10}} r^\alpha.
    \end{equation}
   Combining~\eqref{eqn::deltacube_control_boundary1_aux1}, \eqref{eqn::deltacube_control_boundary1_aux9} and~\eqref{eqn::deltacube_control_boundary1_aux10}, we obtain~\eqref{eqn::deltacube_control_boundary1} with $C_{\eqref{eqn::deltacube_control_boundary1}}=C_{\eqref{eqn::deltacube_control_boundary1_aux1}}+C_{\eqref{eqn::deltacube_control_boundary1_aux9}}+C_{\eqref{eqn::deltacube_control_boundary1_aux10}}$ as desired. 
\end{proof}

\begin{figure}[ht!]
\begin{subfigure}[b]{0.45\textwidth}
\begin{center}
\includegraphics[width=\textwidth]{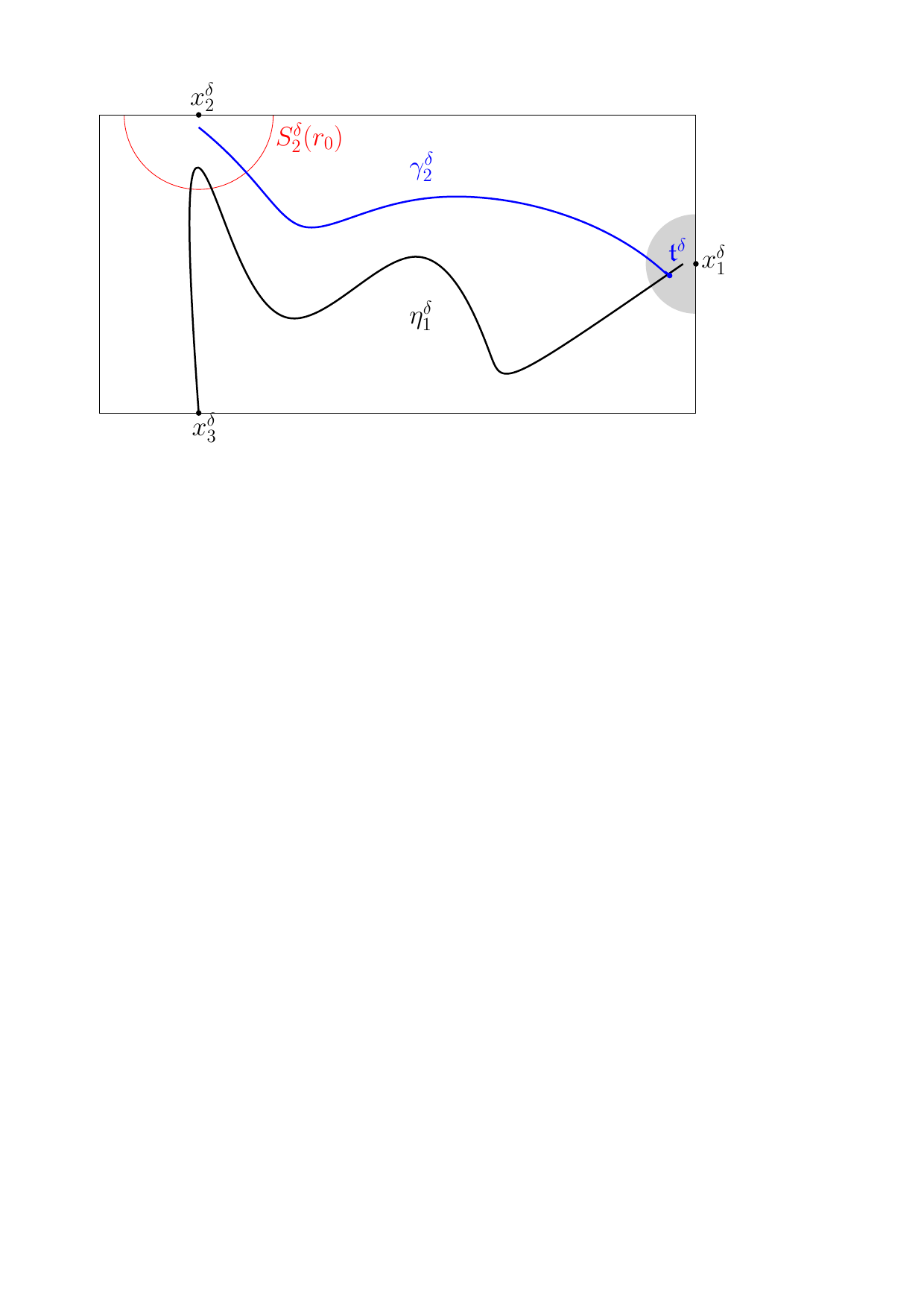}
\end{center}
\caption{The trifurcation $\trifurcation^{\delta}$ is inside $V_1^{\delta}(r)$ (the gray re-\\gion).}
\end{subfigure}
$\quad$
\begin{subfigure}[b]{0.45\textwidth}
\begin{center}
\includegraphics[width=\textwidth]{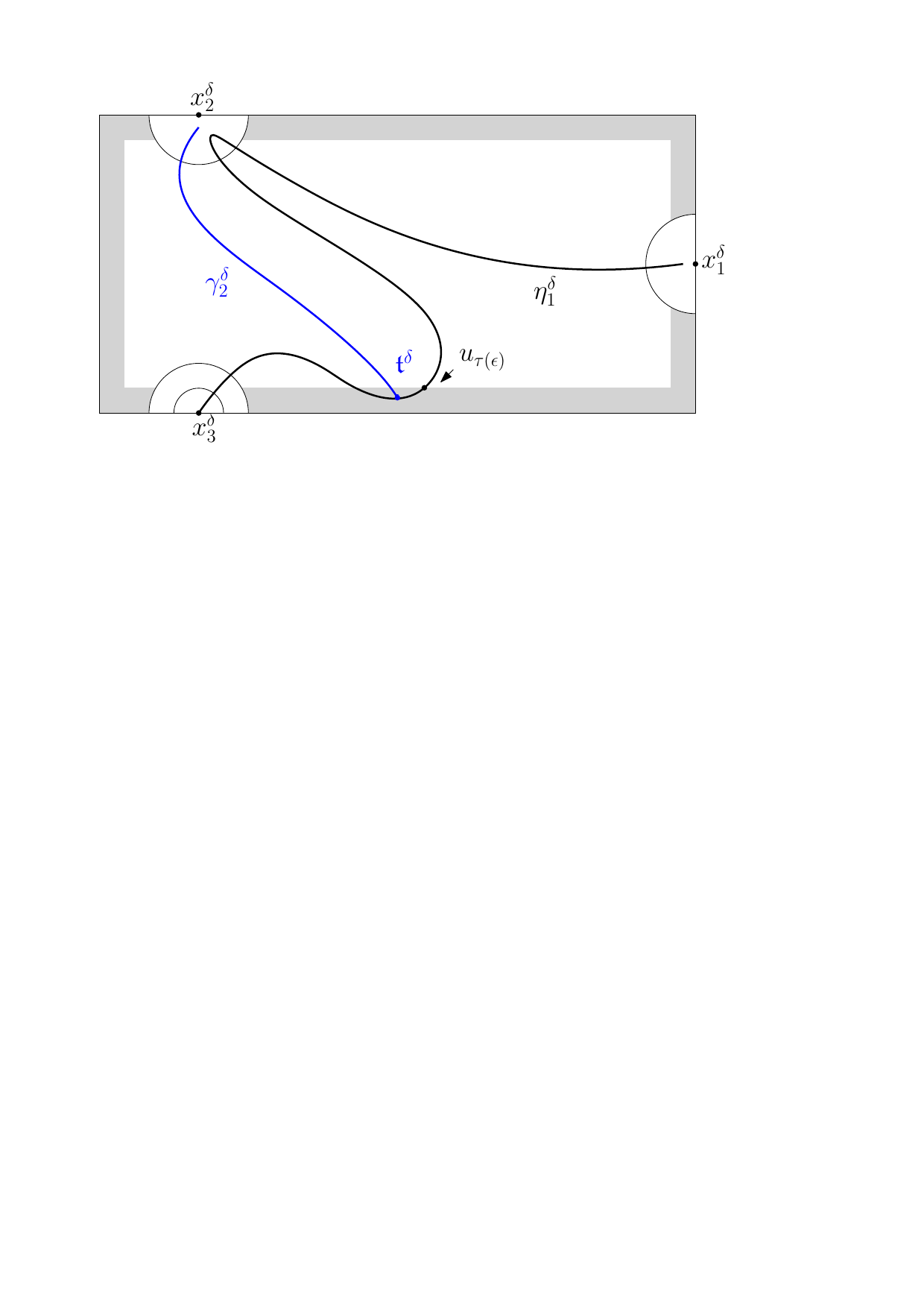}
\end{center}
\caption{The trifurcation $\trifurcation^{\delta}$ is inside $\Omega^{\delta}(\eps)\setminus V(r)$ (the gray region).}
\end{subfigure}
\caption{\label{fig::deltacube_boundary} The fat black curve indicates the boundary branch $\eta_1^{\delta}$ from $x_1^{\delta,\circ}$ to $e_3^{\delta}$. The fat blue curve indicates the boundary branch $\gamma_2^{\delta}$ from $x_2^{\delta, \circ}$ to $\eta_1^{\delta}$. }
\end{figure}

\begin{lemma}\label{lem::proba_tight3}
Assume the same setup as in Theorem~\ref{thm::trifurcation}. We fix an interior point $v\in\Omega$ and suppose $v^{\delta}$ is the vertex in $\LV^{\circ}(\Omega^{\delta})$ that is nearest to $v$. For $r>0$, denote $V(r)=B(x_1, r)\cup B(x_2, r)\cup B(x_3, r)$. 
For $\eps>0$, define $K(\eps)=\{z\in\Omega: \dist(z,\partial\Omega)\ge \eps\}$ and $\Omega^{\delta}(\eps)=\Omega^{\delta}\setminus K(\eps)$. 
There exists a universal constant $\alpha\in (0,\infty)$ (from weak Beurling-type estimate~\cite[Proposition 2.11]{ChelkakSmirnovDiscreteComplexAnalysis}), 
 a constant $C_{\eqref{eqn::deltacube_control_boundary1}}\in (0,\infty)$ depending on $(\Omega; x_1, x_2, x_3; v)$ and a constant $C_{\eqref{eqn::deltacube_control_boundary2}}(r)\in (0,\infty)$ depending on $(\Omega; x_1, x_2, x_3; v; r)$
and a constant $\delta_0(\eps)>0$ depending on $(\{\Omega^{\delta}\}_{\delta>0};\Omega; x_1, x_2, x_3; \eps)$
\begin{align}
    \label{eqn::deltacube_control_boundary2}
&\frac{\PP\left[\LA_1^\delta\cap\LA_2^\delta\cap \{\trifurcation^\delta\in \Omega^{\delta}(\eps)\setminus V(r)\}\right]}{\prod_{j=1}^{3}\harmonic(\Omega^\delta;v^\delta,e_j^\delta)}\le C_{\eqref{eqn::deltacube_control_boundary2}}(r) \left(\frac{\eps}{r}\right)^{\alpha},\quad \text{for all }\delta\le\delta_0(\eps).
\end{align}
\end{lemma}

\begin{proof}
    We use the same notation as in Proof of Lemma~\ref{lem::proba_tight2} and suppose $0<\eps<r$. 
    For $j=1,2,3$, let $V_j^\delta(r)$ be the maximal domain contained in $B(x_j,r)\cap \Omega^\delta$ such that $\partial V_j^\delta(r)$ consists of the edges of $\delta \hexagon$, and we set $S_j^{\delta}(r)=\LV^{\partial}(V_j^{\delta}(r))\setminus \LV^{\partial}(\Omega^{\delta})$ and denote by $\Omega_1^{\delta}$ the domain $\Omega^{\delta}\setminus \eta_1^{\delta}$ as before.
    Recall that $\eta_1^\delta=(u_0, u_1,\dots, u_L)$ is the boundary branch from $x_1^{\delta,\circ}$ to $e_3^{\delta}$. 
    Then $\LA_1^\delta\cap\LA_2^\delta\cap \{\trifurcation^\delta\in \Omega^{\delta}(\eps)\setminus V(r)\}$ implies the following three events: 
    \begin{itemize}
        \item The boundary branch $\eta_1^\delta$ first exits $V_1^\delta(r)$ through $S_1^\delta(r)$ and then hits $\Omega^{\delta}(\eps)\setminus V(r)$ before reaching $e_3^{\delta}$. The probability for this event is bounded by 
        \[\harmonic(V_1^{\delta}(r); x_1^{\delta, \circ}, S_1^{\delta}(r)).\]
         We denote by $\tau(\eps)$ the first time that $\eta_1^\delta$ hits $\Omega^{\delta}(\eps)\setminus V(r)$.
         \item The boundary branch from $u_{\tau(\eps)}$ in $\Omega^\delta(\eps)$ has to hit the boundary through the edge $e_3^\delta$.
         The probability of this event is bounded by 
         \[\max_{w\in \LV(\Omega^{\delta})\cap (\Omega^{\delta}(\eps)\setminus V(r))}\harmonic(\Omega^{\delta}; w, e_3^{\delta}).\]
         \item Given $\eta_1^{\delta}$, the boundary branch from $x_2^{\delta,\circ}$ in $\Omega_1^\delta$ has to exit $V_2^\delta(r)\cap\Omega_1^{\delta}$ through $S_2^\delta(r)\cap\Omega_1^{\delta}$. The probability for this event is bounded by 
         \[\harmonic(V_2^{\delta}(r)\cap\Omega_1^{\delta}; x_2^{\delta,\circ}, S_2^{\delta}(r)\cap\Omega_1^{\delta})\le \harmonic(V_2^{\delta}(r); x_2^{\delta, \circ}, S_2^{\delta}(r)).\]
    \end{itemize}
See Figure~\ref{fig::deltacube_boundary}~(b).    Combining the three events, by the Markov property of random walk, we have
    \begin{align}\label{eqn::deltacube_boundary2_aux1}
        \PP\left[\LA_1^\delta\cap\LA_2^\delta\cap \{\trifurcation^\delta\in \Omega^{\delta}(\eps)\setminus V(r)\}\right]\le &\prod_{j=1}^2\harmonic(V_j^\delta(r); x_j^{\delta,\circ},S_j^\delta(r))\times\max_{w\in \LV(\Omega^{\delta})\cap(\Omega^{\delta}(\eps)\setminus V(r))}\harmonic(\Omega^\delta; w, e_3^\delta).
    \end{align}
    Let us evaluate the three terms in RHS of~\eqref{eqn::deltacube_boundary2_aux1}. 
    \begin{itemize}
    \item 
    From~\eqref{eqn::nharmonic_cvg},
    the following ratios are bounded by finite constant depending on $(\Omega; x_1, x_2, x_3; v; r)$: 
        \begin{equation*}
                \frac{\harmonic(V_1^\delta(r); x_1^{\delta,\circ},S_1^\delta(r))}{\harmonic(\Omega^\delta; v^\delta, e_1^\delta)}, \qquad \frac{\harmonic(V_2^\delta(r); x_2^{\delta,\circ},S_2^\delta(r))}{\harmonic(\Omega^\delta; v^\delta, e_2^\delta)}.
        \end{equation*}
        \item 
        To evaluate $\harmonic(\Omega^\delta; w, e_3^\delta)$ with $w\in \Omega^{\delta}(\eps)\setminus V(r)$, consider a random walk starting from $w$. In order to hit the boundary through $e_3^{\delta}$, it has to first hit $V_3^{\delta}(r/2)$ through $S_3^{\delta}(r/2)$ and then hit the boundary through $e_3^{\delta}$. By the Markov property of the random walk, we have 
        \begin{equation}\label{eqn::deltacube_control_boundary2_aux1}
        \harmonic(\Omega^\delta; w, e_3^\delta)\le \harmonic(\Omega^\delta\setminus V_3^\delta(r/2); w, S^\delta_3(r/2))\times \max_{u\in S^\delta_3(r/2)}\harmonic(\Omega^\delta; u, e_3^\delta).
     \end{equation}
    From weak Beurling-type estimate~\cite[Proposition 2.11]{ChelkakSmirnovDiscreteComplexAnalysis}, there exists a universal constant $\alpha\in (0,\infty)$ such that 
        for all $w\in \LV(\Omega^{\delta})\cap (\Omega^{\delta}(\eps)\setminus V(r))$,
        \begin{equation*}
         \harmonic(\Omega^\delta\setminus V_3^\delta(r/2); w, S^\delta_3(r/2))\lesssim\left(\frac{\dist(w,\partial\Omega^\delta)}{\dist(w, S^\delta_3(r/2))}\right)^\alpha. 
\end{equation*}                
      We claim that there exists $\delta_0(\eps)>0$ depending on $(\{\Omega^{\delta}\}_{\delta>0}; \Omega; x_1, x_2, x_3; \eps)$ such that 
\begin{equation}\label{eqn::subtle_boundary_control}
\dist(w, \partial\Omega^{\delta})\le 2\eps, \qquad \text{for all }w\in \LV(\Omega^{\delta})\cap(\Omega^{\delta}(\eps)\setminus V(r))\text{ and }\delta\le\delta_0(\eps). 
\end{equation}
As $w\in \Omega^{\delta}(\eps)$, we have $\dist(w, \partial\Omega)\le \eps+\dist_H(\partial\Omega^{\delta}, \partial\Omega)$. Thus $\dist(w, \partial\Omega^{\delta})\le \eps+2\dist_H(\partial\Omega^{\delta}, \partial\Omega)$. This confirms~\eqref{eqn::subtle_boundary_control} due to~\eqref{eqn::boundary_cvg_Hausdorff}. 
Then 
         \begin{equation}\label{eqn::deltacube_control_boundary2_aux2}
          \harmonic(\Omega^\delta\setminus V_3^\delta(r/2); w, S^\delta_3(r/2))\lesssim\left(\frac{\dist(w,\partial\Omega^\delta)}{\dist(w, S^\delta_3(r/2))}\right)^\alpha \lesssim \left(\frac{\eps}{r}\right)^\alpha.
        \end{equation}
        From~\eqref{eqn::discretePoisson_max_control}, there exists a constant $C_{\eqref{eqn::deltacube_control_boundary2_aux3}}(r)\in (0,\infty)$ depending on $(\Omega; x_3; r)$ such that
        \begin{align}\label{eqn::deltacube_control_boundary2_aux3}
            \frac{\max_{u\in S_3^\delta(r/2)}\harmonic(\Omega^\delta; u,e_3^\delta)}{\harmonic(\Omega^\delta; v^\delta, e_3^\delta)}
            \le & C_{\eqref{eqn::deltacube_control_boundary2_aux3}}(r).
        \end{align}
        Plugging~\eqref{eqn::deltacube_control_boundary2_aux2} and~\eqref{eqn::deltacube_control_boundary2_aux3} into~\eqref{eqn::deltacube_control_boundary2_aux1}, there exists a constant $C_{\eqref{eqn::deltacube_control_boundary2_aux4}}(r)\in (0,\infty)$ depending on $(\Omega; x_1, x_2, x_3, v; r)$ such that 
      \begin{equation}\label{eqn::deltacube_control_boundary2_aux4}
            \max_{w\in\LV(\Omega^{\delta}\cap(\Omega^\delta(\eps)\setminus V(r)))}\frac{\harmonic(\Omega^\delta; w, e_3^\delta)}{\harmonic(\Omega^\delta; v^\delta, e_3^\delta)}\le C_{\eqref{eqn::deltacube_control_boundary2_aux4}}(r)\left(\frac{\eps}{r}\right)^\alpha,\qquad\text{for all }\delta\le\delta_0(\eps).
        \end{equation}
    \end{itemize}
     Combining the two observations above, we obtain~\eqref{eqn::deltacube_control_boundary2} as desired.
\end{proof}

\begin{lemma}\label{lem::proba_tight1}
Assume the same setup as in Theorem~\ref{thm::trifurcation}. We fix an interior point $v\in\Omega$ and suppose $v^{\delta}$ is the vertex in $\LV^{\circ}(\Omega^{\delta})$ that is nearest to $v$. The following ratio is uniformly bounded by finite constant depending on $(\Omega;x_1,x_2,x_3;v)$:
\begin{align}\label{eqn::deltacube_control_bulk}
   \frac{\PP[\LA_1^{\delta}\cap \LA_2^{\delta}]}{\prod_{j=1}^3\harmonic(\Omega^{\delta}; v^{\delta}, e_j^{\delta})}. 
   \end{align}
   \end{lemma}
   \begin{proof}
We use the same notation and setup as in Proof of Lemma~\ref{lem::proba_tight2}. 
Lemma~\ref{lem::proba_tight2} asserts the boundness of the probability of the event $\{\trifurcation^{\delta}\in V^{\delta}(r)\}\cap\LA_1^{\delta}\cap\LA_2^{\delta}$.
It remains to evaluate the probability of the event $\{\trifurcation^{\delta}\notin V^{\delta}(r)\}\cap\LA_1^{\delta}\cap\LA_2^{\delta}$.

The event $\{\trifurcation^{\delta}\notin V^{\delta}(r)\}$ implies that the simple random walk from $x_1^{\delta,\circ}$ has to exit $V_1^{\delta}(r)$ through $S_1^{\delta}(r)$, exit $\Omega^{\delta}$ via $e_3^{\delta}$, and 
the simple random walk from $x_2^{\delta}$ has to exit $V_2^{\delta}(r)$ through $S_2^{\delta}(r)$. 
   Thus by the Markov property of random walk, we have 
   \begin{equation}\label{eqn::boundary_branch_Dcvg_aux10}
       \PP\left[\{\trifurcation^{\delta}\notin V(r)\}\cap\LA_1^{\delta}\cap\LA_2^{\delta}\right]\le 
       \prod_{j=1}^2\harmonic(V_j^{\delta}(r);x_j^{\delta,\circ}, S_j^{\delta}(r))
\times\max_{w\in S_1^{\delta}(r)}\harmonic(\Omega^{\delta};w,e_3^{\delta}).
   \end{equation}
   From~\eqref{eqn::nharmonic_cvg} and~\eqref{eqn::discretePoisson_max_control}, the following ratios are bounded by a finite constant depending on $(\Omega; x_1, x_2, x_3; v; r)$: 
\begin{equation}\label{eqn::boundary_branch_Dcvg_aux11}
\frac{\harmonic(V_1^{\delta}(r);x_1^{\delta,\circ}, S^{\delta}_1(r))}{\harmonic(\Omega^{\delta}; v^{\delta},e_1^{\delta})}, \qquad \frac{\harmonic(V_2^{\delta}(r);x_2^{\delta,\circ}, S^{\delta}_2(r))}{\harmonic(\Omega^{\delta}; v^{\delta},e_2^{\delta})}, \qquad \frac{\max_{w\in S_2^{\delta}(r)}\harmonic(\Omega^{\delta};w,e_3^{\delta})}{\harmonic(\Omega^{\delta};v^{\delta},e_3^{\delta})}.
\end{equation}
Thus there exists a constant $C_{\eqref{eqn::boundary_branch_Dcvg_aux12}}\in(0,\infty)$ depending on $(\Omega;x_1,x_2,x_3;v;r)$ such that
   \begin{equation}\label{eqn::boundary_branch_Dcvg_aux12}
      \frac{\PP\left[\{\trifurcation^{\delta}\notin V(r)\}\cap\LA_1^{\delta}\cap\LA_2^{\delta}\right]}{\prod_{j=1}^3\harmonic(\Omega^{\delta}; v^{\delta}, e_j^{\delta})}\le  C_{\eqref{eqn::boundary_branch_Dcvg_aux12}}.
   \end{equation}
   Combining~\eqref{eqn::deltacube_control_boundary1} and~\eqref{eqn::boundary_branch_Dcvg_aux12}, we obtain the uniform bound for~\eqref{eqn::deltacube_control_bulk} as desired.
   \end{proof}

\subsection{Proof of Theorem~\ref{thm::trifurcation}}
\label{subsec::trifurcation_proof}
\begin{proposition}\label{prop::deltacube}
Assume the same setup as in Theorem~\ref{thm::trifurcation}. 
We fix an interior point $v\in\Omega$ and suppose $v^{\delta}$ is the vertex in $\LV^{\circ}(\Omega^{\delta})$ that is nearest to $v$. 
We have 
\begin{equation}\label{eqn::deltacube}
\lim_{\delta\to 0}\frac{\PP\left[\LA_1^\delta\cap\LA_2^\delta\right]}{\prod_{j=1}^{3}\harmonic(\Omega^\delta; v^\delta, e_j^\delta)}=\frac{{8\sqrt{2}}}{\prod_{j=1}^{3}\Poisson(\Omega; v, x_j)}\int_{\Omega}\LZtripod(\Omega; x_1, x_2, x_3; w)|\ud w|^2. 
\end{equation}
\end{proposition}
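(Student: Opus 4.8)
The plan is to recognize the left-hand side of~\eqref{eqn::deltacube} as a Riemann sum of the step function $f^{\delta}$ from~\eqref{eqn::deltafive_integrand} and then pass to the limit using Proposition~\ref{prop::observable_cvg}. Since on $\LA_1^{\delta}\cap\LA_2^{\delta}$ the trifurcation is a well-defined interior vertex, summing Lemma~\ref{lem::Fomin} over $u\in\LV^{\circ}(\Omega^{\delta})$ gives
\begin{equation*}
\frac{\PP[\LA_1^{\delta}\cap\LA_2^{\delta}]}{\prod_{j=1}^{3}\harmonic(\Omega^{\delta};v^{\delta},e_j^{\delta})}
=\sum_{u\in\LV^{\circ}(\Omega^{\delta})}\frac{\PP[\LA_1^{\delta}\cap\LA_2^{\delta}\cap\{\trifurcation^{\delta}=u\}]}{\prod_{j=1}^{3}\harmonic(\Omega^{\delta};v^{\delta},e_j^{\delta})}
=\sum_{u\in\LV^{\circ}(\Omega^{\delta})}\delta^{2}f^{\delta}(u).
\end{equation*}
As $f^{\delta}$ is the step function equal to $f^{\delta}(u)$ on the dual face $\bigtriangleup^{\delta}(u)$, whose area is $\frac{3\sqrt3}{4}\delta^{2}$, and $f^{\delta}=0$ off $\Omega^{\delta}$, the last sum equals $\frac{4}{3\sqrt3}\int_{\C}f^{\delta}(z)\,|\ud z|^{2}$ up to a contribution from the $O(\delta^{-1})$ dual faces straddling $\partial\Omega^{\delta}$ that will be absorbed into the boundary estimate below. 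By Proposition~\ref{prop::observable_cvg}, $f^{\delta}\to f$ uniformly on compacts of $\Omega$ with $f$ as in~\eqref{eqn::deltafive_integrand_continuum}, and $\int_{\Omega}f<\infty$ by Lemma~\ref{lem::LZtripod_integrable}; hence, once the interchange of limit and integral is justified,
\begin{equation*}
\lim_{\delta\to0}\frac{\PP[\LA_1^{\delta}\cap\LA_2^{\delta}]}{\prod_{j=1}^{3}\harmonic(\Omega^{\delta};v^{\delta},e_j^{\delta})}
=\frac{4}{3\sqrt3}\int_{\Omega}f(z)\,|\ud z|^{2}
=\frac{4}{3\sqrt3}\cdot\frac{6\sqrt6}{\prod_{j=1}^{3}\Poisson(\Omega;v,x_j)}\int_{\Omega}\LZtripod(\Omega;x_1,x_2,x_3;w)\,|\ud w|^{2},
\end{equation*}
and $\frac{4}{3\sqrt3}\cdot 6\sqrt6=8\sqrt2$ gives exactly~\eqref{eqn::deltacube}.

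It remains to produce a $\delta$-independent integrable majorant for $f^{\delta}$, after which dominated convergence finishes the proof. I would fix a large compact $K\subset\Omega$ (handled by uniform convergence) and bound $f^{\delta}$ on the collar $\Omega\setminus K$, treating separately small fixed neighborhoods $B_{j}$ of $x_1,x_2,x_3$ and the rest. On the collar away from the $B_j$'s, I would use the row-reduced form of the determinant from the proof of Lemma~\ref{lem::deltafive_cvg}: two rows are the discrete directional derivatives $\dharmonic_{k}(\Omega^{\delta};u,e_j^{\delta})$ and the third is the values $\harmonic(\Omega^{\delta};u_3,e_j^{\delta})$. Each $\harmonic(\Omega^{\delta};\cdot,e_j^{\delta})$ is a positive discrete harmonic function vanishing on $\partial\Omega^{\delta}$ and harmonic away from $x_j^{\delta,\circ}$, so the discrete gradient estimate and Harnack inequality of~\cite{ChelkakSmirnovDiscreteComplexAnalysis} give $|\dharmonic_{k}(\Omega^{\delta};u,e_j^{\delta})|\le C\,\harmonic(\Omega^{\delta};u,e_j^{\delta})/\dist(u,\partial\Omega^{\delta})$ with $C$ absolute, while the discrete boundary Harnack principle~\cite[Corollary~3.8]{ChelkakWanMassiveLERW} together with Lemma~\ref{lem::Poisson_cvg} yields $\harmonic(\Omega^{\delta};u,e_j^{\delta})/\harmonic(\Omega^{\delta};v^{\delta},e_j^{\delta})\le C'\Poisson(\Omega;u,x_j)/\Poisson(\Omega;v,x_j)$ uniformly in $\delta$. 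The Hausdorff convergence~\eqref{eqn::boundary_cvg_Hausdorff} is what makes $\dist(u,\partial\Omega^{\delta})\asymp\dist(u,\partial\Omega)$ and keeps these constants uniform. Expanding the $3\times3$ determinant gives
\begin{equation*}
f^{\delta}(u)\le\frac{C''}{\dist(u,\partial\Omega)^{2}}\prod_{j=1}^{3}\frac{\Poisson(\Omega;u,x_j)}{\Poisson(\Omega;v,x_j)},
\end{equation*}
and a direct computation in $\HH$ (using~\eqref{eqn::Poisson_H} and $\dist(\cdot,\partial\HH)=\Im$, then transporting by conformal covariance) shows the right-hand side is integrable on $\Omega$: near each $x_j$ it is $\lesssim\Poisson(\Omega;u,x_j)$, near a generic boundary point it is $\lesssim\dist(u,\partial\Omega)$, and it decays at the image of infinity; in particular its integral over the collar of width $O(\delta)$ tends to $0$, which also kills the boundary-face correction flagged above.

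On a fixed small $B_{j}$ the previous bound degrades since $\harmonic(\Omega^{\delta};\cdot,e_j^{\delta})$ is not harmonic there, but here $\LZtripod$ (hence $f$) is bounded because $\Im(z)^{4}/|z-x_j|^{4}\le1$, and a cruder estimate—bounding the determinant by $O(1)\prod_{j}\harmonic(\Omega^{\delta};v^{\delta},e_j^{\delta})$ using the logarithmic behavior of $\Green(\Omega^{\delta};\cdot,x_j^{\delta,\circ})$ near $x_j^{\delta,\circ}$ averaged over the $O(\delta^{2})$-cell—bounds $f^{\delta}$ by a constant on $B_{j}$, which is integrable since $B_{j}$ has finite measure. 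Combining the three regimes produces the required majorant and completes the proof by dominated convergence. The main obstacle is exactly this uniform control near $\partial\Omega$: getting the discrete gradient and boundary-Harnack bounds with constants independent of $\delta$ right up to the possibly rough boundary, and the delicate bookkeeping near the marked points, where the Poisson kernels in the denominator of $f^{\delta}$ are themselves degenerating—this is the interchange of integral and limit flagged in the introduction.
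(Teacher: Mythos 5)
The first half of your argument (summing Lemma~\ref{lem::Fomin} over $u$, recognizing the Riemann sum of $f^{\delta}$ over dual faces of area $\tfrac{3\sqrt3}{4}\delta^2$, using Proposition~\ref{prop::observable_cvg} on compacts, and the constant check $\tfrac{4}{3\sqrt3}\cdot 6\sqrt6=8\sqrt2$) is exactly the paper's reduction. The divergence, and the problem, is in how you justify the interchange of limit and integral. You propose a pointwise, $\delta$-independent integrable majorant for $f^{\delta}$ plus dominated convergence, but the two inputs you need for the majorant are not available under the standing hypotheses. First, the bound $\harmonic(\Omega^{\delta};u,e_j^{\delta})/\harmonic(\Omega^{\delta};v^{\delta},e_j^{\delta})\le C\,\Poisson(\Omega;u,x_j)/\Poisson(\Omega;v,x_j)$ \emph{uniformly over $u$ in the collar near $\partial\Omega$ and over $\delta$} is a pointwise domination of the discrete kernel by the continuum kernel up to a possibly very rough boundary; Lemma~\ref{lem::discretePoisson_max_control} only gives a \emph{constant} bound on compact subsets of $\overline\Omega$ away from $x_j$ (no decay as $u\to\partial\Omega$), and \cite[Corollary~3.8]{ChelkakWanMassiveLERW} is a boundary Harnack statement localized near $x_j$; neither yields your inequality. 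Second, even granting the majorant $\dist(u,\partial\Omega)^{-2}\prod_j\Poisson(\Omega;u,x_j)$, its integrability is argued by the $\HH$ picture ``$\Poisson\lesssim\dist(u,\partial\Omega)$ near a generic boundary point'', which transfers to $\Omega$ only for reasonably smooth boundaries; here $\partial\Omega$ is merely locally connected (with $C^{1+\eps}$ only near the marked points), Carathéodory plus Hausdorff convergence is assumed, and for such boundaries the Poisson kernel can decay like a small power of the distance (or worse), so the majorant need not be integrable. Third, your treatment near the marked points is not an argument: at distance $O(\delta)$ from $x_1$, the entries $\harmonic(\Omega^{\delta};u_k,e_1^{\delta})$ are of order one while the normalization $\delta^2\prod_j\harmonic(\Omega^{\delta};v^{\delta},e_j^{\delta})$ is much smaller, so naive termwise bounds on the determinant give $f^{\delta}$ blowing up like a negative power of $\delta$; a uniform constant bound there would require either exploiting cancellation in the determinant via boundary derivative estimates or a different mechanism, and the ``cruder estimate averaged over the $O(\delta^2)$-cell'' does not supply one.

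The paper circumvents all three issues by never bounding $f^{\delta}$ pointwise near the boundary. Instead it bounds the \emph{aggregated} quantity $\int_{V(r)}f^{\delta}$ and $\int_{\Omega^{\delta}(\eps)\setminus V(r)}f^{\delta}$, i.e.\ the probabilities $\PP[\LA_1^{\delta}\cap\LA_2^{\delta}\cap\{\trifurcation^{\delta}\in\cdot\}]$, using Wilson's algorithm: conditioning on the first branch and decomposing the random-walk trajectory forces passages through annuli, and the weak Beurling estimate \cite[Proposition~2.11]{ChelkakSmirnovDiscreteComplexAnalysis} together with the constant ratio bounds (Lemmas~\ref{lem::discretePoisson_max_control}, \ref{lem::bPoisson_cvg}, \ref{lem::nharmonic_cvg}) yields bounds $Cr^{\alpha}$ near the marked points (Lemma~\ref{lem::deltacube_control_boundary1}, including a nontrivial edge-switching trick to handle $x_3$) and $C(r)(\eps/r)^{\alpha}$ on the remaining collar (Lemma~\ref{lem::deltacube_control_boundary2}, where the Hausdorff assumption~\eqref{eqn::boundary_cvg_Hausdorff} enters). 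If you want to salvage your route, you would essentially have to prove these same collar estimates, so the probabilistic decomposition is not an optional convenience but the substance of the missing step.
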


\begin{proof}
From~\eqref{eqn::fomin_main}, we have 
\begin{align*}
\frac{\PP\left[\LA_1^\delta\cap\LA_2^\delta\right]}{\prod_{j=1}^{3}\harmonic(\Omega^\delta; v^\delta, e_j^\delta)}
=&\frac{1}{\prod_{j=1}^{3}\harmonic(\Omega^\delta; v^\delta, e_j^\delta)}\sum_{u\in\LV^{\circ}(\Omega^{\delta})}\PP\left[\LA_1^\delta\cap\LA_2^\delta\cap\{\trifurcation^{\delta}=u\}\right]=\frac{4}{3\sqrt{3}}\sum_{u\in\LV^{\circ}(\Omega^{\delta})}f^{\delta}(u)\times\frac{3\sqrt{3}}{4}\delta^2, 
\end{align*}
where $f^{\delta}$ is defined by~\eqref{eqn::deltafive_integrand}. 
We define the triangle $\bigtriangleup^{\delta}(u)$ covering vertex $u$ as in Figure~\ref{fig::threeneighbors}.  
Note that the area of $\bigtriangleup^{\delta}(u)$ is $\frac{3\sqrt{3}}{4}\delta^2$. 
We extend $f^{\delta}$ as in Proposition~\ref{prop::observable_cvg}. With such extension, we have
\begin{align*}
\frac{\PP\left[\LA_1^\delta\cap\LA_2^\delta\right]}{\prod_{j=1}^{3}\harmonic(\Omega^\delta; v^\delta, e_j^\delta)}= &\frac{4}{3\sqrt{3}}\int_{\Omega^{\delta}}f^{\delta}(w)|\ud w|^2. 
\end{align*}
We define $f$ as in~\eqref{eqn::deltafive_integrand_continuum}, then the goal~\eqref{eqn::deltacube} can be written as 
\begin{equation}\label{eqn::deltacube_goal_repeat}
\lim_{\delta\to 0}\int_{\Omega^{\delta}}f^{\delta}(w)|\ud w|^2=\int_{\Omega}f(w)|\ud w|^2.
\end{equation}

Recall from Lemma~\ref{lem::proba_tight3} that for $\eps>0$, define $K(\eps)=\{z\in\Omega: \dist(z,\partial\Omega)\ge \eps\}$. Then $K(\eps)$ is a compact subset of $\Omega$ and Proposition~\ref{prop::observable_cvg} guarantees
\begin{align}\label{eqn::deltacube_integral_compact}
\lim_{\delta\to 0}\int_{K(\eps)}f^{\delta}(w)|\ud w|^2=\int_{K(\eps)}f(w)|\ud w|^2.
\end{align}
Comparing~\eqref{eqn::deltacube_goal_repeat} and~\eqref{eqn::deltacube_integral_compact}, it remains to show 
\[\limsup_{\eps\to 0}\limsup_{\delta\to 0}\int_{\Omega^{\delta}(\eps)}|f^{\delta}(w)-f(w)||\ud w|^2=0, \qquad \text{where }\Omega^{\delta}(\eps)=\Omega^{\delta}\setminus K(\eps). 
\]
As $f$ is integrable on $\Omega$ due to Lemma~\ref{lem::LZtripod_integrable}, we have $\lim_{\eps\to 0}\int_{\Omega^{\delta}(\eps)}f(w)|\ud w|^2=0$. Thus, to get~\eqref{eqn::deltacube_goal_repeat}, it remains to show that 
\begin{equation}\label{eqn::deltacube_control_boundary}
\limsup_{\eps\to 0}\limsup_{\delta\to 0}\int_{\Omega^{\delta}(\eps)}f^{\delta}(w)|\ud w|^2=0. 
\end{equation}
For $r>0$, denote $V(r)=B(x_1, r)\cup B(x_2, r)\cup B(x_3, r)$.  
From Lemmas~\ref{lem::proba_tight2} and~\ref{lem::proba_tight3}, there exists a universal constant $\alpha\in (0,\infty)$ (from weak Beurling-type estimate~\cite[Proposition 2.11]{ChelkakSmirnovDiscreteComplexAnalysis}), a constant $C_{\eqref{eqn::deltacube_control_boundary1}}\in (0,\infty)$ depending on $(\Omega; x_1, x_2, x_3; v)$ and a constant $C_{\eqref{eqn::deltacube_control_boundary2}}(r)\in (0,\infty)$ depending on $(\Omega; x_1, x_2, x_3; v; r)$
and a constant $\delta_0(\eps)>0$ depending on $(\{\Omega^{\delta}\}_{\delta>0};\Omega; x_1, x_2, x_3; \eps)$
such that 
\begin{align*}
\int_{V(r)}f^{\delta}(w)|\ud w|^2\le& C_{\eqref{eqn::deltacube_control_boundary1}} r^{\alpha}, \qquad \text{for all }\delta>0; \\
\int_{\Omega^{\delta}(\eps)\setminus V(r)}f^{\delta}(w)|\ud w|^2\le & C_{\eqref{eqn::deltacube_control_boundary2}}(r) \left(\frac{\eps}{r}\right)^{\alpha}, \qquad \text{for all }\delta\le \delta_0(\eps). 
\end{align*}
They give~\eqref{eqn::deltacube_control_boundary} as desired. 
\end{proof}

\begin{proof}[Proof of Theorem~\ref{thm::trifurcation}]
Combining~\eqref{eqn::fomin_main}, ~\eqref{eqn::det_convergence}, ~\eqref{eqn::detM_LZtripod} and~\eqref{eqn::deltacube}, we have 
\begin{align*}
\lim_{\delta\to 0}\delta^{-2}\PP\left[\trifurcation^{\delta}=z^{\delta}\cond\LA_1^{\delta}\cap\LA_2^{\delta}\right]=\frac{3\sqrt{3}}{4}\frac{\LZtripod(\Omega; x_1, x_2, x_3; z)}{\int_{\Omega}\LZtripod(\Omega; x_1, x_2, x_3; w)|\ud w|^2}, 
\end{align*}
as desired. 
\end{proof}


\subsection{A generalization of Proposition~\ref{prop::deltacube}}
\label{subsec::A1interior}
\begin{proposition}\label{prop::A1interior}
Fix a bounded $2$-polygon $(\Omega; x_1, x_3)$ and suppose $(\Omega^{\delta}; x_1^{\delta}, x_3^{\delta})$ is an approximation of $(\Omega; x_1, x_3)$ on $\delta\hexagon$ in Carath\'eodory sense. We assume further that $\partial\Omega^{\delta}$ converges to $\partial\Omega$ in Hausdorff distance~\eqref{eqn::boundary_cvg_Hausdorff}. 
We fix an interior points $z\in\Omega$ and suppose $z^{\delta}$ is the vertex in $\LV^{\circ}(\Omega^{\delta})$ that is nearest to $z$. 
The following limit exists 
\begin{align}\label{eqn::A1interior}
g(\Omega; x_1, x_3; z):=\lim_{\delta\to 0}\PP\left[ \{z^{\delta}\rightsquigarrow e_3^{\delta}\}\cond \LA_1^{\delta}\right].
\end{align}
The limit $g(\Omega; x_1, x_3; z)$ is characterized by~\eqref{eqn::chordalSLE2_harmonic_inv}-\eqref{eqn::chordalSLE2_harmonic_H}. 
\end{proposition}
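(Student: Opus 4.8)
The plan is to follow the two‑step strategy used for Proposition~\ref{prop::deltacube}: first obtain an exact discrete formula for the joint probability $\PP[\LA_1^{\delta}\cap\{z^{\delta}\rightsquigarrow e_3^{\delta}\}]$ via Fomin's formula, then pass to the scaling limit using the convergence results of Section~\ref{subsec::harmonic_limit}, the main difficulty being the interchange of the limit with the summation near $\partial\Omega$. For the first step, note that on $\LA_1^{\delta}\cap\{z^{\delta}\rightsquigarrow e_3^{\delta}\}$ the branches from $x_1^{\delta,\circ}$ and from $z^{\delta}$ both exit through $e_3^{\delta}$, hence merge at a vertex $w$; from its three neighbours $w_1,w_2,w_3$ (counterclockwise) the tree issues three branches ending respectively at the interior vertices $x_1^{\delta,\circ}$ and $z^{\delta}$ and at the boundary edge $e_3^{\delta}$, and these are pairwise non‑intersecting and cyclically ordered exactly as in the proof of Lemma~\ref{lem::Fomin}. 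Fomin's formula~\cite{fomin2001loop}, applied as there but now with two interior endpoints, therefore yields (with the normalization of the determinant fixed as in Lemma~\ref{lem::Fomin})
\begin{align*}
\PP\left[\LA_1^{\delta}\cap\{z^{\delta}\rightsquigarrow e_3^{\delta}\}\right]=\sum_{w\in\LV^{\circ}(\Omega^{\delta})}\det\begin{pmatrix}\harmonic(\Omega^{\delta};w_1,e_1^{\delta}) & \Green(\Omega^{\delta};w_1,z^{\delta}) & \harmonic(\Omega^{\delta};w_1,e_3^{\delta})\\ \harmonic(\Omega^{\delta};w_2,e_1^{\delta}) & \Green(\Omega^{\delta};w_2,z^{\delta}) & \harmonic(\Omega^{\delta};w_2,e_3^{\delta})\\ \harmonic(\Omega^{\delta};w_3,e_1^{\delta}) & \Green(\Omega^{\delta};w_3,z^{\delta}) & \harmonic(\Omega^{\delta};w_3,e_3^{\delta})\end{pmatrix},
\end{align*}
the first and third columns being discrete Poisson kernels (indeed $\harmonic(\Omega^{\delta};w_k,e_j^{\delta})=\tfrac13\Green(\Omega^{\delta};w_k,x_j^{\delta,\circ})$ by~\eqref{eqn::harmonic_green}) and the middle column the Green's function to the interior target $z^{\delta}$. (Equivalently, sampling $\eta_1^{\delta}$ first by Wilson's algorithm and using that its last interior vertex is $x_3^{\delta,\circ}\in\eta_1^{\delta}$, one has $\PP[\LA_1^{\delta}\cap\{z^{\delta}\rightsquigarrow e_3^{\delta}\}]=\E[\one\{\LA_1^{\delta}\}\,\harmonic(\Omega^{\delta}\setminus\eta_1^{\delta};z^{\delta},\eta_1^{\delta})]$, since a loop‑erased walk and its underlying walk have the same hitting distribution.)

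Dividing by $\PP[\LA_1^{\delta}]=\harmonic(\Omega^{\delta};x_1^{\delta,\circ},e_3^{\delta})$, factoring out the dual‑face area $\tfrac{3\sqrt3}{4}\delta^{2}$ and inserting an auxiliary interior reference point $v$ as in Lemma~\ref{lem::deltafive_cvg}, the right‑hand side becomes a Riemann sum whose summand converges locally uniformly on $\Omega$: by Lemmas~\ref{lem::Green_cvg}, \ref{lem::Poisson_cvg} and~\ref{lem::bPoisson_cvg}, together with the determinant‑of‑derivatives computation of Lemma~\ref{lem::deltafive_cvg} (now one column is a Green's function to the interior point $z$ rather than a Poisson kernel, and the normalising factor $\harmonic(\Omega^{\delta};v^{\delta},e_1^{\delta})\harmonic(\Omega^{\delta};v^{\delta},e_3^{\delta})$ cancels against~\eqref{eqn::bPoisson_cvg}), the limit is an area integral
\begin{align*}
g(\Omega;x_1,x_3;z)=\frac{\cst}{\Poisson(\Omega;x_1,x_3)}\int_{\Omega}\det M_{2}(\Omega;x_1,x_3;z;w)\,|\ud w|^{2},
\end{align*}
where $M_{2}$ is obtained from the matrix $M$ of~\eqref{eqn::Mdef} by replacing the middle column $\bigl(\partial_w\Poisson(\Omega;w,x_2),-\ii\partial_{\overline w}\Poisson(\Omega;w,x_2),\Poisson(\Omega;w,x_2)\bigr)^{\top}$ with $\bigl(\partial_w\Green(\Omega;w,z),-\ii\partial_{\overline w}\Green(\Omega;w,z),\Green(\Omega;w,z)\bigr)^{\top}$ and relabelling $x_2\mapsto x_3$; the resulting integrand has only an integrable $|w-z|^{-1}$ singularity at $w=z$. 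Because $\det M_{2}$ transforms with conformal weight $2$ in $w$, weight $1$ in each of $x_1,x_3$ and weight $0$ in $z$ (the Green's function and its $w$‑derivatives being conformally invariant in $z$), while $\Poisson(\Omega;x_1,x_3)$ transforms with weight $1$ in $x_1$ and $x_3$, the displayed formula is conformally invariant, which is~\eqref{eqn::chordalSLE2_harmonic_inv}; specialising to $(\Omega;x_1,x_3)=(\HH;0,\infty)$, $z=r\ee^{\ii\theta}$, and evaluating the ensuing double integral over $w=s\ee^{\ii\varphi}\in\HH$ in polar coordinates (as in the evaluation of $I_0$ in Lemma~\ref{lem::LZtripod_integrable}) produces the closed form~\eqref{eqn::chordalSLE2_harmonic_H}.

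The main obstacle, exactly as in Proposition~\ref{prop::deltacube}, is to justify exchanging $\lim_{\delta\to0}$ with $\sum_w$: the normalised summand is controlled only locally uniformly and blows up near $x_1,x_3$ and near $\partial\Omega$. One therefore bounds, uniformly in $\delta$, the contribution of $w\in B(x_1,r)\cup B(x_3,r)$ by $\cst\,r^{\alpha}$ — on the relevant event the branches from $x_1^{\delta,\circ}$ and from $z^{\delta}$ must both penetrate deep into such a ball, so the weak Beurling estimate~\cite[Proposition~2.11]{ChelkakSmirnovDiscreteComplexAnalysis} applies, as in Lemma~\ref{lem::deltacube_control_boundary1} — and the contribution of the $\eps$‑neighbourhood of $\partial\Omega$ away from $x_1,x_3$ by $\cst(r)\,(\eps/r)^{\alpha}$, by another Beurling estimate using the Hausdorff convergence~\eqref{eqn::boundary_cvg_Hausdorff}, as in Lemma~\ref{lem::deltacube_control_boundary2}; letting $\delta\to0$, then $\eps\to0$, then $r\to0$ gives the existence of the limit~\eqref{eqn::A1interior} and the displayed formula. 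Finally, once existence is known one may also identify $g(\Omega;x_1,x_3;z)=\chordalSLEexp^{(\kappa=2)}(\Omega;x_1,x_3)[\harmonic(\Omega\setminus\ell;z,\ell)]$ — which reproves~\eqref{eqn::chordalSLE2_harmonic_inv} — by combining the Wilson identity above with the convergence of $\eta_1^{\delta}$ conditionally on $\LA_1^{\delta}$ to chordal $\SLE_2$ (Lemma~\ref{lem::boundary_branch_cvg1}) and the continuity of harmonic measure under this convergence of slit domains (using that $z\notin\ell$ almost surely); this route, however, does not seem to deliver~\eqref{eqn::chordalSLE2_harmonic_H}, which is precisely why the discrete computation above is needed.
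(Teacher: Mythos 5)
Your first step is a genuine gap, and it is where the whole argument breaks. The exact determinant identity you assert for $\PP[\LA_1^{\delta}\cap\{z^{\delta}\rightsquigarrow e_3^{\delta}\}]$ does not follow ``as in Lemma~\ref{lem::Fomin}''. The edge-switching bijection there deletes the two tree edges at the trifurcation pointing towards $x_1^{\delta,\circ}$ and $x_2^{\delta,\circ}$ and compensates by adding the boundary edges $e_1^{\delta},e_2^{\delta}$; this is possible only because those two branch endpoints sit next to boundary edges. In your event one of the three arms emanating from the merge point $w$ terminates at the \emph{interior} vertex $z^{\delta}$: deleting its edge at $w$ disconnects the component of $z^{\delta}$ from the wired boundary and there is no edge available to add, so the configurations are not put in bijection with a ``three disjoint boundary branches'' event to which Fomin's formula applies. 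Moreover, the planarity argument that only cyclic permutations survive in the signed Fomin sum uses that all three targets are boundary edges in counterclockwise order; when one target is interior, a path to it does not separate boundary arcs, all six permutations occur with positive probability, and the determinant (whatever its entries) is not a sum of probabilities of the events you want. So the displayed formula, including its unspecified ``normalization'', is unsubstantiated — and in fact it cannot be right: expanding $\det M_2$ along its middle column and integrating by parts in $w$ (boundary terms vanish since $\Green(\Omega;\cdot,z)=0$ on $\partial\Omega$, the logarithmic singularity contributes nothing, and the Poisson-kernel cofactors are harmonic), one gets
\begin{align*}
\int_{\Omega}\det M_2(\Omega;x_1,x_3;z;w)\,|\ud w|^2
=3\ii\int_{\Omega}\Green(\Omega;z,w)\Bigl(\partial_w\Poisson(\Omega;w,x_1)\,\partial_{\bar w}\Poisson(\Omega;w,x_3)-\partial_w\Poisson(\Omega;w,x_3)\,\partial_{\bar w}\Poisson(\Omega;w,x_1)\Bigr)|\ud w|^2,
\end{align*}
and for $(\Omega;x_1,x_3)=(\HH;0,\infty)$ the real density $\ii(\partial_w\Poisson\,\partial_{\bar w}\Poisson-\cdots)$, suitably renormalized at $x_3=\infty$, equals $2\sin(2\arg w)/|w|^2$, which is odd under the reflection $w\mapsto-\bar w$. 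Your candidate limit would then be odd in $z$ under this reflection, hence vanish on the imaginary axis and be negative on one side — incompatible with being a limit of conditional probabilities, with the symmetric positive target formula~\eqref{eqn::chordalSLE2_harmonic_H}, and with the correct source density $q(r,\theta)=\frac{2\sin 2\theta}{\pi r^2}(\tan\theta-\theta+\frac{\pi}{2})$ of~\eqref{eqn::q_H}, which carries an extra non-constant factor your expression is missing.

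The paper circumvents exactly this obstacle by applying the switching bijection not to the event itself but to the \emph{discrete Laplacian} of the observable $g^{\delta}(u)=\PP[\{u\rightsquigarrow e_3^{\delta}\}\mid\LA_1^{\delta}]$: there the ``free'' neighbour's branch ends at a boundary edge $e$, so after summing over $e$ along the two arcs all three targets are boundary edges, Fomin applies, and one obtains the exact formula~\eqref{eqn::qdiscrete_def} whose middle column is a difference of harmonic measures of the arcs $(x_1^{\delta}x_3^{\delta})$ and $(x_3^{\delta}x_1^{\delta})$, not a Green's function. Then $g^{\delta}$ is recovered through the discrete Green's representation, the limit is identified as the unique \emph{bounded} solution of the Poisson problem~\eqref{eqn::Laplace_Dirichlet} (with a careful interchange of limits near $\partial\Omega$ and near $x_1,x_3$, in the spirit of your last paragraph but for the Green-weighted sum), and~\eqref{eqn::chordalSLE2_harmonic_H} drops out by solving an ODE in $\theta$ rather than by evaluating a double integral. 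Your closing remark is correct but understates itself: the Wilson identity $\PP[\{z^{\delta}\rightsquigarrow e_3^{\delta}\}\mid\LA_1^{\delta}]=\E[\harmonic(\Omega^{\delta}\setminus\eta_1^{\delta};z^{\delta},\eta_1^{\delta})\mid\LA_1^{\delta}]$ combined with Lemma~\ref{lem::boundary_branch_cvg1}, Skorokhod coupling, the convergence~\eqref{eqn::charmonic_cvg} and bounded convergence already yields existence of the limit and the invariance~\eqref{eqn::chordalSLE2_harmonic_inv} with no Fomin input; what it cannot yield — and what your determinant step fails to supply — is the explicit formula~\eqref{eqn::chordalSLE2_harmonic_H}, which is the substance of the proposition.
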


\begin{lemma}
Fix a bounded $2$-polygon $(\Omega; x_1, x_3)$ and suppose $(\Omega^{\delta}; x_1^{\delta}, x_3^{\delta})$ is an approximation of $(\Omega; x_1, x_3)$ on $\delta\hexagon$ in Carath\'eodory sense. 
For $u \in \LV^{\circ}(\Omega^{\delta})$, denote by $u_1, u_2, u_3$ the three adjacent vertices of $u$ arranged in counterclockwise order. 
We define 
\begin{equation}
g^{\delta}(u)=\PP\left[\{u\rightsquigarrow e_3^{\delta}\}\cond\LA_1^{\delta}\right], \qquad u\in\LV^{\circ}(\Omega^{\delta}). 
\end{equation}
Recall that Laplacian is defined in~\eqref{eqn::dLaplacian_def}. 
Then 
\begin{align}\label{eqn::qdiscrete_def}
q^{\delta}(u):=&-\delta^{-2}\Delta g^{\delta}(u)\notag\\
=&\frac{1}{3\delta^2\PP[\LA_1^{\delta}]}\det 
        \begin{pmatrix}
        \harmonic(\Omega^{\delta}; u_1, e_1^\delta) & \harmonic(\Omega^{\delta}; u_1, (x_1^\delta x_3^\delta))-\harmonic(\Omega^\delta;u_1,(x_3^\delta x_1^\delta)) & \harmonic(\Omega^{\delta}; u_1, e_3^\delta)\\     
        \harmonic(\Omega^{\delta}; u_2, e_1^\delta) & \harmonic(\Omega^{\delta}; u_2, (x_1^\delta x_3^\delta))-\harmonic(\Omega^\delta;u_2,(x_3^\delta x_1^\delta)) & \harmonic(\Omega^{\delta}; u_2, e_3^\delta)\\ 
        \harmonic(\Omega^{\delta}; u_3, e_1^\delta) & \harmonic(\Omega^{\delta}; u_3, (x_1^\delta x_3^\delta))-\harmonic(\Omega^\delta;u_3,(x_3^\delta x_1^\delta)) & \harmonic(\Omega^{\delta}; u_3, e_3^\delta)   
        \end{pmatrix}.
\end{align}
\end{lemma}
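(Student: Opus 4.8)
The strategy is to convert the claimed formula for $q^\delta$ into a purely combinatorial identity and then establish that identity with Fomin's formula, in the same spirit as Lemma~\ref{lem::Fomin}.

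First I would reduce. Set $h^\delta(w):=\PP\!\left[\LA_1^\delta\cap\{w\rightsquigarrow e_3^\delta\}\right]$, so that $g^\delta(w)\,\PP[\LA_1^\delta]=h^\delta(w)$ by \eqref{eqn::boundarybranch_harmonic}. Since every interior vertex of $\delta\hexagon$ has degree $3$, the Laplacian \eqref{eqn::dLaplacian_def} is $\Delta g^\delta(u)=\tfrac13\sum_{k=1}^3\big(g^\delta(u_k)-g^\delta(u)\big)$, hence
$$q^\delta(u)=-\delta^{-2}\Delta g^\delta(u)=\frac{1}{3\delta^2\,\PP[\LA_1^\delta]}\Big(3h^\delta(u)-\sum_{k=1}^3 h^\delta(u_k)\Big),$$
so that \eqref{eqn::qdiscrete_def} is equivalent to the identity $\det(\cdots)=3h^\delta(u)-\sum_{k=1}^3 h^\delta(u_k)$, where $\det(\cdots)$ denotes the right-hand side of \eqref{eqn::qdiscrete_def}. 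Thus all the $\delta$'s and the factor $\tfrac13$ are mere bookkeeping from the Laplacian normalization; the content is this combinatorial identity.

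Next I would treat the determinant via Fomin's formula. Expanding the middle column by multilinearity gives $\det(\cdots)=\sum_{e\in\LE^\partial(\Omega^\delta)}\varepsilon(e)\,\det\!\big(\harmonic(\Omega^\delta;u_k,e_1^\delta),\harmonic(\Omega^\delta;u_k,e),\harmonic(\Omega^\delta;u_k,e_3^\delta)\big)_{k=1}^3$, with $\varepsilon(e)=+1$ if the boundary endpoint of $e$ lies on the arc $(x_1^\delta x_3^\delta)$ and $\varepsilon(e)=-1$ otherwise. To each $3\times3$ determinant I apply Fomin's formula \cite[Theorem~6.1]{fomin2001loop} with three independent walks from the counterclockwise neighbours $u_1,u_2,u_3$ of $u$; exactly as in the proof of Lemma~\ref{lem::Fomin}, the loop-erasure non-intersection constraint kills all permutations except those respecting the cyclic order of the three targets along $\partial\Omega^\delta$, and the arc sign $\varepsilon(e)$ is cancelled against the signature of the surviving cyclic permutation, so every remaining term carries a $+$ sign. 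Summing over $e$ and running the edge-swap bijection at $u$ from the proof of Lemma~\ref{lem::Fomin} in reverse (deleting $e_1^\delta$ and $e$, inserting two of the edges $\langle u,u_k\rangle$) recognises $\det(\cdots)$ as a sum of uniform spanning tree probabilities. On the other side, running Wilson's algorithm with the branch $\eta_1^\delta$ from $x_1^{\delta,\circ}$ drawn first, on $\LA_1^\delta$ the event $\{w\rightsquigarrow e_3^\delta\}$ occurs iff the walk from $w$ hits $\eta_1^\delta$ before $\partial\Omega^\delta$ (recall $x_3^{\delta,\circ}\in\eta_1^\delta$); hence $h^\delta(w)=\E[\one_{\LA_1^\delta}\psi_{\eta_1^\delta}(w)]$ with $\psi_{\eta_1^\delta}$ the discrete-harmonic function equal to $1$ on $\eta_1^\delta$ and $0$ on $\partial\Omega^\delta$. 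Therefore $3h^\delta(u)-\sum_k h^\delta(u_k)=3\,\E[\one_{\LA_1^\delta}(-\Delta\psi_{\eta_1^\delta})(u)]=\E\big[\one_{\LA_1^\delta}\one\{u\in\eta_1^\delta\}\sum_{k=1}^3(1-\psi_{\eta_1^\delta}(u_k))\big]=\sum_{k=1}^3\PP\big[\LA_1^\delta\cap\{u\in\eta_1^\delta\}\cap\{\eta_{u_k}^\delta\cap\eta_1^\delta=\emptyset\}\big]$, where $\eta_{u_k}^\delta$ is the branch from $u_k$, and it remains to identify this with the spanning tree event produced by Fomin's formula.

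The hard part will be exactly this matching and the bookkeeping it requires: pinning down which permutations survive in Fomin's formula when one of the three marked objects is the varying edge $e$; verifying the precise cancellation of the arc signs $\varepsilon(e)$ against the permutation signatures (so that no term of the wrong sign is left over); handling the endpoint conventions for the arcs $(x_1^\delta x_3^\delta)$, $(x_3^\delta x_1^\delta)$ near $x_1^\delta,x_3^\delta$; and identifying the reverse edge-swap bijection at $u$ with the configuration ``$u\in\eta_1^\delta$ with its unique neighbour off $\eta_1^\delta$ carrying a branch disjoint from $\eta_1^\delta$'', summed over that branch's exit edge and over the admissible cyclic orientations. As in Lemma~\ref{lem::Fomin}, the degree-$3$ geometry of the hexagonal lattice enters here essentially (each $u\in\eta_1^\delta$ has exactly one neighbour off $\eta_1^\delta$); the remainder is a routine adaptation of the proof of Lemma~\ref{lem::Fomin}.
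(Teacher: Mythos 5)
Your proposal follows essentially the same route as the paper: you compute $-\Delta g^{\delta}(u)$ by conditioning on $\eta_1^{\delta}$ and writing $g^{\delta}$ via the harmonic function that is $1$ on $\eta_1^{\delta}$ and $0$ on $\partial\Omega^{\delta}$, use the degree-$3$ hexagonal structure to reduce to the event that $u\in\eta_1^{\delta}$ with its off-branch neighbour's branch avoiding $\eta_1^{\delta}$, and then match this with the determinant through the edge-swap bijection and Fomin's formula, with the arc-dependent sign absorbed by a column transposition---exactly the paper's argument, merely run with the determinant expanded over boundary edges first rather than assembled at the end. The bookkeeping you defer (which permutations survive, the sign cancellation per arc) is carried out in the paper precisely as you predict, so the plan is sound.
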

\begin{proof}
On $\LA_1^{\delta}$, recall that $\eta_1^{\delta}$ is the boundary branch from $x_1^{\delta, \circ}$ to $x_3^{\delta}$ and we denote by $\Omega_1^{\delta}=\Omega^{\delta}\setminus\eta_1^{\delta}$. 
We have
\begin{align*}
g^{\delta}(u)=&\PP[\LA_1^{\delta}\cap \{u\rightsquigarrow e_3^{\delta}\}\cond\LA_1^{\delta}]=\E\left[\harmonic(\Omega_1^\delta;u, \eta_1^\delta)\cond\LA_1^\delta\right];\\
\text{and}\quad 
\Delta g^{\delta}(u)=&\frac{1}{3}\sum_{k=1}^3\left(g^\delta(u_k)-g^{\delta}(u)\right)=\E\left[\underbrace{\frac{1}{3}\sum_{k=1}^3\left(\harmonic(\Omega_1^{\delta}; u_k; \eta_1^{\delta})-\harmonic(\Omega_1^{\delta}; u, \eta_1^{\delta})\right)}_{\Delta\harmonic(\Omega_1^{\delta}; u, \eta_1^{\delta}):=}\cond \LA_1^{\delta}\right]. 
\end{align*}
As $\harmonic(\Omega_1^{\delta}; \cdot, \eta_1^{\delta})$ is harmonic in $\LV^{\circ}(\Omega_1^{\delta})$, we have
\begin{align*}
\Delta g^{\delta}(u)=\E\left[\Delta\harmonic(\Omega_1^{\delta}; u, \eta_1^{\delta})\one\{u\in\eta_1^{\delta}\}\cond \LA_1^{\delta}\right]. 
\end{align*}
When $u\in\eta_1^{\delta}$ and all its three neighbors $u_1, u_2, u_3$ are contained in $\eta_1^{\delta}$, we have $\harmonic(\Omega_1^{\delta}; u_k, \eta_1^{\delta})=\harmonic(\Omega_1^{\delta}; u, \eta_1^{\delta})=1$ for $k=1,2,3$. Thus, $\Delta\harmonic(\Omega_1^{\delta}; u, \eta_1^{\delta})=0$ in this case. 
When $u\in\eta_1^{\delta}$ and there are exactly two of its three neighbors contained in $\eta_1^{\delta}$, there are three cases: $u_{\ell}\not\in\eta_1^{\delta}$ and $u_{\ell+1}, u_{\ell+2}\in\eta_1^{\delta}$ for $\ell=1,2,3$ where we use the convention $u_4=u_1, u_5=u_2$. 
In this case, we have $\Delta\harmonic(\Omega_1^{\delta}; u, \eta_1^{\delta})=\frac{1}{3}(\harmonic(\Omega_1^{\delta}; u_{\ell}, \eta_1^{\delta})-1)$. 
Therefore, 
\begin{align*}
\Delta g^{\delta}(u)=\sum_{\ell=1}^3\E\left[\frac{1}{3}\left(\harmonic(\Omega_1^{\delta}; u_{\ell}, \eta_1^{\delta})-1\right)\one\{\{u, u_{\ell+1}, u_{\ell+2}\in\eta_1^{\delta}\}\cap\{u_{\ell}\not\in\eta_1^{\delta}\}\}\cond \LA_1^{\delta}\right].
\end{align*}
Note that 
\[\harmonic(\Omega_1^{\delta}; u_{\ell}, \eta_1^{\delta})-1=-\harmonic(\Omega_1^{\delta}; u_{\ell}, \partial\Omega^{\delta})=-\sum_{e\in\LE^{\partial}(\Omega^{\delta})}\harmonic(\Omega_1^{\delta}; u_{\ell}, e).\]
Thus,
\begin{align*}
-3\Delta g^{\delta}(u)=&\sum_{\ell=1}^3\sum_{e\in\LE^{\partial}(\Omega^{\delta})}\frac{1}{\PP[\LA_1^{\delta}]}\E\left[\harmonic(\Omega_1^{\delta}; u_{\ell}, e)\one\{\LA_1^{\delta}\cap\{u, u_{\ell+1}, u_{\ell+2}\in\eta_1^{\delta}\}\cap\{u_{\ell}\not\in\eta_1^{\delta}\}\}\right]\\
=&\sum_{\ell=1}^3\sum_{e\in\LE^{\partial}(\Omega^{\delta})}\frac{1}{\PP[\LA_1^{\delta}]}\PP\left[\LA_1^{\delta}\cap\{u, u_{\ell+1}, u_{\ell+2}\in\eta_1^{\delta}\}\cap\{u_{\ell}\rightsquigarrow e\}\right]. 
\end{align*}
Denote by $\LE^{\partial}_{13}(\Omega^{\delta})$ (resp. $\LE^{\partial}_{31}(\Omega^{\delta})$) the collection of boundary edges $e=\langle x^{\circ}, x^{\partial}\rangle\in\LE^{\partial}(\Omega^{\delta})$ such that $x^{\partial}\in (x_1^{\delta}x_3^{\delta})$ (resp. such that $x^{\partial}\in (x_3^{\delta}x_1^{\delta})$). Then 
\begin{align*}
-3\Delta g^{\delta}(u)
=&\sum_{\ell=1}^3\sum_{e\in\LE_{13}^{\partial}(\Omega^{\delta})}\frac{1}{\PP[\LA_1^{\delta}]}\PP\left[\LA_1^{\delta}\cap\{u, u_{\ell+1}, u_{\ell+2}\in\eta_1^{\delta}\}\cap\{u_{\ell}\rightsquigarrow e\}\right]\\
&+\sum_{\ell=1}^3\sum_{e\in\LE_{31}^{\partial}(\Omega^{\delta})}\frac{1}{\PP[\LA_1^{\delta}]}\PP\left[\LA_1^{\delta}\cap\{u, u_{\ell+1}, u_{\ell+2}\in\eta_1^{\delta}\}\cap\{u_{\ell}\rightsquigarrow e\}\right]. 
\end{align*}
Fix $e\in\LE_{13}^{\partial}(\Omega^{\delta})$, for configurations belonging to the event $\LA_1^{\delta}\cap\{u, u_{\ell+1}, u_{\ell+2}\in\eta_1^{\delta}\}\cap\{u_{\ell}\rightsquigarrow e\}$,
the operation of removing the edge $\langle u, u_{\ell+1}\rangle$ and adding the edge $e_1^\delta$ induces a bijection to configurations in the event 
\[\{u_{\ell+1}\rightsquigarrow e_3^\delta\}\cap\{u_{\ell+2}\rightsquigarrow e_1^\delta\}\cap\{u_\ell\rightsquigarrow e\}\cap\{\langle u, u_{\ell+2}\rangle\in \LT^\delta\}.\]
Fix $e\in\LE_{31}^{\partial}(\Omega^{\delta})$, for configurations belonging to the event $\LA_1^{\delta}\cap\{u, u_{\ell+1}, u_{\ell+2}\in\eta_1^{\delta}\}\cap\{u_{\ell}\rightsquigarrow e\}$,
the operation of removing the edge $\langle u, u_{\ell+1}\rangle$ and adding the edge $e_1^\delta$ induces a bijection to configurations in the event 
\[\{u_{\ell+1}\rightsquigarrow e_1^\delta\}\cap\{u_{\ell+2}\rightsquigarrow e_3^\delta\}\cap\{u_\ell\rightsquigarrow e\}\cap\{\langle u, u_{\ell+2}\rangle\in \LT^\delta\}.\]
See Figure~\ref{fig::fomin_laplace}.
Therefore, 
\begin{align}\label{eqn::A1interior_laplace_aux1}
-3\Delta g^{\delta}(u)
=&\sum_{\ell=1}^3\sum_{e\in\LE_{13}^{\partial}(\Omega^{\delta})}\frac{1}{\PP[\LA_1^{\delta}]}\PP\left[\{u_{\ell+1}\rightsquigarrow e_3^\delta\}\cap\{u_{\ell+2}\rightsquigarrow e_1^\delta\}\cap\{u_\ell\rightsquigarrow e\}\cap\{\langle u, u_{\ell+2}\rangle\in \LT^\delta\}\right]\notag\\
&+\sum_{\ell=1}^3\sum_{e\in\LE_{31}^{\partial}(\Omega^{\delta})}\frac{1}{\PP[\LA_1^{\delta}]}\PP\left[\{u_{\ell+1}\rightsquigarrow e_1^\delta\}\cap\{u_{\ell+2}\rightsquigarrow e_3^\delta\}\cap\{u_\ell\rightsquigarrow e\}\cap\{\langle u, u_{\ell+2}\rangle\in \LT^\delta\}\right]. 
\end{align}
Let us check the two cases. 
\begin{itemize}
    \item When $e\in \LE_{13}^{\partial}(\Omega^{\delta})$, the consequence of Fomin's formula~\eqref{eqn::fomin_aux4} tells
\begin{align*}
&\sum_{\ell=1}^{3}\PP\left[\{u_{\ell+1}\rightsquigarrow e_3^\delta\}\cap\{u_{\ell+2}\rightsquigarrow e_1^\delta\}\cap\{u_\ell\rightsquigarrow e\}\cap\{\langle u, u_{\ell+2}\rangle\in \LT^\delta\}\right]\\
=&\det\begin{pmatrix}
\harmonic(\Omega^{\delta}; u_1, e_1^\delta) & \harmonic(\Omega^{\delta}; u_{1}, e) & \harmonic(\Omega^{\delta}; u_{1}, e_3^\delta)\\
\harmonic(\Omega^{\delta}; u_2, e_1^\delta) & \harmonic(\Omega^{\delta}; u_{2}, e) & \harmonic(\Omega^{\delta}; u_{2}, e_3^\delta)\\
\harmonic(\Omega^{\delta}; u_3, e_1^\delta) & \harmonic(\Omega^{\delta}; u_{3}, e) & \harmonic(\Omega^{\delta}; u_{3}, e_3^\delta)
\end{pmatrix}.
\end{align*}
\item When $e\in \LE_{31}^{\partial}(\Omega^{\delta})$, the consequence of Fomin's formula~\eqref{eqn::fomin_aux4} tells
\begin{align*}
&\sum_{\ell=1}^{3}\PP\left[\{z_{\ell+1}^\delta\rightsquigarrow e_1^\delta\}\cap\{z_{\ell+2}^\delta\rightsquigarrow e_3^\delta\}\cap\{z_\ell^\delta\rightsquigarrow e\}\cap\{\langle z^\delta, z_{\ell+2}^\delta\rangle\in \LT^\delta\}\right]\\
=&\det\begin{pmatrix}
\harmonic(\Omega^{\delta}; u_{1}, e_3^\delta) & \harmonic(\Omega^{\delta}; u_{1}, e) & \harmonic(\Omega^{\delta}; u_{1}, e_1^\delta)\\
\harmonic(\Omega^{\delta}; u_{2}, e_3^\delta) & \harmonic(\Omega^{\delta}; u_{2}, e) & \harmonic(\Omega^{\delta}; u_{2}, e_1^\delta)\\
\harmonic(\Omega^{\delta}; u_{3}, e_3^\delta) & \harmonic(\Omega^{\delta}; u_{3}, e) & \harmonic(\Omega^{\delta}; u_{3}, e_1^\delta)
\end{pmatrix}\\
=&-\det\begin{pmatrix}
\harmonic(\Omega^{\delta}; u_{1}, e_1^\delta) & \harmonic(\Omega^{\delta}; u_{1}, e) & \harmonic(\Omega^{\delta}; u_{1}, e_3^\delta)\\
\harmonic(\Omega^{\delta}; u_{2}, e_1^\delta) & \harmonic(\Omega^{\delta}; u_{2}, e) & \harmonic(\Omega^{\delta}; u_{2}, e_3^\delta)\\
\harmonic(\Omega^{\delta}; u_{3}, e_1^\delta) & \harmonic(\Omega^{\delta}; u_{3}, e) & \harmonic(\Omega^{\delta}; u_{3}, e_3^\delta)     
\end{pmatrix}.
\end{align*}
\end{itemize}
Plugging the two cases into~\eqref{eqn::A1interior_laplace_aux1}, we obtain
\begin{align*}
 -3\Delta g^{\delta}(u)=&\sum_{e\in\LE_{13}^{\partial}(\Omega^{\delta})}\frac{1}{\PP[\LA_1^{\delta}]}\det
        \begin{pmatrix}
        \harmonic(\Omega^{\delta}; u_1, e_1^\delta) & \harmonic(\Omega^{\delta}; u_1, e) & \harmonic(\Omega^{\delta}; u_1, e_3^\delta)\\
        \harmonic(\Omega^{\delta}; u_2, e_1^\delta) & \harmonic(\Omega^{\delta}; u_2, e) & \harmonic(\Omega^{\delta}; u_2, e_3^\delta)\\
        \harmonic(\Omega^{\delta}; u_3, e_1^\delta) & \harmonic(\Omega^{\delta}; u_3, e) & \harmonic(\Omega^{\delta}; u_3, e_3^\delta)
        \end{pmatrix}\notag\\
&-\sum_{e\in\LE_{31}^{\partial}(\Omega^{\delta})}\frac{1}{\PP[\LA_1^{\delta}]}\det
        \begin{pmatrix}
        \harmonic(\Omega^{\delta}; u_1, e_1^\delta) & \harmonic(\Omega^{\delta}; u_1, e) & \harmonic(\Omega^{\delta}; u_1, e_3^\delta)\\
        \harmonic(\Omega^{\delta}; u_2, e_1^\delta) & \harmonic(\Omega^{\delta}; u_2, e) & \harmonic(\Omega^{\delta}; u_2, e_3^\delta)\\
        \harmonic(\Omega^{\delta}; u_3, e_1^\delta) & \harmonic(\Omega^{\delta}; u_3, e) & \harmonic(\Omega^{\delta}; u_3, e_3^\delta)
        \end{pmatrix}\notag\\
        =&\frac{1}{\PP[\LA_1^{\delta}]}\det 
        \begin{pmatrix}
        \harmonic(\Omega^{\delta}; u_1, e_1^\delta) & \harmonic(\Omega^{\delta}; u_1, (x_1^\delta x_3^\delta))-\harmonic(\Omega^\delta;u_1,(x_3^\delta x_1^\delta)) & \harmonic(\Omega^{\delta}; u_1, e_3^\delta)\\     
        \harmonic(\Omega^{\delta}; u_2, e_1^\delta) & \harmonic(\Omega^{\delta}; u_2, (x_1^\delta x_3^\delta))-\harmonic(\Omega^\delta;u_2,(x_3^\delta x_1^\delta)) & \harmonic(\Omega^{\delta}; u_2, e_3^\delta)\\ 
        \harmonic(\Omega^{\delta}; u_3, e_1^\delta) & \harmonic(\Omega^{\delta}; u_3, (x_1^\delta x_3^\delta))-\harmonic(\Omega^\delta;u_3,(x_3^\delta x_1^\delta)) & \harmonic(\Omega^{\delta}; u_3, e_3^\delta)   
        \end{pmatrix},
\end{align*}
as desired in~\eqref{eqn::qdiscrete_def}. 
\end{proof}

\begin{figure}[ht!]
\begin{subfigure}[b]{0.45\textwidth}
\begin{center}
\includegraphics[width=\textwidth]{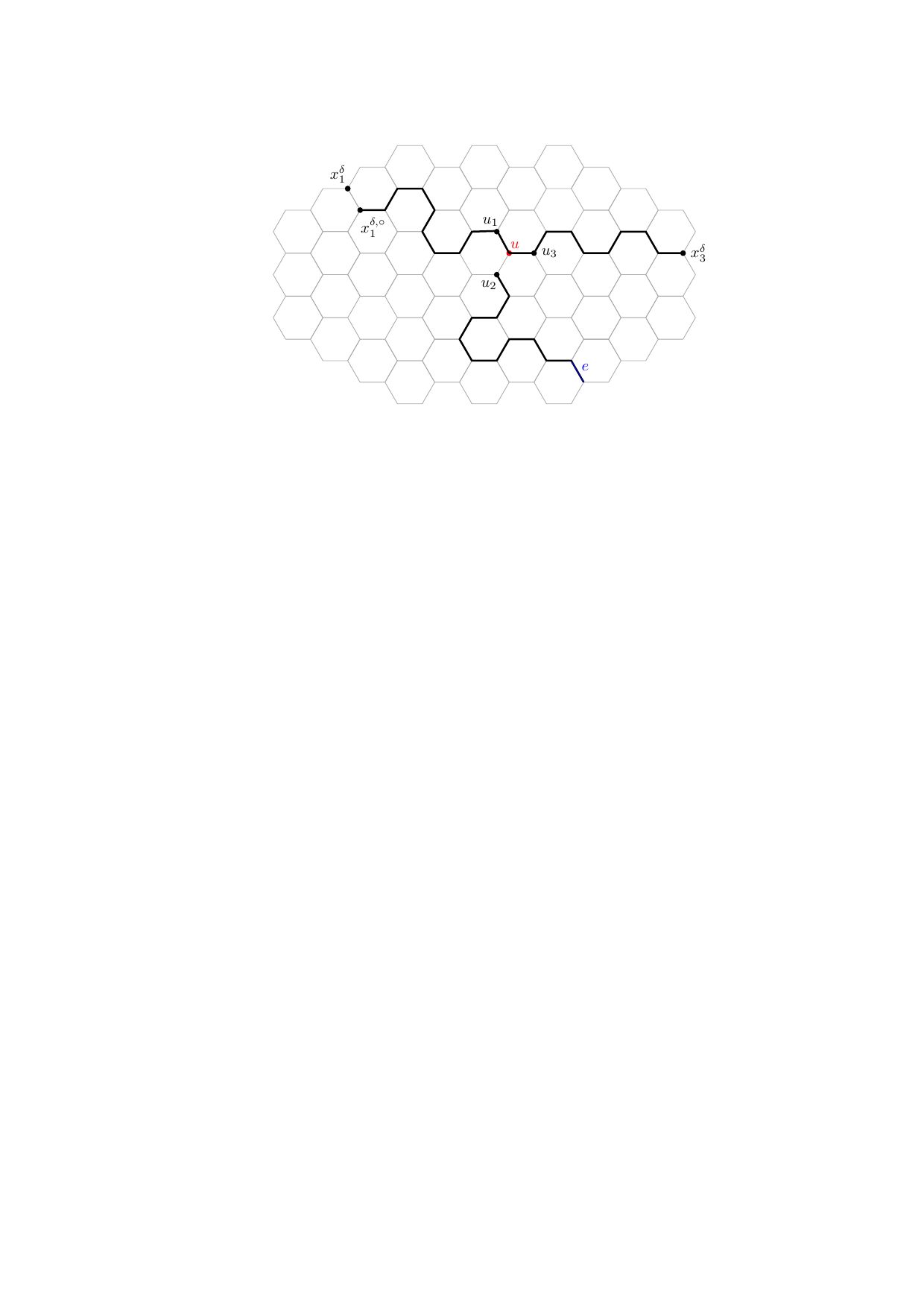}
\end{center}
\caption{}
\end{subfigure}
$\quad$
\begin{subfigure}[b]{0.45\textwidth}
\begin{center}
\includegraphics[width=\textwidth]{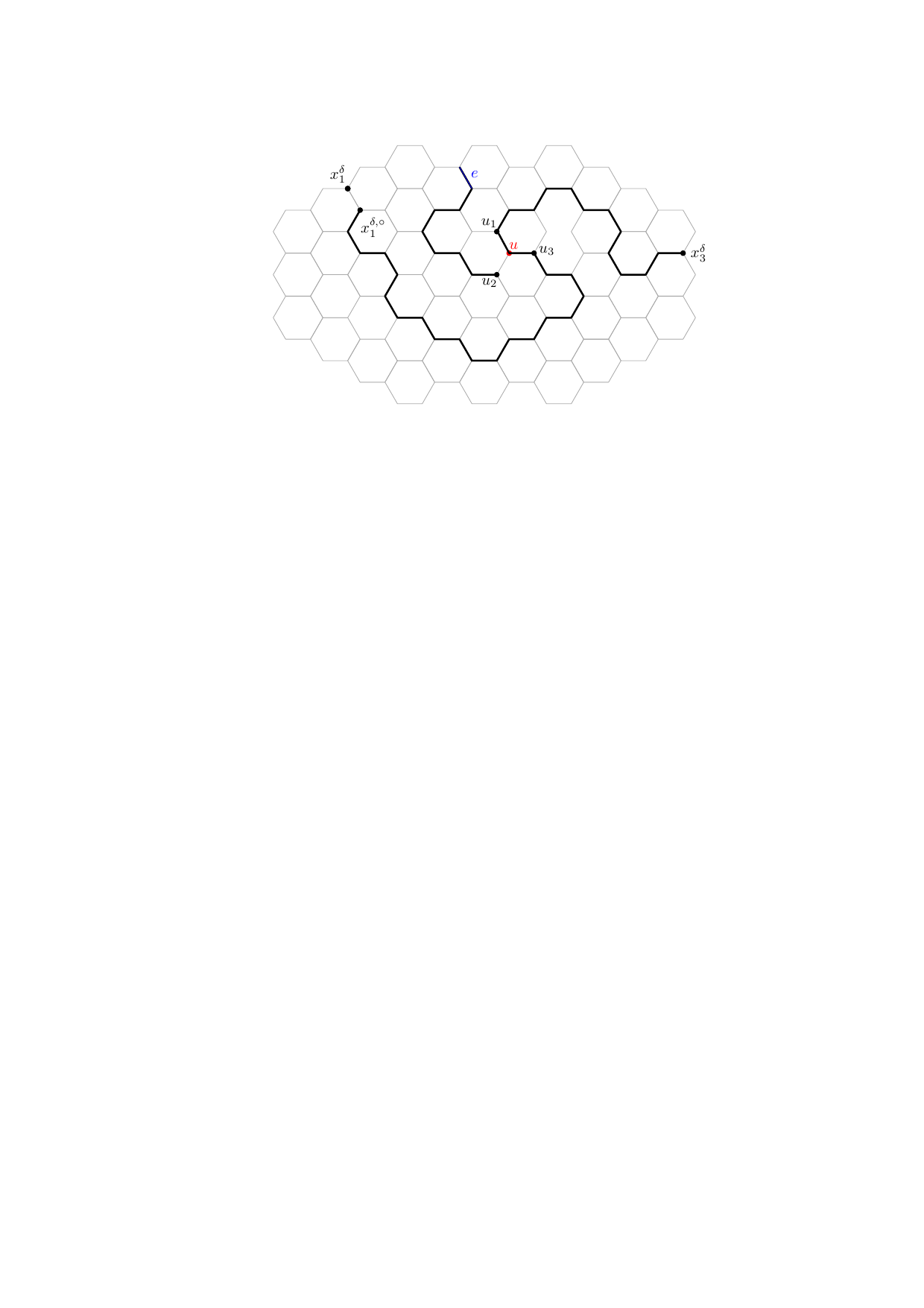}
\end{center}
\caption{}
\end{subfigure}
\caption{\label{fig::fomin_laplace} 
Suppose $u, u_1, u_3\in \eta_1^\delta$ and $u_2\not\in\eta_1^{\delta}$. 
In~(a), we have  $u_2\rightsquigarrow e$ for some $e \in \LE_{13}^\partial(\Omega^\delta)$. 
We delete the edge $\langle u, u_3\rangle$  and add the edge $e_1^{\delta}$. 
Such operation induces a bijection from configurations in $\LA_1^{\delta}\cap\{u,u_1,u_3\in \eta_1^\delta\}\cap\{u_2\rightsquigarrow e\}$ to configurations in $\{u_3\rightsquigarrow e_3^\delta\}\cap\{u_1\rightsquigarrow e_1^\delta\}\cap\{u_2\rightsquigarrow e\}\cap\{\langle u, u_1\rangle\in\LT^\delta\}$.
In~(b), we have  $u_2\rightsquigarrow e$ for some $e \in \LE_{31}^\partial(\Omega^\delta)$. 
We delete the edge $\langle u, u_3\rangle$  and add the edge $e_1^{\delta}$. 
Such operation induces a bijection from configurations in $\LA_1^{\delta}\cap\{u,u_1,u_3\in \eta_1^\delta\}\cap\{u_2\rightsquigarrow e\}$ to configurations in $\{u_3\rightsquigarrow e_1^\delta\}\cap\{u_1\rightsquigarrow e_3^\delta\}\cap\{u_2\rightsquigarrow e\}\cap\{\langle u, u_1\rangle\in\LT^\delta\}$.
}
\end{figure}

\begin{lemma}\label{lem::q_cvg}
Fix a bounded $2$-polygon $(\Omega; x_1, x_3)$ 
and suppose $(\Omega^{\delta}; x_1^{\delta}, x_3^{\delta})$ is an approximation of $(\Omega; x_1, x_3)$ on $\delta\hexagon$ in Carath\'eodory sense. 
\begin{itemize}
\item We define $q^{\delta}$ as in~\eqref{eqn::qdiscrete_def} and extend it in the same way as before: for $u\in\LV^{\circ}(\Omega^{\delta})$, we set $q^{\delta}(z)=q^{\delta}(u)$ for $z\in\bigtriangleup^{\delta}(u)$. 
\item Define $q(\Omega; x_1, x_3; z)$ as follows: for any conformal map $\varphi$ on $\Omega$, 
\begin{align}\label{eqn::q_cov}
q(\Omega; x_1, x_3; z)=|\varphi'(z)|^2 q(\varphi(\Omega); \varphi(x_1), \varphi(x_3); \varphi(z)); 
\end{align} 
and for $(\Omega; x_1, x_3)=(\HH; 0, \infty)$ and $z=r\ee^{\ii\theta}\in\HH$ with $\theta\in (0,\pi)$, we have
\begin{align}\label{eqn::q_H}
q(r, \theta):=q(\HH; 0, \infty; z)=\frac{2\sin(2\theta)}{\pi r^2}\left(\tan\theta-\theta+\frac{\pi}{2}\right). 
\end{align}
\end{itemize}
Then $q^{\delta}(\cdot)$ converges to $q(\Omega; x_1, x_3; \cdot)$ uniformly on compact subsets of $\Omega$. 
\end{lemma}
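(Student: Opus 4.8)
The plan is to follow the same three-step scheme used for Proposition~\ref{prop::observable_cvg}: first establish pointwise convergence of $q^{\delta}$ to an explicit continuum expression built out of Poisson kernels and harmonic measure and their $\partial_z,\partial_{\overline z}$ derivatives; then upgrade it to local uniform convergence; and finally identify the limit with the function $q$ defined by~\eqref{eqn::q_cov}--\eqref{eqn::q_H}. The first two steps are close cousins of Lemma~\ref{lem::deltafive_cvg} and the proof of Proposition~\ref{prop::observable_cvg}, the only new ingredient being the middle column of the determinant in~\eqref{eqn::qdiscrete_def}, which is a harmonic measure rather than a Poisson kernel at a boundary edge.

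For the pointwise convergence, fix a reference interior point $v\in\Omega$ and abbreviate $\harmonic_j^{\delta}:=\harmonic(\Omega^{\delta};v^{\delta},e_j^{\delta})$ for $j=1,3$. Starting from~\eqref{eqn::qdiscrete_def}, write $q^{\delta}(u)$ as the product of $\tfrac{\harmonic_1^{\delta}\harmonic_3^{\delta}}{3\PP[\LA_1^{\delta}]}$ and $\tfrac{1}{\delta^{2}\harmonic_1^{\delta}\harmonic_3^{\delta}}\det(\cdots)$, where $\det(\cdots)$ is the $3\times 3$ determinant in~\eqref{eqn::qdiscrete_def} with rows indexed by the three neighbors $u_1,u_2,u_3$ of $u$. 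Subtracting the third row from the first two turns the first two rows into discrete increments over edges of length $\sqrt3\,\delta$; since the middle column is the bounded discrete harmonic function $\harmonic(\Omega^{\delta};\cdot,(x_1^{\delta}x_3^{\delta}))-\harmonic(\Omega^{\delta};\cdot,(x_3^{\delta}x_1^{\delta}))$ and the two outer columns are discrete Poisson kernels, all these increments are $O(\delta)$; pulling out $\delta$ from each of the two rows and $\harmonic_1^{\delta}$, $\harmonic_3^{\delta}$ from the outer columns, every remaining entry converges: the Poisson-kernel entries and their discrete directional derivatives via~\eqref{eqn::Poisson_cvg}--\eqref{eqn::Poisson_derivative_cvg}, the harmonic-measure entry via~\eqref{eqn::charmonic_cvg}, and its discrete directional derivative via~\cite[Theorem~3.13]{ChelkakSmirnovDiscreteComplexAnalysis} applied to $\harmonic(\Omega^{\delta};\cdot,(x_1^{\delta}x_3^{\delta}))$, which is legitimate because the limit $\tfrac1{2\pi}\harmonic(\Omega;\cdot,(x_1x_3))$ is harmonic in the interior; the prefactor converges by~\eqref{eqn::bPoisson_cvg}. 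Exactly as in Lemma~\ref{lem::deltafive_cvg}, one converts the differences $\partial_{\tau^k}-\partial_{\tau^3}$ into $\partial_z$ and $\partial_{\overline z}$ using $\partial_{\tau^k}=\tau^k\partial_z+\tau^{-k}\partial_{\overline z}$ and the same linear-algebra identity used there; the $v$-dependent Poisson factors cancel, and $q^{\delta}(z^{\delta})$ converges to a fixed lattice-dependent constant times $\tfrac{1}{\Poisson(\Omega;x_1,x_3)}\det\widetilde M(\Omega;x_1,x_3;z)$, where $\widetilde M$ is the $3\times 3$ matrix whose columns are obtained by applying $\partial_z$, $-\ii\partial_{\overline z}$ and the identity to $\Poisson(\Omega;\cdot,x_1)$, $\tfrac1{2\pi}(\harmonic(\Omega;\cdot,(x_1x_3))-\harmonic(\Omega;\cdot,(x_3x_1)))$, $\Poisson(\Omega;\cdot,x_3)$ respectively and evaluating at $z$. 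The upgrade to uniform convergence on compact subsets of $\Omega$ is then verbatim the argument in the proof of Proposition~\ref{prop::observable_cvg}: \cite[Theorem~3.13]{ChelkakSmirnovDiscreteComplexAnalysis} gives uniform convergence of the discrete harmonic functions and of their discrete gradients, the limit is continuous, and the step-function extension transfers this to $q^{\delta}\to q$ uniformly on compacts.

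It remains to identify this continuum expression with $q$. It is conformally covariant with exponent $2$: under a conformal map $\varphi$ each outer column picks up $|\varphi'(x_j)|$, the middle column is invariant, the $\partial_z$ row picks up $\varphi'(z)$ and the $\partial_{\overline z}$ row $\overline{\varphi'(z)}$, and dividing by $\Poisson(\Omega;x_1,x_3)$ (which carries $|\varphi'(x_1)\varphi'(x_3)|$) leaves precisely $|\varphi'(z)|^2$, i.e.~\eqref{eqn::q_cov}. Hence it suffices to evaluate at $(\HH;0,\infty)$: there $\Poisson(\HH;z,0)=2\Im z/|z|^2$ and $\harmonic(\HH;z,(0\infty))-\harmonic(\HH;z,(\infty 0))=1-2\arg(z)/\pi$, while the $x_3=\infty$ column is obtained by letting $x_3\to\infty$, the $x_3^{-2}$ there cancelling against $\Poisson(\HH;0,x_3)\sim 2x_3^{-2}$. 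Computing the resulting $3\times 3$ determinant (as in Lemma~\ref{lem::detM_LZtripod}) and collecting all the numerical constants yields $q(r,\theta)=\tfrac{2\sin(2\theta)}{\pi r^2}(\tan\theta-\theta+\tfrac{\pi}{2})$, which is~\eqref{eqn::q_H}. A useful consistency check, which also pins down the constant, is that $q=-\tfrac34\Delta g$ with $g$ as in~\eqref{eqn::chordalSLE2_harmonic_H}, the factor $\tfrac34$ arising from the normalization $\tfrac{\sqrt3}{2\pi}$ in Lemma~\ref{lem::Green_cvg} together with the dual-face area $\tfrac{3\sqrt3}{4}\delta^2$.

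I expect the main obstacle to be twofold. First, the convergence of the discrete directional derivative of the harmonic measure $\harmonic(\Omega^{\delta};\cdot,(x_1^{\delta}x_3^{\delta}))$ is not among the Poisson-kernel convergences recorded in Section~\ref{subsec::harmonic_limit}, so one has to run~\cite[Theorem~3.13]{ChelkakSmirnovDiscreteComplexAnalysis} directly for this discrete harmonic function; fortunately only uniform convergence on compacts is claimed, which sidesteps any delicate behaviour near $\partial\Omega$. Second, the explicit evaluation of the $3\times 3$ determinant at $(\HH;0,\infty)$, together with the bookkeeping of the several lattice-dependent constants ($\tfrac{\sqrt3}{2\pi}$, the dual-face factor $\tfrac{3\sqrt3}{4}$, the $\tfrac1{2\pi}$ from~\eqref{eqn::charmonic_cvg}, the $2\sqrt3\pi$ from~\eqref{eqn::bPoisson_cvg}, and the $-3\sqrt3\,\ii$ from the conversion to $\partial_z,\partial_{\overline z}$), is the one genuinely computational point; it is most safely carried out by first computing at a bounded normalization such as $(\HH;0,1)$ and transporting by~\eqref{eqn::q_cov}, or by a symbolic computation as was done for Lemma~\ref{lem::detM_LZtripod}.
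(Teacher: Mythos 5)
Your proposal is correct and follows essentially the same route as the paper's proof: row-reduce the determinant in~\eqref{eqn::qdiscrete_def}, pass to the limit via~\eqref{eqn::Poisson_cvg}--\eqref{eqn::Poisson_derivative_cvg}, \eqref{eqn::charmonic_cvg} together with \cite[Theorem~3.13]{ChelkakSmirnovDiscreteComplexAnalysis} for the harmonic-measure column, and~\eqref{eqn::bPoisson_cvg} for the prefactor (using $\PP[\LA_1^{\delta}]=\harmonic(\Omega^{\delta};x_1^{\delta,\circ},e_3^{\delta})$), upgrade to local uniform convergence exactly as in Proposition~\ref{prop::observable_cvg}, and identify the limit by conformal covariance plus an explicit computation in $\HH$. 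The only caveat is bookkeeping in that final computation: with the paper's normalization the harmonic measure has total mass $2\pi$, so the relevant middle entry at $(\HH;0,\infty)$ is $\pi-2\arg z$ (up to the factor $2$ in the matrix $N$), not the probability-normalized $1-2\arg(z)/\pi$ you wrote, which you would need to fix when collecting constants to land on~\eqref{eqn::q_H}.
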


\begin{proof}
    For $z\in\Omega$, suppose $z^\delta$ the vertex in $\LV^\circ(\Omega^\delta)$ that is nearest to $z$, and denote by $z_1^\delta, z_2^\delta,z_3^\delta$ the three adjacent vertices of $z^\delta$ arranged in counterclockwise order.
Using the same analysis as in Proof of Lemma~\ref{lem::deltafive_cvg}, and combining with the convergence~\eqref{eqn::charmonic_cvg}-\eqref{eqn::bPoisson_cvg},
 we have
\begin{align*}
\lim_{\delta\to 0}q^{\delta}(z^\delta)=
    &\lim_{\delta \to 0}\frac{1}{\delta^2\harmonic(\Omega^\delta;x_1^{\delta, \circ},e_3^\delta)}\det
        \begin{pmatrix}
        \harmonic(\Omega^{\delta}; z_1^\delta, e_1^\delta) & \harmonic(\Omega^{\delta}; z_1^\delta, (x_1^\delta x_3^\delta))-\harmonic(\Omega^\delta;z_1^\delta,(x_3^\delta x_1^\delta)) & \harmonic(\Omega^{\delta}; z_1^\delta, e_3^\delta)\\     
        \harmonic(\Omega^{\delta}; z_2^\delta, e_1^\delta) & \harmonic(\Omega^{\delta}; z_2^\delta, (x_1^\delta x_3^\delta))-\harmonic(\Omega^\delta;z_2^\delta,(x_3^\delta x_1^\delta)) & \harmonic(\Omega^{\delta}; z_2^\delta, e_3^\delta)\\ 
        \harmonic(\Omega^{\delta}; z_3^\delta, e_1^\delta) & \harmonic(\Omega^{\delta}; z_3^\delta, (x_1^\delta x_3^\delta))-\harmonic(\Omega^\delta;z_3^\delta,(x_3^\delta x_1^\delta)) & \harmonic(\Omega^{\delta}; z_3^\delta, e_3^\delta)   
        \end{pmatrix}\\
=&\frac{1}{2\pi\Poisson(\Omega;x_1, x_3)}
    \det
    \begin{pmatrix}
      \partial_z\Poisson(\Omega; z, x_1) & \partial_z\left(\harmonic(\Omega;z,(x_1x_3))-\harmonic(\Omega;z,(x_3x_1))\right)  &   \partial_z \Poisson(\Omega; z, x_3) \\  
        -\ii\partial_{\overline{z}}\Poisson(\Omega; z, x_1) & -\ii\partial_{\overline{z}}\left(\harmonic(\Omega;z,(x_1x_3))-\harmonic(\Omega;z,(x_3x_1))\right)  &   -\ii\partial_{\overline{z}} \Poisson(\Omega; z, x_3) \\   
        \Poisson(\Omega; z, x_1) & \harmonic(\Omega;z,(x_1x_3))-\harmonic(\Omega;z,(x_3x_1)) & \Poisson(\Omega; z, x_3)
    \end{pmatrix}.
\end{align*}
Combining with $\harmonic(\Omega; z, (x_1x_3))+\harmonic(\Omega; z, (x_3x_1))=2\pi$, the pointwise limit of $q^\delta(\cdot)$ is given by 
\begin{equation}\label{eqn::q_limit}
q(\Omega; x_1, x_3; z):=\lim_{\delta\to 0}q^\delta(z^{\delta})=\frac{\det\left(N(\Omega; x_1, x_3; z)\right)}{2\pi\Poisson(\Omega; x_1, x_3)}, 
\end{equation}
where $N(\Omega;x_1,x_3,z)$ is the matrix given by
\begin{equation}\label{eqn::Ndef}
    \begin{pmatrix}
      \partial_z\Poisson(\Omega; z, x_1) & 2\partial_z\harmonic(\Omega;z,(x_1x_3))  &   \partial_z \Poisson(\Omega; z, x_3) \\  
        -\ii\partial_{\overline{z}}\Poisson(\Omega; z, x_1) & -2\ii\partial_{\overline{z}}\harmonic(\Omega;z,(x_1x_3))  &   -\ii\partial_{\overline{z}} \Poisson(\Omega; z, x_3) \\   
        \Poisson(\Omega; z, x_1) & 2\harmonic(\Omega;z,(x_1x_3))-2\pi & \Poisson(\Omega; z, x_3)
    \end{pmatrix}, 
\end{equation}
and $\Poisson(\Omega; z, x)$ is Poisson kernel~\eqref{eqn::Poisson_H}-\eqref{eqn::Poisson_cov}. 
Using the same analysis as in Proof of Proposition~\ref{prop::observable_cvg}, we obtain the unifrom convergence of~\eqref{eqn::q_limit} on compact sets of $\Omega$.

It remains to show that $q(\Omega; x_1, x_3; z)$ satisfies~\eqref{eqn::q_cov} and~\eqref{eqn::q_H}. 
The conformal covariance~\eqref{eqn::q_cov} is due to~\eqref{eqn::Poisson_cov} and~\eqref{eqn::bPoisson_cov}. 
   Let us derive~\eqref{eqn::q_H}.
   Recall from~\eqref{eqn::charnomic_H} and~\eqref{eqn::Poisson_derivative} that for $x\in \mathbb{R}$ and $x_1<x_3$,
   \begin{align*}
  & \Poisson(\HH;z,x) = \frac{2\Im z}{|z - x|^2},\qquad 
 \partial_z\Poisson(\HH; z, x)=\frac{-\ii}{(z-x)^2},\qquad \partial_{\overline{z}}\Poisson(\HH; z,x)=\frac{\ii}{(\overline{z}-x)^2};\notag\\
&\harmonic(\HH; z, (x_1x_3))=\int_{x_1}^{x_3}\Poisson(\HH; z, y)|\ud y|=\frac{1}{\ii}\log\frac{(x_3-z)(x_1-\overline{z})}{(x_3-\overline{z})(x_1-z)},\notag\\
& \partial_z \harmonic(\HH ; z,(x_1 x_3))=\ii\left(\frac{1}{x_3-z}-\frac{1}{x_1-z}\right), \quad \partial_{\bar{z}} \harmonic(\HH ; z,(x_1 x_3))=\ii\left(\frac{1}{x_1-\bar{z}}-\frac{1}{x_3-\bar{z}}\right),
\end{align*}
where the logarithm is taken on the principal branch $[0,2\pi)$.
Thus for $(\Omega;x_1,x_3)=(\HH; x_1,x_3)$ with $x_1<x_3$ and $z\in\HH$, we have
\begin{align*}
\displaystyle     \frac{\det\left(N(\HH; x_1, x_3; z)\right)}{2\pi\Poisson(\HH; x_1, x_3)}=&\frac{(x_3-x_1)^2}{2\pi}\det
    \begin{pmatrix}
         \frac{-\ii}{(z-x_1)^2} &  \ii(\frac{1}{x_3-z}-\frac{1}{x_1-z}) &  \frac{-\ii}{(z-x_3)^2} \\[8pt]
\frac{1}{\left(\overline{z}-x_1\right)^2} & \frac{1}{x_1-\overline{z}}-\frac{1}{x_3-\overline{z}} & \frac{1}{(\bar{z}-x_3)^2} \\[8pt]
\frac{2\Im(z)}{|z-x_1|^2} & \frac{1}{\ii} \log \frac{(x_3-z)(x_1-\overline{z})}{(x_3-\overline{z})(x_1-z)}-\pi & \frac{2\Im(z)}{|z-x_3|^2}
    \end{pmatrix}.
\end{align*}
Setting $x_1=0$ and $x_3=+\infty$, we obtain
\begin{align*}
    q(\HH;0,\infty;z)=&\frac{1}{2\pi}\det
    \begin{pmatrix}
-\ii /z^{2} & \ii /z & -\ii \\[8pt]
1/\overline{z}^{2} & -1/\overline{z} & 1 \\[8pt]
2|z|^{-2}\Im(z) & \pi - 2\arg z & 2\Im(z)
\end{pmatrix}\\
    =&\frac{4\Im(z)}{\pi|z|^4}\left(\Im(z)+\Re(z)(\frac{\pi}{2}-\arg(z))\right),
\end{align*}
which is~\eqref{eqn::q_H} as desired.
\end{proof}

\begin{lemma}\label{lem::g_solution_Laplace}
Fix a $2$-polygon $(\Omega; x_1, x_3)$.
Define 
\begin{equation}\label{eqn::g_Green}
g(z)=g(\Omega; x_1, x_3; z)=\frac{2}{3\pi}\int_{\Omega}\Green(\Omega; z, w)q(\Omega; x_1, x_3; w)|\ud w|^2, 
\end{equation}
where $\Green(\Omega; z, w)$ is Green's function~\eqref{eqn::cgreen}-\eqref{eqn::cgreen_inv} and $q(\Omega; x_1, x_3; w)$ is the function defined by~\eqref{eqn::q_cov}-\eqref{eqn::q_H}.
Then $g(\cdot)$ is the unique \textnormal{bounded} solution in $C(\overline{\Omega}\setminus\{x_1,x_3\})\cap C^2(\Omega)$ to the following Poisson equation:
\begin{align}\label{eqn::Laplace_Dirichlet}
\begin{cases}
-\Delta g(z)=\frac{4}{3}q(\Omega; x_1, x_3; z), \quad &z\in\Omega; \\
g(z)=0, \quad &z\in \partial\Omega\setminus\{x_1, x_3\}.
\end{cases}
\end{align}
Moreover, $g$ is characterized by~\eqref{eqn::chordalSLE2_harmonic_inv}-\eqref{eqn::chordalSLE2_harmonic_H}: for any conformal map $\varphi$ on $\Omega$, we have
\begin{equation}\label{eqn::chordalSLE2_harmonic_inv_repeat}
g(\Omega; x_1, x_3; z)=g(\varphi(\Omega); \varphi(x_1), \varphi(x_3); \varphi(z)); 
\end{equation}
and for $(\Omega; x_1, x_3)=(\HH; 0, \infty)$ and $z=r\ee^{\ii\theta}\in \HH$ with $\theta\in(0,\pi)$, we have
\begin{equation}\label{eqn::chordalSLE2_harmonic_H_repeat}
g(\theta):=g(\HH; 0, \infty; z)=\frac{4}{3\pi}\left(\theta(\pi-\theta)+(\frac{\pi}{2}-\theta)\sin\theta\cos\theta+2\sin^2\theta\right).
\end{equation}
\end{lemma}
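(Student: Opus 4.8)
The plan is to treat Lemma~\ref{lem::g_solution_Laplace} as a purely analytic statement: solve the boundary value problem \eqref{eqn::Laplace_Dirichlet} in the half-plane, prove uniqueness of bounded solutions, and transport everything to a general $2$-polygon by conformal invariance. I record at the outset the two inputs. First, the continuum Green's function \eqref{eqn::cgreen}--\eqref{eqn::cgreen_inv} satisfies $-\Delta_z\Green(\Omega; z, w) = 2\pi\,\delta_w(z)$, because $\Green(\HH; z, w) = -\log|z-w| + \log|z-\bar w|$ and $\log|z-\bar w|$ is harmonic in $\HH$. Second, by \eqref{eqn::q_limit} the density $q(\Omega; x_1, x_3; \cdot)$ is real-analytic in $\Omega$, and by \eqref{eqn::q_H} together with the conformal covariance \eqref{eqn::q_cov} it satisfies $q(\Omega; x_1, x_3; w) = O\big(\dist(w, x_j)^{-2}\big)$ as $w\to x_j$ for $j\in\{1,3\}$; in the model case $(\HH; 0,\infty)$ one may rewrite \eqref{eqn::q_H} via $\sin(2\theta)\tan\theta = 2\sin^2\theta$ as $q(\HH; 0,\infty; w) = |w|^{-2}$ times a bounded smooth function of $\arg w$. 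Using also $\Green(\HH; z, w)\le 2\Im z\,\Im w/|z-w|^2$ (from $\log(1+x)\le x$), one checks that the integral in \eqref{eqn::g_Green} converges absolutely: the only potential issues are the logarithmic singularity at $w=z$ and the neighborhoods of $x_1,x_3$, where the factor $\Im w$ coming from $\Green$ beats the $|w|^{-2}$ blow-up of $q$ (for instance $\int_{|w|<1}\Im w\,|w|^{-2}\,|\ud w|^2 = \int_0^1\!\!\int_0^\pi\sin\phi\,\ud\phi\,\ud\rho<\infty$). So $g$ is well-defined.

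Next I would check that $g$ is a bounded solution of \eqref{eqn::Laplace_Dirichlet} and is conformally invariant. Since $q$ is locally smooth in $\Omega$, classical Newtonian-potential theory gives $g\in C^2(\Omega)$ with $-\Delta g = \tfrac{2}{3\pi}\cdot 2\pi\,q = \tfrac43 q$. Letting $z$ tend to a point of $\partial\Omega\setminus\{x_1,x_3\}$ and applying dominated convergence with the same uniform bounds gives $\Green(\Omega; z, w)\to 0$, hence $g\in C(\overline\Omega\setminus\{x_1,x_3\})$ with zero boundary values. For boundedness I would first work in $\HH$: because $\Green(\HH;\lambda z,\lambda w) = \Green(\HH; z, w)$ and $q(\HH; 0,\infty;\lambda w) = \lambda^{-2}q(\HH; 0,\infty; w)$ (from \eqref{eqn::q_cov}), the substitution $w\mapsto\lambda w$ leaves \eqref{eqn::g_Green} unchanged, so $g(\HH; 0,\infty; \cdot)$ depends only on $\arg z$; being continuous on $(0,\pi)$ with limits $0$ at $0$ and $\pi$, it is bounded. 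For a general $2$-polygon I would pick a conformal $\varphi\colon\Omega\to\HH$ with $\varphi(x_1) = 0$, $\varphi(x_3) = \infty$; the change of variables $\tilde w = \varphi(w)$ in \eqref{eqn::g_Green}, together with \eqref{eqn::cgreen_inv}, \eqref{eqn::q_cov} and $|\ud\tilde w|^2 = |\varphi'(w)|^2|\ud w|^2$, gives $g(\Omega; x_1, x_3; z) = g(\HH; 0,\infty; \varphi(z))$. This transports boundedness, continuity and vanishing boundary data from $\HH$ to $\Omega$, and, applied with $\Omega$ replaced by $\psi(\Omega)$ for an arbitrary conformal $\psi$ on $\Omega$, yields \eqref{eqn::chordalSLE2_harmonic_inv_repeat}.

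For uniqueness, if $g_1, g_2$ are two bounded solutions in $C(\overline\Omega\setminus\{x_1,x_3\})\cap C^2(\Omega)$, then $h = g_1 - g_2$ is bounded and harmonic in $\Omega$ and extends continuously by $0$ to $\partial\Omega\setminus\{x_1,x_3\}$; composing with a conformal map $\Omega\to\U$ (which extends to a homeomorphism of the closures by the local connectedness of $\partial\Omega$) makes $h$ a bounded harmonic function on $\U$ whose nontangential boundary values vanish except at two points, hence $h\equiv 0$. Thus the Green potential \eqref{eqn::g_Green} is the unique solution of the stated form, and the PDE/uniqueness part of the lemma is complete. Finally, to identify $g(\HH;0,\infty;\cdot)$ with \eqref{eqn::chordalSLE2_harmonic_H_repeat}, I would not evaluate \eqref{eqn::g_Green} but rather verify the formula directly: set $\tilde g(\theta)$ equal to its right-hand side; it is smooth in $\arg z$, bounded, lies in $C^2(\HH)\cap C(\overline\HH\setminus\{0,\infty\})$ with $\tilde g(0) = \tilde g(\pi) = 0$, and since it depends only on $\theta$ one has $\Delta\tilde g = r^{-2}\tilde g''(\theta)$. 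A short trigonometric computation (differentiating twice and using $\cos2\theta - 1 = -2\sin^2\theta$) gives $\tilde g''(\theta) = -\tfrac{8}{3\pi}\big(2\sin^2\theta + (\tfrac\pi2-\theta)\sin 2\theta\big)$, so $-\Delta\tilde g = -r^{-2}\tilde g''(\theta) = \tfrac{8}{3\pi r^2}\big(2\sin^2\theta + (\tfrac\pi2-\theta)\sin2\theta\big) = \tfrac43\,q(\HH;0,\infty;re^{\ii\theta})$, the last equality from \eqref{eqn::q_H} and $\sin2\theta\tan\theta = 2\sin^2\theta$. By the uniqueness just proved $g(\HH;0,\infty;\cdot) = \tilde g$, and then \eqref{eqn::chordalSLE2_harmonic_inv_repeat}--\eqref{eqn::chordalSLE2_harmonic_H_repeat} determine $g$ on every $2$-polygon.

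I expect the step needing the most care to be the bookkeeping at the marked points $x_1,x_3$: verifying that the $\dist(\cdot,x_j)^{-2}$ singularity of $q$ is exactly compensated by the boundary vanishing of the Green's function — both for the convergence of \eqref{eqn::g_Green} and for the continuity of $g$ up to $\partial\Omega\setminus\{x_1,x_3\}$ — and checking, via the scale-invariance argument, that $g$ stays bounded near $x_1$ and $x_3$ (where $q$ is unbounded); once this is in place, the potential theory, the removable-singularity uniqueness argument, and the trigonometric verification of \eqref{eqn::chordalSLE2_harmonic_H_repeat} are all routine.
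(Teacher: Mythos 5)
Your overall skeleton matches the paper's: show the Green potential \eqref{eqn::g_Green} is well defined, bounded, solves \eqref{eqn::Laplace_Dirichlet}, prove uniqueness of bounded solutions, and then identify $g(\HH;0,\infty;\cdot)$ by checking that the right-hand side of \eqref{eqn::chordalSLE2_harmonic_H_repeat} solves the radial ODE $r^{-2}\hat g''(\theta)=-\tfrac43 q(r,\theta)$ with zero endpoint values (your trigonometric computation agrees with the paper's). Within that skeleton you use some genuinely different devices, and they are all legitimate: for boundedness you exploit scale invariance ($\Green(\HH;\lambda z,\lambda w)=\Green(\HH;z,w)$ and $q(\lambda w)=\lambda^{-2}q(w)$, so $g(\HH;0,\infty;\cdot)$ depends only on $\arg z$ and boundedness follows from continuity on $(0,\pi)$ plus vanishing endpoint limits), whereas the paper runs a direct, $z$-uniform integral estimate; for the interior equation you invoke classical Newtonian-potential theory, whereas the paper uses smooth cut-offs, locally uniform convergence, the distributional Laplacian and hypoellipticity; for uniqueness you transfer to the disk and use a.e.\ vanishing nontangential boundary values of a bounded harmonic function, whereas the paper reflects across $\R\setminus\{0\}$, removes the singularity at $0$, and applies Liouville after an exponential change of variable. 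Your scale-invariance trick is a nice simplification (it also replaces the paper's sharper bound $|q|\lesssim\theta(\pi-\theta)/r^2$ by the coarser $|q|\lesssim|w|^{-2}$ combined with the damping $\Green(\HH;z,w)\le 2\Im z\,\Im w/|z-w|^2$, which indeed suffices for absolute convergence), while the paper's uniqueness argument is more elementary and avoids boundary-extension theory.

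Two points need repair, though neither is fatal. First, the boundary-vanishing step cannot be done by a bare appeal to dominated convergence: as $z$ approaches $x\in\R\setminus\{0\}$ the logarithmic singularity of $\Green(\HH;z,\cdot)$ at $w=z$ moves with $z$, so there is no single integrable dominating function valid for all nearby $z$. You must split as the paper does: the contribution of $\{|w-x|\le 2\eps\}$ is bounded directly (it is $O(\eps^{3}\log(1/\eps))$ uniformly, since $q$ is bounded there), and dominated convergence is applied only on the complementary region, where $\Green(\HH;z,w)\to0$ pointwise with a fixed dominating bound; note also that your boundedness argument is downstream of this step, since it needs the endpoint limits of $g(\theta)$. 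Second, in the uniqueness argument your parenthetical is inaccurate: local connectedness of $\partial\Omega$ gives a continuous extension of the map $\U\to\Omega$ to the closed disk, not a homeomorphism of closures, so the exceptional boundary set on the circle is the preimage of $\{x_1,x_3\}$, which is only known to have zero length (by the Riesz uniqueness theorem applied to the nonconstant map); with that correction your ``bounded harmonic with a.e.\ vanishing nontangential limits is zero'' argument goes through, or you can sidestep the issue entirely by proving uniqueness in $\HH$ as the paper does (Schwarz reflection, removable singularity at $0$, Liouville) and transporting it by the same conformal identity you use elsewhere.
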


Before the proof, let us give two remarks. First, the function $q(\Omega; x_1, x_3; z)$ explodes at $z=x_1$ and $z=x_3$ and the solution given by~\eqref{eqn::chordalSLE2_harmonic_inv_repeat}-\eqref{eqn::chordalSLE2_harmonic_H_repeat} is not continuous at $z=x_1$ or $z= x_3$. We need to analyze the solution to~\eqref{eqn::Laplace_Dirichlet} around $x_1, x_3$ carefully. 
Second, 
we emphasize that bounded solution to~\eqref{eqn::Laplace_Dirichlet} is unique. The ``boundedness" is important, as the solution to~\eqref{eqn::Laplace_Dirichlet} is not unique without the assumption of boundedness: any solution plus linear combinations of the Poisson kernels $\Poisson(\Omega; z, x_1)$ and $\Poisson(\Omega; z, x_3)$ is also a solution to~\eqref{eqn::Laplace_Dirichlet}. 
\begin{proof}
It suffices to prove the conclusion for the case when $(\Omega; x_1, x_3)=(\HH; 0,\infty)$. 
In this case, for $z\in \HH$ and $w=r\ee^{\ii\theta}$ with $\theta\in (0,\pi)$, we have 
\[q(\HH; 0, \infty; r\ee^{\ii\theta})=\frac{2\sin(2\theta)}{\pi r^2}\left(\tan\theta-\theta+\frac{\pi}{2}\right), \qquad \Green(\HH;z,w)=\log{\frac{|z-\overline{w}|}{|z-{w}|}}. 
\]
For $q$, we have: for $\theta\in (0,\pi)$,
\begin{equation}\label{eqn::q_upperbound}
|q(\HH;0,\infty;r\ee^{\ii \theta})|\lesssim \frac{\theta(\pi-\theta)}{r^2};
\end{equation}
and for Green's function, we have: for $\theta\in (0,\pi)$, 
\begin{align}\label{eqn::Green_upperbound1}
&|\Green(\HH;z,r\ee^{\ii\theta})|\lesssim \frac{|z|}{r},  \quad \text{for }r\ge 2|z|; \qquad |\Green(\HH;z,r\ee^{\ii \theta})|\lesssim \frac{r}{|z|}, \quad \text{for }r\le |z|/2; \\
&|\Green(\HH;z,w)|\lesssim\frac{\Im(w)}{\Im(z)},\quad \text{for }\Im(w)\leq \Im(z)/4,\label{eqn::Green_upperbound2}
\end{align}
where we use the following estimate to get~\eqref{eqn::Green_upperbound2}:
for $\Im(w)\leq \Im(z)/4$, we have
\begin{align*}
|\Green(\HH;z,w)|=&\frac{1}{2}\log \frac{\Re(z-w)^2+\Im(z)^2+2\Im(w)\Im(z)+\Im(w)^2}{\Re(z-w)^2+\Im(z)^2-2\Im(w)\Im(z)+\Im(w)^2}\\
\le &\frac{1}{2}\log \frac{\Im(z)^2+2\Im(w)\Im(z)}{\Im(z)^2-2\Im(w)\Im(z)}\\
= & \frac{1}{2}\log \frac{1+\frac{2\Im(w)}{\Im(z)}}{1-\frac{2\Im(w)}{\Im(z)}}
\lesssim \frac{\Im(w)}{\Im(z)}.
\end{align*}
\medbreak

Let us first check that the RHS of~\eqref{eqn::g_Green} is bounded: from~\eqref{eqn::q_upperbound}-\eqref{eqn::Green_upperbound1}, 
\begin{align*}
&\int_\HH |\Green(\HH; z, w)q(\HH; 0, \infty; w)| |\ud w|^2\notag\\
=&\int_0^{\pi}\int_0^{\infty} |\Green(\HH; z, r\ee^{\ii\theta})q(\HH; 0, \infty; r\ee^{\ii\theta})| r \ud r \ud \theta \notag\\
\lesssim & \int_{2|z|}^{\infty} \frac{|z|}{r} \frac{1}{r^2} r \ud r + \int_0^{|z|/2} \frac{r}{|z|} \frac{1}{r^2} r \ud r + \sup_{\theta\in(0,\pi)}\int_{|z|/2}^{2|z|} |\Green(\HH; z, r\ee^{\ii\theta})|  \frac{1}{r^2} r \ud r \notag\\
=&\frac{1}{2}+\frac{1}{2}+\sup_{\theta\in(0,\pi)}\int_{1/2}^{2}\frac{1}{u}\left|\log \frac{|\ee^{\ii \arg(z)}-u\ee^{\ii\theta}|}{|\ee^{\ii\arg(z)}-u\ee^{-\ii\theta}|}\right|\ud u \notag\\
\leq& 1+ 4\sup_{v\in \partial \mathbb{D}} \int_{1/2}^{2} \left|\log|u - v |\right| \ud u<\infty. 
\end{align*}
This shows that RHS of~\eqref{eqn::g_Green} is well-defined and bounded. We denote it by $g(z)$ as in~\eqref{eqn::g_Green}. 
\medbreak
Second, let us check $-\Delta g(z)=\frac{4}{3}q(\HH;0,\infty;z)$. 
For $\eps>0$, define a compact set $A_\eps$ in $\HH$ by $A_\eps:=\{w: \Im(w)\geq \eps^2, \eps\leq |w|\leq \eps^{-1}\}$.
Let $\chi_\eps\in C_c^\infty(\mathbb{\HH})$ be a smooth cut-off function such that $\mathrm{supp}(\chi_\eps) \subset A_{\eps/2}$ and $0 \le \chi_\eps \le 1$ and $\chi_\eps \equiv 1 \text{ on } A_\eps$.
Define
\[q_\eps(\HH;0,\infty;z)=\chi_\eps(z)q(\HH;0,\infty;z),\quad g_\eps(z):=\frac{2}{3\pi}\int_\HH \Green(\HH; z, w) q_\eps(\HH; 0, \infty; w)|\ud w|^2.\]
Then $-\Delta g_\eps(\cdot)=\frac{4}{3}q_\eps(\HH;0,\infty;\cdot)$ since $-\Delta_z \Green(\HH; z, w) = 2\pi\mathrm{Dirac}(z - w)$ in the sense of distributions.

Let us show that $g_\eps\to g$ uniformly on any compact subset of $\HH$ as $\eps\to 0$.
Fix $\eps_0>0$, let us evaluate $|g(z)-g_\eps(z)|$ for $\eps_0\leq |z|\leq \eps_0^{-1}, \Im(z)\geq \eps_0^2$: for any $\eps\le \eps_0/2$,
\begin{align*}
|g(z)-g_\eps(z)|\leq&\frac{2}{3\pi}\int_\HH |\Green(\HH; z, w) (q(\HH; 0, \infty; w)-q_\eps(\HH; 0, \infty; w))||\ud w|^2\\
\leq&\frac{2}{3\pi}\left(\int_{|w|\geq \eps^{-1}}+\int_{0<|w|\leq \eps}+\int_{\substack{\Im(w)\leq\eps^2\\\eps\leq |w|\leq \eps^{-1}}}\right) |\Green(\HH; z, w) q(\HH; 0, \infty; w)||\ud w|^2\\
\lesssim& \int_{\eps^{-1}}^\infty \frac{1}{\eps_0 r}\frac{1}{r^2}r\ud r+\int_{0}^{\eps} \frac{r}{\eps_0} \frac{1}{r^2} r\ud r+\int_{\eps}^{\eps^{-1}}\frac{\eps^2}{\eps_0^2}\frac{\eps}{r^2} r\ud r\tag{due to~\eqref{eqn::q_upperbound}-\eqref{eqn::Green_upperbound2}}\\
=& 2\frac{\eps}{\eps_0}+\frac{\eps^3}{\eps_0^2}\log(\eps^{-2})\to 0, \quad \text{as }\eps\to 0.
\end{align*}
This gives the desired uniform convergence. 
As $\lim_{\eps\to 0}g_\eps = g$ uniformly on any compact subset of $\HH$, the continuity of Laplacian on distributions asserts
\[-\Delta g(\cdot)=-\Delta\left(\lim_{\eps\to 0}g_\eps(\cdot)\right)=\lim_{\eps\to 0}\frac{4}{3}q_\eps(\HH;0,\infty;\cdot)=\frac{4}{3}q(\HH;0,\infty;\cdot)\] 
in the sense of distributions.
As Laplacian is hypoelliptic, $g\in C^2(\HH)$ and $-\Delta g(z)=\frac{4}{3}q(\HH;0,\infty;z)$ holds pointwise.
\medbreak

Third, let us check $\lim_{z\to x}g(z)=0$ for $x\in\R\setminus\{0\}$. Fix $x \in \R\setminus\{0\}$, for $\eps<(\frac{|x|}{4}\wedge \frac{1}{4|x|})$ and $|z-x|\le \eps$, we have
\begin{align*}
|g(z)|\le&\frac{2}{3\pi}\left(\int_{|w|\le \eps}+\int_{\eps\le |w|\leq \eps^{-1}}+\int_{|w|\ge \eps^{-1}}\right) |\Green(\HH; z, w) q(\HH; 0, \infty; w)||\ud w|^2\\
\lesssim & \int_0^{\eps}\frac{r}{|z|}\frac{1}{r^2}r\ud r+\int_{\eps\le |w|\leq \eps^{-1}}|\Green(\HH; z, w) q(\HH; 0, \infty; w)||\ud w|^2+\int_{\eps^{-1}}^{\infty}\frac{|z|}{r}\frac{1}{r^2}r\ud r\tag{due to~\eqref{eqn::q_upperbound}-\eqref{eqn::Green_upperbound1}}\\
\le & \frac{\eps}{|z|}+\int_{\eps\le |w|\leq \eps^{-1}}|\Green(\HH; z, w) q(\HH; 0, \infty; w)||\ud w|^2+\eps|z|\\
=&\frac{\eps}{|z|}+\underbrace{\int_{\substack{\eps\le |w|\leq \eps^{-1}\\ |w-x|\ge 2\eps}}|\Green(\HH; z, w) q(\HH; 0, \infty; w)||\ud w|^2}_{I_{\eps}(z):=}+\underbrace{\int_{|w-x|\le 2\eps}|\Green(\HH; z, w) q(\HH; 0, \infty; w)||\ud w|^2}_{J_{\eps}(z):=}+\eps|z|. 
\end{align*}
As $\lim_{z\to x}\Green(\HH; z, w)=0$, dominated convergence theorem guarantees $\lim_{z\to x}I_{\eps}(z)=0$. 
From~\eqref{eqn::q_upperbound}, we have 
\[J_{\eps}(z)\lesssim \frac{\eps}{|x|^3}\int_{|w-x|\le 2\eps}|\Green(\HH; z, w)||\ud w|^2 \le \frac{\eps}{|x|^3}\int_{|w-z|\leq 4\eps}\log\frac{1}{|w-z|}|\ud w|^2\lesssim\frac{\eps^3}{|x|^3}\log\frac{1}{\eps}. \] 
Thus, $\limsup_{z\to x}|g(z)|\lesssim \frac{\eps}{|x|}+\frac{\eps^3}{|x|^3}\log\frac{1}{\eps} +\eps|x|$. Let $\eps\to 0$, we obtain $\lim_{z\to x}|g(z)|=0$ as desired. 
\medbreak

From the above three steps, $g$ is a bounded solution to~\eqref{eqn::Laplace_Dirichlet} when $(\Omega; x_1, x_3)=(\HH; 0,\infty)$. Let us check that $g$ is the unique such solution.
Suppose there is another bounded solution $\tilde{g}(\cdot)$ to~\eqref{eqn::Laplace_Dirichlet} when $(\Omega; x_1, x_3)=(\HH; 0,\infty)$. 
Then $F(z):=g(z)-\tilde{g}(z)$ is a bounded harmonic function in $\HH$ and equals $0$ on $\mathbb{R}\setminus\{0\}$. 
We extend $F$ to lower-half plane $\{z\in\C: \Im(z)<0\}$ by reflection: $F(z)=-F(\overline{z})$.  
Then $F$ is a bounded harmonic function in $\mathbb{C}\setminus\{0\}$ due to mean value theorem. 
Consequently, $f(z)=F(\ee^z)$ is a bounded harmonic function on $\C$. 
By Liouville's theorem for harmonic function, $f(z)$ is constant.
As $f(1)=0$, we have $f\equiv 0$ and $F\equiv 0$, which gives the desired uniqueness.
\medbreak

Finally, let us check that $g$ is characterized by~\eqref{eqn::chordalSLE2_harmonic_inv_repeat}-\eqref{eqn::chordalSLE2_harmonic_H_repeat}.
The conformal invariance~\eqref{eqn::chordalSLE2_harmonic_inv_repeat} follows from~\eqref{eqn::cgreen_inv} and~\eqref{eqn::q_cov}. 
We denote by $\hat{g}(\theta)$ the RHS of~\eqref{eqn::chordalSLE2_harmonic_H_repeat}, then it is the unique solution to~\eqref{eqn::Laplace_Dirichlet} with $(\Omega; x_1, x_3)=(\HH; 0, \infty)$, because 
\begin{align*}
\begin{cases}
\Delta \hat{g}(\theta)=\left(\frac{\partial^2}{\partial r^2}+\frac{1}{r}\frac{\partial}{\partial r}+\frac{1}{r^2}\frac{\partial^2}{\partial\theta^2}\right)\hat{g}(\theta)=\frac{1}{r^2}\frac{\partial^2}{\partial\theta^2}\hat{g}(\theta)=-\frac{4}{3}q(r,\theta); \\
\hat{g}(0)=\hat{g}(\pi)=0. 
\end{cases}
\end{align*}
This gives~\eqref{eqn::chordalSLE2_harmonic_H_repeat} and completes the proof.
\end{proof}

\begin{proof}[Proof of Proposition~\ref{prop::A1interior}]
Recall from~\eqref{eqn::qdiscrete_def} that $-\Delta g^{\delta}(u)=\delta^{2}q^{\delta}(u)$ for $u\in \LV^{\circ}(\Omega^{\delta})$. 
We extend $q^{\delta}$ as in Lemma~\ref{lem::q_cvg}. 
For $z\in\Omega$, suppose $z^{\delta}$ is the vertex in $\LV^{\circ}(\Omega^{\delta})$ that is nearest to $z$.
We extend Green's function $\Green(\Omega^{\delta}; z^{\delta}, \cdot)$~\eqref{eqn::dGreen_def} in the same way: for $u\in\LV^{\circ}(\Omega^{\delta})$, we set $\Green(\Omega^\delta;z^\delta,w)=\Green(\Omega^\delta;z^\delta,u)$ for $w\in\bigtriangleup^{\delta}(u)$.
We have
\begin{align*}
        g^\delta(z^{\delta})=&\sum_{u\in\LV^\circ(\Omega^\delta)}\delta^2\Green(\Omega^\delta;z^{\delta},u)q^\delta(u)\tag{due to~\eqref{eqn::dLaplacian_green}}\\
        =&\frac{4}{3\sqrt{3}}\sum_{u\in\LV^\circ(\Omega^\delta)}\frac{3\sqrt{3}}{4}\delta^2 \Green(\Omega^\delta;z^{\delta},u) q^\delta(u)\notag\\
        =&\frac{4}{3\sqrt{3}}\int_{\Omega^\delta} \Green(\Omega^\delta;z^{\delta},w)q^\delta(w) |\ud w|^2.
    \end{align*}
We will show in Lemma~\ref{lem::g_cvg} that
        \begin{equation}\label{eqn::Gq_int_cvg}
            \lim_{\delta\to 0}\int_{\Omega^\delta} \Green(\Omega^\delta;z^{\delta},w)q^\delta(w) |\ud w|^2=\frac{\sqrt{3}}{2\pi}\int_\Omega \Green(\Omega;z,w)q(\Omega;x_1,x_3;w)|\ud w|^2.
        \end{equation}
Combining with~\eqref{eqn::g_Green}, we have $g^{\delta}(z^{\delta})\to g(z)$ as $\delta\to 0$.
\end{proof}

\begin{lemma}\label{lem::g_cvg}
Assume the same notation as in Proof of Proposition~\ref{prop::A1interior}. The convergence~\eqref{eqn::Gq_int_cvg} holds. 
\end{lemma}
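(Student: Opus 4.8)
\emph{Proof proposal for Lemma~\ref{lem::g_cvg}.}
The identity~\eqref{eqn::Gq_int_cvg} is an interchange of $\lim_{\delta\to0}$ with an integral, and the plan is to prove it by the scheme already used for Proposition~\ref{prop::deltacube}: localise to a compact core of $\Omega$, where the integrands converge uniformly, and treat the boundary layer separately. Recall that $\Green(\Omega;z,\cdot)\,q(\Omega;x_1,x_3;\cdot)$ is integrable on $\Omega$, which is precisely the first step in the proof of Lemma~\ref{lem::g_solution_Laplace}. Hence it suffices to establish two statements. First, for each fixed $\eps>0$, with $K(\eps)=\{w\in\Omega:\dist(w,\partial\Omega)\ge\eps\}$,
\[
\lim_{\delta\to 0}\int_{K(\eps)}\Green(\Omega^\delta;z^\delta,w)\,q^\delta(w)\,|\ud w|^2=\frac{\sqrt{3}}{2\pi}\int_{K(\eps)}\Green(\Omega;z,w)\,q(\Omega;x_1,x_3;w)\,|\ud w|^2 .
\]
Second, $\displaystyle\limsup_{\eps\to 0}\limsup_{\delta\to 0}\int_{\Omega^\delta\setminus K(\eps)}\Green(\Omega^\delta;z^\delta,w)\,q^\delta(w)\,|\ud w|^2=0$.

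For the first statement I would fix a small $\rho>0$ with $B(z,\rho)\subset K(\eps)$. On the compact set $K(\eps)\setminus B(z,\rho)\subset\Omega\setminus\{z\}$, Lemma~\ref{lem::Green_cvg} gives $\Green(\Omega^\delta;z^\delta,\cdot)\to\frac{\sqrt{3}}{2\pi}\Green(\Omega;z,\cdot)$ uniformly and Lemma~\ref{lem::q_cvg} gives $q^\delta\to q(\Omega;x_1,x_3;\cdot)$ uniformly, so the integrals over $K(\eps)\setminus B(z,\rho)$ converge. On $B(z,\rho)$ the function $q^\delta$ is uniformly bounded (Lemma~\ref{lem::q_cvg} together with continuity of $q(\Omega;x_1,x_3;\cdot)$ near $z$), while Lemma~\ref{lem::Green_upperbound} bounds $\Green(\Omega^\delta;z^\delta,w)$ by $\frac{\sqrt{3}}{2\pi}\log\frac{1}{|z^\delta-w|\vee\delta}+C$; hence $\int_{B(z,\rho)}\Green(\Omega^\delta;z^\delta,w)\,q^\delta(w)\,|\ud w|^2\lesssim\rho^2\log\frac1\rho$ uniformly in $\delta$, and similarly for the continuum integral. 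Letting $\rho\to0$ after $\delta\to0$ yields the first statement.

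For the second statement I would split $\Omega^\delta\setminus K(\eps)$ into the neighbourhoods $V(r)=B(x_1,r)\cup B(x_3,r)$ of the marked points and the remainder $\Omega^\delta(\eps)\setminus V(r)$, and prove analogues of~\eqref{eqn::deltacube_control_boundary1}--\eqref{eqn::deltacube_control_boundary2}: for some universal $\alpha>0$, $\int_{V(r)}\Green(\Omega^\delta;z^\delta,w)\,q^\delta(w)\,|\ud w|^2\lesssim r^\alpha$ for all small $\delta$, and $\int_{\Omega^\delta(\eps)\setminus V(r)}\Green(\Omega^\delta;z^\delta,w)\,q^\delta(w)\,|\ud w|^2\le C(r)\,(\eps/r)^\alpha$ for $\delta$ small depending on $\eps$. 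The main tool is the probabilistic representation used in the proof of Proposition~\ref{prop::A1interior}: $\int_{\Omega^\delta}\Green(\Omega^\delta;v,w)\,q^\delta(w)\,|\ud w|^2=\tfrac{3\sqrt{3}}{4}\,g^\delta(v)$ with $g^\delta(v)=\PP[\{v\rightsquigarrow e_3^\delta\}\mid\LA_1^\delta]=\E[\harmonic(\Omega_1^\delta;v,\eta_1^\delta)\mid\LA_1^\delta]$, where $\Omega_1^\delta:=\Omega^\delta\setminus\eta_1^\delta$. Conditioning on $\eta_1^\delta$ and performing a summation by parts, the contribution of $w\in V(r)$ is dominated, up to bounded factors coming from $\Green(\Omega^\delta;z^\delta,\cdot)$ on $V(r)$, by a conditional expectation of the form $\E[\harmonic(\Omega_1^\delta;z^\delta,\eta_1^\delta\cap V(r))\mid\LA_1^\delta]$; since $\eta_1^\delta\cap V(r)$ has diameter at most $2r$ and lies at a fixed positive distance from $z$, the weak Beurling-type estimate~\cite[Proposition~2.11]{ChelkakSmirnovDiscreteComplexAnalysis} bounds this by $\lesssim r^\alpha$. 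The same estimate, together with~\eqref{eqn::boundary_cvg_Hausdorff} and Lemmas~\ref{lem::discretePoisson_max_control}, \ref{lem::nharmonic_cvg}, \ref{lem::bPoisson_cvg} to pass between discrete and continuum harmonic quantities, handles the remainder $\Omega^\delta(\eps)\setminus V(r)$, exactly as in the proofs of Lemmas~\ref{lem::deltacube_control_boundary1} and~\ref{lem::deltacube_control_boundary2}. Combining the two bounds and letting first $\delta\to0$, then $\eps\to0$, then $r\to0$ gives the second statement, and hence~\eqref{eqn::Gq_int_cvg}.

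The step I expect to be the main obstacle is the uniform-in-$\delta$ control near $x_1$ and $x_3$. Unlike on the compact core, $q^\delta$ does not remain bounded there: $g^\delta$ has steep gradients along $\eta_1^\delta$, which terminates at $x_1$ and $x_3$, so $\delta^2 q^\delta$ carries mass concentrated on $\eta_1^\delta$ all the way to $\partial\Omega^\delta$, and in fact $\int_{V(r)}q^\delta\,|\ud w|^2$ is not bounded as $\delta\to0$. Consequently the crude bound $\int_{V(r)}\Green^\delta q^\delta\lesssim\big(\sup_{V(r)}\Green(\Omega^\delta;z^\delta,\cdot)\big)\int_{V(r)}q^\delta\,|\ud w|^2$ is useless, and the smallness of $\int_{V(r)}\Green^\delta q^\delta$ must be extracted from the decay of the Green's function toward $\partial\Omega^\delta$ — equivalently, from the decay of $\harmonic(\Omega_1^\delta;\cdot,\eta_1^\delta)$ — which is exactly what the conditioning on $\eta_1^\delta$ and the Beurling-type input in the previous paragraph supply. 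This is the technical heart of the argument.
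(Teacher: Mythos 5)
Your high-level plan (uniform convergence on a compact core plus a separate boundary-layer estimate, with the boundary layer controlled through the probabilistic meaning of $q^{\delta}=-\delta^{-2}\Delta g^{\delta}$ and Beurling estimates) is a legitimate alternative to the paper's argument, but as written the decisive steps are asserted rather than proved, and one of them is stated incorrectly. What your ``summation by parts'' actually produces is the last-exit decomposition: for a region $A$,
\[
\sum_{u\in \LV^{\circ}(\Omega^{\delta})\cap A}\Green(\Omega^{\delta};z^{\delta},u)\bigl(-\Delta g^{\delta}(u)\bigr)
=\E\Bigl[\,\PP_{z^{\delta}}\bigl[\text{the walk hits }\eta_1^{\delta}\text{ before }\partial\Omega^{\delta}\text{ and its \emph{last} visit to }\eta_1^{\delta}\text{ lies in }A\bigr]\Bigm|\LA_1^{\delta}\Bigr],
\]
not a quantity dominated by $\E[\harmonic(\Omega_1^{\delta};z^{\delta},\eta_1^{\delta}\cap A)\mid\LA_1^{\delta}]$ ``up to bounded factors'': the harmonic measure you wrote is the probability that the \emph{first} hit of $\eta_1^{\delta}$ lies in $A$, which is a different (smaller) event, and the Green's function cannot simply be pulled out as a bounded factor since $\int_{V(r)}q^{\delta}$ is not bounded in $\delta$, as you yourself note. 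For $A=V(r)$ this is repairable (both events force the walk to visit $\eta_1^{\delta}\cap V(r)$, and a Beurling/annulus-crossing argument near $x_1,x_3$ gives an $r^{\alpha}$ bound), but it must be stated and proved. The more serious gap is the collar $\Omega^{\delta}(\eps)\setminus V(r)$: you dismiss it with ``exactly as in Lemmas~\ref{lem::deltacube_control_boundary1} and~\ref{lem::deltacube_control_boundary2}'', but those lemmas bound probabilities of UST events, not Green-weighted sums of $-\Delta g^{\delta}$. In your scheme you would need a uniform-in-$\delta$ estimate that the walk from $z^{\delta}$ is unlikely to make its last visit to $\eta_1^{\delta}$ within the $\eps$-collar of $\partial\Omega^{\delta}$ away from $x_1,x_3$ --- in effect a quantitative statement that, conditionally on $\LA_1^{\delta}$, the branch $\eta_1^{\delta}$ rarely approaches the boundary away from its endpoints, or equivalently a bound of the type $g^{\delta}(u)\le C(r)\eps^{\alpha}$ for $u$ in the collar outside $V(2r)$. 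You explicitly flag this as ``the technical heart'' and do not supply it, so the proof is incomplete precisely where the difficulty lies.

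For comparison, the paper avoids ever integrating $q^{\delta}$ against the full-domain Green's function over the boundary layer. It truncates the domain to $\Omega^{\delta}_{r,\eps}=K(\eps)\setminus(V_1(r)\cup V_3(r))$ and writes $g^{\delta}(z^{\delta})=g^{\delta}_{r,\eps}(z^{\delta})+\int_{\Omega^{\delta}_{r,\eps}}\Green(\Omega^{\delta}_{r,\eps};z^{\delta},w)q^{\delta}(w)|\ud w|^2$, where $g^{\delta}_{r,\eps}$ is discrete-harmonic with boundary data $g^{\delta}$. The interior integral is handled by the uniform convergences (Lemmas~\ref{lem::Green_cvg} and~\ref{lem::q_cvg}) plus the logarithmic bound of Lemma~\ref{lem::Green_upperbound} near $z$; the continuum truncation is removed by monotone convergence, using $q\ge0$ and monotonicity of Green's functions in the domain; and all boundary effects are concentrated in the harmonic correction $g^{\delta}_{r,\eps}$, which is estimated by the maximum principle from the probabilistic bound $g^{\delta}(u)\le C(r)\eps^{\alpha}$ on $\partial\Omega^{\delta}_{r,\eps}$ away from $V_1(2r)\cup V_3(2r)$ together with the trivial bound $g^{\delta}\le 1$ near the marked points, yielding~\eqref{eqn::g_re_to_g}. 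If you want to keep your route, the missing ingredient is essentially this same boundary estimate on $g^{\delta}$ (or an equivalent last-exit estimate in the collar); otherwise the cleanest fix is to adopt the truncation-plus-harmonic-correction structure.
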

\begin{proof} 
For $0<\eps<r$, we define $K(\eps)=\{z\in\Omega: \dist(z,\partial\Omega)>\eps\}$ and denote $V_j(r)=B(x_j,r)\cap \Omega$ for $j=1,3$.
Let $V_j^\delta(r)$ be the maximal domain contained in $V_j(r)$ such that $\partial V_j^{\delta}(r)$ consist of the edges of $\delta\hexagon$,
 and denote $S^\delta_j(r)=\partial V^\delta_j(r)\setminus \partial \Omega^\delta$ as before.
Denote $\Omega_{r,\eps}=K(\eps)\setminus(V_1(r)\cup V_3(r))$, and let $\Omega^\delta_{r,\eps}$ be the maximal domain contained in $\Omega_{r,\eps}$ such that $\partial \Omega^{\delta}_{r,\eps}$ consist of the edges of $\delta\hexagon$.
Let $g_{r,\eps}^\delta(\cdot)$ be the harmonic function on $\LV(\Omega^{\delta}_{r,\eps})$ with boundary value $g^\delta(\cdot)$ on $\partial \Omega^{\delta}_{r,\eps}$.
Then 
\begin{equation}
    g^\delta(z^\delta)-g_{r,\eps}^\delta(z^\delta)=\int_{\Omega_{r,\eps}^\delta}\Green(\Omega^\delta_{r,\eps};z^\delta,w)q^\delta(w)|\ud w|^2, \qquad\text{for }z^\delta\in \LV^\circ(\Omega^\delta_{r,\eps}).
\end{equation}
We have the following claims.
\begin{itemize}
\item Fix $r,\eps>0$, pick $s>0$ small such that $\overline{B(z,2s)}\subset\Omega_{r,\eps}$.
Lemma~\ref{lem::Green_cvg} asserts that Green's function $\Green(\Omega^\delta_{r,\eps};z^\delta,\cdot)\to \frac{\sqrt{3}}{2\pi}\Green(\Omega_{r,\eps};z,\cdot)$ uniform in $\Omega_{r,\eps}\setminus B(z,s)$. 
Lemma~\ref{lem::q_cvg} asserts that $q^\delta(\cdot)\to q(\Omega;x_1,x_3;\cdot)$ uniform in $\Omega_{r,\eps}$. 
Thus,
\begin{equation}\label{eqn::Greenq_localuniform}
    \lim_{\delta\to 0}\int_{\Omega_{r,\eps}^\delta\setminus B(z,s)}\Green(\Omega^\delta_{r,\eps};z^\delta,w)q^\delta(w)|\ud w|^2=\frac{\sqrt{3}}{2\pi}\int_{\Omega_{r,\eps}\setminus B(z,s)}\Green(\Omega_{r,\eps};z,w)q(\Omega;x_1,x_3;w)|\ud w|^2.
\end{equation}
\item
We will show 
\begin{equation}\label{eqn::Greenq_int_to_zero}
\limsup_{\delta\to 0}\int_{B(z,s)}|\Green(\Omega^\delta_{r,\eps};z^\delta,w)q^\delta(w)||\ud w|^2\to 0,\quad \text{as }s\to 0.
\end{equation} 
\item As $q(\Omega;x_1,x_3;w)$ is non-negative and $\Green(\Omega_{r,\eps};z,w)$ is monotone as $r,\eps\to 0$, we have
\begin{equation}\label{eqn::g_re_to_g_limit}
    \lim_{r\to 0}\lim_{\eps\to 0}\int_{\Omega_{r,\eps}}\Green(\Omega_{r,\eps};z,w)q(\Omega;x_1,x_3;w)|\ud w|^2=\int_{\Omega}\Green(\Omega;z,w)q(\Omega;x_1,x_3;w)|\ud w|^2.
\end{equation}
\item We will show \begin{equation}\label{eqn::g_re_to_g}
    \lim_{r\to 0}\lim_{\eps\to 0}\sup_{\delta>0}|g^\delta_{r,\eps}(z^\delta)|=0.    
\end{equation}
\end{itemize}
Combining~\eqref{eqn::Greenq_localuniform}-\eqref{eqn::g_re_to_g} gives~\eqref{eqn::Gq_int_cvg} as desired.
\medbreak
Let us prove~\eqref{eqn::Greenq_int_to_zero}.
The definition of $\Omega_{r,\eps}^\delta$ ensures that $\dist_H(\partial\Omega_{r,\eps},\partial\Omega_{r,\eps}^\delta)\to 0$ as $\delta \to 0$.
From Lemma~\ref{lem::Green_upperbound}, there exist constants $\delta_0\in(0,1)$ (depending on $(\{\Omega_{r,\eps}^\delta\}_{\delta>0};\Omega;r,\eps;z)$) and $C_{\eqref{eqn::Green_domain_plane_upperbound}}\in(0,\infty)$ (depending on $(\Omega;r,\eps;z)$) such that, for all $\delta\in(0,\delta_0)$,
 \begin{equation}\label{eqn::Green_domain_plane_upperbound}
 \Green(\Omega^\delta_{r,\eps};z^\delta,y)\leq \frac{\sqrt{3}}{2\pi}\log \frac{1}{|z^\delta-y|\vee\delta}+C_{\eqref{eqn::Green_domain_plane_upperbound}}, \quad \text{for all }y\in \LV(\Omega^\delta_{r,\eps}) .
 \end{equation}
 Note that for any $w\in \bigtriangleup^\delta(y)$ with $y\neq z^\delta$, we have $|z-w|\leq 3|z^\delta-y|$, while for $w\in \bigtriangleup^\delta(z^\delta)$, we have $|z-w|\leq \sqrt{3}\delta$.
Thus,
\begin{equation}\label{eqn::Green_re_step_aux1}
 \Green(\Omega^\delta_{r,\eps};z^\delta,w)\leq \frac{\sqrt{3}}{2\pi}\log \frac{1}{|z-w|}+C_{\eqref{eqn::Green_domain_plane_upperbound}}+\frac{\sqrt{3}}{2\pi}\log 3, \quad \text{for all }w\in B(z,s)\text{ and all }\delta\le \delta_0.
 \end{equation}
Consequently, 
\begin{align*}
    &\limsup_{\delta\to 0}\int_{B(z,s)}|\Green(\Omega^\delta_{r,\eps};z^\delta,w)q^\delta(w)||\ud w|^2\\
    \leq&\int_{B(z,s)}\limsup_{\delta\to 0}|\Green(\Omega^\delta_{r,\eps};z^\delta,w)q^\delta(w)||\ud w|^2\tag{due to Fatou's lemma}\\
    \leq&\int_{B(z,s)}\left(\frac{\sqrt{3}}{2\pi}\log \frac{1}{|z-w|}+C_{\eqref{eqn::Green_domain_plane_upperbound}}+\frac{\sqrt{3}}{2\pi}\log 3\right)|q(\Omega;x_1,x_3;w)||\ud w|^2\tag{due to~\eqref{eqn::Green_re_step_aux1} and Lemma~\ref{lem::q_cvg}}\\
    &\xrightarrow{s\to 0} 0.
\end{align*}
This completes the proof of~\eqref{eqn::Greenq_int_to_zero}.
\medbreak
It remains to show~\eqref{eqn::g_re_to_g}. 
Let us evaluate the probability of $\{u\rightsquigarrow e_3^\delta\}\cap\{x_1^{\delta,\circ}\rightsquigarrow e_3^\delta\}$ for $u\in \partial \Omega^\delta_{r,\eps}\setminus (V_1(2r)\cup V_3(2r))$.
Denote by $\eta_{u}^\delta$  the boundary branch from $u$, and denote by $R_{u}$ the event that $\eta_{u}^\delta$ hits $S_1^\delta(r)$.
We will analyze the following two events separately (see Figure~\ref{fig::ginterior_boundary}):
\[
\{u\rightsquigarrow e_3^\delta\}\cap\{x_1^{\delta,\circ}\rightsquigarrow e_3^\delta\}\cap R_{u}\quad \text{and}\quad \{u\rightsquigarrow e_3^\delta\}\cap\{x_1^{\delta,\circ}\rightsquigarrow e_3^\delta\}\cap R_{u}^c.
\]
\begin{itemize}
\item For the first case (see Figure~\ref{fig::ginterior_boundary}~(a)), recall that we denote by $\eta_1^\delta$ the boundary branch from $x_1^\delta$ and denote $\Omega_1^\delta=\Omega^\delta\setminus\eta_1^\delta$. The event 
$\{u\rightsquigarrow e_3^\delta\}\cap\{x_1^{\delta,\circ}\rightsquigarrow e_3^\delta\}\cap R_{u}$
implies that $\LA_1^\delta$ occurs, and given $\eta_1^\delta$, the random walk started from $u$ has to hit $S_1^\delta(r)$ before hitting $\partial \Omega_1^\delta$.
Thus,
\begin{align*}
    \PP\left[\{u \rightsquigarrow e_3^\delta\}\cap\{x_1^{\delta,\circ}\rightsquigarrow e_3^\delta\}\cap R_{u}\right]
    \leq &\E\left[\one\{\LA_1^\delta\}\harmonic(\Omega_1^\delta\setminus V_1^\delta(r);u,S_1^\delta(r))\right]\\
\leq &\PP[\LA_1^\delta]\times\harmonic(\Omega^\delta\setminus V_1^\delta(r);u,S_1^\delta(r)).
\end{align*}
\item For the second case (see Figure~\ref{fig::ginterior_boundary}~(b)), the event $\{u\rightsquigarrow e_3^\delta\}\cap\{x_1^{\delta,\circ}\rightsquigarrow e_3^\delta\}\cap R_{u}^c$
implies that the boundary branch $\eta_u^\delta$ reaches $S^\delta_3(r)$ and then hit $\partial\Omega^\delta$ through $e_3^\delta$; 
given $(R_u^c, \eta_u^{\delta})$, the random walk started from $x_1^{\delta,\circ}$ has to exit $V_1^\delta(r)$ through $S^\delta_1(r)$.
Thus, 
\begin{align*}
    &\PP\left[\{u \rightsquigarrow e_3^\delta\}\cap\{x_1^{\delta,\circ}\rightsquigarrow e_3^\delta\}\cap R_{u}^c\right]\\
    \leq &
    \harmonic(\Omega^\delta\setminus (V_1^\delta(r)\cup V_3^\delta(r));u,S_3^\delta(r))\times\max_{w\in S_3^\delta(r)}\harmonic(\Omega^\delta;w,e_3^\delta)\times\harmonic(V_1^\delta(r);x_1^{\delta,\circ},S_1^\delta(r))\\
    \leq&\harmonic(\Omega^\delta\setminus V_3^\delta(r);u,S_3^\delta(r))\times \max_{w\in S_3^\delta(r)}\harmonic(\Omega^\delta;w,e_3^\delta)\times\harmonic(V_1^\delta(r);x_1^{\delta,\circ},S_1^\delta(r)).
\end{align*}
\end{itemize}

\begin{figure}[ht!]
\begin{subfigure}[b]{0.45\textwidth}
\begin{center}
\includegraphics[width=\textwidth]{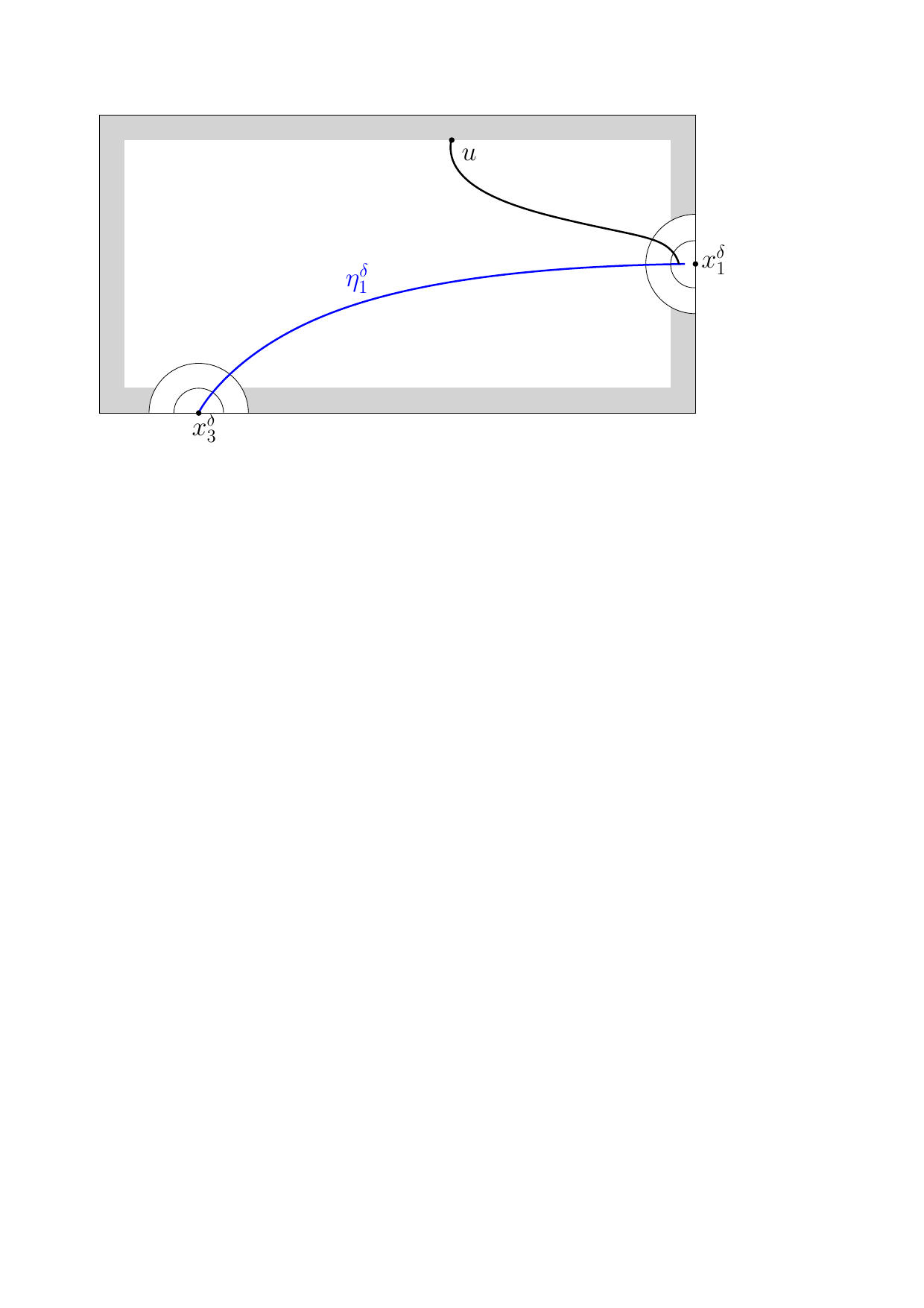}
\end{center}
\caption{The boundary branch $\eta_{u}^\delta$ reaches $S_1^\delta(r)$.}
\end{subfigure}
$\quad$
\begin{subfigure}[b]{0.45\textwidth}
\begin{center}
\includegraphics[width=\textwidth]{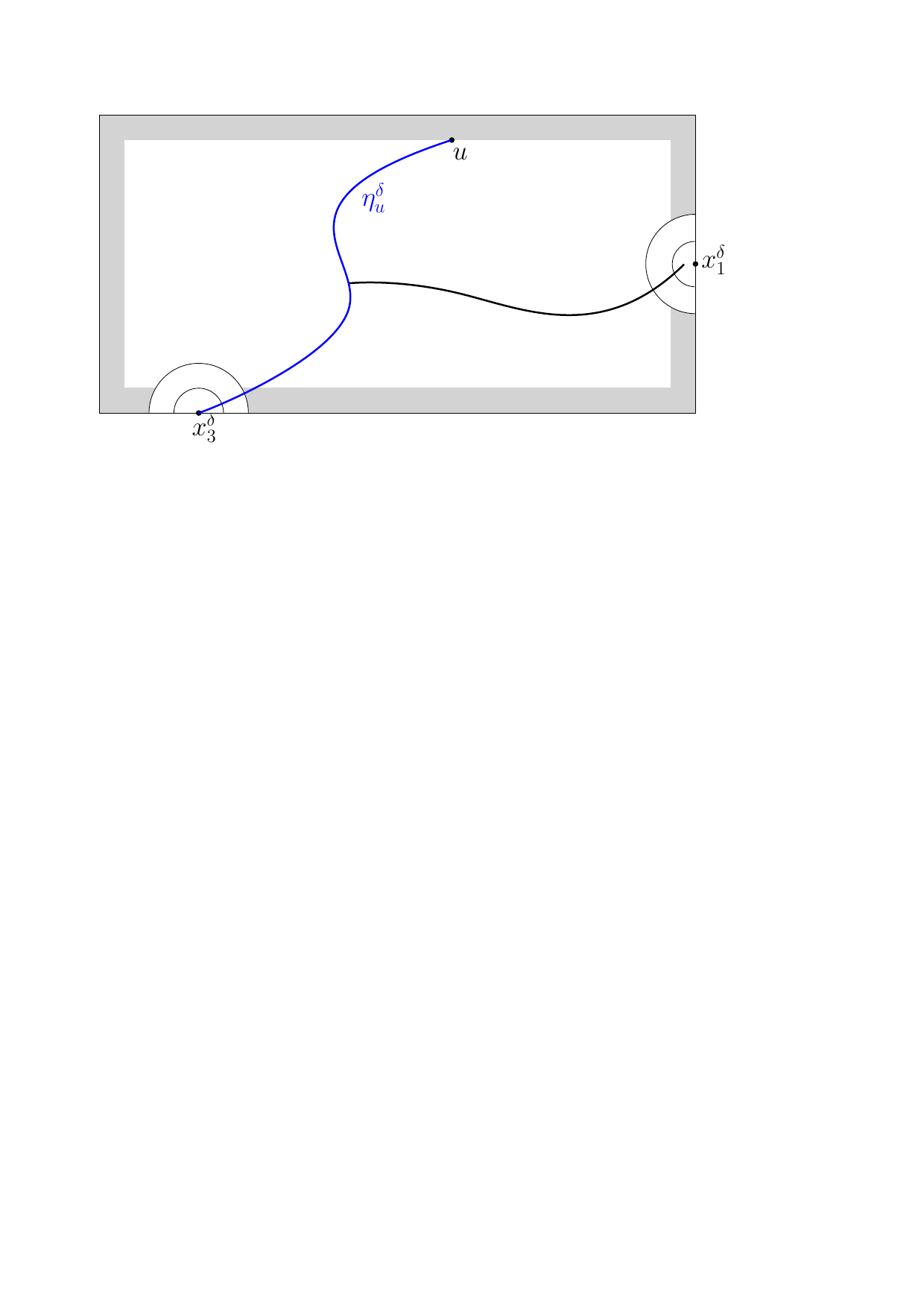}
\end{center}
\caption{The boundary branch $\eta_{u}^\delta$ does not reach $S_1^\delta(r)$.}
\end{subfigure}
\caption{\label{fig::ginterior_boundary}
In (a), the fat blue curve indicates the boundary branch $\eta_1^\delta$. 
In (b), the fat blue curve indicates the boundary branch $\eta_u^\delta$. 
The gray region is $\Omega_{r,\eps}^\delta\setminus(V_1(2r)\cup V_2(2r))$.}  
\end{figure}

Combining the two cases, we have
\begin{align}\label{eqn::g_re_to_g_aux1}
    g^\delta(u)=&\frac{\PP[\{u \rightsquigarrow e_3^\delta\}\cap\{x_1^{\delta,\circ}\rightsquigarrow e_3^\delta\}]}{\PP[\LA_1^\delta]}\\
    \le & \harmonic(\Omega^\delta\setminus V_1^\delta(r);u,S_1^\delta(r))+
    \harmonic(\Omega^\delta\setminus V_3^\delta(r);u,S_3^\delta(r)) \frac{\max_{w\in S_3^\delta(r)}\harmonic(\Omega^\delta;w,e_3^\delta)\times\harmonic(V_1^\delta(r);x_1^{\delta,\circ},S_1^\delta(r))}{\PP[\LA_1^\delta]}.\notag
\end{align}
Fix $v\in \Omega_{r,\eps}$ and let $v^\delta$ be the vertex in $\LV^\circ(\Omega^\delta_{r,\eps})$ nearest to $v$.
From~\eqref{eqn::discretePoisson_max_control},\eqref{eqn::bPoisson_cvg} and~\eqref{eqn::nharmonic_cvg}, the following ratios are bounded by a finite constant depending on $(\Omega;x_1,x_3;v;r)$:
\[\frac{\max_{w\in S_3^\delta(r)}\harmonic(\Omega^\delta;w,e_3^\delta)}{\harmonic(\Omega^\delta;v^\delta,e_3^\delta)}, \quad 
\frac{\harmonic(V_1^\delta(r);x_1^{\delta,\circ},S_1^\delta(r))}{\harmonic(\Omega^\delta;v^\delta,e_1^\delta)},\quad \frac{\harmonic(\Omega^\delta;v^\delta;e_1^\delta)\harmonic(\Omega^\delta;v^\delta;e_3^\delta)}{\PP[\LA_1^\delta]}.
\]
From weak Beurling-type estimate~\cite[Proposition 2.11]{ChelkakSmirnovDiscreteComplexAnalysis}, there exists a universal constant $\alpha\in (0,\infty)$ such that for $j=1,3$ and $\delta$ small enough
\[
\harmonic(\Omega^\delta\setminus V_j^\delta(r);u,S_j^\delta(r))\lesssim\left(\frac{\dist(u,\partial\Omega^\delta)}{\dist(u,S_j^\delta(r))}\right)^\alpha\lesssim\left(\frac{\eps}{r}\right)^{\alpha},\qquad \text{for all }u\in \partial \Omega^\delta_{r,\eps}\setminus (V_1(2r)\cup V_3(2r)). 
\]
Plugging these two estimates into~\eqref{eqn::g_re_to_g_aux1}, there exists a constant $C_{\eqref{eqn::g_re_to_g_aux2}}(r)\in(0,\infty)$ depending on $(\Omega;x_1,x_3;v;r)$ such that, for $\delta$ small enough, 
\begin{equation}\label{eqn::g_re_to_g_aux2}
    g^\delta(u)\leq C_{\eqref{eqn::g_re_to_g_aux2}}(r)\eps^{\alpha}, \qquad \text{for all }u\in \partial \Omega^\delta_{r,\eps}\setminus (V_1(2r)\cup V_3(2r)).
\end{equation}
For $u\in \partial\Omega^\delta_{r,\eps}\cap (V_1(2r)\cup V_3(2r))$, we have the trivial bound $g^\delta(u)\leq 1$.
Thus by the maximal principle for discrete harmonic functions, for $z^{\delta}\in \LV^{\circ}(\Omega^\delta_{r,\eps})$, we have
\begin{equation*}
    g^\delta_{r,\eps}(z^\delta)\leq C_{\eqref{eqn::g_re_to_g_aux2}}(r)\eps^{\alpha}\harmonic(\Omega^\delta_{r,\eps};z^\delta,\partial \Omega^\delta_{r,\eps}\setminus (V_1(2r)\cup V_3(2r)))+\harmonic(\Omega^\delta_{r,\eps};z^\delta,\partial\Omega^\delta_{r,\eps}\cap (V_1(2r)\cup V_3(2r))),
\end{equation*}
which implies~\eqref{eqn::g_re_to_g} by sending $\eps\to 0$ and then $r\to 0$. This completes the proof. 
\end{proof}

\section{Scaling limit of tripod}
\label{sec::tripod}
The goal of this section is to prove Theorem~\ref{thm::tripod}. 
In Section~\ref{prop::cvg_branch_eta1}, we derive the scaling limit of $\eta_1^{\delta}$ conditional on $\LA_1^{\delta}\cap\LA_2^{\delta}$ in Proposition~\ref{prop::cvg_branch_eta1}. 
In Section~\ref{subsec::tightness}, we prove the tightness of the tripod in Proposition~\ref{prop::tripod_tightness}. It follows from Proposition~\ref{prop::cvg_branch_eta1}. 
We emphasize that the conclusions and proofs in Sections~\ref{subsec::boundary_branch_cvg} and~\ref{subsec::tightness} also hold for $\Z^2$ lattice approximation.
In Section~\ref{subsec::prop14}, we complete the proof of Proposition~\ref{prop::chordalSLE2_Fomin}, it is a direct consequence of Proposition~\ref{prop::cvg_branch_eta1} and Proposition~\ref{prop::A1interior}.
In Section~\ref{subsec::gamma3}, we derive the scaling limit of $\gamma_3^{\delta}$ conditional on $\LA_1^{\delta}\cap\LA_2^{\delta}$ in Proposition~\ref{prop::gamma3_cvg}. 
In Section~\ref{subsec::tripod_proof}, we complete the proof of Theorem~\ref{thm::tripod}. The proof relies on Propositions~\ref{prop::cvg_branch_eta1} and~\ref{prop::gamma3_cvg} and tools of multi-sided radial SLE in Section~\ref{subsec::pre_SLE}. 
Finally, we extend the conclusion in Theorem~\ref{thm::tripod} for $\Z^2$ lattice approximation in Section~\ref{subsec::tripod_otherlattice}.

\subsection{Convergence of boundary branch}
\label{subsec::boundary_branch_cvg}
\begin{proposition}\label{prop::cvg_branch_eta1}
Fix a bounded $3$-polygon $(\Omega; x_1, x_2, x_3)$ and suppose $(\Omega^{\delta}; x_1^{\delta}, x_2^{\delta}, x_3^{\delta})$ is an approximation of $(\Omega; x_1, x_2, x_3)$ on $\delta\hexagon$ in Carath\'eodory sense. We assume further that $\partial\Omega^{\delta}$ converges to $\partial\Omega$ in Hausdorff distance~\eqref{eqn::boundary_cvg_Hausdorff}. 
Consider the UST in $\Omega^{\delta}$ with wired boundary condition and 
define $\LA_1^{\delta}=\{x_1^{\delta, \circ}\rightsquigarrow e_3^{\delta}\}$ and $\LA_2^{\delta}=\{x_2^{\delta, \circ}\rightsquigarrow e_3^{\delta}\}$ as in~\eqref{eqn::conditionalevent}. 
Let $\eta_1^{\delta}$ be the boundary branch starting from $x_1^{\delta, \circ}$. 
Suppose $\ell$ is chordal $\SLE_2\sim\chordalSLE^{(\kappa=2)}(\Omega; x_1, x_3)$. 
The law of $\eta_1^{\delta}$ conditional on $\LA_1^{\delta}\cap\LA_2^{\delta}$ converges weakly to the law of $\ell$ weighted by the following Radon-Nikodym derivative
\begin{equation}\label{eqn::RN_eta1_ell1}
\frac{\nharmonic(\Omega\setminus\ell; x_2, \ell)}{\chordalSLEexp^{(\kappa=2)}\left[\nharmonic(\Omega\setminus\ell; x_2, \ell)\right]}, 
\end{equation}
where $\nharmonic(\Omega\setminus\ell; x_2, \ell)$ is harmonic measure~\eqref{eqn::nharmonic_H}-\eqref{eqn::nharmonic_cov}.  
\end{proposition}

\begin{lemma}[\cite{ZhanLERW}]
\label{lem::boundary_branch_cvg1}
Fix a bounded $2$-polygon $(\Omega; x_1, x_3)$ and suppose $(\Omega^{\delta}; x_1^{\delta}, x_3^{\delta})$ is an approximation of $(\Omega; x_1, x_3)$ on $\delta\hexagon$ in Carath\'eodory sense. 
Consider UST on $\Omega^{\delta}$ with wired boundary condition. Let $\ell^{\delta}$ be the boundary branch starting from $x_1^{\delta, \circ}$ and define $\LA_1^{\delta}=\{x_1^{\delta, \circ}\rightsquigarrow e_3^{\delta}\}$ as in~\eqref{eqn::conditionalevent}. 
The law of $\ell^{\delta}$ conditional on $\LA_1^{\delta}$ converges weakly to the law of chordal $\SLE_2\sim\chordalSLE^{(\kappa=2)}(\Omega; x_1, x_3)$. 
\end{lemma}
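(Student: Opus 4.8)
This statement is due to Zhan~\cite{ZhanLERW}; since it is used as a black box below, let me indicate the strategy, which is the standard route from loop-erased random walk to $\SLE_2$~\cite{LawlerSchrammWernerLERWUST}, adapted to the conditioning on $\LA_1^\delta$. By Wilson's algorithm, the boundary branch $\ell^\delta$ has the law of the loop-erasure of a simple random walk from $x_1^{\delta,\circ}$ to $\LV^\partial(\Omega^\delta)$, so conditionally on $\LA_1^\delta=\{x_1^{\delta,\circ}\rightsquigarrow e_3^\delta\}$ it is the loop-erasure of that walk conditioned to exit through $e_3^\delta$ --- equivalently, of the Doob $h$-transform of the walk by the positive discrete-harmonic function $w\mapsto\harmonic(\Omega^\delta;w,e_3^\delta)$. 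Denote this law by $\nu^\delta$. The domain Markov property of LERW survives this $h$-transform: given an initial segment $\ell^\delta[0,k]$, the remainder is the analogous conditioned LERW in the slit domain $\Omega^\delta\setminus\ell^\delta[0,k]$ from $\ell^\delta(k)$ through $e_3^\delta$.

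The first task is precompactness of $(\nu^\delta)_\delta$ in $(\chamber,\dist_\chamber)$. I would verify the Kemppainen--Smirnov condition~\cite{KemppainenSmirnovRandomCurves}: a bound, uniform in $\delta$ and in the revealed configuration, on the conditional probability that $\ell^\delta$ makes an unforced crossing of an annulus. Combining the domain Markov property above with the weak Beurling-type estimate~\cite[Proposition~2.11]{ChelkakSmirnovDiscreteComplexAnalysis} for the hexagonal lattice yields this bound at positive distance from $x_1$ and $x_3$; near the endpoints one uses the $C^{1+\eps}$-regularity of $\partial\Omega$ there together with the fact that the $h$-transform weight $\harmonic(\Omega^\delta\setminus\ell^\delta[0,k];\ell^\delta(k),e_3^\delta)/\harmonic(\Omega^\delta;x_1^{\delta,\circ},e_3^\delta)$ is controlled on compact subsets by Lemma~\ref{lem::discretePoisson_max_control} and near the boundary by the same Beurling estimate. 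Hence every subsequence of $(\nu^\delta)_\delta$ has a weak limit, supported on simple curves from $x_1$ to $x_3$ in $\overline\Omega$ meeting $\partial\Omega$ only at the endpoints.

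Next, identify the limit. Let $\ell$ be a subsequential limit; transporting to $\HH$ by a conformal map sending $x_1$ to $0$ and $x_3$ to $\infty$, parametrize $\ell$ by half-plane capacity and let $(W_t)$ be its Loewner driving function. For the stopped discrete curve $\ell^\delta[0,k]$, the domain Markov property of the conditioned LERW makes $w\mapsto\harmonic(\Omega^\delta\setminus\ell^\delta[0,k];w,e_3^\delta)$ a martingale in $k$; after the normalization dictated by conformal covariance this converges, by Lemmas~\ref{lem::Poisson_cvg} and~\ref{lem::bPoisson_cvg} (and, at interior test points, Lemma~\ref{lem::Green_cvg}), to the chordal $\SLE_2$ martingale observable built from the partition function $\chordalSLEpf^{(\kappa=2)}(\Omega;x_1,x_3)=\Poisson(\Omega;x_1,x_3)$, exactly as in the radial case recalled in Lemma~\ref{lem::LERW_radialSLE2}. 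Passing to the limit and applying It\^o's formula to this family of observables forces $(W_t)$ to be a continuous semimartingale with vanishing drift and quadratic variation $2\,\ud t$; by L\'evy's characterization $W_t=\sqrt{2}\,B_t$, so $\ell$ is chordal $\SLE_2$ in $(\Omega;x_1,x_3)$. Since the limit is the same along every subsequence, $\nu^\delta\to\chordalSLE^{(\kappa=2)}(\Omega;x_1,x_3)$ weakly.

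The crux is uniformity near the two marked boundary points. The conditioning event $\LA_1^\delta$ has probability of order $\delta^2$ (comparable to the boundary Poisson kernel, cf.\ Lemma~\ref{lem::bPoisson_cvg}), so one must show that conditioning on it does not spoil the annulus-crossing estimates near $x_1$ and $x_3$, where the $h$-transform weight degenerates, and one must control the discrete harmonic observable on the \emph{rough} boundary of the slit domain $\Omega^\delta\setminus\ell^\delta[0,k]$ near the tip $\ell^\delta(k)$. This is precisely where the Beurling-type estimate and the regularity of $\partial\Omega$ at $x_1,x_3$ are essential; the remaining steps are routine given the convergence results collected in Section~\ref{subsec::harmonic_limit}.
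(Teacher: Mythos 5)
Your proposal is fine: the paper itself gives no argument here, simply noting that the statement is a special case of the results of Zhan~\cite{ZhanLERW} (and also of Karrila~\cite{KarrilaUSTBranches}), which is exactly how you treat it, and your accompanying sketch of the standard LERW-to-$\SLE_2$ strategy (Wilson's algorithm, Beurling/Kemppainen--Smirnov tightness, discrete Poisson-kernel martingale observable, L\'evy characterization of the driving process) is consistent with that literature. So you take essentially the same approach as the paper.
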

\begin{proof}
This is a special case in~\cite{ZhanLERW}. This is a also a special case in~\cite{KarrilaUSTBranches}. 
\end{proof}

\begin{proof}[Proof of Proposition~\ref{prop::cvg_branch_eta1}]
For any bounded continuous function $F$ on curves, we have
\begin{align}
\E\left[F(\eta_1^{\delta})\cond \LA_1^{\delta}\cap \LA_2^{\delta}\right]=&\frac{1}{\PP[\LA_1^{\delta}\cap\LA_2^{\delta}]}\E\left[F(\eta_1^{\delta})\one\{ \LA_1^{\delta}\cap \LA_2^{\delta}\}\right]\notag\\
=&\frac{1}{\PP[\LA_2^{\delta}\cond \LA_1^{\delta}]}\E\left[F(\eta_1^{\delta})\one\{\LA_2^{\delta}\}\cond \LA_1^{\delta}\right]\notag\\
=&\frac{1}{\E[\harmonic(\Omega^{\delta}\setminus\ell^{\delta};x_2^{\delta},\ell^{\delta})\cond \LA_1^{\delta}]}
\E\left[F(\ell^{\delta}) \harmonic(\Omega^{\delta}\setminus\ell^{\delta}; x_2^{\delta, \circ}, \ell^{\delta})\cond \LA_1^{\delta}\right],
\label{eqn::boundary_branch_cvg_aux1}
\end{align}
where $\ell^{\delta}$ is defined as in Lemma~\ref{lem::boundary_branch_cvg1}. Let us derive the scaling limit of the terms in RHS of~\eqref{eqn::boundary_branch_cvg_aux1}. We fix an interior point $v\in\Omega$ and suppose $v^{\delta}$ is the vertex in $\LV^{\circ}(\Omega^{\delta})$ that is nearest to $v$.
From Lemma~\ref{lem::boundary_branch_cvg1}, the law of $\ell^{\delta}$ conditional on $\LA_1^{\delta}$ converges weakly to the law of $\ell$. 
Suppose $\{\delta_n\}_n$ is any subsequence such that $\delta_n\to 0$. 
By Skorokhod's representation theorem, we can couple $\{\ell^{\delta_n}\}_{n}$ and $\ell$ together such that $\ell^{\delta_n}$ converges to $\ell$ almost surely. 
We denote such coupling still by $\PP$. 
In this coupling, the convergence~\eqref{eqn::nharmonic_cvg} asserts the almost sure convergence:
\begin{align}\label{eqn::boundary_branch_cvg_as}
\lim_n \frac{\harmonic(\Omega^{\delta_n}\setminus\ell^{\delta_n}; x_2^{\delta_n, \circ}, \ell^{\delta_n})}{\harmonic(\Omega^{\delta_n}; v^{\delta_n}, e_2^{\delta_n})}=\sqrt{3}\frac{\nharmonic(\Omega\setminus\ell; x_2, \ell)}{\Poisson(\Omega; v, x_2)}.
\end{align}
We will improve the almost sure convergence to $L^1$ convergence  in Lemma~\ref{lem::boundary_branch_cvg_L1}:
\begin{equation}\label{eqn::boundary_branch_cvg_L1}
\lim_{n}\E\left[F(\ell^{\delta_n}) \frac{\harmonic(\Omega^{\delta_n}\setminus\ell^{\delta_n}; x_2^{\delta_n, \circ}, \ell^{\delta_n})}{\harmonic(\Omega^{\delta_n}; v^{\delta_n}, e_2^{\delta_n})}\cond \LA_1^{\delta_n}\right]=\E\left[F(\ell)\sqrt{3}\frac{\nharmonic(\Omega\setminus\ell; x_2, \ell)}{\Poisson(\Omega; v, x_2)}\right]. 
\end{equation}
Taking $F=1$, we have
\begin{align}\label{eqn::boundary_branch_cvg_L1_conseq}
\lim_{n}\frac{\E[\harmonic(\Omega^{\delta_n}\setminus\ell^{\delta_n};x_2^{\delta_n},\ell^{\delta_n})\cond \LA_1^{\delta_n}]}{\harmonic(\Omega^{\delta_n};v^\delta,e_2^{\delta_n})}
=\frac{\chordalSLEexp^{(\kappa=2)}\left[\sqrt{3}\nharmonic(\Omega\setminus\ell; x_2, \ell)\right]}{\Poisson(\Omega;v,x_2)}.
\end{align}
Plugging~\eqref{eqn::boundary_branch_cvg_L1} and~\eqref{eqn::boundary_branch_cvg_L1_conseq}  into~\eqref{eqn::boundary_branch_cvg_aux1}, we obtain~\eqref{eqn::RN_eta1_ell1} as desired. 
\end{proof}

\begin{lemma}\label{lem::boundary_branch_cvg_L1}
     Assume the same notation as in Proof of Proposition~\ref{prop::cvg_branch_eta1}. The $L^1$-convergence~\eqref{eqn::boundary_branch_cvg_L1} holds.
\end{lemma}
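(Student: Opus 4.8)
The plan is to upgrade the almost sure convergence~\eqref{eqn::boundary_branch_cvg_as} --- which already holds in the Skorokhod coupling set up in Proof of Proposition~\ref{prop::boundary_branch_cvg} --- to $L^1$ convergence by proving uniform integrability of the weights. Set
\[Z^{\delta}:=\frac{\harmonic(\Omega^{\delta}\setminus\ell^{\delta}; x_2^{\delta,\circ}, \ell^{\delta})}{\harmonic(\Omega^{\delta}; v^{\delta}, e_2^{\delta})},\]
viewed as a random variable under $\PP[\,\cdot\mid\LA_1^{\delta}]$. Since $\ell^{\delta}=\eta_1^{\delta}$ is the boundary branch from $x_1^{\delta,\circ}$, Wilson's algorithm gives $\harmonic(\Omega^{\delta}\setminus\ell^{\delta}; x_2^{\delta,\circ},\ell^{\delta})=\PP[\LA_2^{\delta}\mid\eta_1^{\delta},\LA_1^{\delta}]$, so each $Z^{\delta}$ is a bounded (hence integrable) random variable. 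As $F$ is bounded, Vitali's convergence theorem reduces~\eqref{eqn::boundary_branch_cvg_L1} to the uniform integrability of the family $\{Z^{\delta_n}\}_n$ under $\PP[\,\cdot\mid\LA_1^{\delta_n}]$.

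To get uniform integrability I would first record a deterministic bound away from $x_2$. For $r_0>0$, on the event $\{\ell^{\delta}\cap B(x_2,r_0)=\emptyset\}$ a random walk from $x_2^{\delta,\circ}$ must exit $V_2^{\delta}(r_0)$ through $S_2^{\delta}(r_0)$ before it can reach $\ell^{\delta}$, so
\[Z^{\delta}\le\frac{\harmonic(V_2^{\delta}(r_0); x_2^{\delta,\circ}, S_2^{\delta}(r_0))}{\harmonic(\Omega^{\delta}; v^{\delta}, e_2^{\delta})}\le\frac{C}{r_0}\qquad\text{for all }\delta\le\delta_0(r_0),\]
where $C=C(\Omega;x_2;v)$; indeed Lemma~\ref{lem::nharmonic_cvg} applied to the half-disk-type domain $\Omega\cap B(x_2,r_0)$ (with marked points $x_2$ and the two endpoints of $S_{x_2}(r_0)$) shows the left-hand ratio converges to $\sqrt3\,\nharmonic(\Omega\cap B(x_2,r_0);x_2,S_{x_2}(r_0))/\Poisson(\Omega;v,x_2)$, and $\nharmonic(\Omega\cap B(x_2,r_0);x_2,S_{x_2}(r_0))$ is of order $1/r_0$ by the scaling of renormalized harmonic measure. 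Hence $\{Z^{\delta}>M\}\subset\{\ell^{\delta}\cap B(x_2,C/M)\neq\emptyset\}$, and (absorbing finitely many members of the sequence, which are harmless since each $Z^{\delta}$ is bounded) uniform integrability follows once we show
\begin{equation}\tag{$\star$}
\lim_{r_0\to0}\ \limsup_{\delta\to0}\ \E\bigl[Z^{\delta}\,\one\{\ell^{\delta}\cap B(x_2,r_0)\neq\emptyset\}\bigm|\LA_1^{\delta}\bigr]=0.
\end{equation}
Using $\harmonic(\Omega^{\delta}\setminus\ell^{\delta};x_2^{\delta,\circ},\ell^{\delta})=\PP[\LA_2^{\delta}\mid\eta_1^{\delta},\LA_1^{\delta}]$, the expectation in $(\star)$ equals $\PP[\LA_1^{\delta}\cap\LA_2^{\delta}\cap\{\eta_1^{\delta}\cap B(x_2,r_0)\neq\emptyset\}]\big/\bigl(\PP[\LA_1^{\delta}]\,\harmonic(\Omega^{\delta};v^{\delta},e_2^{\delta})\bigr)$.

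The heart of the matter is $(\star)$, which I would prove by splitting according to the location of the trifurcation at an \emph{intermediate} scale. Fix $\theta\in(0,\alpha)$, say $\theta=\alpha/2$, where $\alpha$ is the weak Beurling exponent of~\cite[Proposition~2.11]{ChelkakSmirnovDiscreteComplexAnalysis}. If $\trifurcation^{\delta}\in B(x_2,r_0^{\theta})$, Lemma~\ref{lem::deltacube_control_boundary1} (with radius $r_0^{\theta}$) gives $\PP[\LA_1^{\delta}\cap\LA_2^{\delta}\cap\{\trifurcation^{\delta}\in B(x_2,r_0^{\theta})\}]\lesssim r_0^{\theta\alpha}\prod_{j=1}^3\harmonic(\Omega^{\delta};v^{\delta},e_j^{\delta})$, and since reversibility of simple random walk on the hexagonal lattice together with~\eqref{eqn::bPoisson_cvg} yields $\PP[\LA_1^{\delta}]=\harmonic(\Omega^{\delta};x_1^{\delta,\circ},e_3^{\delta})=\harmonic(\Omega^{\delta};x_3^{\delta,\circ},e_1^{\delta})\gtrsim\harmonic(\Omega^{\delta};v^{\delta},e_1^{\delta})\harmonic(\Omega^{\delta};v^{\delta},e_3^{\delta})$ for $\delta$ small, this case contributes $\lesssim r_0^{\theta\alpha}$ to the expectation in $(\star)$. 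If instead $\trifurcation^{\delta}\notin B(x_2,r_0^{\theta})$, then the random walk from $x_2^{\delta,\circ}$ whose loop-erasure is $\gamma_2^{\delta}$ must reach $S_2^{\delta}(r_0^{\theta})$ before $\partial\Omega^{\delta}$, so that, uniformly in $\eta_1^{\delta}$ (removing $\eta_1^{\delta}$ as a stopping set only helps),
\[\PP\bigl[\LA_2^{\delta}\cap\{\trifurcation^{\delta}\notin B(x_2,r_0^{\theta})\}\bigm|\eta_1^{\delta},\LA_1^{\delta}\bigr]\le\harmonic\bigl(V_2^{\delta}(r_0^{\theta}); x_2^{\delta,\circ}, S_2^{\delta}(r_0^{\theta})\bigr)\le\frac{C\,\harmonic(\Omega^{\delta};v^{\delta},e_2^{\delta})}{r_0^{\theta}}\]
for $\delta$ small; combining this with $\PP[\eta_1^{\delta}\cap B(x_2,r_0)\neq\emptyset\mid\LA_1^{\delta}]\lesssim r_0^{\alpha}$ --- which holds because $\eta_1^{\delta}$ is the loop-erasure of a random walk from $x_1^{\delta,\circ}$, so decomposing at its first visit to $S_2^{\delta}(r_0)$ and invoking the weak Beurling estimate and Lemma~\ref{lem::discretePoisson_max_control} gives the claim --- this case contributes $\lesssim r_0^{\alpha-\theta}$. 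Adding the two contributions, the expectation in $(\star)$ is $\lesssim r_0^{\theta\alpha}+r_0^{\alpha-\theta}\to0$ as $r_0\to0$ for the choice $\theta=\alpha/2$, which establishes $(\star)$ and hence the lemma.

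The delicate step is $(\star)$. A direct attempt --- bounding $\harmonic(\Omega^{\delta}\setminus\ell^{\delta};x_2^{\delta,\circ},\ell^{\delta})$ by $1$, or splitting at the single scale $r_0$ --- loses a factor that diverges like $\delta^{-(1-\alpha)}$, because $\ell^{\delta}$ may come within lattice distance of $x_2$ and still be hit by the branch emanating from $x_2^{\delta,\circ}$. The remedy is to interpose the scale $r_0^{\theta}$ and to use the trifurcation itself: either it lies near $x_2$, where Lemma~\ref{lem::deltacube_control_boundary1} supplies the needed power of $r_0$, or else the branch $\gamma_2^{\delta}$ is forced to escape a half-disk of radius $r_0^{\theta}$ around $x_2$, which costs the extra factor of order $\delta/r_0^{\theta}$. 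The remaining ingredients --- the almost sure convergence, the deterministic upper bound away from $x_2$, and the bookkeeping of the lattice-dependent constants --- are routine given the scaling limits collected in Section~\ref{subsec::harmonic_limit}.
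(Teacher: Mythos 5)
Your argument is correct in substance and runs on the same toolkit as the paper (Beurling estimate, Lemma~\ref{lem::deltacube_control_boundary1}, the convergence statements~\eqref{eqn::nharmonic_cvg}, \eqref{eqn::bPoisson_cvg}, \eqref{eqn::discretePoisson_max_control}), but it is organized differently. The paper splits at a \emph{fixed} scale $\eps$ on $\{\ell^{\delta_n}\cap B(x_2,\eps)=\emptyset\}$, uses only a qualitative bound $C(\eps)$ on the good event, and on the bad event passes to conditioning on $\LA_1^{\delta}\cap\LA_2^{\delta}$ via the ratio convergence~\eqref{eqn::boundary_branch_cvg_aux2}, reducing everything to~\eqref{eqn::boundary_branch_cvg_aux5_bad}, which is proved with two independent scales $\eps<r$ and the iterated limits $\eps\to0$ then $r\to0$; no rates are ever needed, so a constant $C(r)$ that blows up in $r$ is harmless. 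You instead phrase the reduction as uniform integrability (Vitali) and run a single-parameter argument with coupled scales $r_0$ and $r_0^{\theta}$, which forces you to track explicit rates: the $1/r_0$ blow-up of the renormalized harmonic measure of the cross-cut $S_2(r_0)$ seen from $x_2$ (this uses the $C^{1+\eps}$ regularity of $\partial\Omega$ at $x_2$, which the paper assumes, and uniformity in $r_0$ of the constant coming from~\eqref{eqn::nharmonic_cvg}), and the uniformity in the radius of Lemma~\ref{lem::deltacube_control_boundary1} (which the paper does provide, ``for all $r\in(0,r_0]$''). What your version buys is a quantitative polynomial bound $r_0^{\theta\alpha}+r_0^{\alpha-\theta}$; what the paper's version buys is independence from any rate information near $x_2$.

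One step is under-justified as written and is the only place I would push back: the claim $\PP[\eta_1^{\delta}\cap B(x_2,r_0)\neq\emptyset\mid\LA_1^{\delta}]\lesssim r_0^{\alpha}$. Decomposing the walk from $x_1^{\delta,\circ}$ ``at its first visit to $S_2^{\delta}(r_0)$'' and applying the weak Beurling estimate from the starting point only gives a factor $\delta^{\alpha}$ for reaching $S_2^{\delta}(r_0)$, and together with Lemma~\ref{lem::discretePoisson_max_control} this yields an upper bound of order $\delta^{1+\alpha}$, which does \emph{not} beat $\PP[\LA_1^{\delta}]\asymp\harmonic(\Omega^{\delta};v^{\delta},e_1^{\delta})\harmonic(\Omega^{\delta};v^{\delta},e_3^{\delta})\asymp\delta^{2}$ when $\alpha<1$. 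You need the extra peeling near $x_1$: first force the walk to exit a fixed half-disk $V_1^{\delta}(c)$ through $S_1^{\delta}(c)$, which by~\eqref{eqn::nharmonic_cvg} costs a factor comparable to $\harmonic(\Omega^{\delta};v^{\delta},e_1^{\delta})$, then apply the Beurling estimate from $S_1^{\delta}(c)$ to the ball $B(x_2,r_0)$ to get the factor $r_0^{\alpha}$, and only then use Lemma~\ref{lem::discretePoisson_max_control} for the exit through $e_3^{\delta}$, finally dividing by $\PP[\LA_1^{\delta}]$ via~\eqref{eqn::bPoisson_cvg}. This is exactly the chain the paper runs in~\eqref{eqn::boundary_branch_Dcvg_aux9}, so the claim is true and provable with your stated tools, but the intermediate decomposition near $x_1$ must be made explicit for the exponent count to close.
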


\begin{proof}
Note that $\nharmonic(\Omega\setminus\ell; x_2, \ell)$ is a continuous function on $\ell$, but it is not bounded because it can be large when $\ell$ is close to $x_2$. For the convergence~\eqref{eqn::boundary_branch_cvg_L1}, we need to control the distance between $\ell$ and $x_2$. For $\eps>0$, we write 
\begin{align}\label{eqn::boundary_branch_cvg_aux5}
   &\E\left[F(\ell^{\delta_n}) \frac{\harmonic(\Omega^{\delta_n}\setminus\ell^{\delta_n}; x_2^{\delta_n, \circ}, \ell^{\delta_n})}{\harmonic(\Omega^{\delta_n}; v^{\delta_n}, e_2^{\delta_n})}\cond \LA_1^{\delta_n}\right]\notag\\
   =&\E\left[F(\ell^{\delta_n}) \frac{\harmonic(\Omega^{\delta_n}\setminus\ell^{\delta_n}; x_2^{\delta_n, \circ}, \ell^{\delta_n})}{\harmonic(\Omega^{\delta_n}; v^{\delta_n}, e_2^{\delta_n})}\one\{\ell^{\delta_n}\cap B(x_2, \eps)=\emptyset\}\cond \LA_1^{\delta_n}\right]\notag\\
   &+\frac{\E[\harmonic(\Omega^{\delta_n}\setminus\ell^{\delta_n};x_2^{\delta_n},\ell^{\delta_n})\cond \LA_1^{\delta_n}]}{\harmonic(\Omega^{\delta_n}; v^{\delta_n}, e_2^{\delta_n})}\E\left[F(\eta_1^{\delta_n}) \one\{\eta_1^{\delta_n}\cap B(x_2, \eps)\neq\emptyset\}\cond \LA_1^{\delta_n}\cap\LA_2^{\delta_n}\right]. 
\end{align}
We claim that there exists a constant $C_{\eqref{eqn::boundary_branch_cvg_aux5_good}}\in (0,\infty)$ depending on $(\Omega; x_1, x_2, x_3; v; \eps)$ such that 
\begin{align}\label{eqn::boundary_branch_cvg_aux5_good}
\frac{\harmonic(\Omega^{\delta_n}\setminus\ell^{\delta_n}; x_2^{\delta_n, \circ}, \ell^{\delta_n})}{\harmonic(\Omega^{\delta_n}; v^{\delta_n}, e_2^{\delta_n})}\one\{\ell^{\delta_n}\cap B(x_2, \eps)=\emptyset\}\le C_{\eqref{eqn::boundary_branch_cvg_aux5_good}}, \qquad\text{for $n$ large enough}; 
\end{align}
and 
\begin{equation}\label{eqn::boundary_branch_cvg_aux5_bad}
   \lim_{\eps\to 0}\sup_{n\ge 1}\PP\left[\eta_1^{\delta_n}\cap B(x_2, \eps)\neq\emptyset\cond \LA_1^{\delta_n}\cap\LA_2^{\delta_n}\right]=0.
\end{equation}
Combining~\eqref{eqn::boundary_branch_cvg_as}, \eqref{eqn::boundary_branch_cvg_aux5_good}, \eqref{eqn::boundary_branch_cvg_aux5_bad}, we obtain the $L^1$-convergence~\eqref{eqn::boundary_branch_cvg_L1} as desired.  
\medbreak

It remains to show~\eqref{eqn::boundary_branch_cvg_aux5_good} and~\eqref{eqn::boundary_branch_cvg_aux5_bad}.
We use the same notation as in Section~\ref{subsec::proba_tight}.
Recall that, for $j=1,2,3$, let $V_j^\delta(r)$ be the maximal domain contained in $B(x_j,r)\cap \Omega^\delta$ such that $\partial V_j^\delta(r)$ consists of the edges of $\delta \hexagon$. We set $S_j^{\delta}(r)=\LV^{\partial}(V_j^{\delta}(r))\setminus \LV^{\partial}(\Omega^{\delta})$.
First, we prove~\eqref{eqn::boundary_branch_cvg_aux5_good}.
Given $(\LA_1^{\delta_n},\ell^{\delta_n})$ and on the event $\{\ell^{\delta_n}\cap B(x_2,\eps)=\emptyset\}$, we have $\dist(x_2^{\delta_n},\ell^{\delta_n})> \eps/2$ for $n$ large enough.
If a simple random walk starting from $x_2^{\delta_n, \circ}$ exits $\Omega^{\delta_n}\setminus\ell^{\delta_n}$ through $\ell^{\delta_n}$, 
it has to first exit $V_2^{\delta_n}(\eps/2)$ through $S_2^{\delta_n}(\eps/2)$. 
Thus,
\begin{equation*}
   \harmonic(\Omega^{\delta_n}\setminus \ell^{\delta_n}; x_2^{\delta_n,\circ}, \ell^{\delta_n})\le \harmonic(V_2^{\delta_n}(\eps/2);x_2^{\delta_n,\circ},S_2^{\delta_n}(\eps/2)).
\end{equation*}
From~\eqref{eqn::nharmonic_cvg}, the following ratio is bounded by a constant depending on $(\Omega;x_1,x_2,x_3;v;\eps)$: 
\[
   \frac{\harmonic(V_2^{\delta_n}(\eps/2);x_2^{\delta_n,\circ},S_2^{\delta_n}(\eps/2))}{\harmonic(\Omega^{\delta_n};v^{\delta_n},e_2^{\delta_n})}. 
\]
This completes the proof of~\eqref{eqn::boundary_branch_cvg_aux5_good}.
\medbreak

Next, we prove~\eqref{eqn::boundary_branch_cvg_aux5_bad}. 
Recall that $V(r)=B(x_1,r)\cup B(x_2,r)\cup B(x_3,r)$ as in~\eqref{eqn::deltacube_control_boundary1}.
Fix $r_0>0$ such that $B(x_j,2r_0)\cap B(x_k,2r_0)=\emptyset$ for $j\neq k$ and let $v\in \Omega\setminus(\cup_{j=1}^3 B(x_j,r_0))$ and suppose $0<\eps<r\le r_0$. 
Note that
\[\{\eta_1^{\delta_n}\cap B(x_2,\eps)\neq \emptyset\}\subset \{\trifurcation^{\delta_n}\in V(r)\}\cup\left(\{\trifurcation^{\delta_n}\notin V(r)\}\cap\{\eta_1^{\delta_n}\cap B(x_2,\eps)\neq \emptyset\}\right).\]
We have the following three estimates.
\begin{itemize}
   \item From Lemma~\ref{lem::proba_tight1}, the following ratio is uniformly bounded by a finite constant depending on $(\Omega; x_1, x_2, x_3; v)$: 
   \begin{align}\label{eqn::boundary_branch_Dcvg_aux91}
   \frac{\PP[\LA_1^{\delta_n}\cap \LA_2^{\delta_n}]}{\prod_{j=1}^3\harmonic(\Omega^{\delta_n}; v^{\delta_n}, e_j^{\delta_n})}.
   \end{align}
\item From~\eqref{eqn::deltacube_control_boundary1}, there exist a universal constant $\alpha\in (0,\infty)$ (from weak Beurling-type estimate~\cite[Proposition 2.11]{ChelkakSmirnovDiscreteComplexAnalysis}) and a constant $C_{\eqref{eqn::boundary_branch_Dcvg_aux5}}\in (0,\infty)$ depending on $(\Omega;x_1,x_2,x_3;v)$ such that, for $n$ large enough, 
   \begin{equation}\label{eqn::boundary_branch_Dcvg_aux5}
       \frac{\PP\left[\{\trifurcation^{\delta_n}\in V(r)\}\cap\LA_1^{\delta_n}\cap\LA_2^{\delta_n}\right]}{\prod_{j=1}^3\harmonic(\Omega^{\delta_n}; v^{\delta_n}, e_j^{\delta_n})}\le C_{\eqref{eqn::boundary_branch_Dcvg_aux5}}r^\alpha, \quad \text{ for all }0<r\le r_0.
   \end{equation}
   \item We claim that there exist a universal constant $\alpha\in (0,\infty)$ (from weak Beurling-type estimate~\cite[Proposition 2.11]{ChelkakSmirnovDiscreteComplexAnalysis}) and a constant $C_{\eqref{eqn::boundary_branch_Dcvg_aux6}}(r)\in (0,\infty)$ depending on $(\Omega; x_1,x_2,x_3;v;r)$ such that, for $n$ large enough, 
   \begin{equation}\label{eqn::boundary_branch_Dcvg_aux6}
       \frac{\PP\left[\{\trifurcation^{\delta_n}\notin V(r)\}\cap\{\eta_1^{\delta_n}\cap B(x_2,\eps)\neq \emptyset\}\cap\LA_1^{\delta_n}\cap\LA_2^{\delta_n}\right]}{\prod_{j=1}^3\harmonic(\Omega^{\delta_n}; v^{\delta_n}, e_j^{\delta_n})}\le C_{\eqref{eqn::boundary_branch_Dcvg_aux6}}(r)\eps^\alpha.
   \end{equation}
  Given $(\LA_1^{\delta_n}, \eta_1^{\delta_n})$, denote $\Omega_1^{\delta_n}=\Omega^{\delta_n}\setminus \eta_1^{\delta_n}$.
   The event $\{\trifurcation^{\delta_n}\notin V(r)\}\cap \LA_2^{\delta_n}$ implies that the simple random walk from $x_2^{\delta_n,\circ}$ has to exit $V_2^{\delta_n}(r) \cap \Omega_1^{\delta_n}$ through $S_2^{\delta_n}(r)\cap \Omega_1^{\delta_n}$.
   The probability of this event is bounded by \[\harmonic(V_2^{\delta_n}(r)\cap \Omega_1;x_2^{\delta_n,\circ},S_2^{\delta_n}(r)\cap \Omega_1^{\delta_n})\le \harmonic(V_2^{\delta_n}(r);x_2^{\delta_n,\circ},S_2^{\delta_n}(r)).\] Thus,
      \begin{align}\label{eqn::boundary_branch_Dcvg_aux7}
      &\PP\left[\{\trifurcation^{\delta_n}\notin V(r)\}\cap\{\eta_1^{\delta_n}\cap B(x_2,\eps)\neq \emptyset\}\cap\LA_1^{\delta_n}\cap\LA_2^{\delta_n}\right]\notag\\
      \le& \PP\left[\{\eta_1^{\delta_n}\cap B(x_2,\eps)\neq\emptyset\}\cap \LA_1^{\delta_n}\right]\harmonic(V_2^{\delta_n}(r);x_2^{\delta_n,\circ},S_2^{\delta_n}(r)). 
     \end{align}
   The event $\{\eta_1^{\delta_n}\cap B(x_2,\eps)\neq \emptyset\}\cap \LA_1^{\delta_n}$ implies that the simple random walk from $x_1^{\delta_n,\circ}$ has to exit $V_1^{\delta_n}(r)$ through $S_1^{\delta_n}(r)$,  hit $S_2^{\delta_n}(\eps)$, and then exit $V^{\delta_n}_2(r)$ through $S_2^{\delta_n}(r)$ and exit $\Omega^{\delta_n}$ via $e_3^{\delta_n}$. 
   Thus by the Markov property of random walk, we have 
\begin{align*}
   &\PP\left[\{\eta_1^{\delta_n}\cap B(x_2,\eps)\neq\emptyset\}\cap\LA_1^{\delta_n}\right]\notag\\
\le& \harmonic(V_1^{\delta_n}(r);x_1^{\delta_n,\circ}, S^{\delta_n}_1(r))\times\max_{u\in S_1^{\delta_n}(r)}\harmonic(\Omega^{\delta_n}\setminus V_2^{\delta_n}(\eps);u, S_2^{\delta_n}(\eps))\times\max_{w\in S_2^{\delta_n}(r)}\harmonic(\Omega^{\delta_n};w,e_3^{\delta_n}).
\end{align*}
Plugging into~\eqref{eqn::boundary_branch_Dcvg_aux7}, we have
\begin{align}\label{eqn::boundary_branch_Dcvg_aux9}
      &\PP\left[\{\trifurcation^{\delta_n}\notin V(r)\}\cap\{\eta_1^{\delta_n}\cap B(x_2,\eps)\neq \emptyset\}\cap\LA_1^{\delta_n}\cap\LA_2^{\delta_n}\right]\notag\\
\le& \prod_{j=1}^2\harmonic(V_j^{\delta_n}(r);x_j^{\delta_n,\circ}, S_j^{\delta_n}(r))
\times\max_{u\in S_1^{\delta_n}(r)}\harmonic(\Omega^{\delta_n}\setminus V_2^{\delta_n}(\eps);u, S_2^{\delta_n}(\eps))
\times\max_{w\in S_2^{\delta_n}(r)}\harmonic(\Omega^{\delta_n};w,e_3^{\delta_n}).
\end{align}
From~\eqref{eqn::nharmonic_cvg} and~\eqref{eqn::discretePoisson_max_control}, the following ratios are bounded by a finite constant depending on $(\Omega; x_1, x_2, x_3; v; r)$: 
\begin{equation}\label{eqn::eqn::boundary_branch_Dcvg_boundnessaux}
\frac{\harmonic(V_1^{\delta_n}(r);x_1^{\delta_n,\circ}, S^{\delta_n}_1(r))}{\harmonic(\Omega^{\delta_n}; v^{\delta_n},e_1^{\delta_n})}, \qquad \frac{\harmonic(V_2^{\delta_n}(r);x_2^{\delta_n,\circ}, S^{\delta_n}_2(r))}{\harmonic(\Omega^{\delta_n}; v^{\delta_n},e_2^{\delta_n})}, \qquad \frac{\max_{w\in S_2^{\delta_n}(r)}\harmonic(\Omega^{\delta_n};w,e_3^{\delta_n})}{\harmonic(\Omega^{\delta_n};v^{\delta_n},e_3^{\delta_n})}.
\end{equation}
From weak Beurling-type estimate~\cite[Proposition 2.11]{ChelkakSmirnovDiscreteComplexAnalysis}, there exists a universal constant $\alpha\in (0,\infty)$ such that, for $n$ large enough, 
\begin{equation*}
   \max_{u\in S_1^{\delta_n}(r)}\harmonic(\Omega^{\delta_n}\setminus V_2^{\delta_n}(\eps);u, S_2^{\delta_n}(\eps))\lesssim \eps^\alpha.
\end{equation*}
Plugging these into~\eqref{eqn::boundary_branch_Dcvg_aux9}, we obtain~\eqref{eqn::boundary_branch_Dcvg_aux6} as desired.
\end{itemize}
Combining the above three estimates, for any $0<\eps<r\le r_0$, there exist a universal constant  $\alpha\in(0,\infty)$ (from weak Beurling-type estimate~\cite[Proposition 2.11]{ChelkakSmirnovDiscreteComplexAnalysis}), a constant $C_{\eqref{eqn::boundary_branch_Dcvg_aux5}}$ depending on $(\Omega;x_1,x_2,x_3;v)$ and a constant $C_{\eqref{eqn::boundary_branch_cvg_aux6_bad}}(r)\in (0,\infty)$ depending on $(\Omega; x_1, x_2, x_3; v;r)$ such that
\begin{align}\label{eqn::boundary_branch_cvg_aux6_bad}
   \sup_{n\ge 1}\PP\left[\eta_1^{\delta_n}\cap B(x_2, \eps)\neq\emptyset\cond \LA_1^{\delta_n}\cap\LA_2^{\delta_n}\right]\le C_{\eqref{eqn::boundary_branch_Dcvg_aux5}}r^\alpha+ C_{\eqref{eqn::boundary_branch_cvg_aux6_bad}}(r)\eps^{\alpha}. 
\end{align}
By first sending $\eps \to 0$ and then sending $r\to 0$, we obtain~\eqref{eqn::boundary_branch_cvg_aux5_bad} as desired.
\end{proof}
\subsection{Tightness of tripod}
\label{subsec::tightness}

\begin{proposition}\label{prop::tripod_tightness}
Assume the same setup as in Theorem~\ref{thm::tripod}. 
Denote by $\LL^{\delta}(\Omega; x_1, x_2, x_3)$ the law of the tripod $(\gamma_1^{\delta}, \gamma_2^{\delta}, \gamma_3^{\delta})$ conditional on $\LA_1^{\delta}\cap\LA_2^{\delta}$. 
Then the family $\{\LL^{\delta}(\Omega; x_1, x_2, x_3)\}_{\delta>0}$ is tight.
 Furthermore, for any subsequential limit $(\gamma_1, \gamma_2, \gamma_3)$, there exists $\trifurcation\in\Omega$ such that $(\gamma_1, \gamma_2, \gamma_3)\in\chamber(\Omega; x_1, x_2, x_3; \trifurcation)$. 
\end{proposition}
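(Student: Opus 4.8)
The plan is to reduce the tightness of the tripod to the tightness of the single boundary branch $\eta_1^\delta$ conditional on $\LA_1^\delta\cap\LA_2^\delta$, which is provided by Proposition~\ref{prop::boundary_branch_cvg}. Recall that conditional on $\LA_1^\delta\cap\LA_2^\delta$, the union $\eta_1^\delta\cup\eta_2^\delta$ is exactly $\gamma_1^\delta\cup\gamma_2^\delta\cup\gamma_3^\delta$, with $\gamma_3^\delta$ the common part from $\trifurcation^\delta$ to $e_3^\delta$. Since Proposition~\ref{prop::boundary_branch_cvg} asserts that the law of $\eta_1^\delta$ conditional on $\LA_1^\delta\cap\LA_2^\delta$ converges weakly (in particular the family is tight in $(\chamber,\dist_\chamber)$) to the law of chordal $\SLE_2$ in $(\Omega;x_1,x_3)$ weighted by the bounded Radon--Nikodym derivative~\eqref{eqn::RN_eta1_ell1}, the path $\eta_1^\delta=\gamma_1^\delta\cup\gamma_3^\delta$ is tight. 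By the symmetry between $x_1$ and $x_2$ (the roles of $\eta_1^\delta$ and $\eta_2^\delta$ are interchangeable after erasing $e_1^\delta$ and adding $e_2^\delta$, or simply by relabeling), the same argument applied with the roles of $x_1,x_2$ swapped shows that $\eta_2^\delta=\gamma_2^\delta\cup\gamma_3^\delta$ conditional on $\LA_1^\delta\cap\LA_2^\delta$ is also tight. Hence the family of laws of the pair $(\eta_1^\delta,\eta_2^\delta)$ is tight in $(\chamber\times\chamber,\dist_\chamber\times\dist_\chamber)$, and therefore so is the family of laws of $(\gamma_1^\delta,\gamma_2^\delta,\gamma_3^\delta)$ viewed as an element of $\chamber^3$ (each $\gamma_j^\delta$ is a sub-path of $\eta_1^\delta$ or $\eta_2^\delta$; one can record the splitting point as well). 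This gives the first claim.

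For the second claim, fix a subsequence $\delta_n\to 0$ along which $(\gamma_1^{\delta_n},\gamma_2^{\delta_n},\gamma_3^{\delta_n})$ converges weakly to some $(\gamma_1,\gamma_2,\gamma_3)$; by Skorokhod's representation theorem we may assume the convergence holds almost surely in $\dist_\chamber$. I first want to identify the endpoints. Each $\gamma_j^{\delta_n}$ starts at $x_j^{\delta_n,\circ}\to x_j$, so in the limit $\gamma_j$ starts at $x_j$; moreover since $\partial\Omega^{\delta_n}\to\partial\Omega$ in Hausdorff distance~\eqref{eqn::boundary_cvg_Hausdorff} and $\eta_1^{\delta_n},\eta_2^{\delta_n}$ touch $\partial\Omega^{\delta_n}$ only at their endpoints (they are loop-erased walks, hence simple, in the wired quotient), the limits $\gamma_1,\gamma_2,\gamma_3$ touch $\partial\Omega$ only at $x_1,x_2,x_3$ respectively. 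Next I want to show the three limiting curves share a common terminal point, which will be $\trifurcation$. The three discrete curves share the common endpoint $\trifurcation^{\delta_n}$; since all curves live in the compact set $\overline\Omega$, the sequence $\{\trifurcation^{\delta_n}\}$ is precompact, and along a further subsequence $\trifurcation^{\delta_n}\to\trifurcation$ for some $\trifurcation\in\overline\Omega$. The endpoint of each $\gamma_j^{\delta_n}$ equals $\trifurcation^{\delta_n}$ (up to $O(\delta_n)$ for $j=3$, where it is $x_3^{\delta_n,\circ}$ at one end and $\trifurcation^{\delta_n}$ at the other), and $\dist_\chamber$-convergence forces the endpoints to converge, so $\gamma_1,\gamma_2,\gamma_3$ all terminate at $\trifurcation$. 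One then needs to rule out $\trifurcation\in\partial\Omega$: this follows because the RN-derivative description in Proposition~\ref{prop::boundary_branch_cvg} shows the limit of $\eta_1^{\delta_n}$ is (absolutely continuous with respect to) chordal $\SLE_2$ from $x_1$ to $x_3$, which almost surely does not touch $\partial\Omega$ except at $x_1,x_3$; since $\trifurcation$ lies on $\eta_1=\gamma_1\cup\gamma_3$ but is distinct from $x_1$ (as $\gamma_2$ also ends there and $\gamma_2$ starts at $x_2\ne x_1$) and from $x_3$ (a similar argument, or because $\gamma_3$ is nondegenerate a.s.\ — see below), we get $\trifurcation\in\Omega$.

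The main obstacle I expect is the requirement $\gamma_i\cap\gamma_j=\{\trifurcation\}$ for $i\ne j$, i.e.\ that the limiting curves are simple, do not overlap except at the trifurcation, and in particular that none of them degenerates to a point. Simplicity and the no-extra-intersection property should follow from absolute continuity with respect to $\SLE_2$-type curves and the fact that $\SLE_2$ is a.s.\ simple, combined with the strong separation provided by the boundary-branch convergence: each $\eta_j$ is a.c.\ w.r.t.\ chordal $\SLE_2$ hence simple, and $\trifurcation$ being the first meeting point of $\eta_1^{\delta_n}$ and $\eta_2^{\delta_n}$ passes to the limit because the two curves separate before and rejoin only at $\trifurcation$. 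Non-degeneracy of $\gamma_3$ (equivalently $\trifurcation\ne x_3$) is the subtlest point: I would argue that $\PP[\dist(\trifurcation^{\delta_n},x_3)<\eps\mid\LA_1^{\delta_n}\cap\LA_2^{\delta_n}]\to 0$ as $\eps\to 0$ uniformly in $n$, which is precisely the content of estimates~\eqref{eqn::deltacube_control_boundary1} (with $B(x_3,r)$) established in Lemma~\ref{lem::deltacube_control_boundary1}; similarly non-degeneracy of $\gamma_1$ and $\gamma_2$ (i.e.\ $\trifurcation\notin\{x_1,x_2\}$) uses the analogous estimates near $x_1$ and $x_2$. Once these a priori estimates are invoked, passing to the almost-sure-convergent coupling yields $(\gamma_1,\gamma_2,\gamma_3)\in\chamber(\Omega;x_1,x_2,x_3;\trifurcation)$, completing the proof.
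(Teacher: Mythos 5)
You start exactly as the paper does (weak convergence of $\eta_1^{\delta}$ and, by symmetry, of $\eta_2^{\delta}$ conditional on $\LA_1^{\delta}\cap\LA_2^{\delta}$ from Proposition~\ref{prop::boundary_branch_cvg}, followed by a Skorokhod coupling), and the tightness claim itself goes through along these lines. The genuine gap is in the identification of the subsequential limit, at precisely the point you dispose of in one clause: ``$\trifurcation$ being the first meeting point of $\eta_1^{\delta_n}$ and $\eta_2^{\delta_n}$ passes to the limit because the two curves separate before and rejoin only at $\trifurcation$.'' That is the statement to be proved, not a reason. Since $\gamma_j^{\delta_n}$ is obtained by cutting $\eta_j^{\delta_n}$ at the time $T_j^{(n)}$ of the discrete trifurcation, you must show that, in a parameterization in which $\eta_j^{\delta_n}\to\eta_j$ uniformly, $T_j^{(n)}$ converges to the \emph{first} mutual meeting time of $\eta_1$ and $\eta_2$. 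The dangerous scenario is that $\eta_1^{\delta_n}$ comes within distance $1/m$ of $\eta_2^{\delta_n}$ and then travels a macroscopic distance $1/k$ before actually merging: in the limit the two curves touch at the near-approach point, the cut point of the limit curves differs from the limit of the discrete cut points, and the limiting triple fails to lie in $\chamber(\Omega;x_1,x_2,x_3;\trifurcation)$ (for instance $\gamma_1\cap\gamma_2$ would contain points other than the common endpoint). Absolute continuity of each marginal $\eta_j$ with respect to chordal $\SLE_2$ gives simplicity of each curve separately, but it says nothing about the joint intersection structure of the pair, nor about where their first meeting occurs relative to $\lim_n\trifurcation^{\delta_n}$; and at this stage of the paper no absolute continuity statement for the pair $(\eta_1,\eta_2)$ is available.

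The paper closes exactly this gap with a quantitative ``close approach forces quick merging'' estimate: conditional on $\eta_2^{\delta_n}$, by Wilson's algorithm the continuation of $\eta_1^{\delta_n}$ beyond the first time $\tau_m^{(n)}$ it enters the $1/m$-neighborhood of $\eta_2^{\delta_n}$ is generated by a random walk in $\Omega^{\delta_n}\setminus\eta_2^{\delta_n}$, and the weak Beurling estimate of \cite[Proposition~2.11]{ChelkakSmirnovDiscreteComplexAnalysis} bounds by a constant times $(k/m)^{\alpha}$ the probability that this walk exits the ball of radius $1/k$ around $\eta_1^{\delta_n}(\tau_m^{(n)})$ without hitting $\eta_2^{\delta_n}$; letting $m\to\infty$ for fixed $k$ yields $\PP[T_j^{(n)}\not\to T_j]=0$. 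Some version of this estimate must appear in your argument; without it, the second assertion of the proposition is unproved. Your remaining points (convergence of endpoints, $\trifurcation\notin\{x_1,x_2,x_3\}$ via the estimates of Lemma~\ref{lem::deltacube_control_boundary1}, and $\trifurcation\notin\partial\Omega$ from the boundary behavior of the limit of $\eta_1$) are reasonable supplements, and your claim that the Radon--Nikodym derivative~\eqref{eqn::RN_eta1_ell1} is bounded is false (it blows up as $\ell$ approaches $x_2$) but harmless, since the weak convergence in Proposition~\ref{prop::boundary_branch_cvg} already gives tightness of the marginals.
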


\begin{proof}
Suppose that $\{\delta_n\}_n$ is any subsequence such that $\delta_n\to 0$ as $n\to\infty$. It suffices to prove that there exists a subsequence of $\{\delta_n\}_n$, which we still denote by $\{\delta_n\}_n$, such that the conditional law of $\{(\gamma_1^{\delta_n},\gamma_2^{\delta_n},\gamma_3^{\delta_n})\}_{n}$ converges. 
From Proposition~\ref{prop::cvg_branch_eta1}, the law of $\eta_j^{\delta}$ conditional on $\LA_1^{\delta}\cap\LA_2^{\delta}$ converges weakly for $j=1,2$. Denote by $\eta_j$ the limit for $j=1,2$. 
By Skorokhod's representation theorem, we can couple $\{(\eta_1^{\delta_n},\eta_2^{\delta_n})\}_{n
}$ and $(\eta_1,\eta_2)$ together, such that $\eta_j^{\delta_n}$ converges to $\eta_j$ in metric~\eqref{eqn::curves_metric} almost surely for $j=1,2$. 
We denote this coupling still by $\PP$. 
Since $(\gamma_1^{\delta_n},\gamma_2^{\delta_n},\gamma_3^{\delta_n})$ is a measurable function of $(\eta_1^{\delta_n},\eta_2^{\delta_n})$, 
the coupling $\PP$ is also a coupling of $\{(\gamma_1^{\delta_n},\gamma_2^{\delta_n},\gamma_3^{\delta_n})\}_{n}$.

We will prove that $(\gamma_1^{\delta_n},\gamma_2^{\delta_n},\gamma_3^{\delta_n})$ converges almost surely under $\PP$, 
and the limit $(\gamma_1, \gamma_2, \gamma_3)$ belongs to $\chamber(\Omega; x_1, x_2, x_3; \trifurcation)$ for some $\trifurcation\in\Omega$. 
For $j=1,2$, we fix a parameterization such that $\eta_j^{\delta_n}$ converges to $\eta_j$ uniformly on $[0,1]$. 
Denote by $T_1^{(n)}$ and $T_2^{(n)}$ the time such that $\eta_1^{\delta_n}(T_1^{(n)})=\eta_2^{\delta_n}(T_2^{(n)})=\trifurcation^{\delta_n}$. 
Denote by $T_1$ and $T_2$ the time such that $\eta_1(T_1)=\eta_2(T_2)$ and $\eta_1([0,T_1))\cap\eta_2([0,T_2))=\emptyset$.  
See Figure~\ref{fig::tightness}.
Denote by $E_j$ the event that $T_j^{\delta_n}$ does not converge to $T_j$ for $j=1,2$. It suffices to prove that $\PP[E_1\cup E_2]=0$. 
We only prove $\PP[E_1]=0$ since the proof for $\PP[E_2]=0$ is the same.

\begin{figure}[ht!]
   \begin{center}
   \includegraphics[width=0.5\textwidth]{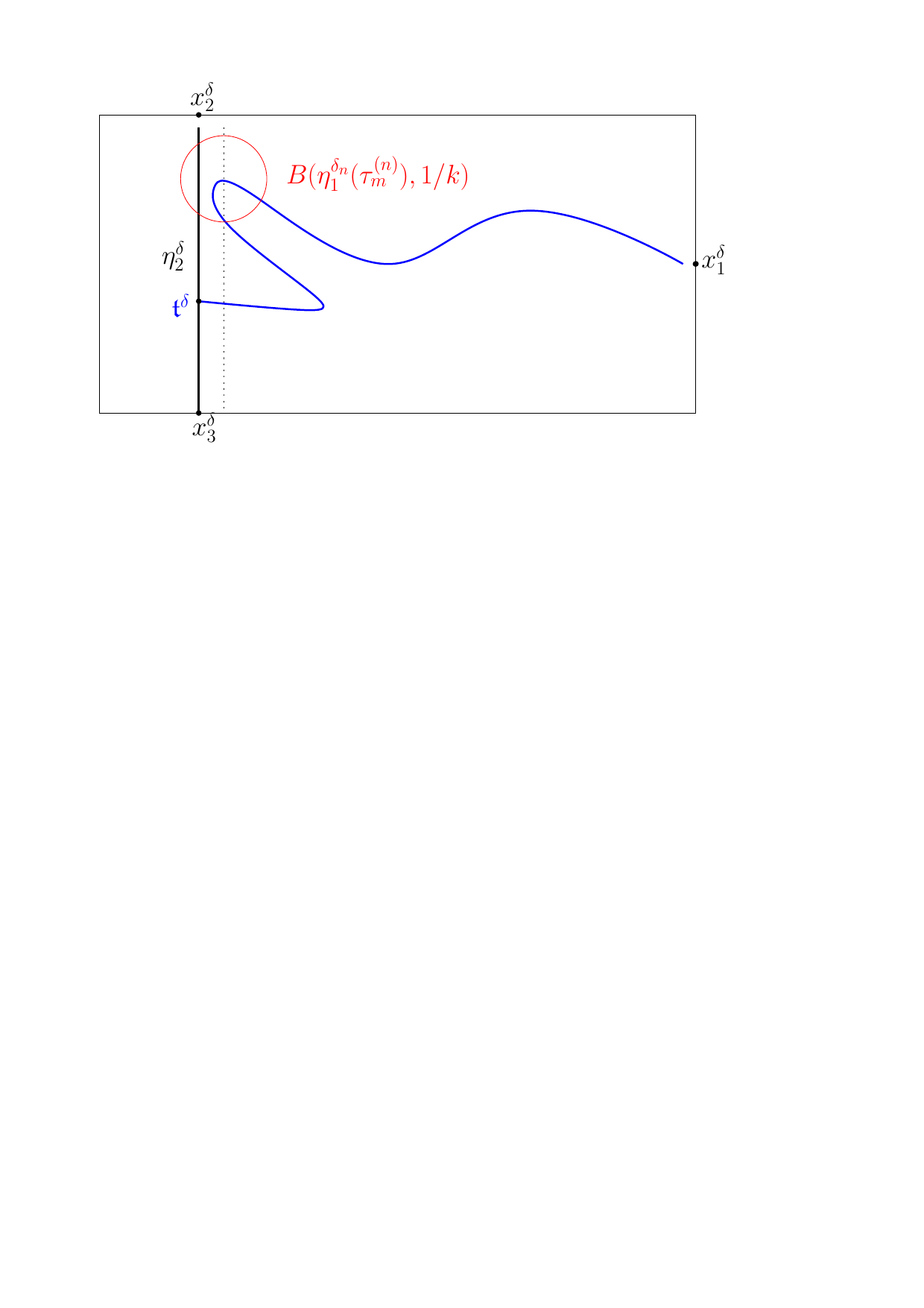}
   \end{center}
   \caption{The fat black curve indicates the boundary branch $\eta_2^{\delta_n}$ from $x_2^{\delta_n, \circ}$ to $e_3^{\delta_n}$. The fat blue curve indicates the boundary branch from $x_1^{\delta_n, \circ}$ to $\eta_2^{\delta_n}$. }
   \label{fig::tightness}
  \end{figure}

For each $m\ge 1$, denote by $\tau_m^{(n)}$ the first time that $\eta_1^{\delta_n}$ hits the $\frac{1}{m}$-neighbourhood of $\eta_2^{\delta_n}$. 
For $k\le m$, we define 
\[\sigma_{m,k}^{(n)}=\inf\left\{t\ge \tau_m^{(n)}: \eta_1^{\delta_n}(t)\notin B\left(\eta_1^{\delta_n}(\tau_m^{(n)}),\frac{1}{k}\right)\right\}.\]
Note that the event $E_1$ implies that 
there exists $k$ large enough such that
for any $m\ge k$, there exists $N_m$ such that $\sigma_{m,k}^{(n)}<T_1^{(n)}$ for all $n\ge N_m$.
In other words,
\[E_1\subset\bigcup_{k\ge 1}\bigcap_{m\ge k}\bigcup_{N\ge 1}\bigcap_{n\ge N}\{\sigma_{m,k}^{(n)}<T_1^{(n)}\}.\]
Thus, we have
\begin{equation}\label{eqn::tightness_aux1}
   \PP[E_1]\le \sup_{k\ge 1}\limsup_{m\to\infty}\limsup_{n\to\infty}\PP\left[\sigma_{m,k}^{(n)}<T_1^{(n)}\right].
\end{equation}

Let us evaluate $\PP\left[\sigma_{m,k}^{(n)}<T_1^{(n)}\right]$.
Denote by $\Omega_2^{\delta_n}$ the domain  $\Omega^{\delta_n}\setminus \eta_2^{\delta_n}$, and denote by $U^{\delta_n}_k$ the maximal domain contained in $\Omega^{\delta_n}_2\cap B(\eta_1^{\delta_n}(\tau_m^{(n)}),1/k)$ such that $\eta_1^{\delta_n}(\tau_m^{(n)})\in U_k^{\delta_n}$ and $\partial U^{\delta_n}_k$ consist of the edges of $\delta\hexagon$.
Then the event $\{\sigma_{m,k}^{(n)}<T_1^{(n)}\}$ implies that 
the random walk from $\eta_1^{\delta_n}(\tau_m^{(n)})$ has to exit $U_k^{\delta_n}$ through $\partial U_k^{\delta_n}\setminus \eta_2^{\delta_n}$.
The probability of this event is bounded by \[\harmonic(U_k^{\delta_n};\eta_1^{\delta_n}(\tau_m^{(n)}),\partial U_k^{\delta_n}\setminus \eta_2^{\delta_n}).\]
From weak Beurling-type estimate~\cite[Proposition~2.11]{ChelkakSmirnovDiscreteComplexAnalysis}, there exists a universal constant $\alpha\in(0,\infty)$ such that
\begin{equation*}
   \harmonic(U_k^{\delta_n};\eta_1^{\delta_n}(\tau_m^{(n)}),\partial U_k^{\delta_n}\setminus \eta_2^{\delta_n})\lesssim\left(\frac{\dist(\eta_1^{\delta_n}(\tau_m^{(n)}),\partial U_k^{\delta_n}\setminus \eta_2^{\delta_n})}{\dist(\eta_1^{\delta_n}(\tau_m^{(n)}),\partial U_k^{\delta_n})}\right)^\alpha \lesssim\left(\frac{k}{m}\right)^\alpha,\qquad\text{for $n$ large enough}.
\end{equation*}
Thus,  
\begin{equation*}
   \limsup_{n\to \infty}\PP\left[\sigma_{m,k}^{(n)}<T_1^{(n)}\right]\lesssim \left(\frac{k}{m}\right)^\alpha.
\end{equation*}
Plugging into~\eqref{eqn::tightness_aux1}, we obtain $\PP[E_1]=0$ as desired and complete the proof.
\end{proof}

In the following two lemmas, we give a first description of the scaling limit of the tripod in Theorem~\ref{thm::tripod}. 
Let us point out that Lemmas~\ref{lem::cvg_branch_gamma2} and~\ref{lem::tripod_ab} will not be used in the proof of Theorem~\ref{thm::tripod} and they will only be used in Section~\ref{subsec::tripod_otherlattice}. 
The advantage for this first description is that the conclusion and the proof also works for $\Z^2$ lattice approximation. 
The disadvantage is that it is hard to tell the marginal distribution of the trifurcation in such description. 
We will derive the marginal law of the trifurcation in Proposition~\ref{prop::gamma3_cvg} using the observable in Proposition~\ref{prop::observable_cvg}.

\begin{lemma}\label{lem::cvg_branch_gamma2}
Fix a bounded $3$-polygon $(\Omega; x_1, x_2, x_3)$ and suppose $(\Omega^{\delta}; x_1^{\delta}, x_2^{\delta}, x_3^{\delta})$ is an approximation of $(\Omega; x_1, x_2, x_3)$ on $\delta\hexagon$ in Carath\'eodory sense. We assume further that $\partial\Omega^{\delta}$ converges to $\partial\Omega$ in Hausdorff distance~\eqref{eqn::boundary_cvg_Hausdorff}. 
Consider the UST in $\Omega^{\delta}$ with wired boundary condition. Let $\gamma_2^{\delta}$ be the boundary branch starting from $x_2^{\delta, \circ}$ and denote by $\{x_2^{\delta, \circ}\rightsquigarrow (x_3^{\delta}x_1^{\delta})\}$ the event that the branch $\gamma_2^{\delta}$ exits the boundary through $(x_3^{\delta}x_1^{\delta})$. 
Then the law of $\gamma_2^{\delta}$ conditional on $\{x_2^{\delta, \circ}\rightsquigarrow (x_3^{\delta}x_1^{\delta})\}$ converges weakly to chordal $\SLE_2(-2;-2)$ in $\Omega$ from $x_2$ to $(x_3x_1)$ with force points $(x_1;x_3)$.\footnote{For $y\in (x_3x_1)$, suppose $\gamma_2$ is chordal $\SLE_2(-2;-2)$ in $\Omega$ from $x_2$ to $y$ with force points $(x_1;x_3)$. As $\kappa-6=-4$ when $\kappa=2$, such process is target-invariant: denote by $\tau$ the first hitting time of $(x_3x_1)$, the law of $\gamma_2[0,\tau]$ is independent of $y\in (x_3x_1)$. Thus we call such process $\SLE_2(-2;-2)$ in $\Omega$ from $x_2$ to $(x_3x_1)$ with force points $(x_1;x_3)$.}
\end{lemma}

\begin{proof}
This is a degenerate case of~\cite[Theorem~1.5]{HanLiuWuUST} and we give a brief strategy below. 
Suppose $(\Omega; x_1, \hat{x}_1, x_2, \hat{x}_3, x_3)$ is a 5-polygon and suppose 
$(\Omega^{\delta}; x_1^{\delta}, \hat{x}_1^{\delta}, x_2^{\delta}, \hat{x}_3^{\delta}, x_3^{\delta})$ is an approximation of $(\Omega; x_1, \hat{x}_1, x_2, \hat{x}_3, x_3)$ on $\delta\hexagon$ in Carath\'eodory sense. Consider UST in $\Omega^{\delta}$ with alternating boundary conditions: $(\hat{x}_1^{\delta}\hat{x}_3^{\delta})$ is wired and $(x_3^{\delta}x_1^{\delta})$ is wired (but $(\hat{x}_1^{\delta}\hat{x}_3^{\delta})$ and $(x_3^{\delta}x_1^{\delta})$ are not wired together). Let $\hat{\gamma}_2^{\delta}$ be the boundary branch starting from $x_2^{\delta, \circ}$ and denote by $\{x_2^{\delta, \circ}\rightsquigarrow (x_3^{\delta}x_1^{\delta})\}$ the event that the branch $\hat{\gamma}_2^{\delta}$ exits the boundary through $ (x_3^{\delta}x_1^{\delta})$. From~\cite[Theorem~1.5]{HanLiuWuUST} and the analysis in~\cite[Section~5]{HanLiuWuUST}, the law of $\hat{\gamma}^{\delta}$ conditional on $\{x_2^{\delta, \circ}\rightsquigarrow (x_3^{\delta}x_1^{\delta})\}$ converges weakly to the law of $\hat{\gamma}_2$ which is chordal $\SLE_2(-1,-1;-1,-1)$ in $\Omega$ from $x_2$ to $(x_3x_1)$ with force points $(x_1, \hat{x}_1; \hat{x}_3, x_3)$. \footnote{For $y\in (x_3x_1)$, suppose $\hat{\gamma}_2$ is chordal $\SLE_2(-1,-1;-1,-1)$ in $\Omega$ from $x_2$ to $y$ with force points $(x_1, \hat{x}_1; \hat{x}_3, x_3)$. As $\kappa-6=-4$ when $\kappa=2$, such process is target-invariant: denote by $\tau$ the first hitting time of $(x_3x_1)$, the law of $\hat{\gamma}_2[0,\tau]$ is independent of $y\in (x_3x_1)$. Thus we call such process $\SLE_2(-1,-1;-1,-1)$ in $\Omega$ from $x_2$ to $(x_3x_1)$ with force points $(\hat{x}_1, x_1;\hat{x}_3, x_3)$. Although, the statement in~\cite[Theorem~1.5]{HanLiuWuUST} also requires that $\partial\Omega$ is $C^1$, but this is not necessary. As discussed in~\cite{HanLiuWuUST}, the smoothness of $\partial\Omega$ is only used to guarantee the tightness of the Peano curves. In the setup of the boundary branch, the tightness does not require such smoothness: the tightness of the laws of $\{\gamma_2^{\delta}\}_{\delta>0}$ can be proved similarly as in Proposition~\ref{prop::tripod_tightness}.}
We have the following two observations. 
\begin{itemize}
\item The law of $\hat{\gamma}_2$ converges weakly to chordal $\SLE_2(-2;-2)$ in $\Omega$ from $x_2$ to $(x_3x_1)$ with force points $(x_1; x_3)$ as $\hat{x}_1\to x_1$ and $\hat{x}_3\to x_3$. 
\item Let us compare the laws of $\gamma_2^{\delta}$ and of $\hat{\gamma}_2^{\delta}$. 
We will construct a coupling $\LQ^{\delta}$ of $\gamma_2^{\delta}$ and $\hat{\gamma}_2^{\delta}$, and prove that there exists a constant $C_{\eqref{eqn::coupling}}\in (0,\infty)$ depending on $(\Omega;x_1,x_2,x_3)$ and a universal constant $\alpha\in(0,\infty)$ (from weak Beurling-type estimate) such that
\begin{equation}\label{eqn::coupling}
\LQ^{\delta}[\gamma_2^\delta\neq \hat{\gamma}_2^\delta]\le C_{\eqref{eqn::coupling}}\left(|x_1-\hat x_1|+|x_3-\hat x_3|\right)^{\alpha}.
\end{equation}
\end{itemize}
Assume we have obtained~\eqref{eqn::coupling}. Suppose $\gamma_2$ is any subsequential limit of $\{\gamma_2^\delta\}_{\delta>0}$ and suppose $\gamma_2^{\delta_n}$ converges to $\gamma_2$ in law as $n\to\infty$. By choosing a subsequence, we may assume that $\left(\gamma_2^{\delta_n},\hat\gamma_2^{\delta_n}\right)\sim\LQ^{\delta_n}$ converges to $(\gamma_2,\hat\gamma_2)\sim\LQ$. Then, for any bounded continuous function $F$ on the curve space, we have
\[\E[F(\gamma_2)]=\lim_{\hat x_1\to x_1,\hat x_3\to x_3}\LQ[F(\hat\gamma_2)].\]
Due to the first obervation, this implies that the law of $\gamma_2$ is given by chordal $\SLE_2(-2;-2)$ in $\Omega$ from $x_2$ to $(x_3x_1)$ with force points $(x_1; x_3)$.
\medbreak
It remains to prove~\eqref{eqn::coupling}. Denote by $\LR^{\delta}$ (resp. $\hat\LR^\delta$) the random walk on $\Omega^{\delta}$ starting from $x_2^{\delta,\circ}$ and stopped when hitting $\partial\Omega^{\delta}$ (resp. when hitting $(\hat x_1^\delta \hat x_3^\delta)\cup(x_3^\delta x_1^\delta)$). We construct a coupling $\LL^{\delta}$ of $(\LR^{\delta},\hat\LR^{\delta})$ such that $\LR^{\delta}=\hat\LR^{\delta}$ before hitting $(x_1^\delta\hat x_1^\delta)\cup(\hat x^\delta_3 x_3^\delta)$.
Denote by $\PP^{\delta}$ (resp. $\hat\PP^{\delta}$) the conditional law of $\LR^{\delta}$ (resp. $\hat\LR^{\delta}$) given the event $\{\LR^{\delta}\text{ hits }\partial\Omega^{\delta}\text{ at }(x_3^\delta x_1^\delta)\}$ (resp. given the event $\{\hat\LR^{\delta}\text{ hits }(\hat x_1^\delta \hat x_3^\delta)\cup(x_3^\delta x_1^\delta)\text{ at }(x_3^\delta x_1^\delta)\}$). Then, the law of $\gamma_2^\delta$ (resp. $\hat\gamma_2^\delta$) equals the loop-erasure of $\LR^{\delta}$ under $\PP^{\delta}$  (resp. the loop-erasure of $\hat\LR^{\delta}$ under $\hat{\PP}^{\delta}$). 
The total variance distance between the laws of $\gamma_2^{\delta}$ and of $\hat\gamma_2^{\delta}$ is bounded by 
\begin{align*}
&\max_{A}\left|\frac{\LL^{\delta}[\LR^{\delta}\in A,\LR^{\delta}\text{ hits }\partial\Omega^\delta\text{ at }(x_3^\delta x_1^\delta)]}{\LL^{\delta}[\LR^\delta\text{ hits }\partial\Omega^\delta\text{ at }(x_3^\delta x_1^\delta)]}-\frac{\LL^{\delta}[\hat\LR^\delta\in A,\hat\LR^\delta\text{ hits }(\hat x_1^\delta \hat x_3^\delta)\cup(x_3^\delta x_1^\delta)\text{ at }(x_3^\delta x_1^\delta)]}{\LL^\delta[\hat\LR^\delta\text{ hits }(\hat x_1^\delta \hat x_3^\delta)\cup(x_3^\delta x_1^\delta)\text{ at }(x_3^\delta x_1^\delta)]}\right|\notag\\
\le&\frac{\LL^{\delta}[\hat\LR^\delta\text{ hits }\partial\Omega^\delta\text{ at }(\hat x_3^\delta x_3^\delta)\cup(x_1^\delta \hat x_1^\delta)]}{\LL^{\delta}[\hat\LR^\delta\text{ hits }(\hat x_1^\delta \hat x_3^\delta)\cup(x_3^\delta x_1^\delta)\text{ at }(x_3^\delta x_1^\delta)]}+\left|1-\frac{\LL^{\delta}[\LR^\delta\text{ hits }\partial\Omega_\delta\text{ at }(x_3^\delta x_1^\delta)]}{\LL^{\delta}[\hat\LR^\delta\text{ hits }(\hat x_1^\delta \hat x_3^\delta)\cup(x_3^\delta x_1^\delta)\text{ at }(x_3^\delta x_1^\delta)]}\right|\notag\\
=&2\frac{\LL^\delta[\hat\LR^\delta\text{ hits }\partial\Omega_\delta\text{ at }(\hat x_3^\delta x_3^\delta)\cup(x_1^\delta \hat x_1^\delta)]}{\LL^\delta[\hat\LR^\delta\text{ hits }(\hat x_1^\delta \hat x_3^\delta)\cup(x_3^\delta x_1^\delta)\text{ at }(x_3^\delta x_1^\delta)]}.
\end{align*} 
From the same analysis as in Lemma~\ref{lem::proba_tight2}, there exists a constant $C_{\eqref{eqn::Beurling_twoRW}}\in(0,\infty)$ depending on $(\Omega;x_1,x_3,x_3)$ and a universal constant $\alpha\in(0,\infty)$ such that
\begin{equation}\label{eqn::Beurling_twoRW}
\frac{\LL^\delta[\hat\LR^\delta\text{ hits }\partial\Omega^\delta\text{ at }(\hat x_3^\delta x_3^\delta)\cup(x_1^\delta \hat x_1^\delta)]}{\LL^{\delta}[\hat\LR^\delta\text{ hits }(\hat x_1^\delta \hat x_3^\delta)\cup(x_3^\delta x_1^\delta)\text{ at }(x_3^\delta x_1^\delta)]}\le C_{\eqref{eqn::Beurling_twoRW}}\left(\max\{|x_1-\hat x_1|,|x_3-\hat x_3|\}\right)^{\alpha}.
\end{equation}
Thus, the total variance distance between the laws of $\gamma_2^{\delta}$ and of $\hat\gamma_2^{\delta}$ is bounded by RHS of~\eqref{eqn::Beurling_twoRW}. 
This gives the desired control in~\eqref{eqn::coupling}. 
\end{proof}
\begin{lemma}\label{lem::tripod_ab}
Fix a $3$-polygon $(\Omega; x_1, x_2, x_3)$ with $z\in\Omega$. We sample two random variables.
\begin{itemize}
\item[(a)] Suppose $\ell$ has the law of chordal $\SLE_2$ in $(\Omega; x_1, x_3)$ weighted by
\begin{equation}\label{eqn::tripod_twoequiv_RN1}
\frac{\nharmonic(\Omega\setminus\ell; x_2, \ell)}{\chordalSLEexp^{(\kappa=2)}[\nharmonic(\Omega\setminus\ell; x_2, \ell)]}, 
\end{equation}
where $\nharmonic(\Omega\setminus\ell; x_2, \ell)$ is harmonic measure~\eqref{eqn::nharmonic_H}-\eqref{eqn::nharmonic_cov}.  
\item[(b)] Given $\ell$, we sample $\gamma_2$ as chordal $\SLE_2(-2;-2)$ in $\Omega\setminus\ell$ from $x_2$ to $\ell$ with force points $(x_1;x_3)$. 
\end{itemize} 
Given $(\ell, \gamma_2)$, denote by $\gamma_1$ the part of $\ell$ from $x_1$ to $\trifurcation$, by $\gamma_3$ the reverse of $\ell$ from $x_3$ to $\trifurcation$.
Then the of the tripod $(\gamma_1^{\delta}, \gamma_2^{\delta}, \gamma_3^{\delta})$ conditional on $\LA_1^{\delta}\cap\LA_2^{\delta}$ in Theorem~\ref{thm::tripod} converges weakly to $(\gamma_1, \gamma_2, \gamma_3)$ whose law is characterized by (a)-(b) as above. 
\end{lemma}
\begin{proof}
The tripod $\{(\gamma_1^{\delta}, \gamma_2^{\delta}, \gamma_3^{\delta})\}_{\delta>0}$ is tight due to Proposition~\ref{prop::tripod_tightness}. Suppose $(\gamma_1^{\delta_n}, \gamma_2^{\delta_n}, \gamma_3^{\delta_n})$ is any convergent subsequence. By Skorokhod's representation theorem, we can couple them so that $(\gamma_1^{\delta_n}, \gamma_2^{\delta_n}, \gamma_3^{\delta_n})\to (\gamma_1, \gamma_2, \gamma_3)\in \chamber(\Omega; x_1, x_2, x_3; \trifurcation)$ almost surely. The conclusion follows from the following observations.
\begin{itemize}
\item From Proposition~\ref{prop::cvg_branch_eta1}, $\eta_1^{\delta_n}$ converges weakly to the law of $\ell$ in~(a). We can further couple the tripod with $\ell$ so that $\eta_1^{\delta_n}\to\ell$ almost surely. 
\item Given $\eta_1^{\delta_n}$, the conditional law of $\gamma_2^{\delta_n}$ is the same as the boundary branch for the UST in $\Omega^{\delta_n}\setminus\eta_1^{\delta_n}$ with wired boundary condition starting from $x_2^{\delta, \circ}$, conditional to exit the boundary through $\eta_1^{\delta_n}$. From Lemma~\ref{lem::cvg_branch_gamma2}, the law of $\gamma_2^{\delta_n}$ given $\eta_1^{\delta_n}$ converges weakly to chordal $\SLE_2(-2;-2)$ in $\Omega\setminus \ell$ from $x_2$ to $\ell$ with force points $(x_1;x_3)$ as desired.
\end{itemize}
\end{proof}

\subsection{Proof of Proposition~\ref{prop::chordalSLE2_Fomin}}
\label{subsec::prop14}
Proposition~\ref{prop::chordalSLE2_Fomin} follows from the convergence in Proposition~\ref{prop::cvg_branch_eta1} and Propositions~\ref{prop::deltacube} and~\ref{prop::A1interior}. 
\begin{lemma}\label{lem::cvg_branch_eta1_more}
Assume the same setup as in Proposition~\ref{prop::cvg_branch_eta1}. 
 \begin{itemize}
 \item The law of $\eta_1^{\delta}$ conditional on $\LA_1^{\delta}\cap\LA_2^{\delta}$ converges weakly to the law of $\ell$ weighted by the following Radon-Nikodym derivative
 \begin{equation}\label{eqn::RN_eta1_ell1_usingobservable}
 \left(\frac{\Poisson(\Omega; x_1, x_3)}{2\Poisson(\Omega; x_1, x_2)\Poisson(\Omega; x_2, x_3)}\right)^{1/2}\nharmonic(\Omega\setminus\ell; x_2, \ell), 
 \end{equation}
 where $\Poisson(\Omega; x_i, x_j)$ is boundary Poisson kernel~\eqref{eqn::bPoisson_H}-\eqref{eqn::bPoisson_cov} and $\nharmonic(\Omega\setminus\ell; x_2, \ell)$ is harmonic measure~\eqref{eqn::nharmonic_H}-\eqref{eqn::nharmonic_cov}.  
 \item Fix $z\in\Omega$ and suppose $z^{\delta}$ is the vertex in $\LV^{\circ}(\Omega^{\delta})$ that is nearest to $z$. The law of $\eta_1^{\delta}$ conditional on $\LA_1^{\delta}\cap\{z^{\delta}\rightsquigarrow e_3^{\delta}\}$ converges weakly to the law of $\ell$ weighted by the following Radon-Nikodym derivative
 \begin{equation}\label{eqn::RN_eta1_ell1_radial}
 \frac{1}{g(\Omega; x_1, x_3; z)}\harmonic(\Omega\setminus\ell; z, \ell),
 \end{equation}
 where $\harmonic(\Omega\setminus\ell; z, \ell)$ is harmonic measure~\eqref{eqn::charnomic_H}-\eqref{eqn::nharmonic_cov} and $g(\Omega; x_1, x_3; z)$ is the limiting function in~\eqref{eqn::A1interior} of Proposition~\ref{prop::A1interior}. 
 \end{itemize}
\end{lemma}

\begin{proof}[Proof of~\eqref{eqn::RN_eta1_ell1_usingobservable}]
 For any bounded continuous function $F$ on curves, we have
 \begin{align}
 \E\left[F(\eta_1^{\delta})\cond \LA_1^{\delta}\cap \LA_2^{\delta}\right]=&\frac{\PP[\LA_1^{\delta}]}{\PP[\LA_1^{\delta}\cap\LA_2^{\delta}]}
 \E\left[F(\ell^{\delta}) \harmonic(\Omega^{\delta}\setminus\ell^{\delta}; x_2^{\delta, \circ}, \ell^{\delta})\cond \LA_1^{\delta}\right],
 \label{eqn::boundary_branch_cvg_aux1_usingobservable}
 \end{align}
 where $\ell^{\delta}$ is defined as in Lemma~\ref{lem::boundary_branch_cvg1}. Let us derive the scaling limit of the terms in RHS of~\eqref{eqn::boundary_branch_cvg_aux1_usingobservable}. We fix an interior point $v\in\Omega$ and suppose $v^{\delta}$ is the vertex in $\LV^{\circ}(\Omega^{\delta})$ that is nearest to $v$.
From~\eqref{eqn::bPoisson_cvg} and~\eqref{eqn::deltacube} and~\eqref{eqn::LZtripod_integrable}, we have
 \begin{align}\label{eqn::boundary_branch_cvg_aux2}
 \lim_{\delta\to 0}\frac{\PP[\LA_1^{\delta}]\harmonic(\Omega^{\delta}; v^{\delta}, e_2^{\delta})}{\PP[\LA_1^{\delta}\cap\LA_2^{\delta}]}=&\frac{2\sqrt{3}\pi\Poisson(\Omega; x_1, x_3)\Poisson(\Omega; v, x_2)}{8\sqrt{2}\int_{\Omega}\LZtripod(\Omega; x_1, x_2, x_3; w)|\ud w|^2}\notag\\
 =&\frac{\Poisson(\Omega; v, x_2)\Poisson(\Omega;x_1,x_3)^{1/2}}{\sqrt{6}\left(\Poisson(\Omega; x_1, x_2)\Poisson(\Omega; x_2, x_3)\right)^{1/2}}. 
 \end{align}
 Plugging~\eqref{eqn::boundary_branch_cvg_aux2} and~\eqref{eqn::boundary_branch_cvg_L1} (proved in Lemma~\ref{lem::boundary_branch_cvg_L1}) into~\eqref{eqn::boundary_branch_cvg_aux1_usingobservable}, we obtain~\eqref{eqn::RN_eta1_ell1_usingobservable} as desired. 
\end{proof}

\begin{proof}[Proof of~\eqref{eqn::RN_eta1_ell1_radial}]
 For any bounded continuous function $F$ on curves, we have 
 \begin{align}\label{eqn::eta_RN_radial_aux1}
 \E\left[F(\eta_1^{\delta})\cond \LA_1^{\delta}\cap\{z^{\delta}\rightsquigarrow e_3^{\delta}\}\right]
 =&\frac{\E\left[F(\eta_1^{\delta})\one\{z^{\delta}\rightsquigarrow e_3^{\delta}\}\cond \LA_1^{\delta}\right]}{\PP\left[\{z^{\delta}\rightsquigarrow e_3^{\delta}\}\cond \LA_1^{\delta}\right]}
 =\frac{\E\left[F(\ell^{\delta})\harmonic(\Omega^{\delta}\setminus\ell^{\delta}; z^{\delta}, \ell^{\delta})\cond \LA_1^{\delta}\right]}{\E\left[\harmonic(\Omega^{\delta}\setminus\ell^{\delta}; z^{\delta}, \ell^{\delta})\cond \LA_1^{\delta}\right]}, 
 \end{align}
 where $\ell^{\delta}$ is defined as in Lemma~\ref{lem::boundary_branch_cvg1}. 
 From Lemma~\ref{lem::boundary_branch_cvg1}, the law of $\ell^{\delta}$ conditional on $\LA_1^{\delta}$ converges weakly to the law of $\ell$. Suppose $\{\delta_n\}_n$ is any subsequence such that $\delta_n\to 0$. By Skorokhod's representation theorem, we can couple $\{\ell^{\delta_n}\}_{n}$ and $\ell$ together such that $\ell^{\delta_n}$ converges to $\ell$ almost surely. We denote such coupling still by $\PP$. In this coupling, the convergence~\eqref{eqn::charmonic_cvg} asserts the almost sure convergence: 
 \begin{align*}
 \lim_n \harmonic(\Omega^{\delta_n}\setminus\ell^{\delta_n}; z^{\delta_n}, \ell^{\delta_n})=\frac{1}{2\pi}\harmonic(\Omega\setminus\ell; z, \ell). 
 \end{align*}
 Plugging the almost sure convergence into~\eqref{eqn::eta_RN_radial_aux1} and noting that the harmonic measure is bounded, we obtain
  \begin{align*}
  \lim_{\delta\to 0}\E\left[F(\eta_1^{\delta})\cond \LA_1^{\delta}\cap\{z^{\delta}\rightsquigarrow e_3^{\delta}\}\right]=\frac{\E\left[F(\ell)\harmonic(\Omega\setminus\ell; z, \ell)\right]}{\E\left[\harmonic(\Omega\setminus\ell; z, \ell)\right]}. 
  \end{align*}
 In other words, the law of $\eta_1^{\delta}$ conditional on $\LA_1^{\delta}\cap\{z^{\delta}\rightsquigarrow e_3^{\delta}\}$ converges weakly to the law of $\ell$ weighted by the following Radon-Nikodym derivative 
 \[\frac{\harmonic(\Omega\setminus\ell; z, \ell)}{\E[\harmonic(\Omega\setminus\ell; z, \ell)]}.\]
 Then the conclusion follows from~\eqref{eqn::A1interior}: 
 \begin{align*}
 \E[\harmonic(\Omega\setminus\ell; z, \ell)]=\lim_{\delta\to 0}\PP\left[\{z^{\delta}\rightsquigarrow e_3^{\delta}\}\cond \LA_1^{\delta}\right]=g(\Omega; x_1, x_3; z). 
 \end{align*}
\end{proof}

\begin{proof}[Proof of Proposition~\ref{prop::chordalSLE2_Fomin}]
From Lemma~\ref{lem::cvg_branch_eta1_more}, we have 
\begin{align*}
\chordalSLEexp^{(\kappa=2)}(\Omega; x_1, x_3)\left[\nharmonic(\Omega\setminus\ell; x_2, \ell)\right]=&\left(\frac{2\Poisson(\Omega; x_1, x_2)\Poisson(\Omega; x_2, x_3)}{\Poisson(\Omega; x_1, x_3)}\right)^{1/2}, \\
\chordalSLEexp^{(\kappa=2)}(\Omega; x_1, x_3)\left[\harmonic(\Omega\setminus\ell; z, \ell)\right]=&g(\Omega; x_1, x_3; z), 
\end{align*}
as desired. 
\end{proof}

\subsection{Convergence to radial SLE$_2(2,2)$}
\label{subsec::gamma3}

\begin{proposition}\label{prop::gamma3_cvg}
Assume the same setup as in Theorem~\ref{thm::tripod}. 
Fix a compact subset $K\subset\Omega$ and fix $r>0$ such that the $r$-neighborhood $K_r:=\{z\in\C: \dist(z, K)<r\}$ is still contained in $\Omega$.  
Recall that $\gamma_3^{\delta}$ is a path from $x_3^{\delta}$ to $\trifurcation^{\delta}$. 
We denote by $T_r^{\delta}$ its hitting time of $K_r$.  Then for any bounded continuous function $F$ on curves, we have 
\begin{align}\label{eqn::gamma3_cvg}
&\lim_{\delta\to 0}\E\left[F(\gamma_3^{\delta}[0,T_r^{\delta}])\one\{\trifurcation^{\delta}\in K\}\cond \LA_1^{\delta}\cap\LA_2^{\delta}\right]\notag\\
=&\int_{K}\nradE{3}^{(\kappa=2)}(\Omega; x_1, x_2, x_3; u)[F(\gamma_3[0,T_r])]
p(\Omega; x_1,x_2,x_3;u) |\ud u|^2, 
\end{align}
where the law of $\gamma_3$ under $\nradP{3}^{(\kappa=2)}(\Omega; x_1, x_2, x_3; u)$ is radial $\SLE_2(2,2)$ in $\Omega$ from $x_3$ to $u$ with force points $(x_1, x_2)$ and $T_r$ is  its hitting time of $K_r$,
and $p(\Omega; x_1,x_2,x_3;u)$ is the density function defined in~\eqref{eqn::density_def}.
\end{proposition}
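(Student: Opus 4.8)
The plan is to derive the scaling limit of $\gamma_3^\delta$ by first decomposing the UST-tripod measure into the branch $\eta_1^\delta$ (equivalently the branch from $x_3^\delta$ produced by running Wilson's algorithm starting at $x_3^\delta$) and the remaining randomness, and then identifying the resulting weighted measure on the $x_3$-to-interior LERW with the Radon--Nikodym description of radial $\SLE_2(2,2)$. Concretely, I would first note that by Wilson's algorithm the branch $\gamma_3^\delta$ (as a path from $x_3^\delta$) conditional on $\LA_1^\delta\cap\LA_2^\delta\cap\{\trifurcation^\delta=u\}$ can be realized as follows: run the LERW $\lambda^\delta$ from $x_3^{\delta,\circ}$ to $\LV^\partial(\Omega^\delta)$; the configuration then requires this branch to pass through $u$, and conditionally the two further branches from $x_1^{\delta,\circ}$ and $x_2^{\delta,\circ}$ must attach to $\lambda^\delta$ exactly at $u$. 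Using Lemma~\ref{lem::Fomin} and the harmonic-measure identities, the conditional law of the initial segment $\gamma_3^\delta[0,T_r^\delta]$, on the event $\{\trifurcation^\delta=u\}$, is the law of the LERW $\lambda^\delta$ from $x_3^{\delta,\circ}$ run up to $T_r^\delta$, weighted by a Green's-function/Poisson-kernel ratio measuring the probability that, given $\lambda^\delta[0,T_r^\delta]$, the remaining tripod closes up at $u$. This weight is (up to the lattice constants already identified in Section~\ref{sec::pre}) a discrete approximation of
\[
\frac{\nradpartfn{3}^{(\kappa=2)}(\Omega\setminus\gamma_3[0,T_r]; x_1, x_2, \gamma_3(T_r); u)}{\nradpartfn{1}^{(\kappa=2)}(\Omega\setminus\gamma_3[0,T_r]; \gamma_3(T_r); u)}\cdot\frac{\nradpartfn{1}^{(\kappa=2)}(\Omega; x_3; u)}{\nradpartfn{3}^{(\kappa=2)}(\Omega; x_1, x_2, x_3; u)},
\]
which is exactly the Radon--Nikodym derivative in Lemma~\ref{lem::3radvs1rad_RN} relating radial $\SLE_2(2,2)$ to radial $\SLE_2$, since $\nradpartfn{1}^{(\kappa=2)}(\Omega;x_3;u)=\Poisson(\Omega;u,x_3)$ and $\nradpartfn{3}^{(\kappa=2)}=\LZtripod$ by~\eqref{eqn::13rad_kappa2}.

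Next I would combine this with the known convergence of the $x_3$-branch. Since conditional on $\{z^\delta\rightsquigarrow e_3^\delta\}$-type events the LERW from $x_3^{\delta,\circ}$ converges to radial $\SLE_2$ (Lemma~\ref{lem::LERW_radialSLE2}, which we may invoke for the interior target $u$), and since the convergence of discrete Poisson kernels, their discrete derivatives, and Green's functions to their continuum counterparts is uniform on compact sets away from the marked points (Lemmas~\ref{lem::Green_cvg}, \ref{lem::Poisson_cvg}, \ref{lem::bPoisson_cvg}, \ref{lem::nharmonic_cvg}, together with the observable convergence Proposition~\ref{prop::observable_cvg}), a Skorokhod-coupling argument transfers the discrete weighted measure to the continuum weighted measure. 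Here the stopping at $T_r^\delta$ is essential: it keeps $\gamma_3^\delta[0,T_r^\delta]$ at positive distance from $K$, hence from the eventual trifurcation $u\in K$, so the weights stay bounded and continuous, and the discrete-to-continuum estimates apply with uniform constants. This yields, for each fixed target $u$,
\[
\lim_{\delta\to 0}\E\left[F(\gamma_3^\delta[0,T_r^\delta])\cond \LA_1^\delta\cap\LA_2^\delta\cap\{\trifurcation^\delta=u^\delta\}\right]=\nradE{3}^{(\kappa=2)}(\Omega; x_1, x_2, x_3; u)[F(\gamma_3[0,T_r])].
\]

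Finally I would integrate over the trifurcation location. Writing
\[
\E\left[F(\gamma_3^\delta[0,T_r^\delta])\one\{\trifurcation^\delta\in K\}\cond \LA_1^\delta\cap\LA_2^\delta\right]=\sum_{u\in K\cap\LV^\circ(\Omega^\delta)}\E\left[F(\gamma_3^\delta[0,T_r^\delta])\cond \cdots\cap\{\trifurcation^\delta=u\}\right]\PP\left[\trifurcation^\delta=u\cond \LA_1^\delta\cap\LA_2^\delta\right],
\]
the second factor is $\delta^2$ times a function converging (by Theorem~\ref{thm::trifurcation}, or rather Proposition~\ref{prop::observable_cvg} plus Proposition~\ref{prop::deltacube}) to $\tfrac{3\sqrt3}{4}\cdot\tfrac{4}{3\sqrt3}p=p$ after accounting for the dual-face area $\tfrac{3\sqrt3}{4}\delta^2$, so the Riemann sum converges to $\int_K \nradE{3}^{(\kappa=2)}[F(\gamma_3[0,T_r])]\,p(\Omega;x_1,x_2,x_3;u)|\ud u|^2$. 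To make this rigorous one needs a uniform-integrability/equicontinuity input: the pointwise convergence of the conditional expectations should be upgraded to local uniformity in $u\in K$, which follows from the uniform convergence statements of Section~\ref{sec::pre} since all $u\in K$ are bounded away from $\partial\Omega$ and from $\gamma_3^\delta[0,T_r^\delta]$.

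The main obstacle I anticipate is the rigorous identification, at the discrete level, of the conditional law of $\gamma_3^\delta[0,T_r^\delta]$ given $\{\trifurcation^\delta=u\}$ as a Poisson-kernel-reweighted LERW, i.e. making the Wilson's-algorithm/Fomin decomposition precise while controlling the ``closing-up'' probability of the two remaining branches uniformly. One must be careful that conditioning on $\{\trifurcation^\delta=u\}$ is conditioning on an event whose probability is of order $\delta^2$, and that the relevant reweighting involves the full determinant $\det(\harmonic(\Omega^\delta;u_k,e_j^\delta))_{k,j=1}^3$ rather than a product; the argument must show this determinant-valued weight, divided by $\delta^2\prod_j\harmonic(\Omega^\delta;v^\delta,e_j^\delta)$, converges (after dividing out the $x_3$-branch Poisson kernel) to the continuum ratio above, which is where Lemma~\ref{lem::detM_LZtripod} and the matrix computation~\eqref{eqn::M_H} do the real work. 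Once the domain-Markov structure and the boundedness of weights (guaranteed by stopping at $T_r^\delta$) are in hand, the passage to the limit is routine via Skorokhod coupling and dominated convergence.
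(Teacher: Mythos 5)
Your plan shares the paper's ingredients (Fomin/Wilson observable, stopping at $T_r^{\delta}$, the Radon--Nikodym comparison of Lemma~\ref{lem::3radvs1rad_RN}), but its two load-bearing steps are missing, and the discrete decomposition is misdescribed. The path $\gamma_3^{\delta}$ is not ``the LERW from $x_3^{\delta,\circ}$ to the boundary conditioned to pass through $u$'': $x_3^{\delta}$ is a boundary vertex and $\gamma_3^{\delta}$ is the common terminal segment of the branches $\eta_1^{\delta},\eta_2^{\delta}$; the correct mechanisms are the domain Markov property of the \emph{time-reversal} of LERW~\cite[Lemma~3.2]{LawlerSchrammWernerLERWUST} (which the paper uses) or Wilson's algorithm started at the interior vertex. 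More seriously, your strategy conditions on $\{\trifurcation^{\delta}=u^{\delta}\}$, an event of probability of order $\delta^{2}$, and asserts convergence of the conditional law of $\gamma_3^{\delta}[0,T_r^{\delta}]$ for each fixed $u$; but you never establish the exact discrete Radon--Nikodym identity comparing this conditional law to a reference LERW measure --- you only say the weight ``is a discrete approximation of'' the continuum ratio, and you yourself name this as the main obstacle. That identity is precisely where the work lies: the paper proves an exact spanning-tree counting identity (Lemma~\ref{lem::gamma3_radial}) showing that $\gamma_3^{\delta}[0,T_r^{\delta}]$ weighted by the discrete Poisson-kernel ratio $\LR_3^{\delta}$, with a \emph{fixed} auxiliary interior target $v$, is exactly the reversed LERW conditioned on $\{v^{\delta}\rightsquigarrow e_3^{\delta}\}$, keeps the sum over $u\in K$ inside the expectation, and then performs the change of target $v\to u$ and the passage from one-sided to three-sided radial SLE entirely in the continuum via Lemmas~\ref{lem::1rad_RN} and~\ref{lem::3radvs1rad_RN}. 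This sidesteps both the conditioning on vanishing-probability events and the need for Lemma~\ref{lem::LERW_radialSLE2} to hold \emph{uniformly} in the target $u\in K$, which your pointwise-then-Riemann-sum step would require but which the cited lemma does not provide.

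The second gap is your claim that the weights ``stay bounded\ldots so the passage to the limit is routine via dominated convergence.'' Boundedness of the weight (the paper's $\LS_3^{\delta_n}$, Lemma~\ref{lem::S3_bounded}) does not follow merely from stopping at $T_r^{\delta}$: one must bound, uniformly over the random slit domain $\Omega^{\delta}\setminus\gamma_3^{\delta}[0,T_r^{\delta}]$ (whose boundary at the tip is rough) and over $u\in K$, ratios of discrete harmonic measures and Green's functions; the paper does this via the comparison estimate of~\cite[Corollary~3.8]{ChelkakWanMassiveLERW}, Green's function monotonicity, and Beurling-type closing-up estimates near $x_1,x_2$. Likewise, invoking Proposition~\ref{prop::observable_cvg} in the slit domain requires justifying that the random domains $\Omega^{\delta_n}\setminus\gamma_3^{\delta_n}[0,T_r^{\delta_n}]$ form an almost-sure Carath\'eodory-approximating sequence under a Skorokhod coupling, which your sketch glosses over. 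So the architecture can be repaired, but as written the exact discrete reweighting identity and the uniform-integrability bound --- the two steps that constitute the actual proof --- are absent.
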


\begin{lemma}[\cite{LawlerSchrammWernerLERWUST}]
\label{lem::LERW_radialSLE2}
Suppose $(\Omega; x)$ is a bounded $1$-polygon and suppose $(\Omega^{\delta}; x^{\delta})$ is an approximation of $(\Omega; x)$ on $\delta\hexagon$ in Carath\'eodory sense.
Suppose $e^{\delta}=\langle x^{\delta, \circ}, x^{\delta}\rangle$ is the boundary edge whose boundary end point $x^{\delta}$ is nearest to $x$. 
Fix an interior point $v\in\Omega$ and suppose $v^{\delta}$ is the vertex in $\LV^{\circ}(\Omega^{\delta})$ that is nearest to $v$. 
Consider UST on $\Omega^{\delta}$ with wired boundary condition. Let $\gamma^{\delta}$ be the branch starting from $v^{\delta}$ and we consider the event $\{v^{\delta}\rightsquigarrow e^{\delta}\}$. We view $\gamma^{\delta}$ as a path from $x^{\delta, \circ}$ to $v^{\delta}$. Then the law of $\gamma^{\delta}$ conditional on $\{v^{\delta}\rightsquigarrow e^{\delta}\}$ converges weakly to the law of radial $\SLE_2\sim \nradP{1}^{(\kappa=2)}(\Omega; x; v)$. 
\end{lemma}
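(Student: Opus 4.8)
\textbf{Proof plan for Lemma~\ref{lem::LERW_radialSLE2}.}
The statement is essentially the Lawler--Schramm--Werner theorem on convergence of loop-erased random walk to radial $\SLE_2$, transported to our setting: we consider a UST boundary branch from the interior vertex $v^{\delta}$, \emph{conditioned} to exit the domain through the prescribed boundary edge $e^{\delta}$, and view it as a curve from $x^{\delta,\circ}$ to $v^{\delta}$. The plan is to reduce this to the unconditioned LERW-to-interior statement via Wilson's algorithm and a time-reversal, then invoke the known convergence result together with the continuity of the relevant conformal maps. First I would recall from the LERW/Wilson's algorithm discussion (around~\eqref{eqn::boundarybranch_harmonic}) that the boundary branch $\gamma^{\delta}$ from $v^{\delta}$ has the law of $\mathrm{LE}$ of a simple random walk from $v^{\delta}$ to $\LV^{\partial}(\Omega^{\delta})$, and that conditionally on $\{v^{\delta}\rightsquigarrow e^{\delta}\}$ it is the loop-erasure of a simple random walk from $v^{\delta}$ to $\LV^{\partial}(\Omega^{\delta})$ conditioned to exit through $e^{\delta}$. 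Equivalently, collapsing $\LV^{\partial}(\Omega^{\delta})$ and then, conditionally on the exit edge, we may regard $\gamma^{\delta}$ (read backwards, from $x^{\delta,\circ}$) as the time-reversal of the loop-erasure of a random walk started at the interior endpoint $x^{\delta,\circ}$ and run to $v^{\delta}$. By the reversibility of loop-erased random walk (the loop-erasure of a walk from $a$ to $b$ has the same law as the reversal of the loop-erasure of a walk from $b$ to $a$, as subgraphs/paths), the law of $\gamma^{\delta}$ conditional on $\{v^{\delta}\rightsquigarrow e^{\delta}\}$, viewed as a path from $x^{\delta,\circ}$ to $v^{\delta}$, coincides with the law of the loop-erasure of a simple random walk started at $x^{\delta,\circ}$, run until it hits $v^{\delta}$, in the graph obtained by keeping $v^{\delta}$ as the target; more precisely one should track the walk conditioned to exit $\LV^{\partial}$ through $e^{\delta}$ and use the Markov property at $x^{\delta,\circ}$.

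Once the conditioned boundary branch is identified with (a reversal of) an interior-targeted LERW, the core of the proof is the convergence of LERW to radial $\SLE_2$. On the hexagonal lattice this is covered by the isoradial/critical-lattice versions of the Lawler--Schramm--Werner theorem: the LERW from a boundary point (here $x^{\delta,\circ}$, approximating $x\in\partial\Omega$) to an interior point $v^{\delta}$ (approximating $v\in\Omega$), in a Carath\'eodory-approximating sequence $(\Omega^{\delta};x^{\delta})$, converges weakly in the metric~\eqref{eqn::curves_metric} to radial $\SLE_2$ from $x$ to $v$ in $\Omega$, i.e.\ to $\nradP{1}^{(\kappa=2)}(\Omega;x;v)$. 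I would cite~\cite{LawlerSchrammWernerLERWUST} (in the form used throughout this literature, e.g.\ also~\cite{ZhanLERW, YadinYehudayoffLERW} for the general isoradial statement) for this convergence, noting that the hypotheses are met because $(\Omega^{\delta};x^{\delta})$ approximates $(\Omega;x)$ in Carath\'eodory sense, $x$ lies on a $C^{1+\eps}$ boundary segment (so the prime end $x$ is well-defined and the curve is well-behaved near the starting point), and $v$ is a fixed interior point. The time-reversal symmetry of radial $\SLE_2$ (as an unparameterized curve from a boundary point to an interior point, which is trivial here since we are only relabelling the two endpoints of the same curve --- $x^{\delta,\circ}$ is the boundary endpoint and $v^{\delta}$ the interior one in both descriptions) ensures no genuine reversal of an SLE is needed; the only reversal is at the discrete level of LERW, and it preserves the law.

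Finally I would assemble these: the conditioned UST branch equals in law an interior-targeted LERW in $\Omega^{\delta}$ from $x^{\delta,\circ}$ to $v^{\delta}$ (after the combinatorial identification), and the latter converges weakly to $\nradP{1}^{(\kappa=2)}(\Omega;x;v)$ as $\delta\to 0$; hence so does the former. The main obstacle I anticipate is not the SLE convergence input itself --- that is by now standard on the hexagonal lattice --- but rather the bookkeeping in the first step: carefully matching the conditioning $\{v^{\delta}\rightsquigarrow e^{\delta}\}$ (a conditioning on the \emph{exit edge} of the wired boundary branch) with an interior-to-interior LERW law, via Wilson's algorithm, the wired-boundary-collapsing construction of Section~\ref{subsec::intro_setup}, and loop-erasure reversibility, while being precise about which vertex is the source and which is the target and about the role of $x^{\delta,\circ}$ versus $x^{\delta}$. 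One clean way to handle it is to condition first on the exit edge being $e^{\delta}$, which by Wilson's algorithm makes $\gamma^{\delta}$ the loop-erasure of a walk from $v^{\delta}$ conditioned to exit through $e^{\delta}$; then prepend the deterministic edge $\langle x^{\delta},x^{\delta,\circ}\rangle$ and use that, by reversibility, this is the reversal of the loop-erasure of a walk from $x^{\delta,\circ}$ to $v^{\delta}$ (in $\Omega^{\delta}$ with $v^{\delta}$ as absorbing target), at which point the cited LERW-to-radial-$\SLE_2$ theorem applies directly.
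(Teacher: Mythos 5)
Your proposal is correct and follows essentially the same route as the paper, whose entire proof is to cite \cite{LawlerSchrammWernerLERWUST} (noting the argument is not tied to $\delta\Z^2$) together with \cite{YadinYehudayoffLERWPoissonKernel}; the Wilson's-algorithm identification of the conditioned branch and the LERW reversibility bookkeeping you describe are exactly the standard reductions implicit in that citation. The only quibble is cosmetic: the conditioning on the exit edge should indeed be handled as you say (via the Doob-transformed walk and LERW reversal), and your citation key ``YadinYehudayoffLERW'' should be the paper's \cite{YadinYehudayoffLERWPoissonKernel}.
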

\begin{proof}
This was proved in~\cite{LawlerSchrammWernerLERWUST} for $\delta\Z^2$ lattice, and the proof is not restricted to a particular choice of lattice. See also~\cite{YadinYehudayoffLERWPoissonKernel}. 
\end{proof}

To prove Proposition~\ref{prop::gamma3_cvg}, we fix the following notation for the observable in Proposition~\ref{prop::observable_cvg}. Fix a bounded $3$-polygon $(\Omega; x_1, x_2, x_3)$ and suppose $(\Omega^{\delta}; x_1^{\delta}, x_2^{\delta}, x_3^{\delta})$ is an approximation of $(\Omega; x_1, x_2, x_3)$ on $\delta\hexagon$ in Carath\'eodory sense. We denote the observable in Proposition~\ref{prop::observable_cvg} by 
\begin{equation}
\LZ^{\delta}(\Omega^{\delta}; x_1^{\delta}, x_2^{\delta}, x_3^{\delta}; u)=\PP[\LA_1^{\delta}\cap\LA_2^{\delta}\cap\{\trifurcation^{\delta}=u\}],\qquad u\in\LV^{\circ}(\Omega^{\delta}). 
\end{equation}
We also extend its definition in the same way as in Proposition~\ref{prop::observable_cvg}: recall that the triangle $\bigtriangleup^{\delta}(u)$ covering vertex $u$ is defined in Figure~\ref{fig::threeneighbors}.  
For $u\in\LV^{\circ}(\Omega^{\delta})$, we set $\LZ^{\delta}(\Omega^{\delta}; x_1^{\delta}, x_2^{\delta}, x_3^{\delta}; z)=\LZ^{\delta}(\Omega^{\delta}; x_1^{\delta}, x_2^{\delta}, x_3^{\delta}; u)$ for $z\in\bigtriangleup^{\delta}(u)$. 

\begin{proof}[Proof of Proposition~\ref{prop::gamma3_cvg}]
As the tripod $\{(\gamma_1^{\delta}, \gamma_2^{\delta}, \gamma_3^{\delta})\}_{\delta>0}$ is tight due to Proposition~\ref{prop::tripod_tightness}, for any sequence $\delta_n\to 0$, there exists a subsequence, which we still denote by $\{\delta_n\}$, such that the conditional law of $\{(\gamma_1^{\delta_n}, \gamma_2^{\delta_n}, \gamma_3^{\delta_n})\}_n$ converges. We can couple 
$\{(\gamma_1^{\delta_n}, \gamma_2^{\delta_n}, \gamma_3^{\delta_n})\}_n$ and its limit $(\gamma_1, \gamma_2, \gamma_3)$ together, such that $\gamma_j^{\delta_n}\to \gamma_j$ in metric~\eqref{eqn::curves_metric} almost surely for $j=1,2,3$. We denote this coupling by $\PP$. Proposition~\ref{prop::tripod_tightness} also asserts that there exists $\trifurcation\in\Omega$ such that $(\gamma_1, \gamma_2, \gamma_3)\in\chamber(\Omega; x_1, x_2, x_3; \trifurcation)$ almost surely. 

From the domain Markov property of the time-reversal of LERW~\cite[Lemma~3.2]{LawlerSchrammWernerLERWUST}, we may write
\begin{align}\label{eqn::cvg_radial_aux1}
&\E\left[F(\gamma_3^{\delta_n}[0,T_r^{\delta_n}])\one\{\trifurcation^{\delta_n}\in K\}\cond \LA_1^{\delta_n}\cap\LA_2^{\delta_n}\right]\notag\\
=&\sum_{u\in \LV^{\circ}(\Omega^{\delta_n})\cap K}
\E\left[F(\gamma_3^{\delta_n}[0,T_r^{\delta_n}])\PP\left[\{\trifurcation^{\delta_n}=u\}\cap\LA_1^{\delta_n}\cap\LA_2^{\delta_n}\cond \gamma_3^{\delta_n}[0,T_r^{\delta_n}]\right]\right]\big/\PP[\LA_1^{\delta_n}\cap\LA_2^{\delta_n}]\notag\\
=&
\E\left[F(\gamma_3^{\delta_n}[0,T_r^{\delta_n}])\sum_{u\in \LV^{\circ}(\Omega^{\delta_n})\cap K}\frac{\LZ^{\delta_n}(\Omega^{\delta_n}\setminus\gamma_3^{\delta_n}[0,T_r^{\delta_n}]; x_1^{\delta_n}, x_2^{\delta_n}, \gamma_3^{\delta_n}(T_r^{\delta_n}); u)}{\PP[\LA_1^{\delta_n}\cap\LA_2^{\delta_n}]}\right].
\end{align}
To simplify notations, we denote $\Omega_3^{\delta_n}(T_r^{\delta_n}):=\Omega^{\delta_n}\setminus\gamma_3^{\delta_n}[0,T_r^{\delta_n}]$ and $\Omega_3(T_r):=\Omega\setminus\gamma_3[0,T_r]$.
Fix one of the boundary edges adjacent to $\gamma_3^{\delta_n}(T_r^{\delta_n})$ in $\Omega_3^{\delta_n}(T_r^{\delta_n})$ and denote it by $e_3^{\delta_n}(T_r^{\delta_n})$.
Fix $v\in K$ and suppose $v^{\delta_n}$ is the vertex in $\LV^{\circ}(\Omega^{\delta_n})$ that is nearest to $v$.
We have the following observations.
\begin{itemize}
\item Proposition~\ref{prop::observable_cvg} gives the following almost sure convergence: as $n\to\infty$, 
\begin{align*}
&\frac{\LZ^{\delta_n}(\Omega_3^{\delta_n}(T_r^{\delta_n}); x_1^{\delta_n}, x_2^{\delta_n}, \gamma_3^{\delta_n}(T_r^{\delta_n}); u^{\delta_n})}{\delta^2\prod_{j=1}^2\harmonic(\Omega_3^{\delta_n}(T_r^{\delta_n}); v^{\delta_n}, e_j^{\delta_n})\harmonic(\Omega_3^{\delta_n}(T_r^{\delta_n}); v^{\delta_n}, e_3^{\delta_n}(T_r^{\delta_n}))}\notag\\
\to &\frac{6\sqrt{6}\LZtripod(\Omega_3(T_r); x_1, x_2, \gamma_3(T_r); u)}{\prod_{j=1}^2\Poisson(\Omega_3(T_r); v, x_j)\Poisson(\Omega_3(T_r); v, \gamma_3(T_r))}, \qquad\text{uniformly over }u\in K. 
\end{align*}
As a consequence, we have the following almost sure convergence: 
\begin{align}\label{eqn::cvg_radial_aux2}
&\lim_n\frac{\sum_{u\in \LV^{\circ}(\Omega^{\delta_n})\cap K}\LZ^{\delta_n}(\Omega_3^{\delta_n}(T_r^{\delta_n}); x_1^{\delta_n}, x_2^{\delta_n}, \gamma_3^{\delta_n}(T_r^{\delta_n}); u)}
{\prod_{j=1}^2\harmonic(\Omega_3^{\delta_n}(T_r^{\delta_n}); v^{\delta_n}, e_j^{\delta_n})\harmonic(\Omega_3^{\delta_n}(T_r^{\delta_n}); v^{\delta_n}, e_3^{\delta_n}(T_r^{\delta_n}))}\notag\\
= &\frac{8\sqrt{2}\int_{K}\LZtripod(\Omega_3(T_r); x_1, x_2, \gamma_3(T_r); u)|\ud u|^2}{\prod_{j=1}^2\Poisson(\Omega_3(T_r); v, x_j)\Poisson(\Omega_3(T_r); v, \gamma_3(T_r))}. 
\end{align}
\item Proposition~\ref{prop::deltacube} gives the following convergence:
\begin{align}\label{eqn::cvg_radial_aux3}
\lim_n\frac{\PP[\LA_1^{\delta_n}\cap\LA_2^{\delta_n}]}{\prod_{j=1}^3\harmonic(\Omega^{\delta_n}; v^{\delta_n}, e_j^{\delta_n})}= \frac{{8\sqrt{2}}}{\prod_{j=1}^{3}\Poisson(\Omega; v, x_j)}\int_{\Omega}\LZtripod(\Omega; x_1, x_2, x_3; w)|\ud w|^2. 
\end{align}
\item From~\eqref{eqn::Poisson_cvg}, we have the following almost sure convergence: 
\begin{align}\label{eqn::cvg_radial_aux4}
\lim_n\frac{\harmonic(\Omega_3^{\delta_n}(T_r^{\delta_n}); v^{\delta_n}, e_j^{\delta_n})}{\harmonic(\Omega^{\delta_n}; v^{\delta_n}, e_j^{\delta_n})}=\frac{\Poisson(\Omega_3(T_r); v, x_j)}{\Poisson(\Omega; v, x_j)},\qquad j=1,2. 
\end{align}

\end{itemize}

We denote 
\begin{align}\label{eqn::def_R3_S3}
\begin{split}
\LR_3^{\delta_n}=&\frac{\harmonic(\Omega_3^{\delta_n}(T_r^{\delta_n});v^{\delta_n},e_3^{\delta_n}(T_r^{\delta_n})])}{\harmonic(\Omega^{\delta_n};v^{\delta_n},e_3^{\delta_n})}, \\
\LS_3^{\delta_n}=&\frac{1}{\LR_3^{\delta_n}}\sum_{u\in \LV^{\circ}(\Omega^{\delta_n})\cap K}\frac{\LZ^{\delta_n}(\Omega_3^{\delta_n}(T_r^{\delta_n}); x_1^{\delta_n}, x_2^{\delta_n}, \gamma_3^{\delta_n}(T_r^{\delta_n}); u)}{\PP[\LA_1^{\delta_n}\cap\LA_2^{\delta_n}]}. 
\end{split}
\end{align}
Plugging into~\eqref{eqn::cvg_radial_aux1}, we have
\begin{align*}
\E\left[F(\gamma_3^{\delta_n}[0,T_r^{\delta_n}])\one\{\trifurcation^{\delta_n}\in K\}\cond \LA_1^{\delta_n}\cap\LA_2^{\delta_n}\right]
=&\E\left[F(\gamma_3^{\delta_n}[0,T_r^{\delta_n}])\LR_3^{\delta_n}\LS_3^{\delta_n}\right].
\end{align*}
The convergence in~\eqref{eqn::cvg_radial_aux2}, \eqref{eqn::cvg_radial_aux3} and~\eqref{eqn::cvg_radial_aux4} gives the following almost sure convergence:
\begin{align}\label{eqn::S3_cvg}
\LS_3:=\lim_n \LS_3^{\delta_n}=\frac{\int_{K}\LZtripod(\Omega_3(T_r); x_1, x_2, \gamma_3(T_r); u)|\ud u|^2}{\Poisson(\Omega_3(T_r); v, \gamma_3(T_r))}\frac{\Poisson(\Omega; v, x_3)}{\int_{\Omega}\LZtripod(\Omega; x_1, x_2, x_3; w)|\ud w|^2}. 
\end{align}

We will prove in Lemma~\ref{lem::gamma3_radial} that the law of $\gamma_3^{\delta_n}[0,T_r^{\delta_n}]$ weighted by $\LR_3^{\delta_n}$ is the same as the time-reversal of loop-erased random walk in $\Omega^{\delta_n}$ starting from $v^{\delta_n}$ and conditional on $\{v^{\delta_n}\rightsquigarrow e_3^{\delta_n}\}$. We denote the conditional law of such loop-erased random walk by $\PP_{\lerw}^{\delta_n}(\Omega; x_3; v)$. 
We will prove in Lemma~\ref{lem::S3_bounded} that $\{\LS_3^{\delta_n}\}_n$ is uniformly bounded. 
Then
\begin{align*}
&\lim_n\E\left[F(\gamma_3^{\delta_n}[0,T_r^{\delta_n}])\one\{\trifurcation^{\delta_n}\in K\}\cond \LA_1^{\delta_n}\cap\LA_2^{\delta_n}\right]\\
=&\lim_n\E_{\lerw}^{\delta_n}(\Omega; x_3; v)\left[F(\gamma_3^{\delta_n}[0,T_r^{\delta_n}])\LS_3^{\delta_n}\right]\tag{due to Lemma~\ref{lem::gamma3_radial}}\\
=&\nradE{1}^{(\kappa=2)}(\Omega; x_3; v)\left[F(\gamma_3[0,T_r])\LS_3\right]\tag{due to Lemmas~\ref{lem::LERW_radialSLE2} and~\ref{lem::S3_bounded} and~\eqref{eqn::S3_cvg}}\\
=&\int_{K}|\ud u|^2\nradE{1}^{(\kappa=2)}(\Omega; x_3; v)\left[F(\gamma_3[0,T_r])\frac{\LZtripod(\Omega_3(T_r); x_1, x_2, \gamma_3(T_r); u)}{\Poisson(\Omega_3(T_r); v, \gamma_3(T_r))}\right]\frac{\Poisson(\Omega; v, x_3)}{\int_{\Omega}\LZtripod(\Omega; x_1, x_2, x_3; w)|\ud w|^2}\\
=&\int_{K}|\ud u|^2\nradE{1}^{(\kappa=2)}(\Omega; x_3; u)\left[F(\gamma_3[0,T_r])\frac{\LZtripod(\Omega_3(T_r); x_1, x_2, \gamma_3(T_r); u)}{\Poisson(\Omega_3(T_r); u, \gamma_3(T_r))}\right]\frac{\Poisson(\Omega; u, x_3)}{\int_{\Omega}\LZtripod(\Omega; x_1, x_2, x_3; w)|\ud w|^2}\tag{due to Lemma~\ref{lem::1rad_RN} and~\eqref{eqn::13rad_kappa2}}\\
=&\int_{K}|\ud u|^2\nradE{3}^{(\kappa=2)}(\Omega; x_1, x_2, x_3; u)\left[F(\gamma_3[0,T_r])\right]\frac{\LZtripod(\Omega; x_1, x_2, x_3; u)}{\int_{\Omega}\LZtripod(\Omega; x_1, x_2, x_3; w)|\ud w|^2}
\tag{due to Lemma~\ref{lem::3radvs1rad_RN} and~\eqref{eqn::13rad_kappa2}}\\
=&\int_{K}|\ud u|^2\nradE{3}^{(\kappa=2)}(\Omega; x_1, x_2, x_3; u)[F(\gamma_3[0,T_r])]
p(\Omega; x_1,x_2,x_3;u), 
\end{align*}
as desired. 
\end{proof}
\begin{lemma}\label{lem::gamma3_radial}
Assume the same notation as in Proof of Proposition~\ref{prop::gamma3_cvg}. The law of $\gamma_3^{\delta_n}[0,T_r^{\delta_n}]$ weighted by $\LR_3^{\delta_n}$ defined in~\eqref{eqn::def_R3_S3} is the same as the time-reversal of loop-erased random walk in $\Omega^{\delta_n}$ starting from $v^{\delta_n}$ and conditional on $\{v^{\delta_n}\rightsquigarrow e_3^{\delta_n}\}$.
\end{lemma}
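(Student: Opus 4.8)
The plan is to compute both sides of the asserted identity on a fixed discrete curve and match them, the three ingredients being: Wilson's algorithm applied in a convenient order; Fomin's formula (Lemma~\ref{lem::Fomin}) to evaluate the $\gamma_3^{\delta_n}$-side; and the reversal symmetry together with the domain Markov property of loop-erased random walk (as exploited in \cite{LawlerSchrammWernerLERWUST}) to evaluate the $\PP_\lerw^{\delta_n}(\Omega;x_3;v)$-side. Since both laws are supported on discrete simple curves from $x_3^{\delta_n}$ to the first vertex of $K_r$ that they reach, it suffices to fix such a curve $\beta$, with tip $b$, and to show that $\PP_\lerw^{\delta_n}(\Omega;x_3;v)[\,\cdot = \beta\,]$ equals $\LR_3^{\delta_n}(\beta)$ times the $\LA_1^{\delta_n}\cap\LA_2^{\delta_n}$-conditional probability that $\gamma_3^{\delta_n}[0,T_r^{\delta_n}]=\beta$, with $\LR_3^{\delta_n}(\beta)$ and $\LS_3^{\delta_n}(\beta)$ read off from~\eqref{eqn::def_R3_S3}.

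I would run Wilson's algorithm drawing the branch from $b$ first. On the event that this branch equals $\beta$ traversed backwards --- an event of some probability $p_\beta$ --- the spatial Markov property of the wired UST says that, after collapsing $\beta$ into the boundary, the remaining tree is again a wired UST in the slit domain $\Omega^{\delta_n}\setminus\beta$; here the hypothesis that $\beta$ meets $K_r$ only at its final vertex is used to guarantee that requiring $\gamma_3^{\delta_n}[0,T_r^{\delta_n}]=\beta$ forces the common branch of the two boundary branches to separate from $\beta$ exactly at $b$, and not earlier. Thus, conditionally on $\{\text{branch from }b=\beta^{-1}\}$, the event $\{\gamma_3^{\delta_n}[0,T_r^{\delta_n}]=\beta\}\cap\LA_1^{\delta_n}\cap\LA_2^{\delta_n}$ is precisely the tripod event in $\Omega^{\delta_n}\setminus\beta$ with third marked point $b$ (the boundary branches from $x_1^{\delta_n,\circ}$ and $x_2^{\delta_n,\circ}$ both merge into $\beta$ at $b$), which by Lemma~\ref{lem::Fomin} has probability $\sum_u\LZ^{\delta_n}(\Omega^{\delta_n}\setminus\beta;x_1^{\delta_n},x_2^{\delta_n},b;u)$. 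Performing the same first step on the $\PP_\lerw^{\delta_n}$-side --- draw the branch from $b$ first, then the branch from $v^{\delta_n}$, which contributes to $\{v^{\delta_n}\rightsquigarrow e_3^{\delta_n}\}\cap\{\hat\lambda^{\delta_n}[0,T_r^{\delta_n}]=\beta\}$ exactly when it merges into $\beta$ at $b$ --- gives $p_\beta$ times a discrete Poisson kernel of the boundary edge at $b$ seen from $v^{\delta_n}$ in $\Omega^{\delta_n}\setminus\beta$, divided by the normalisation $\harmonic(\Omega^{\delta_n};v^{\delta_n},e_3^{\delta_n})=\PP[v^{\delta_n}\rightsquigarrow e_3^{\delta_n}]$. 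The common factor $p_\beta$ cancels in the comparison, and what remains is organised so that $\LR_3^{\delta_n}(\beta)$ supplies exactly the ratio of discrete Poisson kernels and $\LS_3^{\delta_n}(\beta)$ absorbs the slit-domain partition function and the restriction $u\in K$; this is the content of the identity.

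The step I expect to be the main obstacle is the discrete bookkeeping on the hexagonal lattice. The tip $b$ of $\beta$ is a degree-three vertex that acquires two boundary edges when $\beta$ is removed, so one must track which free edge carries the continuation of the common branch, check that the arbitrary choice of $e_3^{\delta_n}(T_r^{\delta_n})$ in the definition of $\LR_3^{\delta_n}$ is compatible with the edge implicit in the observable $\LZ^{\delta_n}$ appearing in $\LS_3^{\delta_n}$, and confirm that the bijections used above (delete one incident edge, add $e_1^{\delta_n}$, etc., in the spirit of the proof of Lemma~\ref{lem::Fomin}) really are measure-preserving. None of this is conceptually difficult, but it has to be done carefully enough that the weight is exactly $\LR_3^{\delta_n}$ rather than $\LR_3^{\delta_n}$ up to a bounded multiplicative constant, since the subsequent use of this lemma in the proof of Proposition~\ref{prop::gamma3_cvg} relies on the exact equality.
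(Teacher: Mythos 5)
Your computational engine---Wilson's algorithm started at the tip $b$, the spatial Markov property of the wired UST, and the bijection between spanning trees of $\Omega^{\delta_n}$ containing $\beta$ and spanning trees of the slit domain---is exactly the mechanism of the paper's proof, which phrases it through spanning-tree counts. The genuine problem is your opening reduction: the identity you set out to prove, namely that $\PP_{\lerw}^{\delta_n}(\Omega;x_3;v)[\,\cdot=\beta\,]$ equals $\LR_3^{\delta_n}(\beta)$ times $\PP\big[\gamma_3^{\delta_n}[0,T_r^{\delta_n}]=\beta\cond\LA_1^{\delta_n}\cap\LA_2^{\delta_n}\big]$, is not the lemma and is false. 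Your own two computations show this. Write $p_\beta$ for the probability that the branch of the \emph{unconditioned} UST from $b$ is the reversal of $\beta$ (equivalently, that $\beta\subset\LT^{\delta_n}$), so $p_\beta=|\st(\Omega^{\delta_n}\setminus\beta)|/|\st(\Omega^{\delta_n})|$. Your tripod-side step gives $\PP\big[\{\gamma_3^{\delta_n}[0,T_r^{\delta_n}]=\beta\}\cap\LA_1^{\delta_n}\cap\LA_2^{\delta_n}\big]=p_\beta\sum_u\LZ^{\delta_n}(\Omega^{\delta_n}\setminus\beta;x_1^{\delta_n},x_2^{\delta_n},b;u)$ (sum over all interior $u$ of the slit domain), while your LERW-side step gives $\PP_{\lerw}^{\delta_n}(\Omega;x_3;v)[\,\cdot=\beta\,]=p_\beta\,\LR_3^{\delta_n}(\beta)$. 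Comparing, the LERW probability equals the conditional probability multiplied by $\LR_3^{\delta_n}(\beta)\,\PP[\LA_1^{\delta_n}\cap\LA_2^{\delta_n}]\big/\sum_u\LZ^{\delta_n}(\Omega^{\delta_n}\setminus\beta;x_1^{\delta_n},x_2^{\delta_n},b;u)$, not by $\LR_3^{\delta_n}(\beta)$; the extra factor depends on $\beta$ and cannot be delegated to $\LS_3^{\delta_n}$, which does not occur in the statement of the lemma. The reference law in the lemma is not the $\LA_1^{\delta_n}\cap\LA_2^{\delta_n}$-conditional law: it is the unconditioned measure $\beta\mapsto p_\beta$, which is also how the lemma is applied in the proof of Proposition~\ref{prop::gamma3_cvg} (the unconditioned expectations appearing in~\eqref{eqn::cvg_radial_aux1} and in $\E[F\,\LR_3^{\delta_n}\LS_3^{\delta_n}]$).

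Once the reference law is identified correctly, your LERW-side computation \emph{is} the whole proof and coincides with the paper's: by Wilson's algorithm, $\harmonic(\Omega^{\delta_n};v^{\delta_n},e_3^{\delta_n})=|\{T\in\st(\Omega^{\delta_n}):v^{\delta_n}\rightsquigarrow e_3^{\delta_n}\}|/|\st(\Omega^{\delta_n})|$ and similarly in the slit domain, so $p_\beta\,\LR_3^{\delta_n}(\beta)$ is the proportion, among spanning trees with $v^{\delta_n}\rightsquigarrow e_3^{\delta_n}$, of those whose reversed branch from $v^{\delta_n}$ begins with $\beta$, i.e.\ precisely $\PP_{\lerw}^{\delta_n}(\Omega;x_3;v)[\,\cdot=\beta\,]$. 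Your tripod-side step (Lemma~\ref{lem::Fomin} applied in the slit domain) is the content of~\eqref{eqn::cvg_radial_aux1}, not of this lemma, so it should not enter here. Finally, your caution about the tip is well placed: after removing $\beta$, the vertex $\gamma_3^{\delta_n}(T_r^{\delta_n})$ carries two boundary edges of the slit domain, and exactness requires either summing harmonic measures and observables over both edges or fixing one and the same single-edge convention in $\LR_3^{\delta_n}$ and in $\LZ^{\delta_n}$; this bookkeeping is indeed what makes the displayed identities exact rather than exact up to a bounded factor.
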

\begin{proof}
Let $F$ be any bounded continuous function on curves and it suffices to prove
\begin{equation}
\E\left[F(\gamma_3^{\delta_n}[0,T_r^{\delta_n}])\LR_3^{\delta_n}\right]=\E_{\lerw}^{\delta_n}(\Omega; x_3; v)\left[F(\gamma_3^{\delta_n}[0,T_r^{\delta_n}])\right].
\end{equation}
We denote by $\chamber_3^{\delta_n}$ the collection of all possible paths $\gamma_3^{\delta_n}$ from $x_3^{\delta_n}$ to $v^{\delta_n}$, 
and denote by $\chamber_3^{\delta_n}(T_r^{\delta_n})$ the collection of all possible paths $\gamma_3^{\delta_n}[0,T_r^{\delta_n}]$ starting from $x_3^{\delta_n}$ and stopped at the first hitting time of $K_r$. 
We denote by $\st(\Omega^{\delta_n})$ the collection of configurations of spanning trees in $\Omega^{\delta_n}$.
Given a path $\lambda\in\chamber_3^{\delta_n}$, 
we have $\lambda[0,T_r^{\delta_n}]\in \chamber_3^{\delta_n}(T_r^{\delta_n})$ and 
we denote by $\st(\Omega^{\delta_n}\setminus\lambda[0,T_r^{\delta_n}])$ the collection of configurations of spanning trees in $\Omega^{\delta_n}$ containing $\lambda[0,T_r^{\delta_n}]$. 
By Wilson's algorithm, we have
\begin{align*}
   \harmonic(\Omega^{\delta_n};v^{\delta_n},e_3^{\delta_n})=\frac{|\{T\in \st(\Omega^{\delta_n}): v^{\delta_n}\rightsquigarrow e_3^{\delta_n}\}|}{|\st(\Omega^{\delta_n})|}, 
\end{align*}
and
\begin{align*}
   \harmonic(\Omega_3^{\delta_n}(T_r^{\delta_n});v^{\delta_n},e_3^{\delta_n}(T_r^{\delta_n}))=\frac{|\{T\in \st(\Omega_3^{\delta_n}(T_r^{\delta_n}): v^{\delta_n}\rightsquigarrow e_3^{\delta_n}(T_r^{\delta_n})\}|}{|\st(\Omega_3^{\delta_n}(T_r^{\delta_n}))|}. 
\end{align*}
Thus,
\begin{align*}
\E\left[F(\gamma_3^{\delta_n}[0,T_r^{\delta_n}])\LR_3^{\delta_n}\right]
=&\sum_{\lambda[0,T_r^{\delta_n}]\in \chamber_3^{\delta_n}(T_r^{\delta_n})}\frac{|\st(\Omega^{\delta_n}\setminus\lambda[0,T_r^{\delta_n}])}{|\st(\Omega^{\delta_n})|} F(\lambda[0,T_r^{\delta_n}])
\frac{\harmonic(\Omega^{\delta_n}\setminus\lambda[0,T_r^{\delta_n}];v^{\delta_n},e_3^{\delta_n}(T_r^{\delta_n}))}{\harmonic(\Omega^{\delta_n};v^{\delta_n},e_3^{\delta_n})}\\
=&\sum_{\lambda[0,T_r^{\delta_n}]\in\chamber_3^{\delta_n}(T_r^{\delta_n})}F(\lambda[0,T_r^{\delta_n}])\frac{|\{T\in \st(\Omega^{\delta_n}\setminus \lambda[0,T_r^{\delta_n}]): v^{\delta_n}\rightsquigarrow e_3^{\delta_n}(T_r^{\delta_n})\}|}{|\{T\in \st(\Omega^{\delta_n}): v^{\delta_n}\rightsquigarrow e_3^{\delta_n}\}|}\\
=&\sum_{\lambda\in\chamber_3^{\delta_n}}F(\lambda[0,T_r^{\delta_n}])\frac{1}{|\{T\in \st(\Omega^{\delta_n}): v^{\delta_n}\rightsquigarrow e_3^{\delta_n}\}|}\\
=&\E_{\lerw}^{\delta_n}(\Omega; x_3; v)\left[F(\gamma_3^{\delta_n}[0,T_r^{\delta_n}])\right],
\end{align*}
as desired.
\end{proof}

\begin{lemma}\label{lem::S3_bounded}
Assume the same notation as in Proof of Proposition~\ref{prop::gamma3_cvg}. The sequence $\{\LS_3^{\delta_n}\}_{n}$ defined in~\eqref{eqn::def_R3_S3} is uniformly bounded. 
\end{lemma}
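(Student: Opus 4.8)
The plan is to show that $\LS_3^{\delta_n}$ is bounded above by a finite constant that does not depend on $n$ nor on the random curve $\gamma_3^{\delta_n}[0,T_r^{\delta_n}]$, using only the deterministic geometric fact that $\gamma_3^{\delta_n}$ has not yet entered $K_r$ at time $T_r^{\delta_n}$, so that the set $K$ over which the sum is taken sits at distance at least $r$ from $\gamma_3^{\delta_n}[0,T_r^{\delta_n}]$. First I would recall from~\eqref{eqn::def_R3_S3} that
\begin{align*}
\LR_3^{\delta_n}\LS_3^{\delta_n}=\sum_{u\in \LV^{\circ}(\Omega^{\delta_n})\cap K}\frac{\LZ^{\delta_n}(\Omega_3^{\delta_n}(T_r^{\delta_n}); x_1^{\delta_n}, x_2^{\delta_n}, \gamma_3^{\delta_n}(T_r^{\delta_n}); u)}{\PP[\LA_1^{\delta_n}\cap\LA_2^{\delta_n}]},
\end{align*}
and $\LR_3^{\delta_n}=\harmonic(\Omega_3^{\delta_n}(T_r^{\delta_n}); v^{\delta_n}, e_3^{\delta_n}(T_r^{\delta_n}))/\harmonic(\Omega^{\delta_n}; v^{\delta_n}, e_3^{\delta_n})$. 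The numerator of the sum is a probability, namely $\PP\left[\LA_1^{\delta_n}\cap\LA_2^{\delta_n}\cap\{\trifurcation^{\delta_n}=u\}\cap\{\gamma_3^{\delta_n}[0,T_r^{\delta_n}]=\cdot\}\right]$ reinterpreted via domain Markov in $\Omega_3^{\delta_n}(T_r^{\delta_n})$; summing over $u\in K$ gives $\PP[\{\text{trifurcation in }K\}\cap\LA_1^{\delta_n}\cap\LA_2^{\delta_n}]$ in the slit domain. So it suffices to bound, uniformly in $n$ and in the realisation of $\gamma_3^{\delta_n}[0,T_r^{\delta_n}]$, the ratio
\begin{align*}
\frac{1}{\LR_3^{\delta_n}}\cdot\frac{\PP\left[\LA_1^{\delta_n}\cap\LA_2^{\delta_n}\cap\{\trifurcation^{\delta_n}\in K\}\,\middle|\,\gamma_3^{\delta_n}[0,T_r^{\delta_n}]\right]}{\PP[\LA_1^{\delta_n}\cap\LA_2^{\delta_n}]}.
\end{align*}

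The key steps, in order, would be: (i) apply Lemma~\ref{lem::Fomin} (Fomin's formula) in the slit domain $\Omega_3^{\delta_n}(T_r^{\delta_n})$ to rewrite $\sum_{u\in K}\LZ^{\delta_n}(\Omega_3^{\delta_n}(T_r^{\delta_n});\dots;u)$ as a sum of $3\times 3$ determinants of discrete Poisson kernels, exactly as in the proof of Proposition~\ref{prop::deltacube}; (ii) expand the discrete derivatives and bound each matrix entry, using that all three marked points $x_1^{\delta_n}, x_2^{\delta_n}, \gamma_3^{\delta_n}(T_r^{\delta_n})$ of the slit domain are at a definite distance from $K$ (the first two because $K\Subset\Omega$, the last because $\gamma_3^{\delta_n}[0,T_r^{\delta_n}]$ avoids $K_r$), so that the estimates of Lemma~\ref{lem::Poisson_cvg} together with \cite[Theorem~3.13]{ChelkakSmirnovDiscreteComplexAnalysis} and the a priori Green's-function bound of Lemma~\ref{lem::Green_upperbound} give a uniform-in-$n$ comparison of $\harmonic(\Omega_3^{\delta_n}(T_r^{\delta_n}); u, e_j^{\delta_n})$ and its discrete derivatives to $\harmonic(\Omega_3^{\delta_n}(T_r^{\delta_n}); v^{\delta_n}, e_j^{\delta_n})$ for $u$ ranging over $K$; (iii) use the elementary harmonic-measure comparison of Lemma~\ref{lem::discretePoisson_max_control} to bound $\harmonic(\Omega_3^{\delta_n}(T_r^{\delta_n}); v^{\delta_n}, e_j^{\delta_n})/\harmonic(\Omega^{\delta_n}; v^{\delta_n}, e_j^{\delta_n})$ from above for $j=1,2$ by a constant depending only on $(\Omega;x_1,x_2,v;r)$ (since the slit stays inside $K_r^c$ near $x_1,x_2$), and to absorb $\LR_3^{\delta_n}$; and finally (iv) invoke Proposition~\ref{prop::deltacube} to bound $\prod_{j=1}^{3}\harmonic(\Omega^{\delta_n}; v^{\delta_n}, e_j^{\delta_n})/\PP[\LA_1^{\delta_n}\cap\LA_2^{\delta_n}]$ from above uniformly in $n$. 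Multiplying these bounds, together with the fact that $\sum_{u\in K}\frac{3\sqrt3}{4}\delta_n^2$ is comparable to the area of $K$, yields a uniform bound on $\LS_3^{\delta_n}$.

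An alternative, softer route avoids Fomin's formula in the slit domain: one can bound the numerator directly by the same Markov-chain decomposition used in Lemma~\ref{lem::deltacube_control_boundary1}, dominating $\PP\left[\LA_1^{\delta_n}\cap\LA_2^{\delta_n}\cap\{\trifurcation^{\delta_n}\in K\}\,\middle|\,\gamma_3^{\delta_n}[0,T_r^{\delta_n}]\right]$ by a product of a harmonic measure for $\eta_1^{\delta_n}$ hitting $\gamma_3^{\delta_n}[0,T_r^{\delta_n}]$ and one for $\eta_2^{\delta_n}$ hitting $\eta_1^{\delta_n}$, then comparing to $\PP[\LA_1^{\delta_n}\cap\LA_2^{\delta_n}]$ via Lemmas~\ref{lem::bPoisson_cvg}, \ref{lem::nharmonic_cvg} and Proposition~\ref{prop::deltacube}; but this gives a bound depending on $\gamma_3^{\delta_n}[0,T_r^{\delta_n}]$ only through $\dist(\cdot, K)\ge r$, which is enough. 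I expect the main obstacle to be step (ii): the slit domain $\Omega_3^{\delta_n}(T_r^{\delta_n})$ has a very rough boundary near the tip $\gamma_3^{\delta_n}(T_r^{\delta_n})$, so one cannot directly apply the convergence statements of Section~\ref{subsec::harmonic_limit}, which assume Carathéodory (and Hausdorff) convergence of nice domains; the point is that one only needs \emph{a priori upper bounds} on Poisson kernels and their discrete derivatives \emph{away} from the rough part of the boundary, where $u\in K$ sits at distance $\ge r$ from everything, and these follow from the interior Harnack-type and derivative estimates of \cite{ChelkakSmirnovDiscreteComplexAnalysis} applied in the ball $B(u, r/2)$, which is entirely inside the domain. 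Making this precise — choosing the right intermediate scales and verifying the estimates are genuinely uniform over the random slit and over $n$ — is the technical heart of the argument, but it is entirely parallel to the control already carried out in Lemmas~\ref{lem::deltacube_control_boundary1}, \ref{lem::deltacube_control_boundary2} and~\ref{lem::g_cvg}.
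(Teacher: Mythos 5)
Your skeleton is workable and overlaps the paper's proof: the paper also isolates the factor $\prod_{j=1}^{3}\harmonic(\Omega^{\delta_n}; v^{\delta_n}, e_j^{\delta_n})\big/\PP[\LA_1^{\delta_n}\cap\LA_2^{\delta_n}]$, bounded by Proposition~\ref{prop::deltacube} (your step (iv)), and controls the remaining, purely slit-domain ratio by a Wilson/Markov decomposition close in spirit to your ``alternative'' route. The genuine gap is in how you treat that slit-domain ratio. After conditioning on $\gamma_3^{\delta_n}[0,T_r^{\delta_n}]$, the relevant event is not that $\eta_1^{\delta_n}$ merely hits the slit: by the reversed domain Markov property the branches from $x_1^{\delta_n,\circ}$ and $x_2^{\delta_n,\circ}$ must exit the slit domain through the single tip edge $e_3^{\delta_n}(T_r^{\delta_n})$, and the factor $\harmonic(\Omega_3^{\delta_n}(T_r^{\delta_n}); v^{\delta_n}, e_3^{\delta_n}(T_r^{\delta_n}))$ inside $\LR_3^{\delta_n}$ is exactly what must cancel this tiny probability. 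Your alternative route, which retains only harmonic measures of the slit, loses this factor and cannot absorb $1/\LR_3^{\delta_n}$. In your primary route the tip-edge kernel does appear (as the third column of the Fomin determinant), but then the crux is a bound, uniform in $n$ and in the realization of the slit, on $\max_{u}\harmonic(\Omega_3^{\delta_n}(T_r^{\delta_n}); u, e_3^{\delta_n}(T_r^{\delta_n}))\big/\harmonic(\Omega_3^{\delta_n}(T_r^{\delta_n}); v^{\delta_n}, e_3^{\delta_n}(T_r^{\delta_n}))$ over $u$ near $K$. This is a comparison of one positive discrete harmonic function at two far-apart interior points of a \emph{random} slit domain; it does not follow from interior gradient/Harnack estimates in the single ball $B(u,r/2)$, nor from Lemma~\ref{lem::Poisson_cvg} or \cite[Theorem~3.13]{ChelkakSmirnovDiscreteComplexAnalysis}, which presuppose Carath\'eodory approximations of a fixed nice polygon. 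Moreover, you invoke Lemma~\ref{lem::discretePoisson_max_control} only for the ratios $\harmonic(\Omega_3^{\delta_n}(T_r^{\delta_n}); v^{\delta_n}, e_j^{\delta_n})/\harmonic(\Omega^{\delta_n}; v^{\delta_n}, e_j^{\delta_n})$, $j=1,2$, which are trivially at most $1$ by monotonicity; the place where a nontrivial comparison is needed is the tip-edge column, and there your cited tools do not apply.

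The paper supplies precisely this missing ingredient: it uses inequality~\eqref{eqn::dPoisson_max_control_aux3} (the Chelkak--Wan boundary-Harnack estimate, valid near the tip with a \emph{universal} constant, hence uniformly over the random slit) to convert the tip-edge ratio into a ratio of Green's functions with pole at a vertex $w$ near $\gamma_3^{\delta_n}(T_r^{\delta_n})$, and then eliminates the rough slit by monotonicity, $\Green(\Omega_3^{\delta_n}(T_r^{\delta_n}); z, w)\le \Green(\Omega^{\delta_n}; z, w)$ and $\Green(\Omega_3^{\delta_n}(T_r^{\delta_n}); v^{\delta_n}, w)\ge \Green(K_{7r/8}^{\delta_n}; v^{\delta_n}, w)$, before applying Lemma~\ref{lem::Green_cvg}. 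A repaired version of your argument needs either this device or a discrete Harnack chain from $u$ to $v^{\delta_n}$ running through the $r/4$-neighbourhood of $K$ (which stays at distance of order $r$ from both the slit and $\partial\Omega^{\delta_n}$), with chain length depending only on $(K,r)$; a single interior ball around $u$ cannot produce a two-point comparison across the domain. With that comparison in hand, your intended cancellation of the tip-edge kernel against $\LR_3^{\delta_n}$, monotonicity for $j=1,2$, the count $\#\{u\in K\cap\LV^{\circ}(\Omega^{\delta_n})\}\cdot\delta_n^{2}\lesssim 1$, and Proposition~\ref{prop::deltacube} do finish the proof.
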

\begin{proof}
Note that
\begin{align*}
\LS_3^{\delta_n}=&\frac{\PP[\LA_1^{\delta_n}\cap\LA_2^{\delta_n}\cap\{\trifurcation^{\delta_n}\in K\}\cond \gamma_3^{\delta_n}[0,T_r^{\delta_n}]]}{\PP[\LA_1^{\delta}\cap\LA_2^{\delta}]}\frac{\harmonic(\Omega^{\delta_n};v^{\delta_n},e_3^{\delta_n})}{\harmonic(\Omega_3^{\delta_n}(T_r^{\delta_n});v^{\delta_n},e_3^{\delta_n}(T_r^{\delta_n})])}\\
= &\underbrace{\frac{\PP\left[\{\trifurcation^{\delta_n}\in K\}\cap \LA_1^{\delta_n}\cap \LA_2^{\delta_n}\cond \gamma_3^{\delta_n}[0,T_r^{\delta_n}]\right]}{\prod_{j=1}^2\harmonic(\Omega^{\delta_n}; v^{\delta_n}, e_j^{\delta_n})\harmonic(\Omega_3^{\delta_n}(T_r^{\delta_n}); v^{\delta_n}, e_3^{\delta_n}(T_r^{\delta_n}))}}_{\LU_r^{\delta_n}:=}\underbrace{\frac{\prod_{j=1}^3\harmonic(\Omega^{\delta_n}; v^{\delta_n}, e_j^{\delta_n})}{\PP[\LA_1^{\delta_n}\cap\LA_2^{\delta_n}]}}_{\LU^{\delta_n}:=}. 
\end{align*}
Proposition~\ref{prop::deltacube} guarantees that the sequence $\{\LU^{\delta_n}\}_n$ is uniformly bounded. It suffices to show that the sequence $\{\LU_r^{\delta_n}\}_n$ is uniformly bounded as well. 

We use similar notation as in Proof of Lemma~\ref{lem::proba_tight2}. 
Fix $\eps>0$ such that $B(x_j,\eps)\cap K_r=\emptyset$ for $j=1,2$. 
Recall that $V_j^{\delta_n}(\eps)$ is the maximal domain contained in $B(x_j,\eps)\cap \Omega^{\delta_n}$ such that $\partial V_j^{\delta_n}(\eps)$ consists of the edges of $\delta_n \hexagon$, and $S_j^{\delta_n}(\eps)=\LV^{\partial}(V_j^{\delta_n}(\eps))\setminus \LV^{\partial}(\Omega^{\delta_n})$.
For $s>0$, let $V_3^{\delta_n}(T_r^{\delta_n},s)$ be the maximal domain contained in $B(\gamma_3^{\delta_n}(T_r^{\delta_n}),s)\cap \Omega_3^{\delta_n}(T_r^{\delta_n})$ such that $\partial V_3^{\delta_n}(T_r^{\delta_n},s)$ consists of edges of $\delta_n\hexagon$, 
and we set $S_3^{\delta_n}(T_r^{\delta_n},s)=\LV^{\partial} (V_3^{\delta_n}(T_r^{\delta_n},s))\setminus\LV^{\partial}(\Omega_3^{\delta_n}(T_r^{\delta_n}))$.
Denote by $\eta_{1,T_r}^{\delta_n}=(u_0,u_1,\dots,u_L)$ is the boundary branch from $x_1^{\delta_n,\circ}$ to $e_3^{\delta_n}(T_r^{\delta_n})$.
Given $\gamma_3^{\delta_n}[0,T_r^{\delta_n}]$, the event $\{\trifurcation^{\delta_n}\in K\}\cap \LA_1^{\delta_n}\cap \LA_2^{\delta_n}$ implies the following three events.
\begin{itemize}
   \item The boundary branch $\eta_{1,T_r}^{\delta_n}$ exits $V_1^{\delta_n}(\eps)\cap\Omega_3^{\delta_n}(T_r^{\delta_n})$ through $S_1^{\delta_n}(\eps)\cap\Omega_3^{\delta_n}(T_r^{\delta_n})$ and then hit $K$ before reaching $e_3^{\delta_n}(T_r^{\delta_n})$. 
   The probability for this event is bounded by 
   \begin{equation*}
      \harmonic(V_1^{\delta_n}(\eps)\cap\Omega_3^{\delta_n}(T_r^{\delta_n}); x_1^{\delta_n,\circ}, S_1^{\delta_n}(\eps)\cap\Omega_3^{\delta_n}(T_r^{\delta_n}))\le \harmonic(V_1^{\delta_n}(\eps); x_1^{\delta_n,\circ}, S_1^{\delta_n}(\eps)).
   \end{equation*}
We denote by $\tau(K)$ the first hitting time of $K$ for the boundary branch $\eta_{1,T_r}^{\delta_n}$.
 \item The boundary branch from $u_{\tau(K)}$ in $\Omega_3^{\delta_n}(T_r^{\delta_n})$ has to hit the boundary through the edge $e_3^{\delta_n}(T_r^{\delta_n})$. The probability of this event is bounded by
   \begin{equation*}
      \max_{z\in K\cap\LV^{\circ}(\Omega^{\delta_n})}\harmonic(\Omega_3^{\delta_n}(T_r^{\delta_n}); z, e_3^{\delta_n}(T_r^{\delta_n})).
   \end{equation*}
   \item Given $\eta_{1,T_r}^{\delta_n}$, the boundary branch from $x_2^{\delta_n,\circ}$ in $\Omega_3^{\delta_n}(T_r^{\delta_n})\setminus \eta_{1,T_r}^{\delta_n}$ has to exit $V_2^{\delta_n}(\eps)\cap\Omega_3^{\delta_n}(T_r^{\delta_n})$ through $S_2^{\delta_n}(\eps)\cap\Omega_3^{\delta_n}(T_r^{\delta_n})$.
   The probability of this event is bounded by
   \begin{equation*}
      \harmonic(V_2^{\delta_n}(\eps)\cap\Omega_3^{\delta_n}(T_r^{\delta_n}); x_2^{\delta_n,\circ}, S_2^{\delta_n}(\eps)\cap\Omega_3^{\delta_n}(T_r^{\delta_n}))\le \harmonic(V_2^{\delta_n}(\eps); x_2^{\delta_n,\circ}, S_2^{\delta_n}(\eps)).
   \end{equation*}
   \end{itemize}
   Combining the above three events, by the Markov property of random walk, we have
\begin{align}\label{eqn::rnd_bound_aux0}
&\PP\left[\{\trifurcation^{\delta_n}\in K\}\cap \LA_1^{\delta_n}\cap \LA_2^{\delta_n}\cond \gamma_3^{\delta_n}[0,T_r^{\delta_n}]\right]\notag\\
\le& \prod_{j=1}^{2}\harmonic(V_j^{\delta_n}(\eps); x_j^{\delta_n,\circ}, S_j^{\delta_n}(\eps))\times \max_{z\in K\cap\LV^{\circ}(\Omega^{\delta_n})}\harmonic(\Omega_3^{\delta_n}(T_r^{\delta_n}); z, e_3^{\delta_n}(T_r^{\delta_n})).
\end{align}
From~\eqref{eqn::nharmonic_cvg}, there exists a constant $C_{\eqref{eqn::rnd_bound_aux1}}\in (0,\infty)$ depending on $(\Omega;x_1,x_2,x_3;v;\eps)$ such that 
   \begin{equation}\label{eqn::rnd_bound_aux1}
      \prod_{j=1}^{2}\frac{\harmonic(V_j^{\delta_n}(\eps);x_j^{\delta_n,\circ},S_j^{\delta_n}(\eps))}{\harmonic(\Omega^{\delta_n};v^{\delta_n},e_j^{\delta_n})}\le C_{\eqref{eqn::rnd_bound_aux1}}.
   \end{equation}
From~\eqref{eqn::dPoisson_max_control_aux3} in Proof of Lemma~\ref{lem::discretePoisson_max_control},
there exists a universal constant $\constCW\in(0,\infty)$ such that for all $z\in K\cap\LV^{\circ}(\Omega^{\delta_n})$ and $w\in B(\gamma_3(T_r^{\delta_n}),r/2)\cap\LV^{\circ}(\Omega^{\delta_n})$,
\begin{equation}\label{eqn::rnd_bound_aux3}
   \frac{\harmonic(\Omega_3^{\delta_n}(T_r^{\delta_n}); z, e_3^{\delta_n}(T_r^{\delta_n}))}{\harmonic(\Omega_3^{\delta_n}(T_r^{\delta_n}); v^{\delta_n}, e_3^{\delta_n}(T_r^{\delta_n}))}
   \le \constCW \frac{\Green(\Omega_3^{\delta_n}(T_r^{\delta_n});z, w)}{\Green(\Omega_3^{\delta_n}(T_r^{\delta_n});v^{\delta_n},w)}.
\end{equation}
Now we fix $w$ to be a vertex in $B(\gamma_3(T_r^{\delta_n}),r/2)\cap K_{3r/4}\cap\LV^{\circ}(\Omega^{\delta_n})$. 
Denote by $K_{r}^{\delta_n}$ the maximal domain contained in $K_r\cap \Omega^{\delta_n}$ such that $\partial K_r^{\delta_n}$ consists of edges of $\delta_n\hexagon$.
As $\dist(w,\partial K_{7r/8})\ge r/8$,  we have the following two bounds:
\begin{equation*}
   \Green(\Omega_3^{\delta_n}(T_r^{\delta_n});z, w)\le \Green(\Omega^{\delta_n};z, w), \qquad 
   \Green(\Omega_3^{\delta_n}(T_r^{\delta_n});v^{\delta_n},w)\ge \Green(K_{7r/8}^{\delta_n};v^{\delta_n},w).
\end{equation*}
Plugging into~\eqref{eqn::rnd_bound_aux3}, we have 
\begin{align*}
\frac{\harmonic(\Omega_3^{\delta_n}(T_r^{\delta_n}); z, e_3^{\delta_n}(T_r^{\delta_n}))}{\harmonic(\Omega_3^{\delta_n}(T_r^{\delta_n}); v^{\delta_n}, e_3^{\delta_n}(T_r^{\delta_n}))}
   \le \constCW \frac{\Green(\Omega^{\delta_n};z, w)}{\Green(K_{7r/8}^{\delta_n};v^{\delta_n},w)}, \quad \text{ for all } z\in K\cap\LV^{\circ}(\Omega^{\delta_n}). 
\end{align*}
Combining with~\eqref{eqn::Green_cvg}, there exists a constant $C_{\eqref{eqn::rnd_bound_aux5}}\in(0,\infty)$ depending on $(\Omega,K;x_1,x_2,x_3;v;r)$ such that
\begin{equation}\label{eqn::rnd_bound_aux5}
   \frac{\harmonic(\Omega_3^{\delta_n}(T_r^{\delta_n}); z, e_3^{\delta_n}(T_r^{\delta_n}))}{\harmonic(\Omega_3^{\delta_n}(T_r^{\delta_n}); v^{\delta_n}, e_3^{\delta_n}(T_r^{\delta_n}))}\le C_{\eqref{eqn::rnd_bound_aux5}},\quad \text{ for all } z\in K\cap\LV^{\circ}(\Omega^{\delta_n}). 
\end{equation}
Plugging~\eqref{eqn::rnd_bound_aux5} and~\eqref{eqn::rnd_bound_aux1} into~\eqref{eqn::rnd_bound_aux0}, we obtain the uniform bound of $\{\LU_r^{\delta_n}\}_n$.
\end{proof}

\subsection{Proof of Theorem~\ref{thm::tripod}}
\label{subsec::tripod_proof}
In this section, we complete the proof of Theorem~\ref{thm::tripod}. It is a collection of various tools developed in previous sections. Before the proof, we still need one more lemma which are consequences of Proposition~\ref{prop::cvg_branch_eta1}. 
\begin{lemma}\label{lem::tripod_aux}
Assume the same setup as in Theorem~\ref{thm::tripod}. 
Fix a compact subset $K$ of $\Omega$. 
Suppose $\ell$ is chordal $\SLE_2\sim\chordalSLE^{(\kappa=2)}(\Omega; x_1, x_2)$. The law of $\gamma_1^{\delta}\cup\gamma_2^{\delta}$ conditional on $\LA_1^{\delta}\cap\LA_2^{\delta}\cap\{\trifurcation^{\delta}\in K\}$ converges weakly to the law of $\ell$ weighted by the following Radon-Nikodym derivative 
\begin{align}\label{eqn::tripod_aux_RN}
\frac{\nharmonic(\Omega\setminus\ell; x_3, \ell\cap K)}{\chordalSLEexp^{(\kappa=2)}[\nharmonic(\Omega\setminus\ell; x_3, \ell\cap K)]}. 
\end{align}
Moreover, 
\begin{align}\label{eqn::tripod_aux_RN_exp}
\chordalSLEexp^{(\kappa=2)}[\nharmonic(\Omega\setminus\ell; x_3, \ell\cap K)]=\left(\frac{2\Poisson(\Omega; x_2, x_3)\Poisson(\Omega; x_3, x_1)}{\Poisson(\Omega; x_1, x_2)}\right)^{1/2}\int_K p(\Omega; x_1, x_2, x_3; w)|\ud w|^2. 
\end{align}
\end{lemma}
\begin{proof}
Note that $\gamma_1^{\delta}\cup\gamma_2^{\delta}\cup \{e_2^{\delta}\}$ has the same law as the boundary branch $\ell^{\delta}$ in UST from $x_1^{\delta, \circ}$ to $x_2^{\delta}$.
For any continuous bounded function $F$ on curves, we have  
\begin{align}
&\E\left[F(\gamma_1^{\delta}\cup\gamma_2^{\delta}\cup \{e_2^{\delta}\})\cond \LA_1^{\delta}\cap\LA_2^{\delta}\cap\{\trifurcation^{\delta}\in K\}\right]\notag\\
=&\frac{\E[F(\gamma_1^{\delta}\cup\gamma_2^{\delta}\cup \{e_2^{\delta}\})\one\{\LA_1^{\delta}\cap\LA_2^{\delta}\cap\{\trifurcation^{\delta}\in K\}\}]}{\PP[\LA_1^{\delta}\cap\LA_2^{\delta}\cap\{\trifurcation^{\delta}\in K\}]}\notag\\
=&\frac{\E[F(\ell^{\delta})\harmonic(\Omega^{\delta}\setminus \ell^{\delta}; x_3^{\delta, \circ}, \ell^{\delta}\cap K)\one\{x_1^{\delta, \circ}\rightsquigarrow e_2^{\delta}\}]}{\PP[\LA_1^{\delta}\cap\LA_2^{\delta}\cap\{\trifurcation^{\delta}\in K\}]}\notag\\
=&\frac{\PP[x_1^{\delta, \circ}\rightsquigarrow e_2^{\delta}]\harmonic(\Omega^{\delta}; v^{\delta}, e_3^{\delta})}{\PP[\LA_1^{\delta}\cap\LA_2^{\delta}\cap\{\trifurcation^{\delta}\in K\}]}
\E\left[F(\ell^{\delta})\frac{\harmonic(\Omega^{\delta}\setminus \ell^{\delta}; x_3^{\delta, \circ}, \ell^{\delta}\cap K)}{\harmonic(\Omega^{\delta}; v^{\delta}, e_3^{\delta})}\cond \{x_1^{\delta, \circ}\rightsquigarrow e_2^{\delta}\}\right]. \label{eqn::final_aux1}
\end{align}
We have the following observations.
\begin{itemize}
\item From~\eqref{eqn::bPoisson_cvg}, we have 
\begin{align}\label{eqn::final_aux2}
\lim_{\delta\to 0}\frac{\PP[x_1^{\delta, \circ}\rightsquigarrow e_2^{\delta}]}{\harmonic(\Omega^{\delta}; v^{\delta}, e_1^{\delta})\harmonic(\Omega^{\delta}; v^{\delta}, e_2^{\delta})}=\frac{2\sqrt{3}\pi\Poisson(\Omega; x_1, x_2)}{\Poisson(\Omega; v, x_1)\Poisson(\Omega; v, x_2)}. 
\end{align}
\item From Proposition~\ref{prop::observable_cvg}, we have 
\begin{align}\label{eqn::final_aux3}
\lim_{\delta\to 0}\frac{\PP[\LA_1^{\delta}\cap\LA_2^{\delta}\cap\{\trifurcation^{\delta}\in K\}]}{\prod_{j=1}^3 \harmonic(\Omega^{\delta}; v^{\delta}, e_j^{\delta})}=\frac{8\sqrt{2}}{\prod_{j=1}^3 \Poisson(\Omega; v, x_j)}\int_K\LZtripod(\Omega; x_1, x_2, x_3; w)|\ud w|^2. 
\end{align}
\item From Lemma~\ref{lem::boundary_branch_cvg1}, the law of $\ell^{\delta}$ conditional on $\{x_1^{\delta, \circ}\rightsquigarrow e_2^{\delta}\}$ converges weakly to the law of $\ell$. The analysis in Lemma~\ref{lem::boundary_branch_cvg_L1} asserts
\begin{align}\label{eqn::final_aux4}
\lim_{\delta\to 0}\E\left[F(\ell^{\delta})\frac{\harmonic(\Omega^{\delta}\setminus \ell^{\delta}; x_3^{\delta, \circ}, \ell^{\delta}\cap K)}{\harmonic(\Omega^{\delta}; v^{\delta}, e_3^{\delta})}\cond \{x_1^{\delta, \circ}\rightsquigarrow e_2^{\delta}\}\right]=\E\left[F(\ell)\frac{\sqrt{3}\nharmonic(\Omega\setminus\ell; x_3, \ell\cap K)}{\Poisson(\Omega; v, x_3)}\right]. 
\end{align}
\end{itemize}
Plugging~\eqref{eqn::final_aux2}-\eqref{eqn::final_aux4} into~\eqref{eqn::final_aux1}, we obtain
\begin{align*}
\lim_{\delta\to 0}\E\left[F(\gamma_1^{\delta}\cup\gamma_2^{\delta})\cond \LA_1^{\delta}\cap\LA_2^{\delta}\cap\{\trifurcation^{\delta}\in K\}\right]=\frac{3\pi \Poisson(\Omega; x_1, x_2)}{4\sqrt{2}\int_K\LZtripod(\Omega; x_1, x_2, x_3; w)|\ud w|^2}\E\left[F(\ell)\nharmonic(\Omega\setminus\ell; x_3, \ell\cap K)\right]. 
\end{align*}
Combining with
\begin{align*}
\int_{K}\LZtripod(\Omega; x_1, x_2, x_3; w)|\ud w|^2
=&{\frac{3\pi}{4}} \left(\Poisson(\Omega; x_1, x_2)\Poisson(\Omega; x_2, x_3)\Poisson(\Omega; x_3, x_1)\right)^{1/2}\int_{K}p(\Omega; x_1, x_2, x_3; w)|\ud w|^2,
\end{align*}
we obtain the conclusion. 
\end{proof}

\begin{proof}[Proof of Theorem~\ref{thm::tripod}]
As the tripod $\{(\gamma_1^{\delta}, \gamma_2^{\delta}, \gamma_3^{\delta})\}_{\delta>0}$ is tight due to Proposition~\ref{prop::tripod_tightness}, for any sequence $\delta_n\to 0$, there exists a subsequence, which we still denote by $\{\delta_n\}$, such that the conditional law of $\{(\gamma_1^{\delta_n}, \gamma_2^{\delta_n}, \gamma_3^{\delta_n})\}_n$ converges. We can couple 
$\{(\gamma_1^{\delta_n}, \gamma_2^{\delta_n}, \gamma_3^{\delta_n})\}_n$ and its limit $(\gamma_1, \gamma_2, \gamma_3)$ together, such that $\gamma_j^{\delta_n}\to \gamma_j$ in metric~\eqref{eqn::curves_metric} almost surely for $j=1,2,3$. We denote this coupling by $\PP$. Proposition~\ref{prop::tripod_tightness} also asserts that there exists $\trifurcation\in\Omega$ such that $(\gamma_1, \gamma_2, \gamma_3)\in\chamber(\Omega; x_1, x_2, x_3; \trifurcation)$ almost surely. 
Let us derive the law of $(\gamma_1, \gamma_2, \gamma_3)$. 
By setting $F=1$ in~\eqref{eqn::gamma3_cvg}, we have
\begin{align}\label{eqn::trifurcation_distribution_repeat}
\lim_{\delta=0}\PP[\trifurcation^{\delta}\in K\cond \LA_1^{\delta}\cap\LA_2^{\delta}]=\int_K p(\Omega; x_1, x_2, x_3; u)|\ud u|^2,\qquad\text{for any compact subset $K$ of $\Omega$}. 
\end{align}
This implies~\eqref{eqn::trifurcation_distribution}.
\medbreak

Fix $z\in \Omega$ and suppose $0<\eps<r$ and $B(z,4r)\subset\Omega$. 
Let $T_r^{\delta}$ (resp. $T_r$) be the first time $\gamma_3^{\delta}$ (resp. $\gamma_3$) hits $B(z,r)$. 
Plugging~\eqref{eqn::trifurcation_distribution_repeat} into~\eqref{eqn::gamma3_cvg}, for any bounded continuous function $F$ on curves, we obtain
\begin{align*}
&\lim_{\delta\to 0}\E\left[F(\gamma_3^{\delta}[0,T_r^{\delta}])\cond \LA_1^{\delta}\cap\LA_2^{\delta}\cap\{\trifurcation^{\delta}\in \overline{B}(z,\eps)\}\right]\notag\\
=&\frac{1}{\int_{\overline{B}(z,\eps)}p(\Omega; x_1,x_2,x_3;w) |\ud w|^2}\int_{\overline{B}(z,\eps)}\nradE{3}^{(\kappa=2)}(\Omega; x_1, x_2, x_3; u)[F(\gamma_3[0,T_r])]
p(\Omega; x_1,x_2,x_3;u) |\ud u|^2. 
\end{align*}
This implies that the law of $\gamma_3$ given $\trifurcation$ is radial $\SLE_2(2,2)$ in $\Omega$ from $x_3$ to $\trifurcation$ with force points $(x_1, x_2)$. 
\medbreak

Finally, let us address the conditional law of $(\gamma_1, \gamma_2)$ given $(\gamma_3, \trifurcation)$. To this end, we will compare the relation between the following three measures.
\begin{itemize}
\item The conditional law of $(\gamma_1, \gamma_2)$ given $(\gamma_3, \trifurcation)$ under $\PP$ (the above coupling). 
\item Suppose $\ell$ is chordal $\SLE_2\sim\chordalSLE^{(\kappa=2)}(\Omega; x_1, x_2)$ from $x_1$ to $x_2$. Let $\gamma_1$ be $\ell$ and let $\gamma_2$ be the time-reversal of $\ell$. Consider the law of $(\gamma_1, \gamma_2)$ under $\chordalSLE^{(\kappa=2)}(\Omega; x_1, x_2)$. 
\item The law of $(\gamma_1, \gamma_2)$ given $\gamma_3$ under $\nradP{3}^{(\kappa=2)}(\Omega; x_1, x_2, 
x_3; z)$. 
\end{itemize}
Given $\gamma_3[0,T_r]$, let $\phi_r$ be the conformal map from $\Omega_3(T_r)$ onto $\Omega$ with \[\phi_r(x_1)=x_1, \quad \phi_r(x_2)=x_2, \quad\phi_r(\gamma_3(T_r))=x_3.\] 
Fix $U\subset\Omega$ such that $(U; x_1, x_2)$ is a $2$-polygon and it has a positive distance from $\{x_3\}$. Note that $\phi_r(z)\not\in U$ when $r>0$ small enough. Let $\tau_j^U$ be the first time $\phi_r(\gamma_j)$ exits $U$. 
Let $g_U$ be the conformal map from $\Omega\setminus(\phi_r(\gamma_1[0,\tau_1^U]\cup\gamma_2[0,\tau_2^U]))$ onto $\Omega$ such that
\[g_U(\phi_r(\gamma_1(\tau_1^U)))=x_1, \qquad g_U(\phi_r(\gamma_2(\tau_2^U)))=x_2,\qquad g_U(x_3)=x_3.\]
See Figure~\ref{fig::final_control}. 
\begin{itemize}
\item From the domain Markov property of three-sided radial $\SLE$, the conditional law of $(\phi_r(\gamma_1), \phi_r(\gamma_2))$ given $\gamma_3[0,T_r]$ under $\nradP{3}^{(\kappa=2)}(\Omega; x_1, x_2, x_3; z)$ is the same as their law under $\nradP{3}^{(\kappa=2)}(\Omega; x_1, x_2, 
x_3; \phi_r(z))$. From Lemma~\ref{lem::3radvschordal}, the law of $(\phi_r(\gamma_1[0,\tau_1^U]), \phi_r(\gamma_2[0,\tau_2^U]))$ given $\gamma_3[0,T_r]$ under $\nradP{3}^{(\kappa=2)}(\Omega; x_1, x_2, x_3; z)$ is the same as $\chordalSLE^{(\kappa=2)}(\Omega; x_1, x_2)$ weighted by the following Radon-Nikodym derivative
\begin{align}\label{eqn::final_aux5}
\one\{\phi_r(\gamma_1[0,\tau_1^U])\cap\phi_r(\gamma_2[0,\tau_2^U])=\emptyset\}|g_U'(x_3)||g_U'(\phi_r(z))|^{2}\frac{\LZtripod(\Omega; x_1, x_2, x_3; g_U(\phi_r(z)))}{\LZtripod(\Omega; x_1, x_2, x_3; \phi_r(z))}.
\end{align}

\item 
We claim that the law of $(\phi_r(\gamma_1[0,\tau_1^U]), \phi_r(\gamma_2[0,\tau_2^U]))$ given $\gamma_3[0,T_r]$ and $\{\trifurcation \in B(z,\eps)\}$ under $\PP$ is the same as $\chordalSLE^{(\kappa=2)}(\Omega; x_1, x_2)$ weighted by the following Radon-Nikodym derivative
\begin{align}\label{eqn::final_aux7}
\frac{|g_U'(x_3)|\int_{g_U(\phi_r(B(z,\eps))))}p(\Omega; x_1, x_2, x_3; w)|\ud w|^2}{\int_{\phi_r(B(z,\eps))}p(\Omega; x_1, x_2, x_3; w)|\ud w|^2}. 
\end{align}
From the domain Markov property of time-reversal of LERW~\cite[Lemma~3.2]{LawlerSchrammWernerLERWUST} and Lemma~\ref{lem::tripod_aux}, the law of $\gamma_1^{\delta}\cup\gamma_2^{\delta}$ conditional on $\gamma_3^{\delta}[0,T_r^{\delta}]$ and $\LA_1^{\delta}\cap\LA_2^{\delta}\cap\{\trifurcation^{\delta}\in \overline{B}(z,\eps)\}$ converges weakly to the law of $\tilde{\ell}$, chordal $\SLE_2$ in $(\Omega_3(T_r); x_1, x_2)$, weighted by the following Radon-Nikodym derivative 
\begin{align*}
\left(\frac{\Poisson(\Omega_3(T_r); x_1, x_2)}{2\Poisson(\Omega_3(T_r); x_2, \gamma_3(T_r))\Poisson(\Omega_3(T_r); \gamma_3(T_r), x_1)}\right)^{1/2}
\frac{\nharmonic(\Omega\setminus(\gamma_3[0,T_r]\cup\tilde{\ell}); \gamma_3(T_r), \tilde{\ell}\cap B(z,\eps))}{\int_{B(z,\eps)} p(\Omega_3(T_r); x_1, x_2, 
\gamma_3(T_r); w)|\ud w|^2}. 
\end{align*}
Therefore, the law of $(\phi_r(\gamma_1), \phi_r(\gamma_2))$ given $\gamma_3[0,T_r]$ and $\{\trifurcation \in B(z,\eps)\}$ under $\PP$ is the same as $\ell\sim\chordalSLE^{(\kappa=2)}(\Omega; x_1, x_2)$ weighted by 
\begin{align*}
\left(\frac{\Poisson(\Omega; x_1, x_2)}{2\Poisson(\Omega; x_2, x_3)\Poisson(\Omega; x_3, x_1)}\right)^{1/2}\frac{\nharmonic(\Omega\setminus\ell; x_3, \ell\cap \phi_r(B(z,\eps)))}{\int_{\phi_r(B(z,\eps))}p(\Omega; x_1, x_2, x_3; w)|\ud w|^2}. 
\end{align*}
Furthermore, the law of $(\phi_r(\gamma_1[0,\tau_1^U]), \phi_r(\gamma_2[0,\tau_2^U]))$ given $\gamma_3[0,T_r]$ and $\{\trifurcation \in B(z,\eps)\}$ under $\PP$ is the same as $\chordalSLE^{(\kappa=2)}(\Omega; x_1, x_2)$ weighted by the following Radon-Nikodym derivative
\begin{align}\label{eqn::final_aux6}
\left(\frac{\Poisson(\Omega; x_1, x_2)}{2\Poisson(\Omega; x_2, x_3)\Poisson(\Omega; x_3, x_1)}\right)^{1/2}\frac{\chordalSLEexp^{(\kappa=2)}[\nharmonic(\Omega\setminus\ell; x_3, \ell\cap \phi_r(B(z,\eps)))\cond \phi_r(\gamma_1[0,\tau_1^U]), \phi_r(\gamma_2[0,\tau_1^U])]}{\int_{\phi_r(B(z,\eps))}p(\Omega; x_1, x_2, x_3; w)|\ud w|^2}. 
\end{align}
Let us calculate the conditional expectation:
\begin{align*}
&\chordalSLEexp^{(\kappa=2)}[\nharmonic(\Omega\setminus\ell; x_3, \ell\cap \phi_r(B(z,\eps)))\cond \phi_r(\gamma_1[0,\tau_1^U]), \phi_r(\gamma_2[0,\tau_1^U])]\\
=&|g_U'(x_3)|\chordalSLEexp^{(\kappa=2)}[\nharmonic(\Omega\setminus\ell; x_3, \ell\cap g_U(\phi_r(B(z,\eps))))\cond \phi_r(\gamma_1[0,\tau_1^U]), \phi_r(\gamma_2[0,\tau_1^U])]\tag{due to~\eqref{eqn::nharmonic_cov}}\\
=&|g_U'(x_3)|\left(\frac{2\Poisson(\Omega; x_2, x_3)\Poisson(\Omega; x_3, x_1)}{\Poisson(\Omega; x_1, x_2)}\right)^{1/2}\int_{g_U(\phi_r(B(z,\eps))))}p(\Omega; x_1, x_2, x_3; w)|\ud w|^2. \tag{due to~\eqref{eqn::tripod_aux_RN_exp}}
\end{align*}
Plugging into~\eqref{eqn::final_aux6}, we obtain~\eqref{eqn::final_aux7} as desired. 
\end{itemize}
Combining~\eqref{eqn::final_aux5} and~\eqref{eqn::final_aux7}, the law of $(\phi_r(\gamma_1[0,\tau_1^U]), \phi_r(\gamma_2[0,\tau_2^U]))$ given $\gamma_3[0,T_r]$ and $\{\trifurcation \in B(z,\eps)\}$ under $\PP$ is the same as 
the law of $(\phi_r(\gamma_1[0,\tau_1^U]), \phi_r(\gamma_2[0,\tau_2^U]))$ given $\gamma_3[0,T_r]$ under $\nradP{3}^{(\kappa=2)}(\Omega; x_1, x_2, x_3; z)$ weighted by the following Radon-Nikodym derivative
\begin{align}\label{eqn::final_RN}
\LR(\eps, r):=\frac{\int_{g_U(\phi_r(B(z,\eps))))}p(\Omega; x_1, x_2, x_3; w)|\ud w|^2}{\int_{\phi_r(B(z,\eps))}p(\Omega; x_1, x_2, x_3; w)|\ud w|^2}\frac{p(\Omega; x_1, x_2, x_3; \phi_r(z))}{|g_U'(\phi_r(z))|^2 p(\Omega; x_1, x_2, x_3; g_U(\phi_r(z)))}. 
\end{align}
It is clear that $\lim_{\eps\to 0}\LR(\eps, r)=1$ almost surely; and we will prove in Lemma~\ref{lem::final_control} that 
\begin{equation}\label{eqn::final_control_RNuniformbound}
\LR(\eps, r)\lesssim 1, \qquad \text{for all }\eps\le r/16.
\end{equation}
This gives the conclusion that the law of $(\phi_r(\gamma_1[0,\tau_1^U]), \phi_r(\gamma_2[0,\tau_2^U]))$ given $(\gamma_3[0,T_r], \trifurcation)$ under $\PP$ is the same as 
the law of $(\phi_r(\gamma_1[0,\tau_1^U]), \phi_r(\gamma_2[0,\tau_2^U]))$ given $\gamma_3[0,T_r]$ under $\nradP{3}^{(\kappa=2)}(\Omega; x_1, x_2, x_3; \trifurcation)$ as desired.  
\end{proof}

\begin{figure}[ht!]
   \begin{center}
   \includegraphics[width=0.9\textwidth]{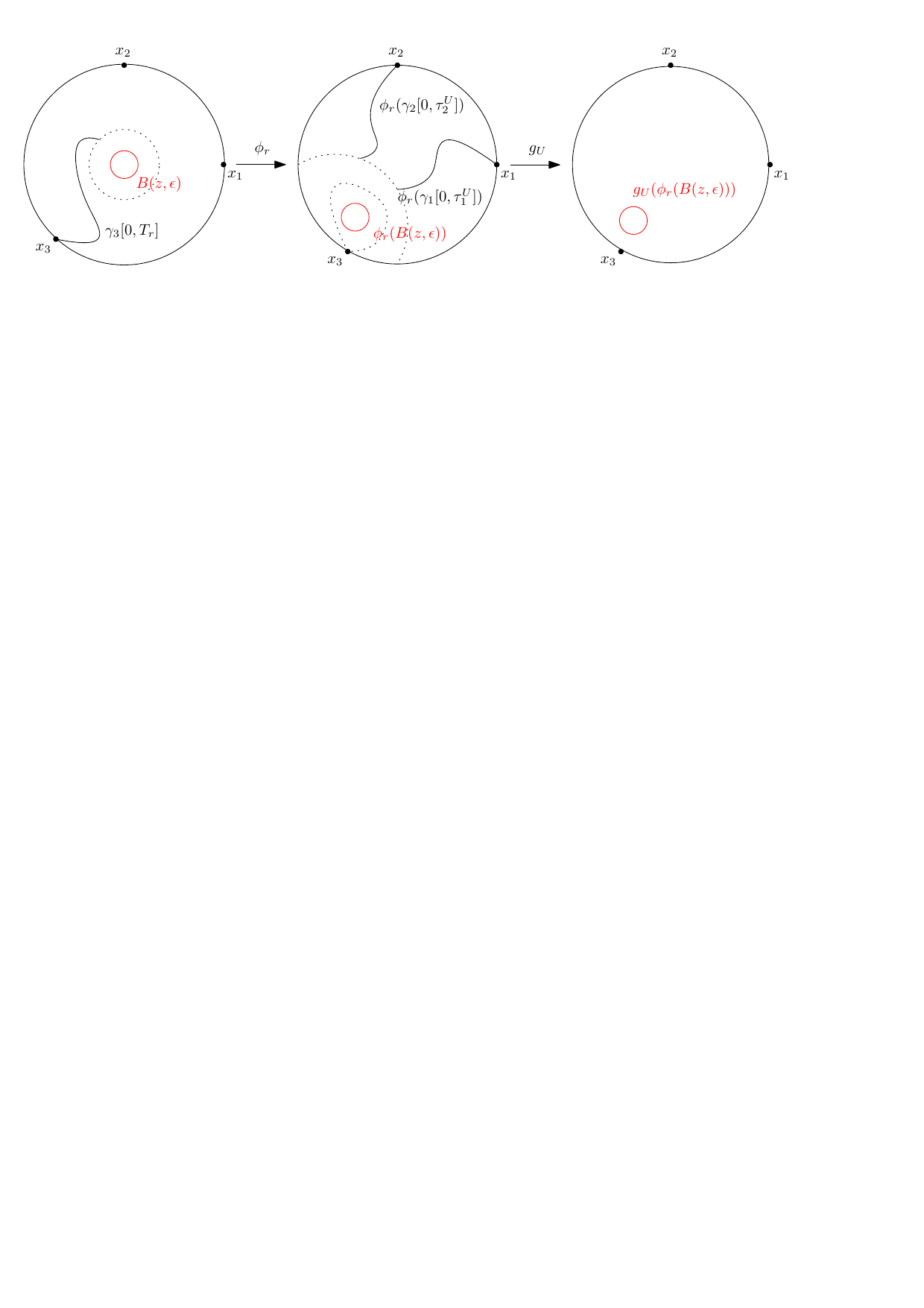}
   \end{center}
   \caption{The conformal map $\phi_r$ is from $\Omega_3(T_r)=\Omega\setminus\gamma_3[0,T_r]$ onto $\Omega$ such that $\phi_r(x_1)=x_1$, $\phi_r(x_2)=x_2$ and $\phi_r(\gamma_3(T_r))=x_3$. The conformal map $g_U$ is from $\Omega\setminus(\phi_r(\gamma_1[0,\tau_1^U]\cup\gamma_2[0,\tau_2^U]))$ onto $\Omega$ such that $g_U(\phi_r(\gamma_1(\tau_1^U)))=x_1$, $g_U(\phi_r(\gamma_2(\tau_2^U)))=x_2$ and $g_U(x_3)=x_3$.}
   \label{fig::final_control}
  \end{figure}

\begin{lemma}\label{lem::final_control}
Assume the same notation as in Proof of Theorem~\ref{thm::tripod}. The estimate~\eqref{eqn::final_control_RNuniformbound} holds. 
\end{lemma}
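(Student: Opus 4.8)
The plan is to use the conformal covariance of the density $p$ to rewrite $\LR(\eps,r)$ as a product of two ratios, each of the form \emph{(average of a positive function over $\phi_r(B(z,\eps))$) over (its value at $\phi_r(z)$)} — one for the domain $\Omega$, one for the slit domain obtained by cutting along $\phi_r(\gamma_1[0,\tau_1^U]\cup\gamma_2[0,\tau_2^U])$ — and then to bound each ratio by a Koebe--Harnack oscillation estimate.

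Concretely, combining \eqref{eqn::LZtripod_cov}, \eqref{eqn::LZtripod_integrable} and \eqref{eqn::density_def}, the density is conformally covariant of weight $2$: $p(\Omega;x_1,x_2,x_3;w)=|\varphi'(w)|^2\,p(\varphi(\Omega);\varphi(x_1),\varphi(x_2),\varphi(x_3);\varphi(w))$ for every conformal $\varphi$ on $\Omega$ (the boundary-Poisson-kernel weights in $\LZtripod$ and in $\int_\Omega\LZtripod$ cancel). Set $\Omega':=\Omega\setminus(\phi_r(\gamma_1[0,\tau_1^U])\cup\phi_r(\gamma_2[0,\tau_2^U]))$ and $\mathfrak{t}_j:=\phi_r(\gamma_j(\tau_j^U))$, so that $g_U:\Omega'\to\Omega$ with $g_U(\mathfrak{t}_1)=x_1$, $g_U(\mathfrak{t}_2)=x_2$, $g_U(x_3)=x_3$. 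The substitution $w=g_U(w')$ together with the covariance identity turns $\int_{g_U(\phi_r(B(z,\eps)))}p(\Omega;x_1,x_2,x_3;w)|\ud w|^2$ into $\int_{\phi_r(B(z,\eps))}p(\Omega';\mathfrak{t}_1,\mathfrak{t}_2,x_3;w')|\ud w'|^2$, and the factor $|g_U'(\phi_r(z))|^2 p(\Omega;x_1,x_2,x_3;g_U(\phi_r(z)))$ into $p(\Omega';\mathfrak{t}_1,\mathfrak{t}_2,x_3;\phi_r(z))$; dividing numerator and denominator by the area of $\phi_r(B(z,\eps))$ gives
\begin{equation*}
\LR(\eps,r)=\frac{\langle p(\Omega';\mathfrak{t}_1,\mathfrak{t}_2,x_3;\cdot)\rangle_\eps}{p(\Omega';\mathfrak{t}_1,\mathfrak{t}_2,x_3;\phi_r(z))}\cdot\frac{p(\Omega;x_1,x_2,x_3;\phi_r(z))}{\langle p(\Omega;x_1,x_2,x_3;\cdot)\rangle_\eps},
\end{equation*}
where $\langle\cdot\rangle_\eps$ is the average over $\phi_r(B(z,\eps))$. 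Since $\phi_r(z)\in\phi_r(B(z,\eps))$, it now suffices to show that $p(\Omega;x_1,x_2,x_3;\cdot)$ and $p(\Omega';\mathfrak{t}_1,\mathfrak{t}_2,x_3;\cdot)$ each have oscillation (ratio of sup to inf) bounded by a constant on $\phi_r(B(z,\eps))$, for then each of the two ratios lies in $[M^{-1},M]$ and $\LR(\eps,r)\le M^2$.

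For the oscillation: since $\gamma_3[0,T_r]$ does not meet $B(z,r)$, we have $\dist(z,\partial\Omega_3(T_r))\ge r$, so Koebe distortion gives $|\phi_r'(w)|\asymp|\phi_r'(z)|$ on $B(z,\eps)$ and $\phi_r(B(z,\eps))\subset B(\phi_r(z),2\eps|\phi_r'(z)|)$ for $\eps\le r/16$, while Koebe's $\tfrac14$-theorem gives $\dist(\phi_r(z),\partial\Omega)\ge\tfrac14|\phi_r'(z)|r$, whence $\phi_r(B(z,\eps))\subset B(\phi_r(z),\tfrac12\dist(\phi_r(z),\partial\Omega))$. On this ball each $\Poisson(\Omega;\cdot,x_j)$, being positive and harmonic, has bounded oscillation by Harnack, and $\CR(\Omega;\cdot)\asymp\dist(\cdot,\partial\Omega)$ has bounded oscillation by the elementary bounds $\dist\le\CR\le 4\dist$; since by \eqref{eqn::density_def} $p(\Omega;x_1,x_2,x_3;\cdot)$ is a $w$-independent constant times $\prod_j\Poisson(\Omega;\cdot,x_j)^2/\CR(\Omega;\cdot)^2$, its oscillation on $\phi_r(B(z,\eps))$ is universally bounded. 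The same argument applies to $p(\Omega';\mathfrak{t}_1,\mathfrak{t}_2,x_3;\cdot)$: since $\phi_r(z)\notin U\supseteq\phi_r(\gamma_1[0,\tau_1^U])\cup\phi_r(\gamma_2[0,\tau_2^U])$, the number $\rho_\ast:=\dist(\phi_r(z),\partial\Omega')$ is strictly positive, and for $\eps$ small enough $\phi_r(B(z,\eps))\subset B(\phi_r(z),\tfrac12\rho_\ast)$, on which $\Poisson(\Omega';\cdot,\mathfrak{t}_j)$, $\Poisson(\Omega';\cdot,x_3)$ and $\CR(\Omega';\cdot)$ again have bounded oscillation.

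I expect the only real subtlety to be bookkeeping the uniformity. The $\Omega$-side estimate is exactly what makes $\eps\le r/16$ the natural threshold; on the $\Omega'$-side the threshold below which $\phi_r(B(z,\eps))\subset B(\phi_r(z),\tfrac12\rho_\ast)$ depends on $\rho_\ast$, hence on the realization. This is harmless for the use of the lemma, where in the proof of Theorem~\ref{thm::tripod} one lets $\eps\to0$ with $r$ and the sample point fixed: accordingly the implied constant in $\LR(\eps,r)\lesssim1$ may depend on $(\Omega;x_1,x_2,x_3;z;r)$ and on the realization of $(\gamma_1,\gamma_2,\gamma_3)$ (equivalently on $\phi_r$, $\Omega'$ and $g_U$), but not on $\eps$. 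One should also note at the outset that $\phi_r(B(z,\eps))\subset\Omega'$ for $\eps$ small — which holds because $\dist(z,\gamma_1[0,\tau_1^U]\cup\gamma_2[0,\tau_2^U])>0$ — so that the change of variables and all quantities in $\LR(\eps,r)$ are well defined.
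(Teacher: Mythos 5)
Your algebraic reduction of $\LR(\eps,r)$ is correct: the density $p$ is indeed conformally covariant of weight $2$ with no boundary factors, and pulling the $g_U$-integral back to the slit domain $\Omega'=\Omega\setminus\phi_r(\gamma_1[0,\tau_1^U]\cup\gamma_2[0,\tau_2^U])$ rewrites $\LR(\eps,r)$ as the product of two "average over $\phi_r(B(z,\eps))$ versus value at $\phi_r(z)$" ratios; the Harnack-plus-$\dist\le\CR\le4\dist$ oscillation estimate is also exactly the paper's tool. The genuine gap is in the slit-domain factor. You bound the oscillation of $p(\Omega';\cdot)$ only on $B(\phi_r(z),\tfrac12\rho_*)$ with $\rho_*=\dist(\phi_r(z),\partial\Omega')$ treated as an uncontrolled positive number, so both your admissible range of $\eps$ and your implied constant depend on the realization of the curves. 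That is strictly weaker than~\eqref{eqn::final_control_RNuniformbound}: by the paper's convention $\lesssim$ means a universal constant, valid simultaneously for all $\eps\le r/16$ and all realizations, and this uniformity is what makes the passage $\eps\to 0$ in the proof of Theorem~\ref{thm::tripod} a routine dominated-convergence step with a deterministic bound. Declaring the realization-dependence ``harmless'' replaces the lemma by a weaker statement whose sufficiency would itself require a further argument (e.g.\ uniform integrability of the Radon--Nikodym derivatives), which you do not supply; moreover, for $\eps$ between your realization-dependent threshold and $r/16$ your argument gives no bound at all.

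The missing step is precisely the paper's quantitative use of Koebe's one-quarter theorem on the ball $B(z,r)$: since $\gamma_3[0,T_r]$ and the stopped curves $\gamma_1[0,\tau_1^U]\cup\gamma_2[0,\tau_2^U]$ stay outside $B(z,r)$, the maps $\phi_r$ and $g_U\circ\phi_r$ are univalent on $B(z,r)$, whence in your pulled-back picture $\rho_*=\dist(\phi_r(z),\partial\Omega')\ge\tfrac{r}{4}|\phi_r'(z)|$, and by the growth theorem $\phi_r(B(z,\eps))\subset B\left(\phi_r(z),2\eps|\phi_r'(z)|\right)\subset B\left(\phi_r(z),\tfrac12\rho_*\right)$ for every $\eps\le r/16$. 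With this lower bound on $\rho_*$ (rather than merely $\rho_*>0$), your Harnack and conformal-radius comparisons on the $\Omega'$ side hold with universal constants on the whole range $\eps\le r/16$, and your scheme then yields the paper's conclusion verbatim; the paper achieves the same effect by pushing forward with $\varphi_2=g_U\circ\phi_r$ and comparing values of $p(\Omega;x_1,x_2,x_3;\cdot)$ on $B(g_U(\phi_r(z)),8\eps_2)$, where Koebe gives $\dist(g_U(\phi_r(z)),\partial\Omega)\ge 16\eps_2$. Note also that the same geometric input --- $B(z,r)$ disjoint from the stopped curves, not just a positive distance --- is what you need even to assert $\phi_r(B(z,\eps))\subset\Omega'$ uniformly for $\eps\le r/16$.
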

\begin{proof}
We denote 
\begin{align*}
&\varphi_1(\cdot)=\phi_r(\cdot),\quad z_1=\phi_r(z), \quad \eps_1=\frac{1}{4}\eps|\phi_r'(z)|;\\
&\varphi_2(\cdot)=g_U(\phi_r(\cdot)), \quad z_2=g_U(\phi_r(z)), \quad \eps_2=\frac{1}{4}\eps |g_U'(\phi_r(z))\phi_r'(z)|.
\end{align*}
Koebe’s one quarter theorem and the growth theorem assert that, for all $\eps\le r/16$, 
\begin{align*}
B(z_j, \eps_j)\subset \varphi_j(B(z, \eps))\subset B(z_j, 8\eps_j). 
\end{align*}
 Thus,
\begin{align}\label{eqn::final_control_aux1}
\LR(\eps, r)\le \frac{\int_{B(z_2, 8\eps_2)}p(\Omega; x_1, x_2, x_3; w)|\ud w|^2}{\int_{B(z_1, \eps_1)}p(\Omega; x_1, x_2, x_3; w)|\ud w|^2}\frac{p(\Omega; x_1, x_2, x_3; z_1)}{|g_U'(\phi_r(z))|^2 p(\Omega; x_1, x_2, x_3; z_2)}.
\end{align}

From~\eqref{eqn::density_def}, we have 
\begin{align*}
\begin{split}
p(\Omega; x_1, x_2, x_3; w)
=&{\frac{4}{3\pi}}\frac{\Poisson(\Omega; w, x_1)^2\Poisson(\Omega; w, x_2)^2\Poisson(\Omega; w, x_3)^2}{\CR(\Omega; w)^2\Poisson(\Omega; x_1, x_2)\Poisson(\Omega; x_2, x_3)\Poisson(\Omega; x_3, x_1)}. 
\end{split}
\end{align*}
For $j=1,2$ and $w\in B(z_j, 8\eps_j)$, let us control the ratio
\begin{align}\label{eqn::final_controal_density_ratio}
\frac{p(\Omega; x_1, x_2, x_3; w)}{p(\Omega; x_1, x_2, x_3; z_j)}=\left(\frac{\Poisson(\Omega; w, x_1)\Poisson(\Omega; w, x_2)\Poisson(\Omega; w, x_3)\CR(\Omega; z_j)}{\Poisson(\Omega; z_j, x_1)\Poisson(\Omega; z_j, x_2)\Poisson(\Omega; z_j, x_3)\CR(\Omega; w)}\right)^2. 
\end{align}
\begin{itemize}
\item For the conformal radius, Koebe’s one quarter theorem and the Schwarz lemma assert that 
\begin{align*}
\dist(w, \partial\Omega)\le \CR(\Omega; w)\le 4\dist(w, \partial\Omega). 
\end{align*}
Thus, for all $\eps\le r/16$, 
\begin{align}\label{eqn::CR_dist_eps}
\CR(\Omega; z_j)\ge \dist(z_j, \partial\Omega)\ge \frac{r}{4}|\varphi_j'(z)|\ge 16\eps_j.
\end{align}
Thus, for $w\in B(z_j, 8\eps_j)$, we have 
\begin{align*}
\CR(\Omega; w)\le& 4\dist(w, \partial\Omega)\le 4\dist(z_j, \partial\Omega)+32\eps_j\le 6\dist(z_j, \partial\Omega)\le 6\CR(\Omega; z_j); \\
\CR(\Omega; w)\ge& \dist(w, \partial\Omega)\ge \dist(z_j, \partial\Omega)-8\eps_j\ge \frac{1}{2}\dist(z_j, \partial\Omega)\ge \frac{1}{8}\CR(\Omega; z_j). 
\end{align*}
In conclusion,
\begin{align}\label{eqn::final_control_CR}
\frac{1}{8}\le \frac{\CR(\Omega; w)}{\CR(\Omega; z_j)}\le 6, \quad \text{for all }w\in B(z_j, 8\eps_j). 
\end{align}
\item For the Poisson kernel, for fixed $x\in\partial\Omega$, the function $\Poisson(\Omega; \cdot, x)$ is harmonic. Thus, Harnack's inequality asserts that
\begin{align*}
\frac{\dist(z_j, \partial\Omega)-8\eps_j}{\dist(z_j, \partial\Omega)+8\eps_j}\le \frac{\Poisson(\Omega; w, x)}{\Poisson(\Omega; z_j, x)}\le \frac{\dist(z_j, \partial\Omega)+8\eps_j}{\dist(z_j, \partial\Omega)-8\eps_j}, \qquad \text{for all }w\in B(z_j, 8\eps_j). 
\end{align*}
Combining with~\eqref{eqn::CR_dist_eps}, we have 
\begin{align}\label{eqn::final_control_Poisson}
\frac{1}{3}\le \frac{\Poisson(\Omega; w, x)}{\Poisson(\Omega; z_j, x)}\le 3, \qquad \text{for all }w\in B(z_j, 8\eps_j). 
\end{align}
\end{itemize}
Plugging~\eqref{eqn::final_control_CR} and~\eqref{eqn::final_control_Poisson} into~\eqref{eqn::final_controal_density_ratio}, we have\footnote{We write $F\asymp G$ if $F/G$ is bounded both sides by universal positive finite constants.}
\begin{align*}
\frac{p(\Omega; x_1, x_2, x_3; w)}{p(\Omega; x_1, x_2, x_3; z_j)}\asymp 1, \qquad \text{for all }w\in B(z_j, 8\eps_j). 
\end{align*}
Plugging into~\eqref{eqn::final_control_aux1}, we have
\begin{align*}
\LR(\eps, r)\lesssim \frac{\eps_2^2}{|g_U'(\phi_r(z))|^2\eps_1^2}=1, 
\end{align*}
as desired. 
\end{proof}

\subsection{Convergence of the tripod for $\Z^2$ lattice}
\label{subsec::tripod_otherlattice}
In this section, we explain that the conclusion in Theorem~\ref{thm::tripod} also holds for $\Z^2$ lattice approximation. 

\begin{corollary}\label{cor::tripod_Z2}
Fix a bounded $3$-polygon $(\Omega; x_1, x_2, x_3)$ and suppose $(\Omega^{\delta}; x_1^{\delta}, x_2^{\delta}, x_3^{\delta})$
 is an approximation of $(\Omega; x_1, x_2, x_3)$ on $\delta\Z^2$ in Carath\'eodory sense. 
We assume further that $\partial\Omega^{\delta}$ converges to $\partial\Omega$ in Hausdorff distance~\eqref{eqn::boundary_cvg_Hausdorff}. 
Consider the UST in $\Omega^{\delta}$ with wired boundary condition and 
define $\LA_1^{\delta}$ and $\LA_2^{\delta}$ as in~\eqref{eqn::conditionalevent} and consider the tripod $(\gamma_1^{\delta}, \gamma_2^{\delta}, \gamma_3^{\delta})$ and the trifurcation $\trifurcation^{\delta}$. 
The law of the tripod $(\gamma_1^{\delta}, \gamma_2^{\delta}, \gamma_3^{\delta})$ conditional on $\LA_1^{\delta}\cap \LA_2^{\delta}$ converges weakly to three continuous simple curves $(\gamma_1, \gamma_2, \gamma_3)$ whose law is characterized by the following properties. \begin{itemize}
\item[(1)] There exists $\trifurcation\in\Omega$ such that $(\gamma_1, \gamma_2, \gamma_3)\in\chamber(\Omega; x_1, x_2, x_3; \trifurcation)$. 
The law of the trifurcation $\trifurcation$ is absolutely continuous with respect to Lebesgue measure on $\Omega$ and the density is given by $p(\Omega; x_1, x_2, x_3; z)$ defined in~\eqref{eqn::density_def}. 
\item[(2)] Given $\trifurcation$, the conditional law of the tripod $(\gamma_1, \gamma_2, \gamma_3)$ is three-sided radial $\SLE_2$ in $(\Omega; x_1, x_2, x_3; \trifurcation)$ in Definition~\ref{def::3sidedradialSLE}. 
\end{itemize}
\end{corollary}
\begin{proof}
The conclusions and proofs in Sections~\ref{subsec::boundary_branch_cvg} and~\ref{subsec::tightness} hold for $\Z^2$ approximation. In particular, Lemma~\ref{lem::tripod_ab} holds for $\Z^2$ approximation. Thus, the limiting law of the tripod can be described by (a)-(b) in Lemma~\ref{lem::tripod_ab}. 
The law of $(\gamma_1, \gamma_2, \gamma_3)$ described by (a)-(b) in Lemma~\ref{lem::tripod_ab} is the same as the law described by (1)-(2) due to Theorem~\ref{thm::tripod}. This completes the proof. 
\end{proof}

Next, let us check how to derive the conclusion in Theorem~\ref{thm::trifurcation} for $\Z^2$ lattice approximation. Assume the same setup as in Corollary~\ref{cor::tripod_Z2}, the law of the trifurcation $\trifurcation^{\delta}$ converges weakly to the law of $\trifurcation$ given by~\eqref{eqn::trifurcation_distribution}. In particular, fix $z\in\Omega$ and for any $\eps>0$, 
\begin{equation}\label{eqn::trifurcation_Z2_aux1}
\lim_{\delta\to 0}\PP[\trifurcation^{\delta}\in B(z,\eps)\cond \LA_1^{\delta}\cap\LA_2^{\delta}]=\int_{B(z,\eps)} p(\Omega; x_1, x_2, x_3; w)|\ud w|^2.
\end{equation}
Suppose $z^{\delta}$ is a vertex in $\LV^{\circ}(\Omega^{\delta})$ that is nearest to $z$. We would like to show that 
\begin{align}\label{eqn::trifurcation_Z2_aux2}
\lim_{\delta\to 0}\delta^{-2}\PP\left[\trifurcation^{\delta}=z^{\delta}\cond \LA_1^{\delta}\cap\LA_2^{\delta}\right]=p(\Omega; x_1, x_2, x_3; z). 
\end{align}

The convergence~\eqref{eqn::trifurcation_Z2_aux2} would follow from~\eqref{eqn::trifurcation_Z2_aux1} and the following additional control: for every $\xi>0$, there exists $\eps=\eps(\xi)\to 0$ as $\xi\to 0$, such that for every $w^\delta\in B(z^\delta,\eps)$, we have
\begin{equation}\label{eqn::trifurcation_Z2_aux3}
\left|\frac{\PP[\trifurcation^{\delta}=w^\delta\cond \LA_1^{\delta}\cap\LA_2^{\delta}]}{\PP[\trifurcation^{\delta}=z^\delta\cond \LA_1^{\delta}\cap\LA_2^{\delta}]}-1\right|<\xi.
\end{equation}
Assume we have proved~\eqref{eqn::trifurcation_Z2_aux3}. Note that 
\[\PP[\trifurcation^{\delta}\in B(z,\eps)\cond \LA_1^{\delta}\cap\LA_2^{\delta}]=\sum_{w^\delta\in B(z,\eps)\cap V(\Omega_\delta)}\PP[\trifurcation^{\delta}=w^\delta\cond \LA_1^{\delta}\cap\LA_2^{\delta}].\]
Combining with~\eqref{eqn::trifurcation_Z2_aux1} and~\eqref{eqn::trifurcation_Z2_aux3}, we have
\[\frac{1}{(1+\xi)\operatorname*{Area}(B(z,\eps))}\int_{B(z,\eps)} p(\Omega; x_1, x_2, x_3; w)|\ud w|^2\le\liminf_{\delta\to 0}\delta^{-2}\PP\left[\trifurcation^{\delta}=z^{\delta}\cond \LA_1^{\delta}\cap\LA_2^{\delta}\right]\]
and
\[
\limsup_{\delta\to 0}\delta^{-2}\PP\left[\trifurcation^{\delta}=z^{\delta}\cond \LA_1^{\delta}\cap\LA_2^{\delta}\right]\le \frac{1}{(1-\xi)\operatorname*{Area}(B(z,\eps))}\int_{B(z,\eps)} p(\Omega; x_1, x_2, x_3; w)|\ud w|^2.
\]
By letting $\xi\to 0$, we obtain~\eqref{eqn::trifurcation_Z2_aux2}. 
The control~\eqref{eqn::trifurcation_Z2_aux3} might be proved via coupling of UST on different graphs, but it seems highly non-trivial for us.
\newpage
\appendix
\section{The trifurcation observable for 4-8 lattice}
\label{appendix::48lattice}
In this appendix, we first derive analogous conclusion of Proposition~\ref{prop::observable_cvg} for 4-8 lattice in Proposition~\ref{prop::observable_cvg_48}.

\begin{proposition}\label{prop::observable_cvg_48}
Fix a bounded $3$-polygon $(\Omega; x_1, x_2, x_3)$ and suppose $(\Omega^{\delta}; x_1^{\delta}, x_2^{\delta}, x_3^{\delta})$ is an approximation of $(\Omega; x_1, x_2, x_3)$ on 
$\delta$-scaled 4-8 lattice in Carath\'eodory sense. 
Consider UST on $\Omega^{\delta}$ with wired boundary condition. 
Define $\LA_1^{\delta}=\{x_1^{\delta, \circ}\rightsquigarrow e_3^{\delta}\}$ and $\LA_2^{\delta}=\{x_2^{\delta, \circ}\rightsquigarrow e_3^{\delta}\}$ as in~\eqref{eqn::conditionalevent} and consider the trifurcation $\trifurcation^{\delta}$.
We fix an interior point $v\in\Omega$ and suppose $v^{\delta}$ is the vertex in $\LV^{\circ}(\Omega^{\delta})$ that is nearest to $v$. 
Define 
\begin{equation}\label{eqn::deltafive_integrand_48}
f^{\delta}(u)=\frac{\PP\left[\LA_1^{\delta}\cap \LA_2^{\delta}\cap \{\trifurcation^{\delta}=u\}\right]}{\delta^2\prod_{j=1}^3 \harmonic(\Omega^\delta; v^\delta, e_j^\delta)},\qquad u\in\LV^{\circ}(\Omega^{\delta}). 
\end{equation}
We define the triangle $\bigtriangleup^{\delta}(u)$ covering vertex $u$ as in Figure~\ref{fig::48lattice}.  
For $u\in\LV^{\circ}(\Omega^{\delta})$, we set $f^{\delta}(z)=f^{\delta}(u)$ for $z\in\bigtriangleup^{\delta}(u)$. 
This is a step function defined on $\Omega^{\delta}$ and we set $f^{\delta}=0$ outside of $\Omega^{\delta}$. Define
\begin{equation}\label{eqn::deltafive_integrand_continuum_48}
f(z):=\frac{{4(2+\sqrt{2})}\LZtripod(\Omega; x_1, x_2, x_3; z)}{\prod_{j=1}^{3}\Poisson(\Omega; v, x_j)}, \qquad z\in\Omega, 
\end{equation}
and set $f=0$ outside of $\Omega$. 
Then $f^{\delta}$ converges to $f$ uniformly on compact subsets of $\Omega$. 
\end{proposition}
\begin{proof}
First, the key lemma---Lemma~\ref{lem::Fomin}---remains true since the proof is valid for any  graph such that $\deg(v)=3$ for all inner vertices $v$.
Thus for $z^\delta\in\LV^{\circ}(\Omega^{\delta})$, denote by $z_1^{\delta}, z_2^{\delta}, z_3^{\delta}$ the three neighbors of $z^\delta$ arranged in counterclockwise order, then we have
\begin{equation}\label{eqn::keylemma_48}
\PP\left[\LA_1^{\delta}\cap \LA_2^{\delta}\cap \{\trifurcation^{\delta}=z^\delta\}\right]=\det\left(\harmonic(\Omega^{\delta}; z_k^{\delta},e_j^\delta)\right)_{j,k=1}^3.
\end{equation}

Second, we consider the convergence of the harmonic measures.
For $u\in\LV^{\circ}(\Omega^{\delta})$, it has three adjacent vertices $u_1, u_2, u_3$. There are four possible cases for the directions of the edges $\langle u,u_1\rangle, \langle u,u_2\rangle, \langle u,u_3\rangle$ (see Figure~\ref{fig::48lattice}):
\begin{align*}
\text{Case 1:}\qquad &u_k=u+\delta\bs{\beta}_k, \qquad \text{where }\bs{\beta}_1=1,\; \bs{\beta}_2=\ee^{3\pi\ii/4},\; \bs{\beta}_3=\ee^{3\pi\ii/2}; \\
\text{Case 2:}\qquad 
&u_k=u+\delta\bs{\beta}_k, \qquad \text{where }\bs{\beta}_1=-\ii,\; \bs{\beta}_2=\ee^{\pi\ii/4},\; \bs{\beta}_3=-1; \\
\text{Case 3:}\qquad 
&u_k=u+\delta\bs{\beta}_k, \qquad \text{where }\bs{\beta}_1=-1,\; \bs{\beta}_2=\ee^{-\pi\ii/4},\; \bs{\beta}_3=\ii; \\
\text{Case 4:}\qquad 
&u_k=u+\delta\bs{\beta}_k, \qquad \text{where }\bs{\beta}_1=\ii,\; \bs{\beta}_2=\ee^{5\pi\ii/4},\; \bs{\beta}_3=1.
\end{align*}

\begin{figure}[ht!]
\begin{subfigure}[b]{0.2\textwidth}
\begin{center}
\includegraphics[width=\textwidth]{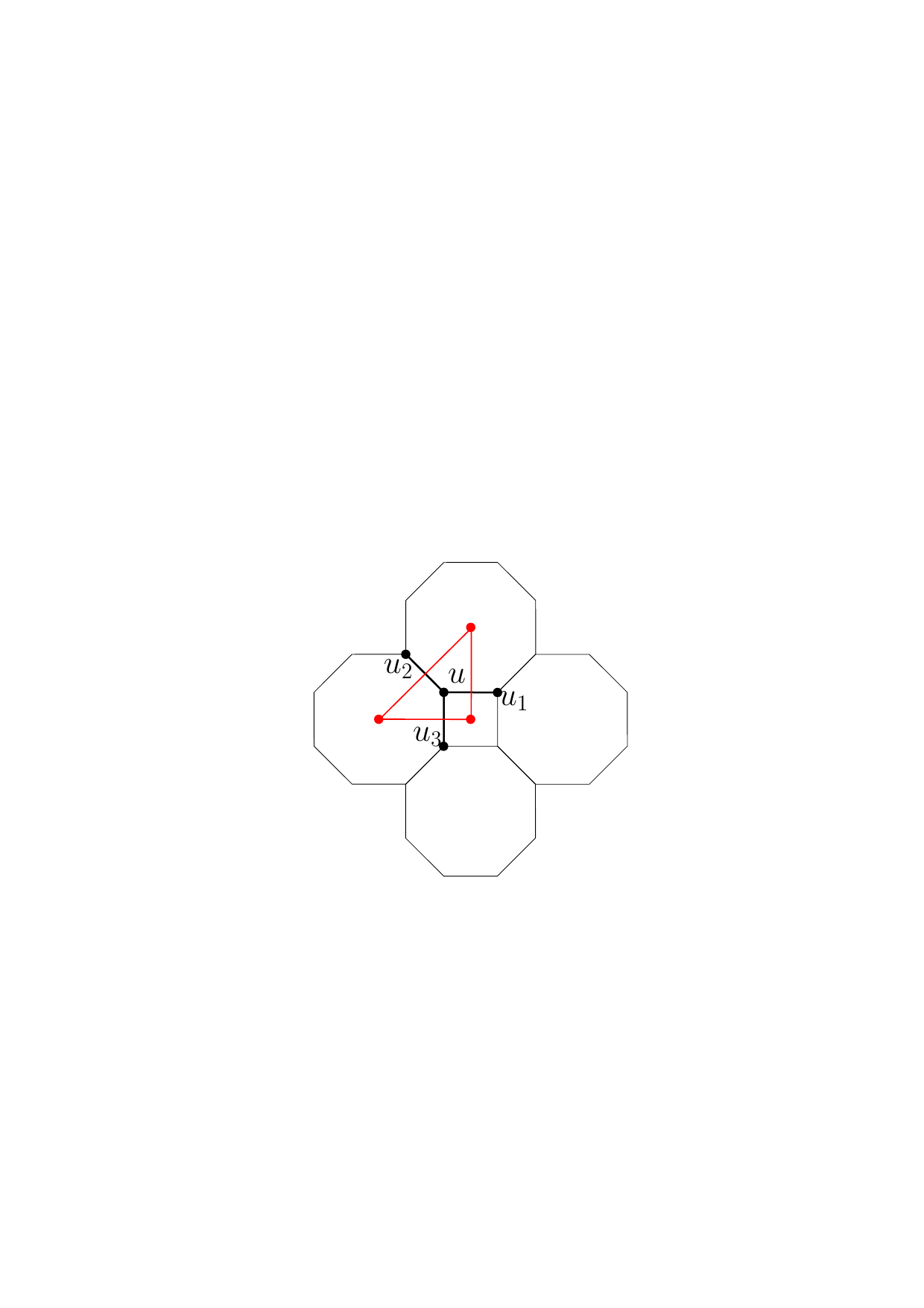}
\end{center}
\caption{Case 1.}
\end{subfigure}
\begin{subfigure}[b]{0.2\textwidth}
\begin{center}
\includegraphics[width=\textwidth]{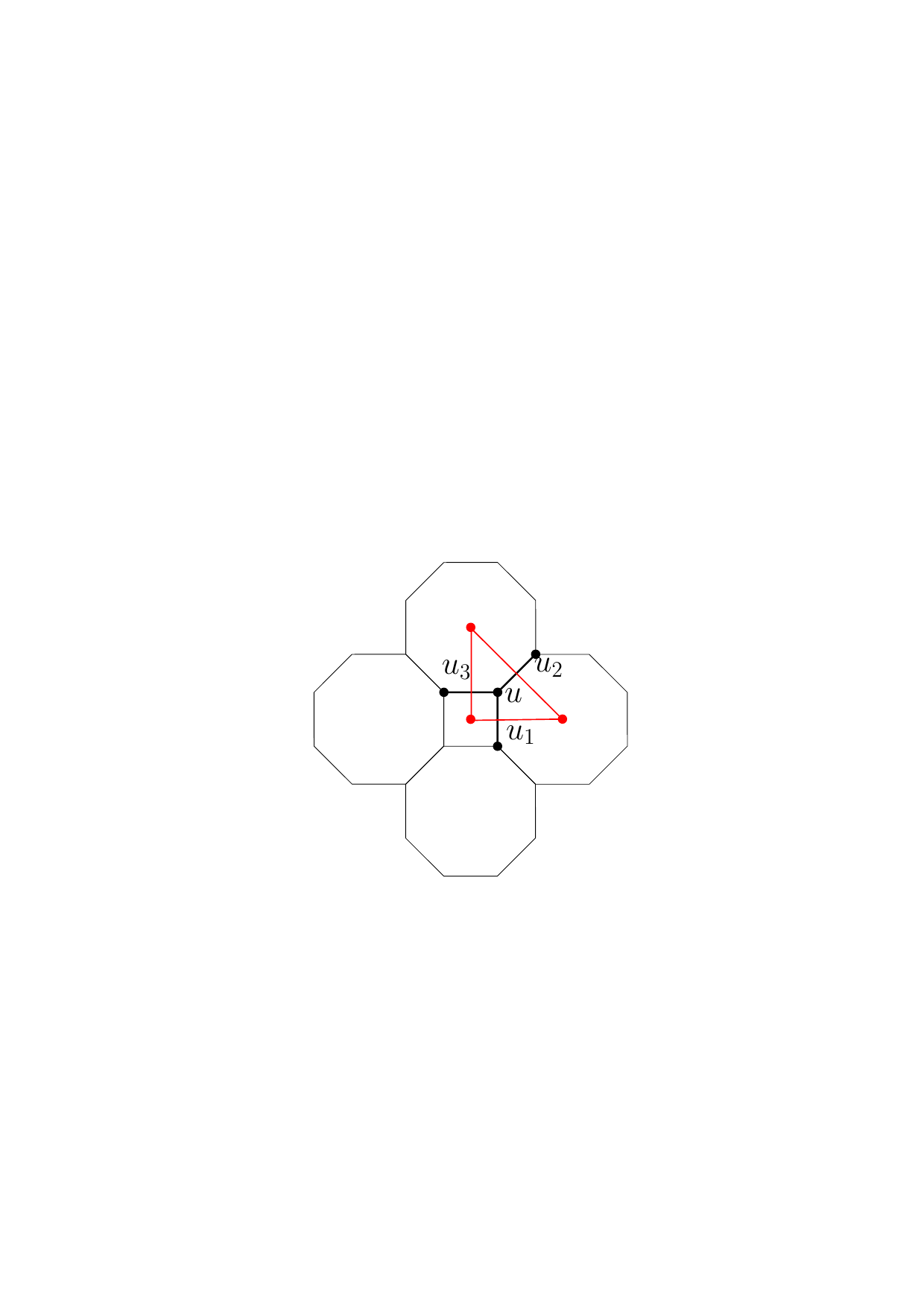}
\end{center}
\caption{Case 2.}
\end{subfigure}
\begin{subfigure}[b]{0.2\textwidth}
\begin{center}
\includegraphics[width=\textwidth]{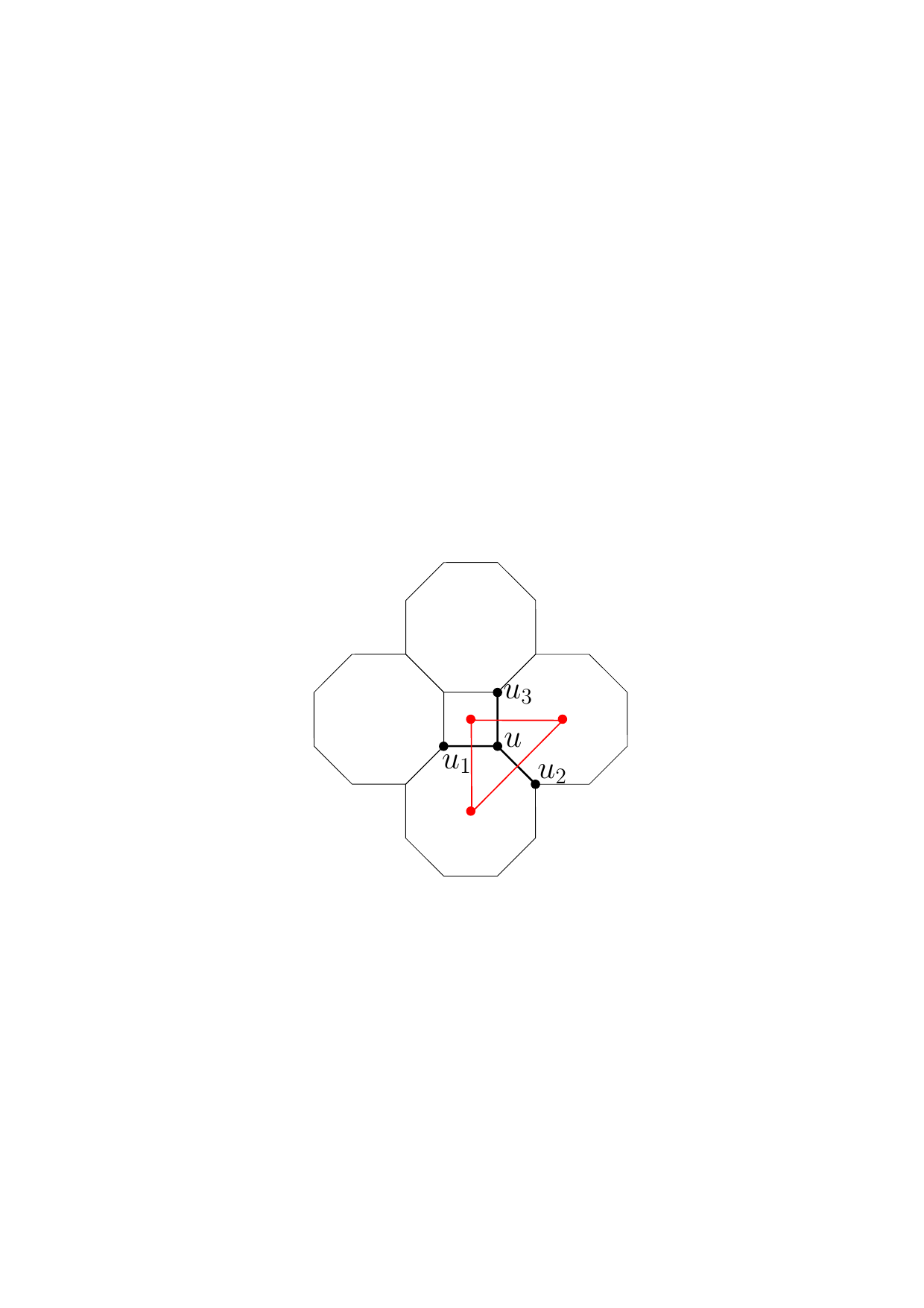}
\end{center}
\caption{Case 3.}
\end{subfigure}
\begin{subfigure}[b]{0.2\textwidth}
\begin{center}
\includegraphics[width=\textwidth]{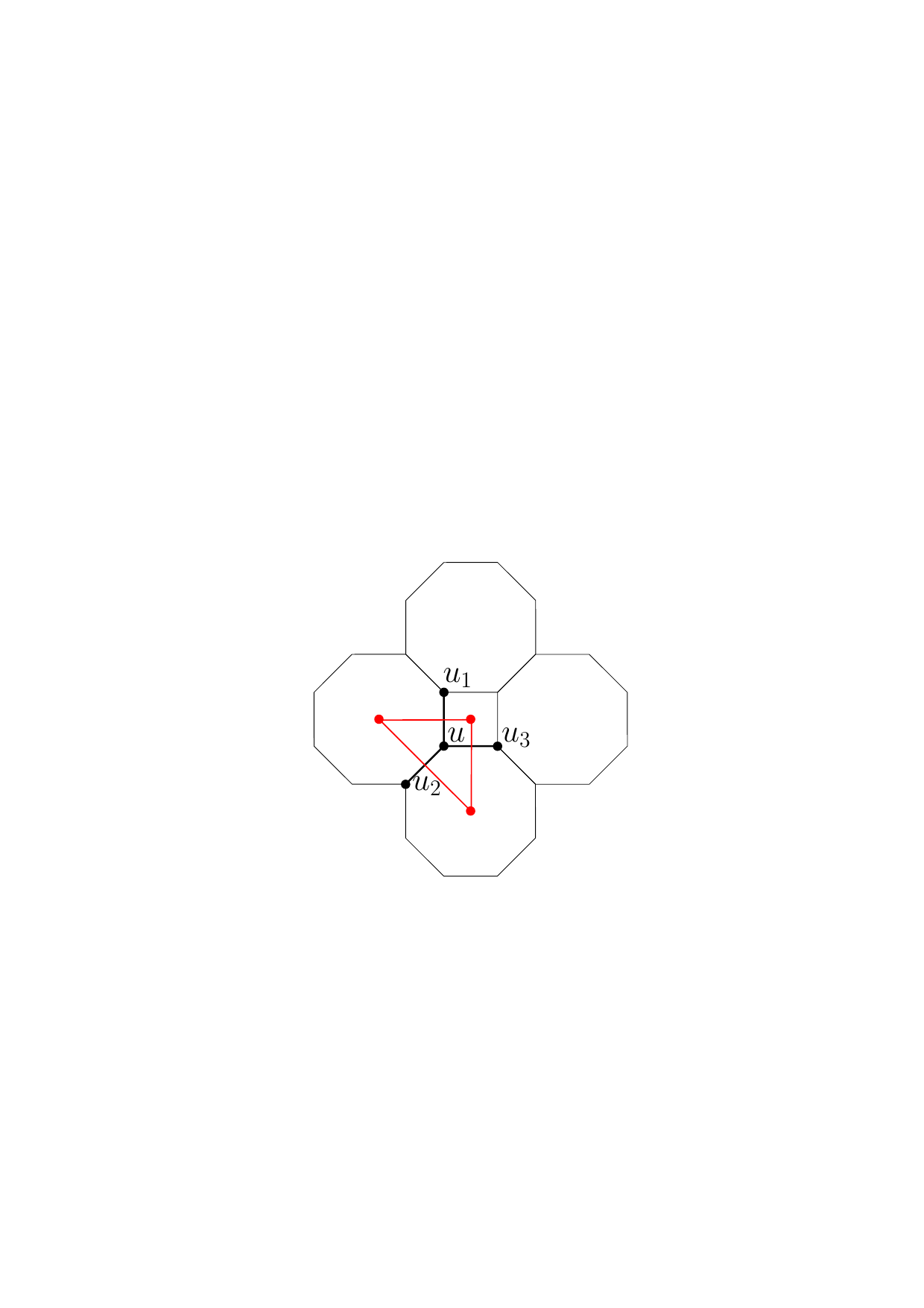}
\end{center}
\caption{Case 4.}
\end{subfigure}
\caption{\label{fig::48lattice}The directions of three adjacent edges of $u$ on 4-8 lattice have four possible cases. Denote by $\bigtriangleup^{\delta}(u)$ the dual face (in red) of $u$ in 4-8 lattice. } 
\end{figure}
For any subsequence of $\{z^{\delta}\}$ of Case 1, by the same proof of Lemma~\ref{lem::deltafive_cvg}, we obtain
\begin{equation}\label{eqn::Poisson_derivative_cvg_48}
\lim_{\delta\to 0}\frac{\det\left(\harmonic(\Omega^\delta;z_k^\delta,e_j^\delta)\right)}{\delta^2 \prod\harmonic(\Omega^\delta;v^\delta,e_j^\delta)}=\frac{4\operatorname*{Area}(\bs{\beta}_1,\bs{\beta}_2,\bs{\beta}_3)\det(M(\Omega;x_1,x_2,x_3;z))}{\prod_{j=1}^3\Poisson(\Omega;v,x_j)}, 
\end{equation}
where $\operatorname*{Area}(\bs{\beta}_1,\bs{\beta}_2,\bs{\beta}_3)$ denotes the area of the triangle whose three vertices are $\bs{\beta}_1,\bs{\beta}_2,\bs{\beta}_3$. 
For the 4-8 lattice, we have 
\[\operatorname*{Area}(\bs{\beta}_1,\bs{\beta}_2,\bs{\beta}_3)=\frac{1}{2}(1+\sqrt{2}).\]
Combining with~\eqref{eqn::keylemma_48} and~\eqref{eqn::Poisson_derivative_cvg_48} and Lemma~\ref{lem::detM_LZtripod}, we obtain the desired convergence in~\eqref{eqn::deltafive_integrand_continuum_48} for any subsequence of $\{z^{\delta}\}$ of Case 1. As $\operatorname*{Area}(\bs{\beta}_1,\bs{\beta}_2,\bs{\beta}_3)=\frac{1}{2}(1+\sqrt{2})$ for all the four cases, we obtain the convergence of the whole sequence. 
\end{proof}


{\small
\newcommand{\etalchar}[1]{$^{#1}$}

}

\end{document}